\setlist[itemize]{topsep=0pt,itemsep=0pt}
\setlist[enumerate]{topsep=0pt,itemsep=0pt}
\newread \epsffilein    
\newif \ifepsfatend     
\newif \ifepsfbbfound   
\newif \ifepsfdraft     
\newif \ifepsffileok    
\newif \ifepsfframe     
\newif \ifepsfshow      
\newif \ifepsfshowfilename 
\newif \ifepsfverbose   
\newdimen \epsfframemargin 
\newdimen \epsfframethickness 
\newdimen \epsfrsize    
\newdimen \epsftmp      
\newdimen \epsftsize    
\newdimen \epsfxsize    
\newdimen \epsfysize    
\newdimen \pspoints     
\def \epsfbox #1{%
    \global \def \epsfllx {72}%
    \global \def \epsflly {72}%
    \global \def \epsfurx {540}%
    \global \def \epsfury {720}%
    \def \lbracket {[}%
    \def \testit {#1}%
    \ifx \testit \lbracket
        \let \next = \epsfgetlitbb
    \else
        \let \next = \epsfnormal
    \fi
    \next{#1}%
}%
\def \epsfgetlitbb #1#2 #3 #4 #5]#6{%
   \epsfgrab #2 #3 #4 #5 .\\%
   \epsfsetsize
   \epsfstatus{#6}%
   \epsfsetgraph{#6}%
}%
\def \epsfnormal #1{%
    \epsfgetbb{#1}%
    \epsfsetgraph{#1}%
}%
\def \epsfgetbb #1{%
%
%
    \openin\epsffilein=#1
    \immediate \write16 {(#1}%
    \ifeof \epsffilein
        \errmessage{Could not open file #1, ignoring it}%
    \else                       
        {
            \chardef \other = 12%
            \def \do ##1{\catcode`##1=\other}%
            \dospecials
            \catcode `\ = 10%
            \epsffileoktrue        
            \epsfatendfalse        
            \loop                  
                \read \epsffilein to \epsffileline
                \ifeof \epsffilein 
                \epsffileokfalse   
            \else                  
                \expandafter \epsfaux \epsffileline :. \\%
            \fi
            \ifepsffileok
            \repeat
            \ifepsfbbfound
            \else
                \ifepsfverbose
                    \immediate \write16 {No BoundingBox comment found in %
                                         file #1; using defaults}%
                \fi
            \fi
        }
        \closein\epsffilein
    \fi                         
    \epsfsetsize                
    \epsfstatus{#1}%
    \immediate \write16 {)}%
}%
\def \epsfclipoff {\def \epsfclipstring {\ifepsfdraft \space clip\fi}}%
\def \epsfspecial #1{%
     \epsftmp=10\epsfxsize
     \divide \epsftmp by \pspoints
     \ifnum \epsfrsize = 0%
       \relax
       \includegraphics{\ifepsfdraft}%
     \else
       \epsfrsize=10\epsfysize
       \divide \epsfrsize by \pspoints
       \includegraphics{\ifepsfdraft}%
     \fi
}%
\def \epsfframe #1%
\def \epsfsetgraph #1%
\def \epsfsetsize
\epsfsize{\epsftsize}{\epsfrsize}%
\def \epsfstatus #1{
   \ifepsfverbose
     \immediate \write16 {#1: BoundingBox:
			  llx = \epsfllx \space lly = \epsflly \space
			  urx = \epsfurx \space ury = \epsfury \space}%
     \immediate \write16 {#1: scaled width = \the\epsfxsize \space
			  scaled height = \the\epsfysize}%
   \fi
}%
\global \let \epsfpercent=
\global \def \epsfatend{(atend)}%
\long \def \epsfaux#1#2:#3\\%
   \def \testit {#2}
\def \epsfempty {}%
\def \epsfgrab #1 #2 #3 #4 #5\\{%
   \global \def \epsfllx {#1}\ifx \epsfllx\epsfempty
      \epsfgrab #2 #3 #4 #5 .\\\else
   \global \def \epsflly {#2}%
   \global \def \epsfurx {#3}\global \def \epsfury {#4}\fi
}%
\def \epsfsize #1#2{\epsfxsize}%
\newcommand{\manuallabel}[2]{\def\@currentlabel{#2}\label{#1}}
\newtheorem{theorem}{Theorem}[section]
\newtheorem{proposition}[theorem]{Proposition}
\newtheorem{corollary}[theorem]{Corollary}
\newtheorem{lemma}[theorem]{Lemma}
\theoremstyle{definition}
\newtheorem{remark}[theorem]{Remark}
\newtheorem{example}[theorem]{Example}
\newtheorem{definition}[theorem]{Definition}
\newtheorem{problem}[theorem]{Problem}
\newtheorem{exercise}[theorem]{Exercise}
\numberwithin{section}{chapter}
\numberwithin{equation}{section}
\numberwithin{figure}{chapter}
\def\endproof{\hfill$\square$\medskip}
\def\ZZ{\mathbb{Z}}
\def\CC{\mathbb{C}}
\def\RR{\mathbb{R}}
\def\QQ{\mathbb{Q}}
\def\TT{\mathbb{T}}
\def\AA{\mathcal{A}}
\def\FFcal{\mathcal{F}}
\def\xx{\mathbf{x}}
\def\yy{\mathbf{y}}
\def\roots{\operatorname{roots}}
\def\Gr{\operatorname{Gr}}
\def\SL{\operatorname{SL}}
\def\GL{\operatorname{GL}}
\def\Qsf{\QQ_{\,\rm sf}}
\newcommand{\overunder}[2]{
\!\begin{array}{c}
\scriptstyle{#1}\\[-.1in]
-\!\!\!-\!\!\!-\\[-.1in]
\scriptstyle{#2}
\end{array}
\!
}
\def\Trop{\operatorname{Trop}}
\newcommand{\notch}{\scriptstyle\bowtie}
\newsavebox{\digon}
\newsavebox{\lowbar}
\newsavebox{\lowtag}
\newsavebox{\highbar}
\newsavebox{\hightag}
\newcommand{\credit}[1]{\smallskip\noindent {\textbf{#1.\ }}} 
\definecolor{darkred}{rgb}{1,0,0}        
\definecolor{lightred}{rgb}{1,0.4,0}     
\definecolor{darkblue}{cmyk}{1,0.4,0,0.4}  
\definecolor{lightblue}{cmyk}{1,0.4,0,0}  
\definecolor{darkgreen}{cmyk}{1,0.5,1,0}  
\definecolor{lightgreen}{cmyk}{1,0,1,0}  
\begin{document}




\frontmatter

\title{
\textsf{\Huge \hbox{Introduction to Cluster Algebras}}\\[.1in]
\textsf{\Huge Chapters 4--5} \\[.2in]
{\rm\textsf{\LARGE (preliminary version)}}
}

\author{\Large \textsc{Sergey Fomin}}

\author{\Large \textsc{Lauren Williams}}

\author{\Large \textsc{Andrei Zelevinsky}}

\maketitle

\noindent
\textbf{\Huge Preface}

\vspace{1in}


\noindent
This is a preliminary draft of Chapters 4--5 of our forthcoming textbook
\textsl{Introduction to cluster algebras}, 
joint with Andrei Zelevinsky (1953--2013). 
Other chapters have been posted as 
\begin{itemize}[leftmargin=.2in]
\item  \texttt{arXiv:1608:05735} \hbox{(Chapters~1--3)}, 
\item \texttt{arXiv:2008.09189} (Chapter~6), and
\item \texttt{arXiv:2106.02160} (Chapter~7). 
\end{itemize}
We expect to post additional chapters in the not so distant future. 

\medskip

We thank Gregg Musiker for explanations concerning the \texttt{Sage} 
package~\cite{Musiker-Stump}. 

We are grateful to Colin Defant, Chris Fraser, Sergei Gelfand, Felix Gotti, 
Amal Mattoo, Hanna Mularczyk, Emmanuel Tsukerman, and Raluca Vlad  
for a number of comments on the earlier versions of these chapters.

Our work was partially supported by the NSF grants DMS-1361789, DMS-1664722 and
DMS-1600447. 

\medskip

Comments and suggestions are welcome. 

\bigskip

\rightline{Sergey Fomin}
\rightline{Lauren Williams}

\vfill

\noindent
2020 \emph{Mathematics Subject Classification.} Primary 13F60.

\bigskip

\noindent
\copyright \ 2017--2021 by 
Sergey Fomin, Lauren Williams, and Andrei Zelevinsky

\tableofcontents

\mainmatter

\setcounter{chapter}{3}




\chapter{New patterns 
from old}

This chapter provides several methods for obtaining new seed patterns
(or new cluster algebras) from existing ones.

\section{Restrictions and embeddings of quivers and matrices}
\label{sec:subquivers}

We begin by discussing some purely combinatorial constructions 
involving mutations of quivers or matrices---but not clusters or seeds. 

\begin{definition}
\label{def:tildeB_I}
Let $\tilde B$ be an $m \times n$ extended skew-symmetrizable matrix.
For a subset $I\subset [1,m]$, consider the matrix 
$\tilde B_I$ obtained from $\tilde B$ by restricting to the
row set $I$ and to the column set $I \cap [1,n]$.
It is easy to see that $\tilde B_I$ is again an extended 
skew-symmetrizable matrix.  We say that $\tilde B_I$ is 
\emph{obtained from $\tilde B$ by restriction to $I$}.  
More generally, we say that a matrix $\tilde B'$ is 
{obtained from $\tilde B$ by restriction} if $\tilde B'$
can be identified with a matrix $\tilde B_I$ as above.
(Note that we will use the convention that 
the rows and columns of
	$\tilde B_I$ are labeled by $I$ rather than by $\{1,2,\dots,|I|\}$.)

If $\tilde B=\tilde B(Q)$ 
is the extended exchange matrix of a quiver~$Q$, 
 then $\tilde B_I = \tilde B(Q_I)$ 
is the extended exchange matrix of the quiver~$Q_I$,
where $Q_I$  is obtained from $Q$ by taking the subset of vertices of~$Q$
indexed by $I$ along with all 
the arrows in~$Q$ that connect the vertices in~$I$.  Such a quiver
$Q_I$
is called  
a \emph{full} (or \emph{induced}) \emph{subquiver} of $Q$.
The vertices in $Q_I$ inherit the property of being frozen or 
mutable from the ambient quiver~$Q$. 
\end{definition}

The following property is easy to check.

\begin{lemma}
\label{ex:mut-local}
Mutation of matrices/quivers commutes with restriction. 
More precisely, if $\tilde B_I$ is the restriction of 
an extended skew-symmetrizable matrix~$\tilde B$
to a subset~$I$,
and $k\in I$ is mutable, 
then $\mu_k(\tilde B_I)= (\mu_k(\tilde B))_I$. 
\end{lemma}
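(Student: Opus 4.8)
The plan is to verify the identity entrywise, exploiting the fact that the matrix mutation rule~\eqref{eq:matrix-mutation} is \emph{local}: the $(i,j)$ entry of $\mu_k(\tilde B)$ is computed from only the three entries $b_{ij}$, $b_{ik}$, and $b_{kj}$ of the original matrix. Concretely, fix a mutable $k\in I$. Since $k$ mutable forces $k\in[1,n]$, we have $k\in I\cap[1,n]$, so $k$ is a legal mutation direction for the restricted matrix $\tilde B_I$ (and inherits mutability from the ambient matrix). Both $\mu_k(\tilde B_I)$ and $(\mu_k(\tilde B))_I$ have row set $I$ and column set $I\cap[1,n]$, so it suffices to fix an arbitrary position $(i,j)$ with $i\in I$ and $j\in I\cap[1,n]$ and compare the two matrices there.

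On the one hand, $[\mu_k(\tilde B_I)]_{ij}$ is computed from~\eqref{eq:matrix-mutation} using the entries $(\tilde B_I)_{ij}$, $(\tilde B_I)_{ik}$, and $(\tilde B_I)_{kj}$; since restriction does not alter entry values, these equal $b_{ij}$, $b_{ik}$, $b_{kj}$ respectively. On the other hand, $[(\mu_k(\tilde B))_I]_{ij}=[\mu_k(\tilde B)]_{ij}$ is produced by the same formula applied to the very same three entries $b_{ij}$, $b_{ik}$, $b_{kj}$ of $\tilde B$. Hence the two expressions coincide, and since $(i,j)$ was arbitrary, the matrices are equal. To be thorough I would run through the branches of the mutation formula---the rows and columns carrying the index $k$, where entries merely change sign, and the generic case with the quadratic correction---and confirm that each branch refers only to these three entries.

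The one point deserving attention, and where the hypothesis $k\in I$ is genuinely used, is that every entry needed to compute $[\mu_k(\tilde B)]_{ij}$ at a position inside $I$ is itself visible in the restriction $\tilde B_I$: the entry $b_{ik}$ sits in row $i\in I$ and column $k$, while $b_{kj}$ sits in row $k$ and column $j\in I\cap[1,n]$, and both survive restriction precisely because $k\in I$. I therefore do not anticipate any real obstacle---the statement reduces to the locality of the mutation rule together with the observation that mutated entries within the block indexed by $I$ never depend on entries outside that block. Finally, the claim for quivers follows immediately from the matrix case via the identification $\tilde B(Q_I)=\tilde B(Q)_I$ recorded just before the lemma.
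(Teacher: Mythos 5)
Your proof is correct, and it is exactly the argument the paper has in mind: the text simply declares the lemma ``easy to check'' and supplies no proof, the intended check being precisely the entrywise locality of the mutation rule \eqref{eq:matrix-mutation} that you spell out. Your observation that $k\in I$ (and mutable, hence $k\in I\cap[1,n]$) guarantees the entries $b_{ik}$ and $b_{kj}$ survive restriction is the one point of substance, and you have it right.
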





\begin{definition}
\label{def:hereditary}
We say that a property of extended skew-symmetrizable matrices
is \emph{hereditary}
if it is preserved under restriction: 
for any matrix~$\tilde{B}$ which has this property,
the same holds true for all its submatrices~$\tilde{B}_I$.
For quivers, a property is hereditary if it is inherited by the full
subquivers of any quiver which has that property. 
\end{definition}

We are interested in hereditary properties that are preserved under
mutations. 
The first example of this kind
concerns the notion of finite mutation type, 
cf.\ Definition~\ref{def:finitemutationtype}; this definition
can be generalized in a straightforward manner
to extended exchange matrices.
Using 
\cref{ex:mut-local}, we obtain the following.

\begin{proposition}
Finite mutation type is a hereditary property.
\end{proposition}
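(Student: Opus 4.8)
The plan is to deduce everything formally from the commutation of mutation with restriction proved in \cref{ex:mut-local}. Fix an extended skew-symmetrizable matrix $\tilde B$ of finite mutation type and let $\tilde B_I$ be its restriction to a subset $I$; I want to show that the mutation class of $\tilde B_I$ is finite. The guiding idea is that every matrix reachable from $\tilde B_I$ by a sequence of mutations is the restriction to $I$ of the matrix reached from $\tilde B$ by the \emph{same} sequence of mutations, so finiteness of the mutation class of $\tilde B$ will force finiteness of that of $\tilde B_I$.

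First I would observe that a mutation sequence admissible for $\tilde B_I$ is automatically admissible for $\tilde B$. Each mutation of $\tilde B_I$ is performed in a direction $k\in I\cap[1,n]$ that is mutable in $\tilde B_I$; since an index inherits the property of being frozen or mutable from the ambient matrix, such a $k$ is mutable in $\tilde B$ as well. Thus any finite sequence $\mu_{k_\ell}\cdots\mu_{k_1}$ with mutable $k_1,\dots,k_\ell\in I$ may be applied both to $\tilde B_I$ and to $\tilde B$. I would then prove, by induction on $\ell$, the identity
\[
\mu_{k_\ell}\cdots\mu_{k_1}(\tilde B_I)=\bigl(\mu_{k_\ell}\cdots\mu_{k_1}(\tilde B)\bigr)_I .
\]
The case $\ell=0$ is trivial. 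For the inductive step, set $\tilde C=\mu_{k_{\ell-1}}\cdots\mu_{k_1}(\tilde B)$; the inductive hypothesis gives $\mu_{k_{\ell-1}}\cdots\mu_{k_1}(\tilde B_I)=\tilde C_I$, and \cref{ex:mut-local} applied to $\tilde C$ in the direction $k_\ell$ (still mutable in $\tilde C$, by inheritance of mutability) yields $\mu_{k_\ell}(\tilde C_I)=(\mu_{k_\ell}(\tilde C))_I$, which is exactly the desired identity.

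Finally I would read off finiteness. The displayed identity shows that every matrix in the mutation class of $\tilde B_I$ equals $(\tilde C')_I$ for some $\tilde C'$ in the mutation class of $\tilde B$; in other words, restriction to~$I$ maps the mutation class of $\tilde B$ onto a set containing the whole mutation class of $\tilde B_I$. Because $\tilde B$ has finite mutation type, its mutation class is finite, hence so is its image under restriction, and therefore the mutation class of $\tilde B_I$ is finite. This says precisely that $\tilde B_I$ is of finite mutation type, and the corresponding quiver statement follows at once from the identification $\tilde B_I=\tilde B(Q_I)$.

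I do not anticipate any real obstacle: as the surrounding text already signals, the statement is a direct consequence of \cref{ex:mut-local}. The only points deserving care are the two pieces of bookkeeping above---verifying that mutability passes from $\tilde B$ to its restrictions so that the inductive step is legitimate, and counting the mutation class up to the appropriate relabeling of rows and columns. The latter causes no trouble, since for matrices of fixed size finiteness of the mutation class is equivalent whether or not one identifies matrices related by a relabeling, and restriction is compatible with this identification.
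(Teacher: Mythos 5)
Your proof is correct and is exactly the argument the paper has in mind: the paper gives no separate proof, stating only that the proposition "is directly obtained from" Lemma~\ref{ex:mut-local}, and your induction on the length of the mutation sequence is the standard way to spell that out. The bookkeeping you flag (mutability passing to restrictions, and finiteness being insensitive to relabeling) is handled correctly, so nothing is missing.
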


In other words, an extended exchange matrix obtained by restriction from an 
extended exchange matrix of finite mutation type
will again have finite mutation type.




We will show later
that the property of being
mutation-equivalent to an orientation of 
a (possibly disconnected, simply laced) Dynkin
diagram is hereditary, 
see  \cref{finite-type-hereditary}.
See also Theorem~\ref{thm:Dynkin-hereditary} for 
a version of this statement that includes extended Dynkin diagrams.


\begin{example} 
\label{ex:mutation-acyclic}
Recall that an acyclic quiver is one containing no oriented cycles. 
A quiver with no frozen vertices is called 
\emph{mutation-acyclic} if it is mutation equivalent to an acyclic quiver.
It was shown in~\cite{BMR08}, using the machinery of
quiver representations, that 
the property of being mutation-acyclic is hereditary.
It would be  interesting to find an elementary proof. 
\end{example}


In light of \cref{ex:mutation-acyclic}, it is natural to consider the unoriented
analogue of the notion of mutation-acyclicity. 

\begin{remark}
A quiver is called \emph{arborizable} if it is
mutation-equivalent to an orientation of a forest 
(i.e., an undirected simple graph with no cycles).

Unfortunately, arborizability is not a hereditary property. 
A~counterexample is given in Figure \ref{fig:arborizable}.

\begin{figure}[ht]
\begin{center}
\setlength{\unitlength}{2pt} 
\begin{picture}(40,40)(0,0) 
\put(0,10){\makebox(0,0){$\mathbf{_{1}}$}}
\put(20,0){\makebox(0,0){$\mathbf{_{2}}$}}
\put(20,20){\makebox(0,0){$\mathbf{_{5}}$}}
\put(20,40){\makebox(0,0){$\mathbf{_{4}}$}}
\put(40,10){\makebox(0,0){$\mathbf{_{3}}$}}
\thicklines 
\put(2,9){\vector(2,-1){16}}
\put(22,1){\vector(2,1){16}}
\put(2,10){\vector(1,0){36}}
\put(38,11){\vector(-2,1){16}}
\put(18,19){\vector(-2,-1){16}}
\put(38.4,12.4){\vector(-2,3){17}}
\put(18.4,37.6){\vector(-2,-3){17}}
\end{picture}
\begin{picture}(40,40)(0,0) 
\put(20,20){\makebox(0,0){$\mathbf{_{\mu_5 \circ \mu_3 \circ \mu_4}}$}}
\put(5,17.5){\vector(1,0){30}}
\end{picture}
\begin{picture}(40,40)(0,0) 
\put(0,10){\makebox(0,0){$\mathbf{_{1}}$}}
\put(20,0){\makebox(0,0){$\mathbf{_{2}}$}}
\put(20,20){\makebox(0,0){$\mathbf{_{5}}$}}
\put(20,40){\makebox(0,0){$\mathbf{_{4}}$}}
\put(40,10){\makebox(0,0){$\mathbf{_{3}}$}}
\thicklines 
\put(2,11){\vector(2,1){16}}
\put(38,11){\vector(-2,1){16}}
\put(20,18){\vector(0,-1){16}}
\put(20,22){\vector(0,1){16}}
\end{picture}

\end{center}
\caption{The quiver $Q$ shown on the left is arborizable
since it is mutation equivalent to the quiver $Q'$ on the right. 
On the other hand, 
the full subquiver~$Q_I$ on the vertex set $I=\{1,2,3\}$
is not arborizable:
$Q_I$~is of finite mutation type, 
and its mutation class $[Q_I]$ consists 
of two quivers (up to isomorphism)
neither of which is an orientation of a tree.
}
\label{fig:arborizable}
\end{figure}
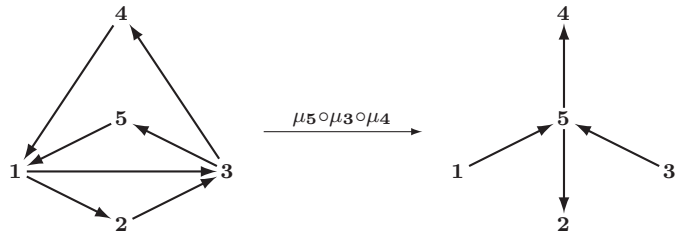
\end{remark}

\begin{lemma}
\label{pr:order-mut-classes}
For mutation classes
$\mathbf{Q}$ and $\mathbf{Q'}$, 
the following are equivalent:
\begin{itemize}[leftmargin=.3in]
\item[{\rm (i)}]
there exist $\tilde B\in \mathbf{Q}$ and $\tilde B' \in \mathbf{Q'}$ such that 
$\tilde B$ is obtained from $\tilde B'$ by restriction;
\item[{\rm (ii)}]
for any $\tilde B\in \mathbf{Q}$, there exists $\tilde B' \in \mathbf{Q'}$ 
such that $\tilde B$ is obtained from $\tilde B'$ by restriction.
\end{itemize}
\end{lemma}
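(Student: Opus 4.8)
The plan is to dispose of the implication (ii)$\Rightarrow$(i) immediately---specializing (ii) to a single chosen $\tilde B\in\mathbf{Q}$ yields (i)---and to focus on (i)$\Rightarrow$(ii), whose entire content is that the restriction relation is compatible with mutation, as recorded in \cref{ex:mut-local}.

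Assume (i), and fix a witnessing pair $\tilde B_0\in\mathbf{Q}$ and $\tilde B_0'\in\mathbf{Q'}$ with $\tilde B_0$ obtained from $\tilde B_0'$ by restriction. After relabeling rows and columns---which replaces each matrix by an isomorphic one in the same mutation class---I may assume that $\tilde B_0=(\tilde B_0')_I$ literally, for a fixed index set $I$, where the row indices of $\tilde B_0$ are the elements of $I$ and its column indices are the elements of $I\cap[1,n']$, with $n'$ the number of columns of $\tilde B_0'$. Now take an arbitrary $\tilde B\in\mathbf{Q}$. Since $\tilde B$ and $\tilde B_0$ lie in the same mutation class, I can write $\tilde B=\mu_{k_r}\circ\cdots\circ\mu_{k_1}(\tilde B_0)$ for some sequence of mutable indices $k_1,\dots,k_r$. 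Each $k_i$ is a column index of $\tilde B_0$, hence lies in $I\cap[1,n']$; in particular $k_i\in I$, and $k_i$ is mutable in $\tilde B_0'$ as well, since the frozen/mutable status of an index is inherited under restriction. I then define $\tilde B'=\mu_{k_r}\circ\cdots\circ\mu_{k_1}(\tilde B_0')\in\mathbf{Q'}$, applying the very same sequence of mutations to the larger matrix. Applying \cref{ex:mut-local} once at each step---each $k_i$ lies in $I$ and is mutable, so $\mu_{k_i}$ commutes with restriction to $I$---and inducting on $r$, I obtain $(\tilde B')_I=\mu_{k_r}\circ\cdots\circ\mu_{k_1}\bigl((\tilde B_0')_I\bigr)=\mu_{k_r}\circ\cdots\circ\mu_{k_1}(\tilde B_0)=\tilde B$. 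Thus $\tilde B$ is obtained from $\tilde B'$ by restriction, which is (ii).

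The argument is essentially a one-line consequence of \cref{ex:mut-local} once the setup is arranged, so the only genuine work is bookkeeping, and that is where I expect the care to be needed. I must verify that the subset $I$ to which I restrict remains fixed throughout the mutation sequence---it does, because \cref{ex:mut-local} restricts to the same $I$ at each step---and that every mutation index appearing in the expression for $\tilde B$ is simultaneously a legal (mutable) direction for $\tilde B_0'$, which is precisely the inheritance of the frozen/mutable labels under restriction. The one point to state explicitly is the handling of relabeling: since ``obtained by restriction'' is defined only up to a relabeling of rows and columns, the identity $(\tilde B')_I=\tilde B$ should be read up to such a relabeling, and the conclusion (ii) is then exactly what this delivers.
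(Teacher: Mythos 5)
Your proof is correct and takes exactly the paper's route: the paper disposes of the lemma in one line by citing \cref{ex:mut-local}, and your argument is precisely the careful expansion of that citation---transporting the witnessing restriction along an arbitrary mutation sequence, using at each step that mutation at a mutable index $k\in I$ commutes with restriction to~$I$. The bookkeeping you flag (fixed index set $I$, inheritance of mutable status, relabeling) is handled correctly.
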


\begin{proof}
The equivalence of (i) and~(ii) follows from \cref{ex:mut-local}.
\end{proof}

The notion of restriction descends to a partial order 
 on the set of mutation classes of extended exchange
matrices, as follows. 

\begin{definition}
\label{def:quiver-embedding}
Let $\mathbf{Q}$ and $\mathbf{Q'}$ be mutation classes.
We say that $\mathbf{Q}$ is \emph{embeddable} into~$\mathbf{Q'}$,
and write $\mathbf{Q}\le\mathbf{Q'}$,
if either of the two equivalent conditions (i)--(ii) in 
\cref{pr:order-mut-classes} holds.
In particular, when 
$\mathbf{Q}$ and $\mathbf{Q'}$ are mutation classes of quivers,
we say that $\mathbf{Q}$ is \emph{embeddable} into~$\mathbf{Q'}$,
if there exist quivers $Q\in\mathbf{Q}$ and $Q'\in\mathbf{Q'}$
such that $Q$ is a full subquiver of~$Q'$. 
\end{definition}

\begin{example}
\label{example:dynkin-embeddings}
In Figure~\ref{fig:E6E8},
the mutation class of each quiver (including any orientation of each of the Dynkin diagrams shown)
is embeddable into the mutation class of every quiver appearing in the rows below. 
We revisit (and extend) this example in Remark~\ref{rem:clustersubalgebra}. 
\end{example}

Recall that $[\tilde B]$ denotes the mutation class of an extended exchange matrix~$\tilde B$. 

\begin{remark}
Fix a mutation class~$\mathbf{R}$. 
Then embeddability into~$\mathbf{R}$ is a hereditary property. 
More precisely, if $[\tilde B]\le\mathbf{R}$, then $[\tilde B_I]\le\mathbf{R}$
for any matrix $\tilde B_I$ obtained by restriction from $\tilde B$.
\end{remark}

\begin{remark}
The equivalent conditions (i)--(ii) in
\cref{pr:order-mut-classes} do not imply the condition
\begin{itemize}[leftmargin=.3in]
\item[{\rm (iii)}]
for any $\tilde B' \in \mathbf{Q'}$, there exists 
 $\tilde B \in \mathbf{Q}$ such that 
 $\tilde B$ is obtained from $\tilde B'$ by restriction.
\end{itemize}
To see this, consider the
quivers $Q$ and $Q'$ shown in Figure~\ref{fig:arborizable}.
Let $\mathbf{Q} = [Q_I]$ with $I = \{1,2,3\}$ and 
$\mathbf{Q'} = [Q']$.  
Then (i)--(ii) hold while (iii) fails. 
(Note that that the mutation sequence 
relating the two quivers in \cref{fig:arborizable} uses
mutations at vertices outside of~$I$.)
\end{remark}

\begin{problem}
Let $T$ and $T'$ be finite trees, and let $\mathbf{T}$ and $\mathbf{T}'$ denote 
the mutation classes containing their respective orientations. 
Is it true that $\mathbf{T}$ is embeddable into $\mathbf{T}'$ 
if and only if $T$ can be obtained from $T'$ by contracting some edges? 
\end{problem}

\begin{remark}
Given mutation classes $\mathbf{Q}$ and~$\mathbf{R}$, 
the problem of deciding whether $\mathbf{Q}$  is embeddable into~$\mathbf{R}$
is generally very hard (unless $\mathbf{R}$ is finite). 
For example, for any positive integer~$k$,
there is no known algorithm to determine whether 
the two-vertex \emph{Kronecker quiver}
\begin{equation}
\label{eq:kronecker}
\bullet\!
\begin{array}{c}
\longrightarrow\\[-.09in]
\cdots\cdot\\[-.09in]
\longrightarrow
\end{array}
\!\bullet
\end{equation}
with $k$~arrows 
is embeddable into the mutation class of a given quiver~$R$.
That is, for any~$k$,
there is no known general method to determine whether $R$ can be mutated 
to a quiver containing two vertices connected by $k$ arrows. 
\end{remark}

\pagebreak[3]

\section{Seed subpatterns and cluster subalgebras}
\label{sec:subalgebra}



\begin{definition}
\label{def:freezing}
Let $(\tilde \xx, \tilde B)$ be a seed of rank~$n$, and let $x_i\in\tilde\xx$ be a cluster variable. 
	\emph{Freezing} at the index~$i$ (or, of the variable~$x_i$) is a
transformation of the seed that reclassifies~$i$
and~$x_i$ as frozen,
and accordingly removes the $i$th column from the exchange matrix~$\tilde B$.
(In addition, this would typically require a change of indexing, 
provided we want to keep using
the smaller indices $1,\dots,n-1$ for the mutable variables.) 
More generally, we can freeze any subset of cluster variables.  
The order of freezing does not matter.  

In the quiver case, freezing at a subset of mutable vertices
amounts to reclassifying all these vertices,
and the corresponding cluster variables,~as frozen;
and then removing all arrows connecting frozen vertices to each other. 
\end{definition}

\begin{lemma}
Freezing commutes with seed mutation. 
\end{lemma}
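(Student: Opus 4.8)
The plan is to reduce to the simplest case and then verify separately that the two ingredients of seed mutation---the matrix update and the exchange relation---are each unaffected by freezing at a different index. Since freezing a subset is carried out one variable at a time with the order immaterial (by \cref{def:freezing}), and since $\mu_k$ is defined only at a mutable index~$k$, it suffices to prove that freezing a single index~$i$ commutes with $\mu_k$ for every mutable $k\neq i$. Fix a seed $(\tilde\xx,\tilde B)$ of rank~$n$ with $\tilde B$ an extended $m\times n$ matrix, and write $D_i$ for the operation that deletes the $i$th column while retaining every row.

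At the matrix level, I would exploit the fact that matrix mutation is \emph{column-local}: by the mutation rule \eqref{eq:matrix-mutation}, for $q\neq k$ the $q$th column of $\mu_k(\tilde B)$ is computed solely from the $q$th and $k$th columns of $\tilde B$ (the entries $b_{pq}$, $b_{pk}$, $b_{kq}$), and never refers to column~$i$ when $i\notin\{q,k\}$; the column $q=k$ is simply negated. Consequently $D_i$ and $\mu_k$ act on disjoint data for $k\neq i$, giving $D_i(\mu_k(\tilde B))=\mu_k(D_i(\tilde B))$. This is the column-deletion analogue of the row-and-column restriction statement in \cref{ex:mut-local}, and I expect it to be a short verification straight from the formula.

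At the level of clusters, the key point is that the exchange relation produced by $\mu_k$ depends only on the $k$th column $(b_{pk})_{p\in[1,m]}$ of~$\tilde B$, through the two monomials $\prod_{b_{pk}>0}x_p^{b_{pk}}$ and $\prod_{b_{pk}<0}x_p^{-b_{pk}}$. Freezing at $i\neq k$ deletes column~$i$ but keeps column~$k$ and \emph{all} rows intact; in particular the entry $b_{ik}$ survives, so the (now frozen) variable $x_i$ still enters these monomials exactly as before. Hence the new variable $x_k'$ is the same whether $i$ is frozen before or after mutating at~$k$, while every other cluster variable is left untouched by either operation, and the frozen/mutable labels end up identical in both orders. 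Combining this with the matrix identity above shows that the two composite operations return the same seed.

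The one point needing care---and the only real obstacle---is the asymmetry built into freezing: it removes a column but keeps the corresponding row, so freezing is \emph{not} literally an instance of the restriction appearing in \cref{ex:mut-local}, and one cannot simply invoke that lemma. What makes the argument go through is precisely the retention of row~$i$: deleting the column is what disengages index~$i$ from the matrix update, while keeping the row is what preserves the role of $x_i$ in all remaining exchange relations. The attendant relabeling of mutable indices as $1,\dots,n-1$ is purely cosmetic and can be suppressed by arguing with fixed labels.
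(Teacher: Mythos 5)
Your proof is correct and is precisely the verification the paper has in mind: the paper's entire proof reads ``This property is straightforward from the definitions,'' and your two checks---column-locality of the mutation rule \eqref{eq:matrix-mutation} (so deleting column~$i$ commutes with $\mu_k$ for $k\neq i$), plus the observation that the exchange relation at~$k$ depends only on column~$k$ and all rows, so the retained row~$i$ keeps $x_i$ entering the exchange monomials identically in either order---constitute exactly that straightforward check. Your remark on the row/column asymmetry of freezing, and why keeping row~$i$ while deleting column~$i$ is what makes the commutation work, is a sound elaboration rather than a different approach.
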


\begin{proof}
This property is straightforward from the definitions. 
\end{proof}

It is natural to try to extend the operation of restriction 
(to a full subquiver, or more generally to a submatrix of the exchange matrix) to the level of seeds. 
Note however that the naive notion
of restriction does \emph{not}, generally speaking, 
commute with seed mutation---because an exchange relation for a cluster variable
associated with a vertex in a subquiver
may well involve variables coming from outside the subquiver. 
This observation explains the additional constraints appearing in Definition~\ref{def:excision} below. 

\begin{definition}
\label{def:excision}
Let $(\tilde \xx, \tilde B)$ be a seed, and let 
	$I \sqcup J$ be a partition of $[1,m]$ such that 
$b_{jk}=0$ for any $j\in J$ and $k\in I \cap [1,n]$.
(In other words, none of the variables~$x_j$, for $j\in J$,
appears on the right-hand side of an exchange relation $x_k\, x_k'=\cdots$, for $k\in I$.) 
We then define the \emph{restricted seed} $(\tilde \xx_I, \tilde B_I)$ 
with the extended cluster $\tilde \xx_I=(x_i)_{i\in I}$
and the extended exchange (sub)matrix~$\tilde B_I$, cf.\ Definition~\ref{def:tildeB_I}. 
In the case where $\tilde B$ comes from a quiver~$Q$, 
this is equivalent to requiring that there are no arrows between
$I\cap [1,n]$ and $J$.
\end{definition}

\begin{example}
Let $Q$ and $Q'=\mu_k(Q)$ be the quivers 
shown in Figure~\ref{fig:quiver-mutation} on the left and on the right, respectively. 
The subset $I=\{a,q,k\}$ does not satisfy the conditions in Definition~\ref{def:excision}
(neither for~$Q$, nor for~$Q'$)
since the vertex~$k\in I$ is connected by arrows to vertices 
not in $I$.
On the other hand, if we first freeze~$k$, then we can restrict to~$I$ in~$Q'$
(but not in~$Q$). 
\end{example}

\begin{lemma}
Passing to a restricted seed commutes with seed mutation. 
\end{lemma}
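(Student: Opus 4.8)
The plan is to fix a seed $(\tilde{\xx}, \tilde{B})$ together with a subset $I \subset [1,m]$ satisfying the excision condition of \cref{def:excision}, and to prove that for every mutable index $k \in I \cap [1,n]$ one has
\[
\mu_k\bigl(\tilde{\xx}_I, \tilde{B}_I\bigr) = \bigl(\mu_k(\tilde{\xx}, \tilde{B})\bigr)_I.
\]
Since a seed consists of a matrix part and a cluster part, I would establish the equality for each part separately and then combine them.

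For the matrix part there is essentially nothing new to prove: \cref{ex:mut-local} already supplies $\mu_k(\tilde{B}_I) = (\mu_k(\tilde{B}))_I$. What does require attention is that the right-hand side above be a legitimate restricted seed in the first place, i.e.\ that $\mu_k(\tilde{B})$ still satisfies the excision condition with respect to $I$. I would check this by a short direct computation from the mutation rule~\eqref{eq:matrix-mutation}. Take $i \notin I$ and $j \in I \cap [1,n]$; since $k, j \in I \cap [1,n]$ while $i \notin I$, the original excision condition forces $b_{ik} = b_{ij} = 0$. Substituting these zeros into the formula for $b_{ij}'$, and treating separately the case $j = k$ (where $b_{ik}' = -b_{ik} = 0$), yields $b_{ij}' = 0$. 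Thus the off-block entries that define the excision condition vanish both before and after mutation.

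The heart of the matter is the cluster part, and the key point is that the excision condition is precisely what makes the exchange relation at $k$ local to $I$. In the full seed the new variable $x_k'$ is given by
\[
x_k\, x_k' = \prod_{i\,:\,b_{ik} > 0} x_i^{\,b_{ik}} + \prod_{i\,:\,b_{ik} < 0} x_i^{\,-b_{ik}},
\]
with products ranging over all $i \in [1,m]$; in the restricted seed the exchange relation has the same shape but with products over $i \in I$ only, the entries $b_{ik}$ being read off from $\tilde{B}_I$ (and hence agreeing with those of $\tilde{B}$). Because the excision condition gives $b_{ik} = 0$ for every $i \notin I$, each such index contributes a trivial factor $x_i^{0} = 1$, so the two right-hand sides coincide and the new variable produced is the same. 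Every remaining variable $x_i$ with $i \in I \setminus \{k\}$ is carried over unchanged by mutation in both seeds. Assembling the matrix and cluster parts gives the asserted equality.

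I do not expect a genuine obstacle here: once the excision condition is imposed, the computation is forced and there is no subtle interaction to untangle. The one point that genuinely needs care---and the reason \cref{def:excision} is formulated as it is---is the verification that the exchange monomials never reach outside the restricted cluster, which amounts to the vanishing of the off-block entries before \emph{and} after mutation. This is the step I would carry out first, since it simultaneously makes the statement well-posed and provides the equality of exchange relations that drives the rest.
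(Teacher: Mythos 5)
Your proof is correct, and it is precisely the routine verification that the paper leaves implicit (the lemma is stated there without proof): the matrix part is \cref{ex:mut-local}, and the two substantive checks you carry out---that the condition of \cref{def:excision} is preserved under mutation at $k \in I$, and that by that condition the exchange relation at $k$ involves only variables indexed by $I$, so the new cluster variable agrees in both seeds---are exactly what is needed. Nothing further is required.
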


It is easy to see that repeated applications of freezing and restriction
produce an outcome that can be achieved by a single application of freezing followed by restriction. 

\begin{definition}
\label{def:clustersubalgebra}
Let $\Sigma$ 
be a seed.
Freeze some subset of cluster variables,
as in Definition~\ref{def:freezing}, 
to obtain a seed~$\Sigma'$. 
After that, apply the construction 
in Definition~\ref{def:excision} to the seed~$\Sigma'$ 
to obtain a restricted  seed~$\Sigma''$. 
We then say that the seed pattern defined by~$\Sigma''$ 
	is a \emph{seed subpattern} of the seed pattern  defined by~$\Sigma$; 
and the cluster algebra associated to~$\Sigma''$ 
is a \emph{cluster subalgebra} of the cluster algebra associated
to~$\Sigma$. 
\end{definition}

Put simply, a cluster subpattern can be viewed as a part of the original pattern in which 
we are only allowed to exchange cluster variables labeled by a particular subset of indices,
and in which we discard (some of) the coefficient variables that do not appear at all
in the resulting exchange relations. 

Note that if passing to a seed subpattern (resp., cluster subalgebra)
involves freezing at least one cluster variable,
then its rank is smaller than the rank of the original pattern (resp., cluster algebra). 

\begin{example}
Let $\mathbf{P}_{n+3}$ be a convex polygon 
whose vertices are labeled $\{1,2,\dots,n+3\}$ in clockwise order.
As explained in Section~\ref{sec:triangulations}, 
each triangulation $T$ of $\mathbf{P}_{n+3}$ gives rise to a seed 
$(\tilde \xx(T), Q(T))$ whose mutable variables
are the Pl\"ucker coordinates $P_{ij}$ labeled by the diagonals
of~$T$, and 
whose frozen variables
are the Pl\"ucker coordinates labeled by the sides of~$\mathbf{P}_{n+3}$. 
We thus obtain a seed pattern and a cluster
algebra $R_{2,n+3}$ of rank~$n$.

Let $S=\{s_1< \dots < s_{\ell}\}$ 
be a subset of $\{1,2,\dots, n\}$, with $\ell\ge 4$, 
and let~$\mathbf{P}_S$ be the convex polygon on the vertex set~$S$.
Let $T$ be a tri\-angulation of $\mathbf{P}_{n+3}$ that includes
the sides of the polygon~$\mathbf{P}_S$,
and consequently contains a triangulation $T_S$ of~$\mathbf{P}_S$. 
Take the seed $(\tilde \xx(T), Q(T))$, 
and freeze all the cluster variables in~$\tilde \xx(T)$ 
corresponding to the sides of~$\mathbf{P}_S$.  
We can now restrict from $\tilde \xx(T)$ to the subset $\tilde \xx(T_S)$
consisting of the elements 
labeled by diagonals and sides of~$\mathbf{P}_S$.  
%
The resulting seed $(\tilde \xx(T_S), Q(T_S))$ defines a rank~$\ell-3$ cluster 
subalgebra (isomorphic to~$R_{2,\ell}$) of the cluster algebra~$R_{2,n+3}$.
\end{example}

The following properties of seed subpatterns and cluster subalgebras 
follow immediately from the definitions.

\pagebreak[3]

\begin{lemma}\ 
\begin{itemize}[leftmargin=.2in]
	\item Let $\mathbf{\Sigma}$  
be a seed pattern.
 A seed subpattern of a seed subpattern 
		of~$\mathbf{\Sigma}$
		is again a seed 
		subpattern of $\mathbf{\Sigma}$.
\item If a cluster algebra has finitely many cluster variables,
then so does any of its cluster subalgebras.
\item A connected component of a quiver gives rise to a seed 
subpattern.
\end{itemize}
\end{lemma}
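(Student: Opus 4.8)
The plan is to verify each of the three bullet points directly from the definitions, since the lemma is asserted to follow immediately from them.

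\textbf{First bullet (transitivity).} I would start from the observation, stated in the excerpt just before \cref{def:clustersubalgebra}, that any sequence of freezings and restrictions can be collapsed into a single freezing followed by a single restriction. So suppose $\Sigma''$ is a seed subpattern of $\Sigma$, obtained by freezing a set $F$ and then restricting to $I$, and that $\Sigma'''$ is a seed subpattern of $\Sigma''$, obtained by freezing a further set $G$ (a subset of indices of $\Sigma''$) and restricting to $J$. Composing these produces a seed obtained from $\Sigma$ by a sequence freeze--restrict--freeze--restrict. Invoking the collapsing principle, this equals the result of a single freeze followed by a single restrict, which is exactly the form required by \cref{def:clustersubalgebra}. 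Hence $\Sigma'''$ is a seed subpattern of~$\Sigma$.

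\textbf{Second bullet (finiteness of cluster variables).} Passing to a cluster subalgebra only restricts the set of mutable indices at which exchanges are permitted and reclassifies some variables as frozen. Consequently every seed reachable in the subpattern corresponds (via freezing and restriction, which by the lemmas above commute with mutation) to a seed reachable in the ambient pattern. Therefore the set of cluster variables of the subalgebra injects into the set of cluster variables of the original algebra, and finiteness of the latter forces finiteness of the former.

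\textbf{Third bullet (connected components).} If $Q$ has a connected component on vertex set $C$, then there are no arrows between $C \cap [1,n]$ and $[1,m]\setminus C$ at all, so the condition $b_{ik}=0$ for $i \notin C$, $k \in C \cap [1,n]$ in \cref{def:excision} holds automatically; thus restriction to $C$ is permitted with no prior freezing, and the restricted seed $(\tilde\xx_C, \tilde B_C)$ defines a seed subpattern. The main (and only genuine) obstacle is the first bullet: one must be careful that the index-relabeling inherent in freezing and restriction does not cause bookkeeping trouble when composing the two operations, so the cleanest route is to lean entirely on the already-stated collapsing principle rather than re-deriving it by tracking indices by hand.
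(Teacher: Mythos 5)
Your proof is correct and takes essentially the same approach as the paper, which offers no argument beyond asserting that the lemma follows immediately from the definitions: your use of the stated collapsing principle (a freeze--restrict--freeze--restrict sequence equals a single freeze followed by a single restriction) for the first bullet, the commutation of freezing and restriction with mutation for the second, and the automatic vanishing of the cross-arrows condition for the third is exactly the routine verification being left to the reader.
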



\section{Changing the coefficients}

In this section we will establish a connection between seed patterns
utilizing the same exchange matrices but different coefficient tuples. 
For a more thorough and comprehensive treatment, see~\cite{Fraser-Quasi}. 

We begin by a brief discussion of a couple of very simple special instances of
coefficient change.

\begin{definition}
Let $(\tilde \xx, \tilde B)$ be a seed with an $m$-element extended cluster~$\tilde\xx$, 
and let $x_i\in\tilde\xx$ be 
a coefficient (or frozen) variable. 
	\emph{Trivialization} at the index~$i$ (or, of the variable~$x_i$) is a
transformation of the seed that removes~$x_i$ from~$\tilde\xx$,
and accordingly removes $i$ from the set of indices, 
and the $i$th row from the exchange matrix~$\tilde B$.
(As in the case of freezing, cf.\ \cref{def:freezing},
a renumbering may be required if we want to use
the indices $1,\dots,m-1$ after the trivialization.) 
More generally, we can trivialize any subset of coefficient variables; 
the order of operations does not matter.  

In the quiver case, trivialization 
amounts to a removal of a subset of frozen vertices, together with all arrows incident to them; 
and the removal of the corresponding coefficient variables. 
\end{definition}

\begin{lemma}
Trivialization of coefficients commutes with seed mutation. 
\end{lemma}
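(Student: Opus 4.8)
The plan is to verify the claim in two parts---first at the level of the exchange matrix, then at the level of the extended cluster---after reducing to the case of a single trivialized index. Since the order of trivializations does not matter, and trivializing a subset of coefficient variables is just the composition of single-variable trivializations, it suffices to show that trivialization at one index $i_0\in[n+1,m]$ commutes with a single mutation $\mu_k$ at a mutable index $k\in[1,n]$. The one structural fact I will lean on throughout is that $i_0$ is frozen while $k$ is mutable, so in particular $i_0\neq k$; this is precisely what prevents the deleted row from ever being the pivot row of the mutation.

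For the matrix part, I would write out the mutation rule \cref{eq:matrix-mutation} and observe that, for $i,j\neq k$, the new entry $b'_{ij}$ is a function of the three old entries $b_{ij}$, $b_{ik}$, and $b_{kj}$ only. In other words, the updated row $i$ depends solely on the old row $i$ together with the old pivot row $k$. Consequently, deleting row $i_0$ (which is distinct from $k$) leaves the computation of $b'_{ij}$ for every surviving $i\neq i_0$ completely unchanged: whether we delete row $i_0$ before or after applying $\mu_k$, the resulting matrices agree entry for entry. This yields the matrix statement $(\mu_k(\tilde B))$ with row $i_0$ deleted $=\mu_k(\tilde B \text{ with row } i_0 \text{ deleted})$.

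For the extended cluster, the cleanest bookkeeping is to regard trivialization at $i_0$ as the specialization $x_{i_0}\mapsto 1$, which drops $x_{i_0}$ from the ambient field of rational functions. Mutation at $k$ leaves every variable other than $x_k$ untouched and fixes every frozen variable, so the two routes automatically agree on all entries except possibly the newly produced $x_k'$. For that one entry, I would compare the exchange relation $x_k\,x_k'=\prod_{b_{jk}>0}x_j^{b_{jk}}+\prod_{b_{jk}<0}x_j^{-b_{jk}}$ (cf.\ \cref{eq:exchange-rel-xx}) evaluated in the two orders. Specializing $x_{i_0}\mapsto 1$ in the relation for the original seed simply removes the factor $x_{i_0}^{\pm b_{i_0 k}}$; and this is exactly the exchange relation one writes down in the trivialized seed, where row $i_0$---and hence the exponent $b_{i_0 k}$---has already been deleted. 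The two expressions for $x_k'$ therefore coincide.

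I do not expect a genuine obstacle here; as with the analogous statement for freezing, the result is essentially bookkeeping. The only point deserving care is conceptual rather than computational: making the variable-level comparison meaningful requires interpreting trivialization as the specialization $x_{i_0}\mapsto 1$ rather than the literal deletion of a symbol, so that ``dropping the factor $x_{i_0}^{b_{i_0 k}}$'' and ``removing row $i_0$'' are manifestly the same operation. Once that identification is in place, together with the observation that $i_0\neq k$, both the matrix and the cluster statements fall out directly, and the optional renumbering of indices is immaterial.
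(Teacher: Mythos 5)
Your proof is correct and rests on exactly the same key observation as the paper's own (one-line) proof: trivialization of a coefficient variable can be interpreted as the specialization $x_{i_0}\mapsto 1$, after which commutation with mutation is routine bookkeeping. Your write-up simply makes explicit the matrix-level and cluster-level checks that the paper leaves to the reader.
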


\begin{proof}
The key observation is that trivialization of a coefficient variable~$x_i$ 
can be interpreted as setting $x_i=1$. 
\end{proof}

It is also easy to see that trivializing coefficients commutes with taking a seed subpattern. 

\begin{remark}
Here is another simple way of transforming a seed pattern into a new one: 
introduce (any number of) additional ``dummy'' coefficient 
variables that do not appear in any exchange relations.
That is, enlarge the extended exchange matrices 
by adding rows consisting entirely of zeroes;
in the quiver case, just add isolated frozen vertices. 
This transformation changes the associated cluster algebra 
by tensoring it with the polynomial ring generated by the dummy variables. 
\end{remark}

We next describe a large class of ``rescaling'' transformations of seed patterns. 
Roughly, the idea is to multiply each cluster variable by a Laurent monomial 
in the coefficient variables, and then rewrite the exchange relations in terms of the ``rescaled''
variables. (We use quotation marks since we are not simply rescaling our variables,
but also sending them to a different ambient field of rational functions.)
The key property of this construction, formalized in Theorem~\ref{th:B-covering} below,
is that the resulting ``rescaled'' seeds are again related by (the same) mutations, 
yielding a seed pattern. 

\begin{theorem}
\label{th:B-covering}
Let $n$, $m$, $\bar m$ be positive integers, with 
$n \leq m$ and $n \leq \bar m$.
Let $\FFcal$ and $\bar\FFcal$ be the fields of rational functions in the variables
$x_1,\dots,x_m$ and $\bar x_1,\dots,\bar x_{\bar m}$, respectively. 
Let 
\[
\varphi:  \Qsf(x_1,\dots,x_m) \to \Trop(\bar x_{n+1},\dots,\bar x_{\bar m})
\]
be a semifield homomorphism
(determined by an arbitrary choice of 
Laurent monomials $\varphi(x_i)\!\in\! \Trop(\bar x_{n+1},\dots,\bar
x_{\bar m})$). 
Define the semifield~map
\begin{align*}
\psi:  \Qsf(x_1,\dots,x_m) &\to \Qsf(\bar x_1,\dots,\bar x_{\bar m})
\end{align*}
by setting 
\begin{equation}
\label{eq:psi(x_i)}
\psi(x_i)=
\begin{cases}
\bar x_i\,\varphi(x_i) & \text{if $i\le n$;}\\ 
\varphi(x_i)           & \text{if $i>n$.}
\end{cases}
\end{equation}
Let $(\xx(t), \yy(t), B(t))_{t\in\TT_n}$ be a seed pattern in~$\FFcal$
(cf.\ 
Definition \ref{def:triple-pattern}),
with 
\begin{align*}
\xx(t)&=(x_{1;t},\dots,x_{n;t})\in \FFcal^n,\\
\yy(t)&=(y_{1;t},\dots,y_{n;t})\in \Trop(x_{n+1},\dots,x_m)^n,\\
B(t)&=(b^t_{ij}),
\end{align*}
with the initial cluster 
$\xx(t_\circ)=(x_1,\dots,x_n)$,  
and with the frozen variables $x_{n+1},\dots,x_m$. 
Define $\bar\xx(t)=(\bar x_{1;t},\dots,\bar x_{n;t})$ and $\bar\yy(t)=(\bar y_{1;t},\dots,\bar y_{n;t})$~by
\begin{align}
\label{eq:bar-x}
\bar x_{i;t}&=\frac{\psi(x_{i;t})}{\varphi(x_{i;t})}, 
\\
\label{eq:bar-y}
\bar y_{k;t}&=\varphi(\hat y_{k;t}) 
= \varphi(y_{k;t}) \prod_{i=1}^n \varphi(x_{i;t})^{b^t_{ik}}. 
\end{align}
Then $(\bar\xx(t), \bar\yy(t), B(t))_{t\in\TT_n}$ is a seed pattern in~$\bar\FFcal$, 
with the same exchange matrices $B(t)$ and with the frozen variables 
$\bar x_{n+1},\dots,\bar x_{\bar m}$.
\end{theorem}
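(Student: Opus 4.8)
The plan is to verify that the barred data $(\bar\xx(t),\bar\yy(t),B(t))$ obeys the defining mutation rules of a seed pattern along each edge $t\overset{k}{-}t'$ of $\TT_n$ (cf.\ \cref{def:triple-pattern}). Since the exchange matrices are declared to be the same, only the $x$- and $y$-mutations need checking, and because a seed pattern is determined by its one-step mutations it suffices to treat a single edge. The engine behind the whole argument is that mutation is expressed through subtraction-free rational operations, hence is compatible with the semifield homomorphisms $\varphi$ and $\psi$; the entire proof consists of transporting the original mutation relations across these homomorphisms.

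For the coefficients I would argue as follows. By \cref{cor:y-hat} the hatted variables $\hat y_{k;t}$ form a $Y$-pattern in the universal semifield $\Qsf(x_1,\dots,x_m)$, that is, they obey the $Y$-mutation relations \eqref{eq:yj}--\eqref{eq:y-mutation-trop} with $\oplus$ read as ordinary addition. Applying $\varphi$, which is a semifield homomorphism sending ordinary addition to tropical addition in $\Trop(\bar x_{n+1},\dots,\bar x_{\bar m})$, converts these into the tropical $Y$-mutation relations for the elements $\bar y_{k;t}=\varphi(\hat y_{k;t})$ defined in \eqref{eq:bar-y}. Thus $(\bar y_{k;t})$ is a $Y$-pattern with frozen variables $\bar x_{n+1},\dots,\bar x_{\bar m}$, exactly as claimed.

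For the cluster variables I would start from the exchange relation \eqref{eq:exchange-rel-xx} at the edge $t\overset{k}{-}t'$. Both sides are subtraction-free expressions in $x_1,\dots,x_m$ (the coefficient factors $y_{k;t}/(y_{k;t}\oplus1)$ and $1/(y_{k;t}\oplus1)$ are Laurent monomials in the frozen variables, and every cluster variable is subtraction-free by induction along the tree), so it is an identity in $\Qsf(x_1,\dots,x_m)$ to which $\psi$ applies. Applying $\psi$ and substituting $\psi(x_{i;t})=\bar x_{i;t}\,\varphi(x_{i;t})$ from \eqref{eq:bar-x}, the left-hand side becomes $\bar x_{k;t'}\,\bar x_{k;t}\,\varphi(x_{k;t'})\varphi(x_{k;t})$. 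To clear the monomial factor $\varphi(x_{k;t'})\varphi(x_{k;t})$, I would apply $\varphi$ to the \emph{same} exchange relation: since $\varphi$ sends $+$ to $\oplus$, this produces a tropical exchange relation whose right-hand side factors as
\[
\varphi(x_{k;t'})\varphi(x_{k;t})=\frac{\prod_{i=1}^{n}\varphi(x_{i;t})^{[-b^t_{ik}]_+}}{\varphi(y_{k;t})\oplus1}\,\bigl(\bar y_{k;t}\oplus1\bigr),
\]
using $[b]_+-[-b]_+=b$ and $\varphi(y_{k;t})\prod_{i=1}^{n}\varphi(x_{i;t})^{b^t_{ik}}=\varphi(\hat y_{k;t})=\bar y_{k;t}$. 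Dividing the $\psi$-image of the exchange relation by this quantity and matching the two monomial coefficients then yields precisely
\[
\bar x_{k;t'}\,\bar x_{k;t}=\frac{\bar y_{k;t}}{\bar y_{k;t}\oplus1}\prod_{i=1}^{n}\bar x_{i;t}^{[b^t_{ik}]_+}+\frac{1}{\bar y_{k;t}\oplus1}\prod_{i=1}^{n}\bar x_{i;t}^{[-b^t_{ik}]_+},
\]
the exchange relation of the barred pattern with coefficients $\bar y$ and matrix $B(t)$.

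It remains to record the initial conditions and to isolate the one delicate point. Formula \eqref{eq:bar-x} gives $\bar x_{i;t_\circ}=\psi(x_i)/\varphi(x_i)=\bar x_i$, so the initial barred cluster is $(\bar x_1,\dots,\bar x_n)$; and for $i\neq k$ the equalities $x_{i;t'}=x_{i;t}$ and $\varphi(x_{i;t'})=\varphi(x_{i;t})$ force $\bar x_{i;t'}=\bar x_{i;t}$, so only the $k$-th variable changes across the edge. I expect the main obstacle to be purely the bookkeeping of two different additions: $\psi$ is a homomorphism for ordinary $+$ while $\varphi$ is a homomorphism for tropical $\oplus$, and one must invoke the compatibility $\psi=\varphi$ on the coefficient semifield $\Trop(x_{n+1},\dots,x_m)$ (both agree on the frozen generators and are multiplicative, and every element there is a single Laurent monomial) to see that the normalization monomials cancel correctly. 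Once this is organized, the matching of the two coefficient factors is a short monomial computation, and the three components together exhibit $(\bar\xx(t),\bar\yy(t),B(t))_{t\in\TT_n}$ as a seed pattern in $\bar\FFcal$.
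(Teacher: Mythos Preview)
Your proposal is correct and follows essentially the same route as the paper: transport the $Y$-pattern for $\hat y$ through $\varphi$ via \cref{cor:y-hat}, apply both $\psi$ and $\varphi$ to the exchange relation \eqref{eq:exchange-rel-xx}, divide, and use that $\psi$ and $\varphi$ agree on Laurent monomials in the frozen variables to obtain the barred exchange relation. One small slip worth fixing: in your displayed formula for $\varphi(x_{k;t'})\varphi(x_{k;t})$ the denominator should be $\varphi(y_{k;t}\oplus 1)$ rather than $\varphi(y_{k;t})\oplus 1$ (the tropical sum is taken in $\Trop(x_{n+1},\dots,x_m)$ \emph{before} applying $\varphi$, and $\varphi$ need not intertwine the two tropical additions), but since this factor equals $\psi(y_{k;t}\oplus 1)$ it cancels in the division and the conclusion is unaffected.
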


\noindent\textbf{Proof.} 
By Corollary~\ref{cor:y-hat}, the elements $\hat{y}_{i;t}$ satisfy the $Y$-pattern recurrences.
It follows that their images under the semifield homomorphism~$\varphi$,
cf.~\eqref{eq:bar-y}, 
satisfy~\eqref{eq:y-mutation-trop}, the tropical version of these recurrences. 
%
It remains to check that the elements $\bar x_{i;t}$ and $\bar y_{j;t}$ satisfy 
the exchange relations
\begin{equation}
\label{eq:exchange-rel-xx-bar}
\bar x_{k;t} \, \bar x_{k;t'}= \frac{\bar y_{k;t}}{\bar y_{k;t} \oplus 1}
\prod_{b^t_{ik}>0} \bar x_{i;t}^{b^t_{ik}}
+ 
\frac{1}{\bar y_{k;t} \oplus 1}
\prod_{b^t_{ik}<0} \bar x_{i;t}^{-b^t_{ik}} \,, 
\end{equation}
for $t\overunder{k}{}{t'}$ 
(cf.\ \eqref{eq:exchange-rel-xx}).

First note that by 
\eqref{eq:bar-y}  and the distributive property for the tropical semifield (Lemma~\ref{lem:distributive}),
we have that 
	\begin{equation}\label{eq:identity}
		\Bigl(\bar y_{k;t}\oplus 1\Bigr)
\prod_{b^t_{ik}<0} \varphi(x_{i;t})^{-b^t_{ik}} = 
\varphi(y_{k;t})\prod_{b^t_{ik}>0} \varphi(x_{i;t})^{b^t_{ik}}
\oplus
		\prod_{b^t_{ik}<0} \varphi(x_{i;t})^{-b^t_{ik}}.
	\end{equation}

We shall deduce \eqref{eq:exchange-rel-xx-bar} from the exchange relation
\begin{equation}
\label{eq:exchange-rel-xx-t}
x_{k;t} \, x_{k;t'}= \frac{1}{y_{k;t} \oplus 1}
\Bigl(
y_{k;t}\prod_{b^t_{ik}>0} x_{i;t}^{b^t_{ik}}
+ 
\prod_{b^t_{ik}<0} x_{i;t}^{-b^t_{ik}}
\Bigr). 
\end{equation}
Applying the semifield homomorphism $\psi$ (resp.,~$\varphi$) to
both sides of~\eqref{eq:exchange-rel-xx-t}, 
dividing respective images by each other,
using the fact that $\psi$ and $\varphi$ agree on 
the frozen variables $x_{n+1},\dots,x_m$,
and using 
\eqref{eq:bar-x}
and~\eqref{eq:identity},
we get:
\begin{align*}
\bar x_{k;t} \, \bar x_{k;t'}&=
\frac{\psi(x_{k;t}) \,\psi(x_{k;t'})}{\varphi(x_{k;t})\, \varphi(x_{k;t'})}\\
&=
\frac{\varphi(y_{k;t} \oplus 1)}{\psi(y_{k;t} \oplus 1)}\,\cdot\,
\frac{
\psi(y_{k;t})\prod_{b^t_{ik}>0} \psi(x_{i;t})^{b^t_{ik}}
+ 
\prod_{b^t_{ik}<0} \psi(x_{i;t})^{-b^t_{ik}}
}
{
\varphi(y_{k;t})\prod_{b^t_{ik}>0} \varphi(x_{i;t})^{b^t_{ik}}
\oplus
\prod_{b^t_{ik}<0} \varphi(x_{i;t})^{-b^t_{ik}}
}\\
&=
\frac{1}{\bar y_{k;t}\oplus 1}
\,\cdot\,
\frac{\varphi(y_{k;t})\prod_{b^t_{ik}>0} \psi(x_{i;t})^{b^t_{ik}}
+ 
\prod_{b^t_{ik}<0} \psi(x_{i;t})^{-b^t_{ik}}}
{\prod_{b^t_{ik}<0} \varphi(x_{i;t})^{-b^t_{ik}}}
\\
&=\frac{1}{\bar y_{k;t}\oplus 1}
\Bigl(
\varphi(y_{k;t})
\prod_{b^t_{ik}>0} \psi(x_{i;t})^{b^t_{ik}}
\prod_{b^t_{ik}<0} \varphi(x_{i;t})^{b^t_{ik}}
+ 
\prod_{b^t_{ik}<0} {\bar x_{i;t}}^{-b^t_{ik}}
\Bigr)\\
&=\frac{1}{\bar y_{k;t}\oplus 1}
\Bigl(
\bar y_{k;t}
\prod_{b^t_{ik}>0} {\bar x_{i;t}}^{b^t_{ik}}
+ 
\prod_{b^t_{ik}<0} {\bar x_{i;t}}^{-b^t_{ik}}
\Bigr). \qedhere
\end{align*}

We next restate \cref{th:B-covering} in terms of extended
exchange matrices. 

\begin{proposition}
\label{pr:B-covering}
Keep the assumptions and notation of \cref{th:B-covering}. 
Let~$\tilde B_\circ=\tilde B(t_\circ)$ be the initial
extended exchange matrix of the original \linebreak[3]
exchange pattern. 
Then the new initial seed $(\bar\xx(t_\circ), \bar\yy(t_\circ),B(t_\circ))$ 
has the extended exchange matrix 
$
\overline{\tilde B}_\circ=
\Psi\tilde B_\circ
$ where $\Psi=(\psi_{ij})$ is the 
$\bar m\times m$ matrix whose entries $\psi_{ij}$ are defined by
\begin{equation}
\label{eq:bar-B}
\psi(x_j)=\prod_{i=1}^{\bar m} 
\bar x_i^{\psi_{ij}}. 
\end{equation}
(Note that~\eqref{eq:psi(x_i)} implies that 
$\psi(x_j)$ is a Laurent monomial in $\bar x_1,\dots, \bar x_{\bar
  m}$.)
\end{proposition}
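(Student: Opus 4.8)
The plan is to prove the matrix identity $\overline{\tilde B}_\circ = \Psi \tilde B_\circ$ by direct entrywise comparison, checking separately the top $n\times n$ block (which must reproduce the exchange matrix $B(t_\circ)$) and the bottom $(\bar m-n)\times n$ block (which must encode the initial coefficients $\bar y_{k;t_\circ}$). Since the proposition merely recasts the conclusion of \cref{th:B-covering} at the initial vertex $t_\circ$ in matrix language, no new mutation argument is needed: everything reduces to unwinding the two defining formulas \eqref{eq:psi(x_i)} and \eqref{eq:bar-y}.

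First I would determine the shape of $\Psi$. Each $\varphi(x_j)$ is by hypothesis a Laurent monomial in the frozen variables $\bar x_{n+1},\dots,\bar x_{\bar m}$, so write $\varphi(x_j)=\prod_{l=n+1}^{\bar m}\bar x_l^{c_{lj}}$. Substituting into \eqref{eq:psi(x_i)} and comparing with the defining relation \eqref{eq:bar-B}, one reads off that $\psi_{jj}=1$ and $\psi_{lj}=c_{lj}$ for $l>n$ when $j\le n$, while $\psi_{lj}=c_{lj}$ for $l>n$ and $\psi_{ij}=0$ for $i\le n$ when $j>n$. Thus
\[
\Psi=\begin{bmatrix} I_n & 0\\ C_1 & C_2\end{bmatrix},
\qquad
C_1=(c_{lj})_{l>n,\,j\le n},\quad C_2=(c_{lj})_{l>n,\,j>n}.
\]
Writing $\tilde B_\circ=\begin{bmatrix}B_\circ\\ L_\circ\end{bmatrix}$ with $B_\circ$ the $n\times n$ exchange matrix $B(t_\circ)$ and $L_\circ$ the coefficient block, the product becomes
\[
\Psi\tilde B_\circ=\begin{bmatrix}B_\circ\\ C_1 B_\circ+C_2 L_\circ\end{bmatrix}.
\]
The top block is $B_\circ=B(t_\circ)$, matching the assertion in \cref{th:B-covering} that the new pattern has the same exchange matrices.

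It then remains to identify the lower block $C_1 B_\circ + C_2 L_\circ$ with the coefficient data of the new initial seed. For this I would compute $\bar y_{k;t_\circ}$ directly from \eqref{eq:bar-y}. At $t=t_\circ$ the cluster is the initial one and $y_{k;t_\circ}=\prod_{i>n}x_i^{b_{ik}}$, so applying the semifield homomorphism $\varphi$ and expanding each $\varphi(x_i)$ gives
\[
\bar y_{k;t_\circ}=\varphi(y_{k;t_\circ})\prod_{i=1}^{n}\varphi(x_i)^{b_{ik}}
=\prod_{l=n+1}^{\bar m}\bar x_l^{\sum_{i=1}^{m}c_{li}b_{ik}}.
\]
Splitting the inner sum according to $i\le n$ and $i>n$ identifies the exponent of $\bar x_l$ with $(C_1B_\circ+C_2L_\circ)_{lk}$. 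Since the coefficient block of an extended exchange matrix records exactly these exponents (the frozen variables of the new pattern being $\bar x_{n+1},\dots,\bar x_{\bar m}$), the bottom block of $\overline{\tilde B}_\circ$ coincides with that of $\Psi\tilde B_\circ$, completing the proof.

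The computation is essentially bookkeeping, so I do not anticipate a serious obstacle. The one point demanding care is that the exponent array $(c_{li})$ of $\varphi$ plays a double role: it simultaneously fills the lower blocks $C_1,C_2$ of $\Psi$ and governs the monomial expansion of $\bar y_{k;t_\circ}$. The verification works precisely because the matrix product $\Psi\tilde B_\circ$ reassembles the combination $\varphi(y_{k;t_\circ})\prod_i\varphi(x_i)^{b_{ik}}$ of \eqref{eq:bar-y}, and the key is to keep the mutable contributions ($i\le n$, which also carry the identity block $I_n$) separated from the frozen ones ($i>n$).
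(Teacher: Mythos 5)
Your proof is correct and is essentially the paper's argument: both amount to unwinding \eqref{eq:psi(x_i)}, \eqref{eq:bar-y}, \eqref{eq:yj}, and \eqref{eq:bar-B} and comparing exponents of the algebraically independent variables $\bar x_1,\dots,\bar x_{\bar m}$. The only difference is organizational --- the paper runs a single uniform multiplicative computation, identifying $\prod_{i\le\bar m}\bar x_i^{\bar b_{ik}}$ with $\prod_{j\le m}\psi(x_j)^{b_{jk}}$ for all rows at once, whereas you first derive the block-triangular form of $\Psi$ and then check the top and bottom blocks separately (a block form the paper itself exploits later, in the proof of Corollary~\ref{cor:z-span}).
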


\begin{proof}
We use the notation 
$\overline{\tilde B}_\circ=(\bar b_{ik})$
and $\tilde B_\circ=(b_{ik})$. 
Recall that $\bar b_{ik}=b_{ik}$ for $i\le n$. 
We~have: 
\begin{align*}
\prod_{i\le\bar m} \bar x_i^{\bar b_{ik}}
&=\bar y_{k;t_\circ} \prod_{j\le n} \bar x_j^{\bar b_{jk}}
=\varphi(y_{k;t_\circ}) \prod_{j\le n} \varphi(x_j)^{b_{jk}} \prod_{j\le n} \bar x_j^{b_{jk}}\\
&= \prod_{j \le m} \varphi(x_j)^{b_{jk}} \prod_{j\le n} \bar x_j^{b_{jk}}
= \prod_{j\le m} \psi(x_j)^{b_{jk}}
= \prod_{i\le \bar m} \prod_{j\le m} \bar x_i^{\psi_{ij} b_{jk}},
\end{align*}
establishing  that
$\overline{\tilde B}_\circ=
\Psi\tilde B_\circ$.
(Here we used \eqref{eq:yj} and \eqref{eq:bar-B}.) 
\end{proof}

The following corollary will be particularly useful in 
the sequel; see the overview of \cref{ch:finitetype}
which follows \cref{thm:type}.

\begin{corollary}
\label{cor:z-span} 
Consider two seed patterns 
\begin{align*}
(\Sigma(t))_{t\in\TT_n} &= (\xx(t), \yy(t), B(t))_{t\in\TT_n},\\
(\bar \Sigma(t))_{t\in\TT_n} &= (\bar \xx(t), \bar \yy(t), \bar B(t))_{t\in\TT_n}
\end{align*} 
with the same exchange matrices~$B(t)= \bar B(t)$, for $t\in\TT_n$. 
Suppose that all rows of the initial extended exchange matrix 
$\overline{\tilde B}_{\circ}$ for the second seed pattern
lie in the $\ZZ$-span of the rows of the initial extended exchange matrix 
$\tilde B_{\circ}$ for the first seed pattern. 
If two labeled (resp., unlabeled) seeds $\Sigma(t) = \Sigma(t')$
coincide in the first pattern,
then the corresponding seeds 
$\bar \Sigma(t) = \bar \Sigma(t')$
in the second pattern coincide as well. 
\end{corollary}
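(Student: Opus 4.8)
```latex
The plan is to reduce Corollary~\ref{cor:z-span} to \cref{pr:B-covering}
by exhibiting the second seed pattern $(\bar\Sigma(t))$ as the image of
the first one under a rescaling map of the type constructed in
\cref{th:B-covering}. The central observation is that the hypothesis
``all rows of $\overline{\tilde B}_\circ$ lie in the $\ZZ$-span of the
rows of $\tilde B_\circ$'' is exactly the algebraic condition needed to
produce the matrix $\Psi$ appearing in \eqref{eq:bar-B}: since
$\overline{\tilde B}_\circ = \Psi\,\tilde B_\circ$ in that proposition,
the rows of $\overline{\tilde B}_\circ$ are automatically integer
combinations of the rows of $\tilde B_\circ$, and conversely a row
condition of this form lets us \emph{define} a suitable $\Psi$ (hence a
semifield homomorphism $\psi$, and a corresponding $\varphi$ on the
frozen part) realizing the passage from the first pattern to the second.

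First I would set up the homomorphisms. Given the $\ZZ$-span hypothesis,
choose an integer matrix $\Psi=(\psi_{ij})$ with
$\overline{\tilde B}_\circ=\Psi\,\tilde B_\circ$, and use~\eqref{eq:bar-B}
to define a semifield map $\psi$ and its tropical companion $\varphi$ as
in \cref{th:B-covering}. By that theorem together with
\cref{pr:B-covering}, the rescaled pattern built from the first seed
pattern via $\psi$ has initial extended exchange matrix
$\Psi\,\tilde B_\circ=\overline{\tilde B}_\circ$ and the \emph{same}
exchange matrices $B(t)$ along $\TT_n$. Because both the rescaled pattern
and $(\bar\Sigma(t))$ share the initial extended exchange matrix and
propagate by the same mutation rule, they coincide: the rescaled pattern
\emph{is} the second seed pattern. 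Thus every $\bar x_{i;t}$ and
$\bar y_{k;t}$ is obtained from the first pattern by the explicit
formulas \eqref{eq:bar-x}--\eqref{eq:bar-y}.

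The heart of the argument is then the implication about coinciding seeds.
Suppose $\Sigma(t)=\Sigma(t')$ in the first pattern, so that
$x_{i;t}=x_{i;t'}$ for all $i$, the $y$-tuples agree, and $B(t)=B(t')$.
The matrices already agree by assumption. For the cluster and
coefficient tuples, I apply the formulas
\eqref{eq:bar-x}--\eqref{eq:bar-y}: since $\bar x_{i;t}$ is a function of
$x_{i;t}$ alone (via $\psi$ and $\varphi$), and $\bar y_{k;t}$ is a
function of $\hat y_{k;t}$ via $\varphi$, equality of the first-pattern
data at $t$ and $t'$ forces equality of $\bar x_{i;t}=\bar x_{i;t'}$ and
$\bar y_{k;t}=\bar y_{k;t'}$. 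Hence $\bar\Sigma(t)=\bar\Sigma(t')$. The
labeled case is immediate from this index-by-index comparison; for the
unlabeled case one composes with the permutation witnessing
$\Sigma(t)=\Sigma(t')$ and checks that the same permutation identifies
$\bar\Sigma(t)$ with $\bar\Sigma(t')$, which holds because the rescaling
formulas are equivariant in the index.

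The main obstacle I anticipate is the justification that the rescaled
pattern genuinely coincides with the \emph{given} second pattern, rather
than merely sharing its initial matrix. One must argue that a seed
pattern is determined by its initial extended exchange matrix and its
sequence of exchange matrices $B(t)$ along with the choice of ground
field; here the identification of frozen variables
$\bar x_{n+1},\dots,\bar x_{\bar m}$ and the compatibility of $\psi$ and
$\varphi$ on the frozen indices (as used in the proof of
\cref{th:B-covering}) must be tracked carefully so that the two patterns
agree not just up to isomorphism but literally as tuples. Once this
identification is pinned down, the coincidence-of-seeds statement follows
formally from the functional dependence in
\eqref{eq:bar-x}--\eqref{eq:bar-y}.
```
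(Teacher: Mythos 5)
Your proposal is correct and follows essentially the same route as the paper's proof: both use the $\ZZ$-span hypothesis to produce an integer matrix $\Psi$ with $\overline{\tilde B}_\circ = \Psi\tilde B_\circ$, define $\psi$ and $\varphi$ via \eqref{eq:bar-B} so as to invoke Theorem~\ref{th:B-covering} and \cref{pr:B-covering}, and conclude that each seed $\bar\Sigma(t)$ is determined by the corresponding $\Sigma(t)$ through the formulas \eqref{eq:bar-x}--\eqref{eq:bar-y}. The only detail the paper makes explicit that you leave implicit is the normalization of $\Psi$ to the block form $\begin{bmatrix} I & 0\\ \Psi_1 & \Psi_2\end{bmatrix}$ (legitimate because the top $n\times n$ submatrices of $\tilde B_\circ$ and $\overline{\tilde B}_\circ$ both equal the initial exchange matrix), which is exactly what guarantees that $\psi$ has the precise shape \eqref{eq:psi(x_i)} required by Theorem~\ref{th:B-covering}.
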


\begin{proof}
We use the notation of \cref{pr:B-covering}.
In particular, we have 
 initial clusters
$\xx(t_\circ)=(x_1,\dots,x_n)$ and 
 $\bar\xx(t_\circ)=(\bar x_1,\dots,\bar x_{n})$, 
 with frozen variables $x_{n+1},\dots,x_m$  and 
	$\bar x_{n+1},\dots,\bar x_{\bar m}$ respectively.  We also have 
exchange matrices $\tilde B_\circ$ and $\overline{\tilde B}_\circ$, which are $m \times n$
and $\bar m \times n$ matrices whose top $n \times n$
submatrices equal $B(t_\circ)$.
Recall from the discussion surrounding 
	\eqref{eq:yj} 
	that an extended exchange matrix 
contains the same information as its top $n \times n$
submatrix together with the coefficient tuple~$\yy$.

The $\ZZ$-span condition in our hypothesis means that 
$\overline{\tilde B}_\circ=\Psi\tilde B_\circ$ 
where $\Psi=(\psi_{ij})$ is an integer $\bar m\times m$ matrix.
Since the top $n\times n$ submatrices of $\tilde B_\circ$ and $\overline{\tilde B}_\circ$
coincide, we may assume that $\Psi$ is a block matrix of the form
\[
\Psi=
\begin{bmatrix}
I & 0\\
\Psi_1 & \Psi_2
\end{bmatrix}
\]
where $I$ is the $n\times n$ identity matrix. 

	We then define the maps $\psi$ and~$\varphi$ by
\begin{equation*}
\psi(x_j)=\prod_{i=1}^{\bar m} 
\bar x_i^{\psi_{ij}}, \quad
\varphi(x_j)=\prod_{i=n+1}^{\bar m} 
\bar x_i^{\psi_{ij}}, 
\end{equation*}
for $1 \leq j \leq m$ 
(cf.~\eqref{eq:bar-B}), so as to agree with 
\eqref{eq:psi(x_i)} and 
\eqref{eq:bar-B}.

We claim that  {initial}  seeds 
$(\bar \xx(t_\circ), \bar \yy(t_\circ), \bar B(t_\circ))$ 
and $(\xx(t_\circ), \yy(t_\circ), B(t_\circ))$ 
are related via 
formulas \eqref{eq:bar-x}--\eqref{eq:bar-y};
once we know the claim, it follows from  
	Theorem~\ref{th:B-covering} that \emph{each} seed
$(\bar \xx(t), \bar \yy(t), \bar B(t))$ of the second seed pattern
is related to the corresponding seed 
$(\xx(t), \yy(t), B(t))$ of the first seed pattern 
via 
\eqref{eq:bar-x}--\eqref{eq:bar-y}.
And this implies the corollary.

To verify that our initial seeds satisfy \eqref{eq:bar-x}, note that 
for $1 \leq j \leq n$, we have 
$\frac{\psi(x_{j})}{\varphi(x_{j})} = \prod_{i=1}^n \bar x_{i}^{\psi_{ij}},$
which is equal to $\bar x_{j}$, since $\Psi$ restricts to the 
identity matrix on the first $n$ rows and columns.
To verify that our initial seeds also satisfy 
\eqref{eq:bar-y},
we need to use 
 \eqref{eq:yj}  to relate the $y$-variables to the extended cluster
 variables.
We find that the left-hand side of \eqref{eq:bar-y} 
becomes 
$\bar y_{k} = \prod_{\ell=n+1}^{\bar m} \bar x_\ell^{\bar b_{\ell k}}$, while
the right-hand side becomes 
$\varphi(y_{k}) \prod_{i=1}^n \varphi(x_{i})^{b_{ik}} = 
\varphi(\prod_{i=n+1}^m x_i^{b_{ik}}) \prod_{i=1}^n \varphi(x_i)^{b_{ik}}
= \prod_{i=1}^m \prod_{\ell = n+1}^{\bar m} \bar x_{\ell}^{\psi_{\ell i} b_{ik}}.$
We can see that the two sides  are equal by 
using 
the fact that 
$\overline{\tilde B}_\circ=\Psi\tilde B_\circ$, or equivalently, that 
$\bar b_{\ell k} = \sum_{i=1}^m \psi_{\ell i} b_{ik}.$
\end{proof}

\begin{remark}
\label{rem:full-z-rank}
The $\ZZ$-span condition in Corollary~\ref{cor:z-span} is in particular satisfied 
if the initial extended exchange matrix $\tilde B_\circ=\tilde B(t_\circ)$ has \emph{full $\ZZ$-rank},
i.e., the $\ZZ$-span of its rows is the entire lattice $\ZZ^n$ of integer row-vectors. 

This condition is also satisfied
if the second seed pattern has trivial coefficients, i.e., if it has no frozen variables. 
\end{remark}

\begin{exercise}
\label{exercise:A2B2G2}
Use Corollary~\ref{cor:z-span} to show that any seed pattern
with exchange matrices
\begin{equation}
B(t)=(-1)^t\,\begin{bmatrix}
 0 & 1\\
-c & 0
\end{bmatrix},
\end{equation}
with $c\in\{1,2,3\}$, 
has finitely many distinct seeds. 
To this end, consider the seed with the initial extended
exchange matrix
\[
\tilde B_\circ
=\begin{bmatrix}
 0 & 1\\
-c & 0\\
1 & 0
\end{bmatrix}. 
\]
Note that the rows of $\tilde B_\circ$ span $\ZZ^2$ over~$\ZZ$,
so by Remark~\ref{rem:full-z-rank},
it suffices to check that the seed pattern defined by~$\tilde B_\circ$
has finitely many seeds.
(In fact, it has five seeds for $c=1$,
six seeds for $c=2$,
and eight seeds for $c=3$.)
\end{exercise}

\begin{remark}
\label{rem:q-span}
In Corollary~\ref{cor:z-span}, the $\ZZ$-span condition
can be replaced by one involving a~$\QQ$-span. 
The proof remains essentially the same but requires allowing   
rational powers of the variables. 
This can be handled in two alternative ways:
algebraically, by working in the semifield of 
``subtraction-free Puiseux expressions;'' 
or analytically, 
by always choosing the branch of a fractional power that takes positive values 
at positive arguments. 
\end{remark}

\section{Folding}
\label{sec:folding}

Folding is a procedure that, under certain conditions, produces new
seed patterns from existing ones. 
The basic idea is to exploit symmetries of a quiver 
to construct a quotient object (a folded extended exchange matrix),
then design an equivariant mutation dynamics
that would drive the algebraic dynamics of ``folded seeds.'' 

In this text, we discuss folding in a somewhat limited generality;
see~\cite{dupont} for a more elaborate treatment. 
Our main application of folding will occur in 
Chapter~\ref{ch:finitetype}, 
where we use it to construct cluster algebras of finite types 
$BCFG$ from those of ``simply-laced'' types~$ADE$. 
Folding has also been used in~\cite{fst2} to classify 
the cluster algebras of finite mutation type; see Section~\ref{sec:finite-mutation-type-skew-symmetrizable}. 

We begin with a motivating example.
Consider the quiver 
\[
1 \longleftarrow 2 \longrightarrow 3 
\]
of type~$A_3$, with three mutable vertices. 
We notice the $\ZZ/2\ZZ$ symmetry of the quiver,
and place only \emph{two} distinct variables at its vertices, as follows:
\[
x_0 \longleftarrow x_1 \longrightarrow x_0\,.
\]
The exchange relations then become:
\begin{align*}
x_0 x_0' &= x_1 +1,\\
x_1 x_1' &= x_0^2+1. 
\end{align*}
If we want to preserve the symmetry, we 
can now mutate either at vertex~$2$, or simultaneously
at~$1$ and~$3$. 
Continuing in this fashion, we recover the seed pattern from
Example~\ref{example:A(1,2)}. 

\begin{definition}
\label{def:folding}
Let $Q$ be a labeled quiver, as in Definition~\ref{def:Bmatrix}. 
More explicitly, we assume that $Q$ has $m$ vertices labeled $1,\dots,m$;
the vertices labeled $1,\dots,n$ are mutable; 
the vertices labeled $n+1,\dots,m$ are frozen. 
Let $G$ be a group acting on the vertex set of~$Q$, 
or equivalently on $\{1,\dots,m\}$.
(For all practical purposes, it is safe to assume that the group $G$ is finite.) 
The notation $i\sim i'$ will mean that  $i$ and~$i'$ lie in the
same $G$-orbit. 
We say that the quiver~$Q$ (or the corresponding $m \times n$ extended
exchange matrix $\tilde B=\tilde B(Q)=(b_{ij})$) is 
\emph{$G$-admissible}~if
\begin{enumerate}
\item  \label{item:admissible-G1}
for any $i\sim i'$, index $i$ is mutable (i.e., $i\le n$) if and only if $i'$ is; 
\item \label{item:admissible-G2}
for any indices $i$ and~$j$, and any $g\in G$, 
we have $b_{ij}=b_{g(i),g(j)}\,$;
\item 
\label{item:admissible-G3}
for mutable indices $i\sim i'$, we have $b_{ii'}=0$; 
\item 
\label{item:admissible-G4}
for any $i\sim i'$, and any mutable~$j$, we have $b_{ij}b_{i'j}\ge
0$. 
\end{enumerate}
%
%
\pagebreak[3]
Assume that $Q$ is $G$-admissible. 
We call a $G$-orbit \emph{mutable} (resp., \emph{frozen})
if it consists of mutable (resp., frozen) vertices,
cf.\ condition~\eqref{item:admissible-G1} above. 
Let $\tilde B^G=\tilde B(Q)^G=(b^G_{IJ})$ 
	be the matrix whose rows (resp., columns) are
labeled by the $G$-orbits (resp., mutable $G$-orbits),
and whose entries are given~by 
\begin{equation}
\label{eq:B^G}
b^G_{IJ}=\sum_{i\in I} b_{ij}\,,  
\end{equation}
where $j$ is an arbitrary index in~$J$. 
(By condition \eqref{item:admissible-G2}, the right-hand side of~\eqref{eq:B^G}
does not depend on the choice of~$j$.) 
We then say that $\tilde B^G$ is obtained from~$\tilde B$ (or from the quiver~$Q$) 
by \emph{folding} with respect to the given $G$-action. 
\end{definition}

An example of folding is shown in Figure~\ref{fig:D4(1)}. 

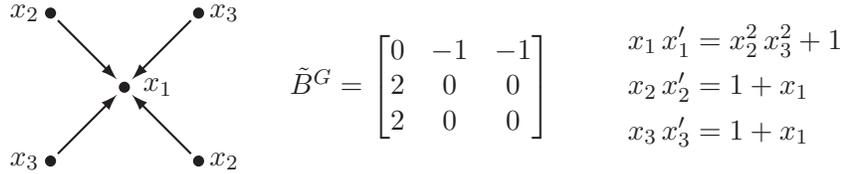
\begin{figure}[ht]
\setlength{\unitlength}{1.4pt} 
\begin{picture}(210,42)(0,0) 
\thicklines 
  \put(2,2){\vector(1,1){16}} 
  \put(2,38){\vector(1,-1){16}} 
  \put(38,38){\vector(-1,-1){16}} 
  \put(38,2){\vector(-1,1){16}} 

  \put(20,20){\circle*{3}} 
  \multiput(0,0)(40,0){2}{\circle*{3}} 
  \multiput(0,40)(40,0){2}{\circle*{3}} 

\put(-7,0){\makebox(0,0){$x_3$}}
\put(-7,40){\makebox(0,0){$x_2$}}
\put(47,0){\makebox(0,0){$x_2$}}
\put(47,40){\makebox(0,0){$x_3$}}
\put(29,20){\makebox(0,0){$x_1$}}

\put(100,20){\makebox(0,0){$\tilde B^G=\begin{bmatrix}
0 & -1 & -1\\
2 & 0 & 0\\
2 & 0 & 0
\end{bmatrix}$}}

\put(185,20){\makebox(0,0){$\begin{array}{l}
x_1\,x_1'=x_2^2\,x_3^2+1\\[.05in]
x_2\,x_2'=1+x_1\\[.05in]
x_3\,x_3'=1+x_1
\end{array}$}}

\end{picture} 

\caption{The quiver $Q$ shown on the left 
is $G$-admissible with respect to the action of the group $G=\ZZ/2\ZZ$ 
wherein the generator of~$G$ acts on the vertices 
of $Q$ by a $180^\circ$ rotation.
All $5$ vertices are mutable.}
\label{fig:D4(1)}
\end{figure}

\begin{remark}
Condition~\eqref{item:admissible-G3} 
can be restated as saying that each $G$-orbit $I$ is \emph{totally disconnected};  
that is, there is no arrow between
two vertices in~$I$. 
Condition~\eqref{item:admissible-G4}
means that there is no oriented path of length $2$ through a mutable
vertex that connects two vertices belonging to the same $G$-orbit. \linebreak[3]
These conditions are dictated by the following considerations. 
If~$i$ and $i'$ are in the same $G$-orbit~$I$, then in the folded
seed (to be defined below in this section), the same variable $x_I$ will be associated with both~$i$
and~$i'$. 
We~do not want $x_I$ to appear on the right-hand side of 
the exchange relation for~$x_I$ (hence~\eqref{item:admissible-G3}),
nor do we want $x_I$ to appear in both monomials on the right-hand side
of the exchange relation for some variable~$x_J$ 
(hence~\eqref{item:admissible-G4}).  
\end{remark}

\begin{lemma}
Let $Q$ be a $G$-admissible quiver.  
Then $\tilde B(Q)^G$ is an extended skew-symmetrizable matrix. 
\end{lemma}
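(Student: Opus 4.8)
The plan is to produce an explicit skew-symmetrizer for the mutable part of $\tilde B^G$. Recall that calling an extended matrix \emph{skew-symmetrizable} constrains only the square submatrix whose rows and columns are both indexed by the mutable $G$-orbits; the rows indexed by frozen orbits are unconstrained. So I would first reduce the statement to showing that this mutable principal part $B^G=(b^G_{IJ})_{I,J\text{ mutable}}$ is skew-symmetrizable, and then guess the diagonal skew-symmetrizer $D^G=\mathrm{diag}(d_I)$ with $d_I=1/|I|$, where $|I|$ denotes the cardinality of the orbit~$I$. (If one insists on integer entries, rescale all $d_I$ afterwards by a common multiple $N$ of the orbit sizes, so that $d_I=N/|I|\in\ZZ_{>0}$; this does not affect the skew-symmetry relation, which is homogeneous in the $d_I$.)

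The heart of the argument is an averaging identity. Since $Q$ comes from a quiver, its exchange matrix is skew-symmetric, $b_{ji}=-b_{ij}$ for mutable $i,j$. By condition~\eqref{item:admissible-G2}, the quantity $b^G_{IJ}=\sum_{i\in I}b_{ij}$ from~\eqref{eq:B^G} is independent of the representative $j\in J$; summing this constant over all $j\in J$ therefore gives
\[
|J|\,b^G_{IJ}=\sum_{i\in I,\ j\in J} b_{ij}.
\]
The right-hand side is symmetric in $I$ and $J$ up to the skew-symmetry of~$b$: the same computation applied to $b^G_{JI}$ yields
\[
|I|\,b^G_{JI}=\sum_{i\in I,\ j\in J} b_{ji}=-\sum_{i\in I,\ j\in J} b_{ij}=-|J|\,b^G_{IJ}.
\]

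Dividing the resulting relation $|J|\,b^G_{IJ}=-|I|\,b^G_{JI}$ by $|I|\,|J|$ gives exactly $d_I\,b^G_{IJ}=-d_J\,b^G_{JI}$ for $d_I=1/|I|$; that is, $D^G B^G$ is skew-symmetric, which is what was required. Note that only conditions~\eqref{item:admissible-G1} (to make the notion of a mutable orbit meaningful, so that $B^G$ is a well-defined square block) and~\eqref{item:admissible-G2} (for the well-definedness of $b^G_{IJ}$ and the averaging step) are used here; conditions~\eqref{item:admissible-G3}--\eqref{item:admissible-G4} play no role, as they are needed later to make the folded \emph{mutation dynamics} consistent rather than to ensure skew-symmetrizability.

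There is no genuinely hard step in this lemma: the computation is short once the right symmetrizer is identified. The one place to be careful is the well-definedness of $b^G_{IJ}$ and the averaging trick that feeds on it. The only mildly counterintuitive point is that the weights are the \emph{reciprocal} orbit sizes $1/|I|$ rather than the orbit sizes themselves; this is forced by the asymmetric shape of~\eqref{eq:B^G}, in which the row orbit is summed while the column orbit is represented by a single element. Checking the recipe against the folding in Figure~\ref{fig:D4(1)}, where the symmetrizer becomes $\mathrm{diag}(2,1,1)$, is a useful sanity check.
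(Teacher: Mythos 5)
Your proof is correct and takes essentially the same route as the paper: the averaging identity $|J|\,b^G_{IJ}=\sum_{i\in I}\sum_{j\in J}b_{ij}=-|I|\,b^G_{JI}$ is exactly the paper's computation, which it phrases as saying the matrix $(|J|\,b^G_{IJ})$ is skew-symmetric. Your left symmetrizer $\mathrm{diag}(1/|I|)$ (or its integer rescaling) is just the equivalent reformulation of that same conclusion, so the two arguments coincide.
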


\begin{proof}
As above, we use the notation $\tilde B=\tilde B(Q)=(b_{ij})$. 
Formula \eqref{eq:B^G} implies that, for $I$ and $J$ mutable, 
\begin{equation}
\label{eq:|J|b_IJ}
|J| \,b^G_{IJ}=\sum_{i\in I} \sum_{j\in J} b_{ij}
=- \sum_{j\in J} \sum_{i\in I} b_{ji}
= - |I| \,b^G_{JI}
\,, 
\end{equation}
so the square matrix $(|J| \,b^G_{IJ})$ is skew-symmetric,
and hence
	$\tilde B(Q)^G=(b^G_{IJ})$  is skew-symmetrizable.
\end{proof}

\begin{lemma}
\label{lem:folding-signs}
Let $Q$ be a $G$-admissible quiver, with $\tilde B(Q)=(b_{ij})$. 
Let $I$ and $J$ be two 
$G$-orbits, with $J$ mutable.  
Then either all entries~$b_{ij}$, for $i\!\in\! I$ and $j\!\in\! J$, 
are nonnegative, or all are nonpositive.
Consequently the following are equivalent: 
\begin{itemize}[leftmargin=.2in]
\item
$b^G_{IJ}>0$; 
\item
there exist $i\in I$ and $j\in J$ such that $b_{ij}> 0$; 
\item
for every $i\in I$, there exists $j\in J$ such that $b_{ij}> 0$.   
\end{itemize}
Similar equivalences hold with all inequality signs reversed. 
\end{lemma}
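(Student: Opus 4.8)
The plan is to first prove the dichotomy stated in the opening sentence---that all entries $b_{ij}$ with $i\in I$ and $j\in J$ share a common sign---and then read off the three equivalences as immediate consequences. Everything reduces to combining two of the admissibility conditions: \eqref{item:admissible-G4}, which controls signs within a single column, and \eqref{item:admissible-G2}, which lets me compare distinct columns (and, later, distinct rows).

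First I would fix one column index $j\in J$ and examine the entries $b_{ij}$ as $i$ ranges over $I$. Because $J$ is mutable, $j$ is mutable, and any two $i,i'\in I$ satisfy $i\sim i'$ (as $I$ is a single $G$-orbit), so condition \eqref{item:admissible-G4} gives $b_{ij}b_{i'j}\ge 0$. Hence no two entries of this column (over rows in $I$) can be of strictly opposite signs: all the nonzero ones share a sign, which I will call the sign of column $j$. Next I would propagate this sign across all columns of $J$ using $G$-invariance. Given $j,j'\in J$, pick $g\in G$ with $g(j)=j'$ (possible since $J$ is an orbit); then for each $i\in I$ condition \eqref{item:admissible-G2} yields $b_{g(i),j'}=b_{g(i),g(j)}=b_{ij}$, and since $g$ permutes $I$, the multiset $\{b_{i,j'}:i\in I\}$ equals $\{b_{ij}:i\in I\}$. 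So columns $j$ and $j'$ carry the same sign, and the dichotomy follows.

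For the equivalences I would use that $b^G_{IJ}=\sum_{i\in I}b_{ij}$ for any fixed $j\in J$, cf.~\eqref{eq:B^G}, is a sum of entries that by the dichotomy all share one sign. Thus $b^G_{IJ}>0$ holds exactly when some $b_{ij}>0$ (a single positive entry forces every entry to be nonnegative, so the sum is positive), giving the equivalence of the first two bullets. For the third bullet I would transport a single positive entry to every row: from $b_{i_0 j_0}>0$ and any $i\in I$, choose $g\in G$ with $g(i_0)=i$, so that $b_{i,g(j_0)}=b_{i_0 j_0}>0$ with $g(j_0)\in J$, exhibiting the required $j$; the converse is trivial. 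The reverse-sign statement follows word for word with ``$>0$'' replaced by ``$<0$'', or by applying the above to $-\tilde B(Q)$.

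I expect no serious obstacle here; the one point worth stating carefully is that neither admissibility condition alone suffices---\eqref{item:admissible-G4} pins down signs only within a fixed column, and it is \eqref{item:admissible-G2} (together with the fact that each $g$ permutes an orbit into itself) that synchronizes the signs across columns and later across rows. Keeping track of the orbit structure, so that $g$ maps $I$ and $J$ to themselves, is the only bookkeeping required.
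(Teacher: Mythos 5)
Your proof is correct and follows essentially the same route as the paper's: both combine condition (2) of the admissibility definition (to transport entries of one column of the orbit block into another) with condition (4) (sign coherence within a single column), and then read off the equivalences from the formula $b^G_{IJ}=\sum_{i\in I}b_{ij}$. The paper merely compresses your two-step dichotomy argument into the single chain $b_{ij}b_{i'j'}=b_{ij}b_{g(i'),j}\ge 0$ with $g(j')=j$, so there is no substantive difference.
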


\begin{proof}
Let $i,i'\in I$ and $j,j'\in J$. 
Let $g\in G$ be such that $g(j')=j$. 
Using conditions~\eqref{item:admissible-G2}
and~\eqref{item:admissible-G4} of Definition~\ref{def:folding}, 
we get $b_{ij}b_{i'j'}=b_{ij}b_{g(i'),j}\ge 0$, as desired. 
The equivalence statements in the lemma follow by~\eqref{eq:B^G} together with
condition~\eqref{item:admissible-G2}. 
\end{proof}

For $Q$ a $G$-admissible quiver,  
condition~\eqref{item:admissible-G3} of Definition~\ref{def:folding}
ensures 
(cf.\ \linebreak[3]
Exercise~\ref{ex:mut-simple}\eqref{ex:mut-commute})
that the result of mutating $Q$ at the set of all vertices in a mutable
$G$-orbit~$K$ 
does not depend on the order of mutations. 
We will denote this composition of mutations by 
\begin{equation}
\label{eq:mu-K}
\mu_K=\prod_{k\in K}\mu_k\,,
\end{equation}
making sure to only use this notation when it is well defined. 

\begin{lemma}
\label{lem:mut-folding}
Let $Q$ be a $G$-admissible quiver, with $\tilde B=\tilde B(Q)$.  
Let $K$ be a mutable \hbox{$G$-orbit} \linebreak[3]
such that $\mu_K(Q)$ is also 
$G$-admissible. 
Then 
\[
(\mu_K(\tilde B))^G\!=\!\mu_K(\tilde B^G).
\] 
\end{lemma}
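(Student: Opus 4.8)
The plan is to verify the identity $(\mu_K(\tilde B))^G = \mu_K(\tilde B^G)$ entry-by-entry, comparing the matrix obtained by folding after mutation with the one obtained by mutating after folding. Write $\tilde B = (b_{ij})$, let $\tilde B' = \mu_K(\tilde B) = (b'_{ij})$ denote the (well-defined, by condition~\eqref{item:admissible-G3} and the remark preceding \cref{lem:mut-folding}) result of mutating at all vertices of the mutable orbit~$K$, and let $(b^G_{IJ})$, $(b'^G_{IJ})$ be the corresponding folded matrices. Since both sides are computed by the same summation rule~\eqref{eq:B^G}, it suffices to show, for orbits $I$ and mutable orbits $J$, that $\sum_{i \in I} b'_{ij}$ (with $j \in J$ fixed) equals the result of applying the single matrix-mutation $\mu_K$ at the orbit~$J$ to the folded matrix $\tilde B^G$.

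First I would handle the case $J = K$ (equivalently, where the column being examined is the one mutated). For a single mutation $\mu_k$, the matrix-mutation rule~\eqref{eq:matrix-mutation} simply flips the sign of every entry in row~$k$ and column~$k$. Since the orbit~$K$ is totally disconnected (condition~\eqref{item:admissible-G3}), mutating at the vertices of~$K$ one after another only sign-flips the entries $b_{ik}$ for $k \in K$, and these entries are unaffected by the intervening mutations within~$K$; summing over $i \in I$ then matches the sign-flip prescribed by $\mu_K$ on $\tilde B^G$. So this case reduces to bookkeeping with signs and is routine. The substantive case is $J \ne K$, where the nonlinear term of the mutation rule appears: for each $k \in K$ the update contributes $\tfrac{1}{2}\bigl(|b_{ik}|\,b_{kj} + b_{ik}\,|b_{kj}|\bigr)$ to the entry $b'_{ij}$. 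I would sum this over $i \in I$ and over $k \in K$, and the goal is to show the result equals the corresponding quantity $\tfrac{1}{2}\bigl(|b^G_{IK}|\,b^G_{KJ} + b^G_{IK}\,|b^G_{KJ}|\bigr)$ computed from the folded matrix.

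The hard part will be pushing the absolute values through the summation over the orbit. A priori, $\sum_{i,k} |b_{ik}|\,b_{kj}$ need not equal $\bigl|\sum_i b_{ik}\bigr|\cdot(\cdots)$, because absolute value does not commute with sums. This is exactly where \cref{lem:folding-signs} is indispensable: it guarantees that for fixed orbits all the entries $b_{ik}$ ($i \in I$, $k \in K$) share a common sign, and likewise all the $b_{kj}$ ($k \in K$, $j \in J$) share a common sign. Consequently $\bigl|\sum_{i\in I} b_{ik}\bigr| = \sum_{i \in I} |b_{ik}|$ and similarly for the $j$-sums, so the absolute values factor cleanly out of the orbit-sums and the nonlinear mutation term folds correctly. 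Once the common-sign property is invoked, the verification becomes a direct manipulation: I would fix $j \in J$, expand $\sum_{i \in I} b'_{ij}$ using~\eqref{eq:matrix-mutation}, split into the linear part (which folds as in the $J=K$ case and via condition~\eqref{item:admissible-G2}) and the quadratic part, apply \cref{lem:folding-signs} to each factor, and collect terms to recover $\mu_K(\tilde B^G)_{IJ}$. Conditions~\eqref{item:admissible-G2} and~\eqref{item:admissible-G4} ensure respectively that the folded entries are well-defined (independent of the representative $j$) and that no sign cancellations spoil the factorization, so the computation closes.
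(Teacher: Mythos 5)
Your proposal is correct and follows essentially the same route as the paper's proof: an entry-by-entry comparison of $(\mu_K(\tilde B))^G$ and $\mu_K(\tilde B^G)$, with Lemma~\ref{lem:folding-signs} supplying exactly the sign-coherence across orbits needed to make the nonlinear term of the mutation rule commute with the orbit sums (and total disconnectedness of~$K$ handling the composition of mutations and the sign-flip cases). The only difference is presentational: the paper organizes the computation as a case analysis on the common signs of $b_{ik}$ and $b_{kj}$, while you use the $\tfrac12\bigl(|b_{ik}|\,b_{kj}+b_{ik}\,|b_{kj}|\bigr)$ form of the mutation rule and push absolute values through the sums, which is the same argument in different notation.
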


Note the abuse of notation in the last equality:
on the left, $\mu_K$~is a composition of mutations defined
by~\eqref{eq:mu-K}; on the right, $\mu_K$~is a single mutation
at the mutable index~$K$ of the folded matrix~$\tilde B^G$. 

\noindent\textbf{Proof.} 
Using the definition of matrix mutation~\eqref{eq:matrix-mutation}
in combination with Lemma~\ref{lem:folding-signs}, we obtain: 
\begin{equation}
\label{eq:mutation-folding0}
\mu_K(\tilde B^G)_{IJ} =
\begin{cases}
-b_{IJ}^G & \text{if $K\!\in\!\{I,J\}$;} \\[.05in]
b_{IJ}^G\!+\!b_{IK}^G b_{KJ}^G & \text{if $K\!\notin\!\{I,J\}$ and $b_{ik}> 0$ and 
$b_{kj}> 0$}\\
& \text{\qquad for some $i\in I, j\in J, k\in K$;}\\[.05in]
b_{IJ}^G\!-\!b_{IK}^G b_{KJ}^G & \text{if $K\!\notin\!\{I,J\}$ and $b_{ik}< 0$ and 
$b_{kj}< 0$}\\
& \text{\qquad for some $i\in I, j\in J, k\in K$;}\\[.05in]
b_{IJ}^G & \text{otherwise.}
\end{cases}
\end{equation}
On the other hand, mutating $\tilde B$ at each 
$k\in K$ (recall that $K$ is totally disconnected), we get: 
\begin{equation}
\label{eq:mutation-folding}
\mu_K(\tilde B)_{ij} =
\begin{cases}
-b_{ij} & \text{if $i\in K$ or $j\in K$ (i.e., $K\!\in\!\{I,J\}$);} \\[.05in]
b_{ij}+\displaystyle\sum_{k\in K} b_{ik}b_{kj} & \text{if $i\notin K$,
  $j\notin K$ (i.e., $K\!\notin\!\{I,J\}$), and}\\[-.15in]
& \text{\qquad $b_{ik}> 0$ and $b_{kj}> 0$ for some $k\in K$;}\\[.05in]
b_{ij}-\displaystyle\sum_{k\in K} b_{ik}b_{kj} & 
     \text{if $i\notin K$, $j\notin K$ (i.e., $K\!\notin\!\{I,J\}$), and}\\[-.15in]
& \text{\qquad $b_{ik}< 0$ and $b_{kj}< 0$ for some $k\in K$;}\\[.05in]
b_{ij} & \text{otherwise.}
\end{cases}
\end{equation}
%
By~\eqref{eq:B^G} (recall that $\mu_K(\tilde B)$ is $G$-admissible),
we have, for any $j\in J$: 
\[
((\mu_K(\tilde B))^G)_{IJ} = \sum_{i\in I} (\mu_K(\tilde B))_{ij}\,.
\]
We note furthermore that all terms in the last sum will fall into
the same case in~\eqref{eq:mutation-folding}.
It is now easy to check that $(\mu_K(\tilde B^G))_{IJ} = 
((\mu_K(\tilde B))^G)_{IJ}$.  For example, in the second 
case of \eqref{eq:mutation-folding}, we have: 
\begin{align*}
((\mu_K(\tilde B))^G)_{IJ} 
&= \sum_{i\in I} \bigl(b_{ij} + \sum_{k\in K} b_{ik}b_{kj}\bigr)\\
&= b_{IJ}^G + \sum_{k\in K} b_{kj} \sum_{i\in I} b_{ik}  \\
&= b_{IJ}^G + b_{IK}^G  b_{KJ}^G \\
&= \mu_K(\tilde B^G)_{IJ}. \qedhere
\end{align*}

\pagebreak[3]

The ``mutation commutes with folding'' statement in Lemma~\ref{lem:mut-folding}
comes with a caveat:
it requires admissibility of \emph{both} 
quivers $Q$ and~$\mu_K(Q)$ with respect to the group action at hand. 
Unfortunately, admissibility does not propagate via mutations: 
$Q$ may be $G$-admissible while $\mu_K(Q)$ is not. 

\pagebreak[3]

\begin{example}
Let $Q$ be an oriented $6$-cycle with six mutable vertices labeled $0$ through~$5$
in clockwise order. 
Let the generator of $G=\ZZ/2\ZZ$ act by sending each vertex~$i$ to 
the vertex $(i+3)\bmod 6$.
Then $Q$ is $G$-admissible but for any mutable 
	$\ZZ/2\ZZ$-orbit $K$, the quiver $\mu_K(Q)$ is not. 
\end{example}

We next proceed to the folding of seeds. 
This will require, in addition to an action of a group~$G$,
a choice of a semifield homomorphism that ``bundles together'' the variables
associated with the vertices in the same $G$-orbit. 

\begin{definition}
\label{def:seed-folding}
Let $G$ be a group acting on the set of indices $\{1,\dots, m\}$ so that 
every $g\in G$~maps the subset $\{1,\dots, n\}$ to itself. 
Let $m^G$ denote the number of orbits of this action. 
Let $\FFcal$ (resp., $\FFcal^G$) be a field isomorphic to the field of rational functions in~$m$ 
(resp.,~$m^G$) independent variables,
and let $\FFcal_\textrm{sf}$ (resp.,~$\FFcal_\textrm{sf}^G$)
denote the corresponding
semifields of subtraction-free rational expressions. 
Let 
$\psi: \FFcal_\textrm{sf} \to \FFcal_\textrm{sf}^G$
be a surjective semifield homomorphism. 

Let $Q$ be a quiver as above. 
A seed $\Sigma=(\tilde\xx,\tilde B(Q))$ in~$\FFcal$, 
with the extended cluster $\tilde\xx=(x_i)$, is called \emph{$(G,\psi)$-admissible} if 
\begin{itemize}[leftmargin=.2in]
\item
$Q$ is a $G$-admissible quiver; 
\item
for any $i\sim i'$, we have $\psi(x_i)=\psi(x_{i'})$. 
\end{itemize}
In this situation, we define a new ``folded'' seed 
$\Sigma^G=(\tilde\xx^G,\tilde B^G)$ in $\FFcal_\textrm{sf}^G\subset\FFcal^G$
whose extended exchange matrix $\tilde B^G$ is given
by~\eqref{eq:B^G}, 
and whose extended cluster 
$\tilde\xx^G=(x_I)$ has $m^G$ elements $x_I$ indexed by the $G$-orbits
and defined by $x_I= \psi(x_i)$, for~$i\in I$. 
Note that since $\psi$ is a surjective homomorphism, the elements $x_I$ generate~$\FFcal^G$,
hence are algebraically independent. 
\end{definition}

We can now extend Lemma~\ref{lem:mut-folding} to the folding of seeds. 

\begin{lemma}
\label{lem:mut-folded-seeds}
Let 
$\Sigma=(\tilde \xx,\tilde B(Q))$ be a  $(G,\psi)$-admissible seed
as above. 
Let $K$ be a mutable $G$-orbit. 
If the quiver $\mu_K(Q)$ is $G$-admissible,  
then the seed $\mu_K(\Sigma)$ is $(G,\psi)$-admissible, 
and moreover $(\mu_K(\Sigma))^G=\mu_K(\Sigma^G)$. 
\end{lemma}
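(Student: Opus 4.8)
The plan is to split the statement into its matrix part and its cluster-variable part. The equality of extended exchange matrices, $(\mu_K(\tilde B(Q)))^G = \mu_K(\tilde B(Q)^G)$, is exactly the content of Lemma~\ref{lem:mut-folding}, which applies precisely because both $Q$ and $\mu_K(Q)$ are assumed $G$-admissible. So the only remaining work is to track the cluster variables under folding, and to verify that $\mu_K(\Sigma)$ is again $(G,\psi)$-admissible; since the $G$-admissibility of the underlying quiver $\mu_K(Q)$ is a hypothesis, only the compatibility condition $\psi(x_i)=\psi(x_{i'})$ for $i\sim i'$ needs to be checked.

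First I would record a preliminary observation that makes the simultaneous mutation $\mu_K=\prod_{k\in K}\mu_k$ manageable. Since $K$ is totally disconnected (condition~\eqref{item:admissible-G3} of Definition~\ref{def:folding}), we have $b_{kk'}=0$ for distinct $k,k'\in K$, so mutating at one $k'\in K$ leaves the $k$th column of $\tilde B$ unchanged and never introduces the variable $x_{k'}$ into the exchange relation for $x_k$. Consequently, for each $k\in K$ the new variable $x_k'$ is governed by the exchange relation read off directly from the original seed $\Sigma$:
\begin{equation*}
x_k\, x_k' = \prod_{i:\,b_{ik}>0} x_i^{b_{ik}} + \prod_{i:\,b_{ik}<0} x_i^{-b_{ik}},
\end{equation*}
with the products ranging over all indices $i$ (frozen ones included).

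Next I would apply the surjective semifield homomorphism $\psi$ to this relation. Because $\psi$ respects both semifield operations, and because $\psi(x_i)=x_I$ depends only on the orbit $I$ of $i$ (by $(G,\psi)$-admissibility of $\Sigma$), the two monomials may be regrouped orbit by orbit. The crucial input is Lemma~\ref{lem:folding-signs}: for the fixed mutable orbit $K$, all entries $b_{ik}$ with $i$ ranging over a single orbit $I$ share a common sign, so each orbit $I$ contributes the single factor $x_I^{|b^G_{IK}|}$ to exactly one of the two monomials, where $b^G_{IK}=\sum_{i\in I}b_{ik}$ by~\eqref{eq:B^G}. Writing $x_K=\psi(x_k)$ (common to all $k\in K$), we obtain
\begin{equation*}
x_K\, \psi(x_k') = \prod_{I:\,b^G_{IK}>0} x_I^{\,b^G_{IK}} + \prod_{I:\,b^G_{IK}<0} x_I^{-b^G_{IK}}.
\end{equation*}
The right-hand side depends only on the orbit $K$ and on the folded matrix $\tilde B^G$, not on the individual choice of $k\in K$.

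I expect this last display to carry essentially all of the remaining weight, so the final step merely harvests its consequences. Since the right-hand side is independent of $k\in K$, the quantity $\psi(x_k')$ is the same for all $k\in K$; together with the fact that variables outside $K$ are untouched by the mutation (their compatibility being inherited from $\Sigma$), this establishes that $\mu_K(\Sigma)$ is $(G,\psi)$-admissible. Moreover, the displayed identity is exactly the exchange relation for a \emph{single} mutation at the index $K$ in the folded seed $\Sigma^G$, which identifies the $K$th folded cluster variable of $(\mu_K(\Sigma))^G$ with that of $\mu_K(\Sigma^G)$; all other folded cluster variables agree trivially. Combined with the matrix equality from Lemma~\ref{lem:mut-folding}, this yields $(\mu_K(\Sigma))^G=\mu_K(\Sigma^G)$. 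The main obstacle, such as it is, is bookkeeping: ensuring that the regrouping by orbits in the middle step is legitimate, which is exactly where the sign-coherence of Lemma~\ref{lem:folding-signs} and the orbit-constancy of $\psi$ on the $x_i$ are indispensable.
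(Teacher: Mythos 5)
Your proposal is correct and follows essentially the same route as the paper's proof: reduce the matrix statement to Lemma~\ref{lem:mut-folding}, then apply $\psi$ to the exchange relation \eqref{eq:firstmutate} for each $k\in K$ and use the sign-coherence of Lemma~\ref{lem:folding-signs} to regroup the monomials orbit by orbit, matching \eqref{eq:firstfold}. Your added remarks (the totally-disconnected justification for reading off the exchange relation directly from $\Sigma$, and the explicit observation that $k$-independence of the right-hand side yields the $(G,\psi)$-admissibility of $\mu_K(\Sigma)$) are points the paper leaves implicit, but they do not change the argument.
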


\begin{proof}
Let $\mathcal{O}$ denote the set of $G$-orbits. 
As above, we use the notation 
\begin{alignat*}{3}
\Sigma&=(\tilde\xx,\tilde B), & \tilde\xx&=(x_i), & \tilde B&=\tilde B(Q)=(b_{ij}), \\
\Sigma^G&=(\tilde\xx^G,\tilde B^G), \quad &
\tilde\xx^G&=(x_I)_{I\in\mathcal{O}} ,\quad & \tilde B^G&=(b^G_{IJ}) 
\end{alignat*}
By Lemma \ref{lem:mut-folding}, all we need to show is that 
the extended clusters  in $\mu_K(\Sigma^G)$ and in $(\mu_K(\Sigma))^G$ are the same. 
The extended cluster in $\mu_K(\Sigma^G)$ is 
obtained from 
$\tilde\xx^G$ by replacing $x_K$ by the element $x'_K$ defined by 
\begin{align}
x_K x'_K &= \prod_{b_{IK}^G>0} x_I^{b_{IK}^G} +
            \prod_{b_{IK}^G<0} x_I^{-b_{IK}^G}.
\label{eq:firstfold}
\end{align}

The extended cluster in $\mu_K(\Sigma)$ is 
obtained from
$\tilde\xx$ by replacing each~$x_k$, for $k\in K$, 
by the element $x'_k$ defined by 
\begin{equation}\label{eq:firstmutate}
x_k x'_k = \prod_{b_{ik}>0} x_i^{b_{ik}} + \prod_{b_{ik}<0} x_i^{-b_{ik}}.
\end{equation}
The extended cluster in $(\mu_K(\Sigma))^G$ is then obtained 
by applying the homomorphism~$\psi$;
as we know, $\psi$ sends each~$x_i$ to $x_{[i]}$ where $[i]$ denotes the $G$-orbit containing~$i$.
As a result, the cluster contains the variables 
$x_I$ for $I\in \mathcal{O}-\{K\}$, 
together with $\psi(x'_k)$; here $k$ is an arbitrary element of~$K$.
We need to show that $\psi(x'_k)=x'_K$ where $x_K'$ is defined by~\eqref{eq:firstfold}.

Applying $\psi$ to \eqref{eq:firstmutate}, we get 
\begin{equation}\label{eq:firstmutate1}
x_K \psi(x'_k) = \prod_{b_{ik}>0} x_{[i]}^{b_{ik}} + 
\prod_{b_{ik}<0} x_{[i]}^{-b_{ik}}.
\end{equation}
Note that in the first monomial on the right-hand 
side of \eqref{eq:firstmutate1}, the exponent of a given 
$x_I$ will be nonzero if and only if 
there exists $i\in I$ such that $b_{ik}>0$, in which case the
exponent will be
$\sum_{i\in I} b_{ik}=b_{IK}^G$.  But by Lemma \ref{lem:folding-signs},
the existence of such an index~$i\in I$ is equivalent to the inequality 
$b_{IK}^G>0$.  It follows that the first monomials in the right-hand
sides of \eqref{eq:firstmutate1} and \eqref{eq:firstfold} agree.
A similar argument shows that the second monomials agree, 
and we are done.
\end{proof}

\begin{definition}
Let $G$ be a group acting on the vertex set of a quiver~$Q$.  
We say that $Q$ is \emph{globally foldable} with respect to $G$
if $Q$ is $G$-admissible and moreover 
for any sequence of mutable $G$-orbits $J_1,\dots,J_k$, the quiver
$(\mu_{J_k}\circ\cdots\circ\mu_{J_1})(Q)$ is $G$-admissible. 
\end{definition}

\begin{exercise}
For the quiver~$Q$ and the action of the group $G=\ZZ/2\ZZ$
described in Figure~\ref{fig:D4(1)}, 
show that $Q$ is globally foldable with respect to~$G$.
\end{exercise}

Lemma~\ref{lem:mut-folded-seeds} implies that 
if $Q$ is globally foldable, then we can fold all the seeds in the
corresponding seed pattern. 

\pagebreak[3]

\begin{corollary}
\label{cor:always-admissible}
Let $Q$ be a quiver which is globally foldable with respect to a group~$G$
acting on the set of its vertices.
Let $\Sigma=(\tilde\xx,\tilde B(Q))$ be a seed
in the field $\FFcal$ of rational functions freely generated 
by an extended cluster~$\tilde\xx=(x_i)$.
Let $\tilde\xx^G=(x_I)$ be a collection of formal variables labeled by the $G$-orbits~$I$,
and let $\FFcal^G$ denote the field of rational functions 
in these variables. 
Define the surjective homomorphism 
\begin{align*}
\psi:\FFcal_\mathrm{sf}&\to \FFcal_\mathrm{sf}^G\\
x_i&\mapsto x_I \quad (i\in I)
\end{align*}
of the corresponding semifields of subtraction-free rational
expressions, 
so that $\Sigma$ is a $(G,\psi)$-admissible seed. 
Then for any mutable $G$-orbits $J_1,\dots,J_k$,
the seed $(\mu_{J_k}\circ\cdots\circ\mu_{J_1})(\Sigma)$ 
is $(G,\psi)$-admissible, 
and moreover the folded seeds  $((\mu_{J_k}\circ\cdots\circ\mu_{J_1})(\Sigma))^G$ 
form a seed pattern in~$\FFcal^G$, with the initial extended exchange matrix $(\tilde B(Q))^G$. 
\end{corollary}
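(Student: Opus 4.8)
The plan is to show that \cref{cor:always-admissible} follows almost immediately from \cref{lem:mut-folded-seeds} by induction on the length $k$ of the mutation sequence, once we record that global foldability guarantees the admissibility hypotheses are never violated. First I would observe that the map $\psi$ defined by $x_i\mapsto x_I$ (for $i\in I$) is indeed a surjective semifield homomorphism, and that the initial seed $\Sigma$ is $(G,\psi)$-admissible: the first bullet of \cref{def:seed-folding} holds because $Q$ is $G$-admissible by hypothesis, and the second bullet, $\psi(x_i)=\psi(x_{i'})$ for $i\sim i'$, holds by the very definition of $\psi$, since $i\sim i'$ means $i$ and $i'$ lie in the same orbit $I$ and hence both map to $x_I$.

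Next I would set up the induction. For $k=0$ there is nothing to prove, and the folded seed $\Sigma^G$ has extended exchange matrix $(\tilde B(Q))^G$ by \cref{def:seed-folding}. For the inductive step, suppose the seed $\Sigma_{k-1}=(\mu_{J_{k-1}}\circ\cdots\circ\mu_{J_1})(\Sigma)$ is $(G,\psi)$-admissible, and write $Q_{k-1}$ for its underlying quiver. The definition of \emph{globally foldable} says precisely that $\mu_{J_k}(Q_{k-1})=(\mu_{J_k}\circ\cdots\circ\mu_{J_1})(Q)$ is again $G$-admissible. This is exactly the hypothesis needed to invoke \cref{lem:mut-folded-seeds} with the seed $\Sigma_{k-1}$ and the mutable orbit $K=J_k$: the lemma then yields that $\mu_{J_k}(\Sigma_{k-1})=\Sigma_k$ is $(G,\psi)$-admissible and that $(\mu_{J_k}(\Sigma_{k-1}))^G=\mu_{J_k}(\Sigma_{k-1}^G)$. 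Combined with the inductive hypothesis $\Sigma_{k-1}^G=(\mu_{J_{k-1}}\circ\cdots\circ\mu_{J_1})(\Sigma^G)$ in the folded pattern, this gives
\[
\bigl((\mu_{J_k}\circ\cdots\circ\mu_{J_1})(\Sigma)\bigr)^G
=\mu_{J_k}\bigl(\Sigma_{k-1}^G\bigr)
=(\mu_{J_k}\circ\cdots\circ\mu_{J_1})(\Sigma^G),
\]
completing the induction.

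Finally I would assemble the conclusion. The displayed identity says that the folded seed obtained after any admissible mutation sequence equals the result of applying the corresponding composite single-index mutation $\mu_{J_k}\circ\cdots\circ\mu_{J_1}$ (in the sense of the folded matrix, per the abuse of notation flagged after \cref{lem:mut-folding}) to the initial folded seed $\Sigma^G$. Since every seed in the folded collection is thus reached from the single initial seed $\Sigma^G=(\tilde\xx^G,(\tilde B(Q))^G)$ by genuine seed mutations in $\FFcal^G$, and each such mutation transforms one seed into another by the exchange relations, the collection of folded seeds is closed under mutation and is therefore a bona fide seed pattern with initial extended exchange matrix $(\tilde B(Q))^G$.

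I expect the only genuine subtlety to be bookkeeping about what ``is a seed pattern'' requires: strictly speaking one should check that the folded seeds are indexed consistently by a tree $\TT_{m^G}$ and that two different mutation sequences yielding the same upstairs seed yield the same downstairs seed. The former is handled by the composite-mutation notation $\mu_K$ from \eqref{eq:mu-K}, which is well defined on mutable orbits by condition \eqref{item:admissible-G3}; the latter is automatic because $\Sigma^G$ determines all folded seeds functorially through $\psi$ once $\Sigma_{k-1}^G$ is known, so coincidences upstairs descend. Everything hard has already been absorbed into \cref{lem:mut-folding} and \cref{lem:mut-folded-seeds}, and the present corollary is essentially the statement that global foldability is exactly the hypothesis that lets one iterate those lemmas.
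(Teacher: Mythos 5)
Your proof is correct and is essentially the paper's own argument: the paper derives the corollary directly by iterating \cref{lem:mut-folded-seeds}, with global foldability supplying exactly the $G$-admissibility of each intermediate quiver $(\mu_{J_i}\circ\cdots\circ\mu_{J_1})(Q)$ needed to apply that lemma at each step. Your explicit induction on $k$, together with the observation that the resulting identity $\bigl((\mu_{J_k}\circ\cdots\circ\mu_{J_1})(\Sigma)\bigr)^G=(\mu_{J_k}\circ\cdots\circ\mu_{J_1})(\Sigma^G)$ exhibits the folded seeds as the seed pattern generated by $\Sigma^G$, is precisely the bookkeeping the paper leaves implicit.
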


In general, it may be very difficult to determine whether a quiver 
is globally foldable.
Fortunately, the cases that we will 
need in Chapter~\ref{ch:finitetype}  for the purposes of finite type classification
will turn out to be easy to handle. 
One of these cases is discussed in Exercise~\ref{ex:E6F4} below. 
We will revisit this exercise in \cref{sec:exceptional2}.

\begin{exercise}
\label{ex:E6F4}
Let $Q$ be the quiver at the left of 
\cref{fig:E6-F4}, with all its vertices mutable. 
Let the generator of the group $G=\ZZ/2\ZZ$ act on the vertices
of $Q$ 
by fixing the vertices $1$ and~$2$,
exchanging the vertices $3$ and~$4$, and exchanging the vertices $5$ and~$6$.
Then $Q$ is $G$-admissible;
the folded skew-symmetrizable matrix  $B(Q)^G$ is shown at the right in \cref{fig:E6-F4}.
(The rows and columns of this matrix are labeled by the orbits
$\{1\}, \{2\}, \{3,4\}, \{5,6\}$.)
Show that $Q$ is globally foldable,
by cataloguing all quivers (up to $G$-equivariant isomorphism) 
which can be obtained from~$Q$ by iterating the transformations~$\mu_J$ 
associated with $G$-orbits~$J$.  
\end{exercise}

\begin{figure}[ht]
\setlength{\unitlength}{1.6pt}
\begin{picture}(180,30)(0,-22)
\thicklines
\put(2,0){\vector(1,0){16}}
\put(38,0){\vector(-1,0){16}}
\put(40,-2){\vector(0,-1){16}}
\put(78,0){\vector(-1,0){16}}
\put(42,0){\vector(1,0){16}}
\put(40,-20){\circle*{2}}
\multiput(0,0)(20,0){5}{\circle*{2}}
\put(0,5){\makebox(0,0){$5$}}
\put(20,5){\makebox(0,0){$3$}}
\put(40,5){\makebox(0,0){$2$}}
\put(40,-25){\makebox(0,0){$1$}}
\put(60,5){\makebox(0,0){$4$}}
\put(80,5){\makebox(0,0){$6$}}

\put(150,-10){\makebox(0,0){$\tilde B^G=\begin{bmatrix}
0 & -1 & 0 & 0\\
1 & 0 & 1& 0\\
0 & -2 & 0 &-1\\
0 & 0 & 1 & 0 
\end{bmatrix}$}}

\end{picture}

\caption{Folding a type $E_6$ quiver. 
}
\label{fig:E6-F4}
\end{figure}
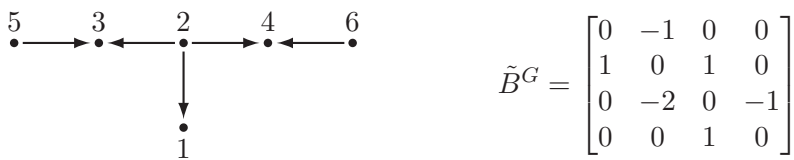

\begin{remark}
There exist skew-symmetrizable square matrices which cannot be obtained
by folding a globally foldable quiver, see \cite[Section~14]{Labardini-Zelevinsky}.
Consequently the technique of folding (in the current state of the art) 
is not powerful enough to reduce the study of general seed patterns,
and the associated cluster algebras, to the quiver case. 
\end{remark}


\chapter{\hbox{Finite type classification}}
\label{ch:finitetype}

\def\Seeds{\operatorname{\# seeds}}
\def\Vars{\operatorname{\# clvar}}

\vspace{-.3in}


A seed pattern (or the corresponding cluster algebra)
is said to be of \emph{finite type} if it 
has finitely many different seeds. 
%
To rephrase, a seed gives rise to a pattern (or cluster algebra) of
finite type if the process of iterated mutation produces finitely many
distinct seeds. 

If a seed pattern has finite type, then it obviously has finitely many distinct cluster variables. 
In fact the converse is also true, 
see \cref{cor:finitely-many-seeds}.

The main result of this chapter is the classification, originally
obtained in~\cite{ca2}, 
of seed patterns (equivalently cluster algebras) of finite type. 
It turns out that the property of a cluster algebra 
with an initial seed $(\xx, \yy, B)$ to be of finite type
depends only on (the mutation class~of) the exchange matrix~$B$ but not on the choice of 
a coefficient tuple~$\yy$.
Even more remarkably, such mutation classes 
are in one-to-one correspondence with Cartan matrices of finite type,
or equivalently with finite crystallographic root systems.

To show that a cluster algebra of finite type must come from a Cartan
matrix of finite type, we follow the approach of~\cite{ca2}.   In particular 
we show that a cluster algebra has finite type 
if and only if all its exchange matrices $B=(b_{ij})$ have the property 
that $|b_{ij} b_{ji}| \leq 3$ for any pair of indices $i,j$.

The fact that cluster algebras coming from finite-type Cartan matrices are of finite
type is established by a string of case-by-case arguments. 
For each type, we construct a particular seed pattern 
that has finitely many seeds,
and whose initial extended exchange matrix 
has full $\mathbb{Z}$-rank. \linebreak[3]
A~generalization to arbitrary coefficients
is then obtained via Remark~\ref{rem:full-z-rank}. \linebreak[3]
We note that 
the original proof \cite{ca2} of this direction of the classification
theorem used a different (root-theoretic) strategy 
relying on some rather delicate properties of \emph{generalized 
associahedra}, cf.\ 
Section~\ref{sec:generalized-associahedra}. 


We are not aware of a simple argument that would directly derive the classification 
of cluster algebras of finite type from one of the instances
of the classical Cartan-Killing classification (see Theorem~\ref{th:Cartan-Killing-list}). 

\section{Finite type classification in rank~$2$}
\label{sec:finite-type-rank2}

Any seed pattern of rank~$1$ has two seeds, so is of finite type. 
In this section, we determine which seed patterns of rank~2 are of finite type. 

Recall that the seeds 
in any seed pattern of rank~2 
can be labeled by integers $t\in\ZZ$.
Without loss of generality, we may assume
that the exchange matrices $B(t)$ are given by 
\begin{equation}
\label{eq:Bbc-again}
B(t)=(-1)^t 
\begin{bmatrix}
0 & b\\
-c & 0
\end{bmatrix}
\end{equation}
where the integers $b$ and $c$ are either both positive, or both equal to~$0$. 

\begin{theorem}
\label{th:finite-type-rank2}
A seed pattern of rank~$2$,
with exchange matrices given by~\eqref{eq:Bbc-again},
is of finite type if and only if $bc\le 3$. 
\end{theorem}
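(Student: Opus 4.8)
The plan is to analyze the rank-2 exchange dynamics explicitly by writing down the cluster variables $x_1, x_2, x_3, \dots$ produced by iterated mutation and determining exactly when this sequence becomes periodic. Since the seeds in a rank-2 pattern are labeled by $t \in \ZZ$ and the exchange matrices alternate in sign as in \eqref{eq:Bbc-again}, the pattern has finitely many seeds if and only if the one-sided sequence of cluster variables $(x_t)_{t \ge 1}$ is eventually periodic; by the alternating structure and the symmetry of the recurrence, periodicity in one direction forces full periodicity. First I would set up the exchange relations governing this sequence. Writing the cluster at step $t$ as a pair sharing one variable with the next cluster, the recurrence takes the form
\begin{equation*}
x_{t-1}\, x_{t+1} =
\begin{cases}
x_t^{\,b} + 1 & \text{if $t$ is odd,}\\
x_t^{\,c} + 1 & \text{if $t$ is even,}
\end{cases}
\end{equation*}
(with coefficients suppressed, using the principal-coefficient or coefficient-free normalization justified by \cref{rem:full-z-rank}, so that finite type depends only on $b$ and $c$).

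The sufficiency direction ($bc \le 3$) I would handle by direct computation, splitting into the cases $(b,c)$ with $bc \in \{0,1,2,3\}$ up to the symmetry $b \leftrightarrow c$. For each case one computes the sequence $x_1, x_2, x_3, \dots$ as Laurent polynomials in the two initial variables $x_1, x_2$ and checks that it returns to the initial cluster after a finite number of steps: $bc=0$ gives period $4$, $bc=1$ (type $A_2$) gives period $5$, $bc=2$ (type $B_2$) gives period $6$, and $bc=3$ (type $G_2$) gives period $8$. These are finite explicit calculations; I would present the $A_2$ case in full as the model and indicate the others. The key point is merely that the sequence closes up, yielding finitely many seeds.

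For necessity ($bc \ge 4 \Rightarrow$ infinite type), the natural strategy is to show the sequence of exponents or degrees grows without bound, so no cluster variable is ever repeated. The cleanest route is to track the denominator vectors (or the degrees) of the $x_t$: one shows that the exponent vectors satisfy a linear recurrence whose governing matrix, when $bc \ge 4$, has a real eigenvalue greater than $1$, forcing the degrees to grow geometrically and hence the $x_t$ to be pairwise distinct. Concretely, if $d_t$ denotes a suitable integer statistic of $x_t$ (such as the degree in $x_1$ or the denominator exponent), then $d_{t+1} = bc\, d_t - d_{t-1} - (\text{bounded correction})$ or a similar second-order recurrence with characteristic roots $\tfrac{1}{2}(bc \pm \sqrt{(bc)^2 - 4bc})$; for $bc \ge 4$ the larger root exceeds $1$, giving unbounded growth.

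The main obstacle will be making the necessity argument rigorous: one must identify the right numerical invariant of the cluster variables and prove it obeys a clean recurrence with the claimed monotone growth, rather than merely observing that the algebra ``looks'' infinite. The subtlety is that the raw recurrence for the $x_t$ is rational, not polynomial, so one cannot read off degree growth naively; I expect to use the Laurent phenomenon to control the form of each $x_t$ and then extract a genuinely linear recurrence for the exponents. Handling the boundary case $bc = 4$ (where the larger characteristic root equals $1$ with multiplicity two, giving linear rather than geometric growth) carefully, to confirm the degrees still tend to infinity, will require a touch more care than the strictly hyperbolic case $bc \ge 5$.
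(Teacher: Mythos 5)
Your plan for $bc\le 3$ is essentially the paper's: verify finiteness for one explicit pattern whose initial extended exchange matrix has full $\ZZ$-rank, then invoke \cref{cor:z-span} and \cref{rem:full-z-rank}. One caution: of the two normalizations you offer, only principal coefficients works. The coefficient-free matrix $\left[\begin{smallmatrix}0 & b\\ -c & 0\end{smallmatrix}\right]$ has rows spanning $c\ZZ\oplus b\ZZ\neq\ZZ^2$ when $bc\in\{2,3\}$, so a finiteness check for the coefficient-free pattern does not, via \cref{cor:z-span}, control patterns with arbitrary coefficients. (The paper instead appends the single frozen row $(1,0)$, as in Exercise~\ref{exercise:A2B2G2}; principal coefficients would do equally well.)

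For $bc\ge 4$ your route is genuinely different from the paper's, and as sketched it has a gap at its crux. You propose to track denominator vectors (or degrees) and assert they obey a linear recurrence, but the Laurent phenomenon only gives that each $z_t$ \emph{is} a Laurent polynomial — it does not identify its denominator. To get the exact recurrence $d_{t+1}=e\,d_t-d_{t-1}$ (with $e\in\{b,c\}$ alternating) one must rule out cancellation: when $(z_t^{\,e}+1)/z_{t-1}$ is written in lowest terms, its numerator must remain coprime to $x_1x_2$. That coprimality is true but needs its own induction; it is exactly the obstacle you name without resolving, and it is the real content of the denominator-vector approach. The paper sidesteps the issue entirely: since every cluster variable is a subtraction-free expression in the initial extended cluster, it can be evaluated under a semifield homomorphism $\psi$ into $U=\{u^r: r\in\RR\}$ with $u^r\oplus u^s=u^{\max(r,s)}$ (frozen variables sent to~$1$). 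In $U$ the exchange recurrence linearizes and is solved in closed form, $\psi(z_{2k+1})=u^{\lambda^k c}$, $\psi(z_{2k+2})=u^{\lambda^k(\lambda+1)}$, so the $z_t$ are pairwise distinct with no appeal to Laurentness or cancellation; the boundary case $bc=4$ is the same evaluation with different initial exponents, giving linear growth. What your approach would buy, once the no-cancellation lemma is supplied, is finer information (the actual denominator vectors); what the paper's buys is brevity and self-containment. Finally, your characteristic roots are misquoted: the correct equation is \eqref{eq:lambda-char}, $\lambda^2-(bc-2)\lambda+1=0$, with roots $\tfrac12\bigl(bc-2\pm\sqrt{bc(bc-4)}\bigr)$, not $\tfrac12\bigl(bc\pm\sqrt{(bc)^2-4bc}\bigr)$; note that your own (correct) remark that $bc=4$ yields a double root at~$1$ is consistent only with the former.
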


The statement of 
Theorem~\ref{th:finite-type-rank2} is very similar to
the classification of finite crystallographic reflection groups of
rank~2, which we recall in Proposition~\ref{rem:weyl-rank2} below.
(The proof of Theorem~\ref{th:finite-type-rank2} will not depend on this proposition.) 

\begin{proposition}
\label{rem:weyl-rank2}
Consider the subgroup $W\subset\GL_2$ generated by the reflections
\begin{equation}
\label{eq:s1-s2-bc}
s_1=\begin{bmatrix}
-1 & b\\
0 & 1
\end{bmatrix},\quad
s_2=\begin{bmatrix}
1 & 0\\
c & -1
\end{bmatrix}. 
\end{equation}
(As above, 
$b,c\in\ZZ$ are either both positive, or both equal to~$0$.)
Then $W$ is a finite group if and only if $bc \leq 3$.
\end{proposition}

\begin{proof}
Since $s_1^2=s_2^2=1$, the group~$W$ is finite if and only if the
element
\[
s_1 s_2=\begin{bmatrix}
bc-1 & -b\\
c & -1
\end{bmatrix}
\] 
is of finite order. 
An eigenvalue $\lambda$ of~$s_1s_2$ satisfies the
characteristic equation
\begin{equation}
\label{eq:lambda-char}
\lambda^2-(bc-2)\lambda+1=0. 
\end{equation}
If $bc\in\{1,2,3\}$, then the roots of \eqref{eq:lambda-char} are two distinct roots of
unity, so $s_1s_2$ has finite order
(specifically, order 3, 4, or~6, respectively). 
If $b=c=0$, then $(s_1s_2)^2=1$ by inspection. 
If $bc>4$, then the eigenvalues are real and not equal to $\pm 1$, 
so $W$ is infinite. 
For $bc=4$, one checks that
\[
(s_1s_2)^k=
\begin{bmatrix}
2k+1 & -kb\\
kc & -2k+1
\end{bmatrix},
\]
implying that $W$ is infinite in this case as well. 
\end{proof}

\begin{proof}[Proof of Theorem~\ref{th:finite-type-rank2}]
The case $b=c=0$ is trivial. 
The cases $bc\in\{1,2,3\}$ are handled by
the calculation from Exercise~\ref{exercise:A2B2G2}. 
An alternative (more conceptual) explanation of why the
rank~2 cluster algebras with $bc\le 3$ are of finite type
will be given later in this chapter.

Recall from \cref{exercise:A2B2G2} that 
any seed pattern 
with exchange matrices
\begin{equation}
B(t)=(-1)^t\,\begin{bmatrix}
 0 & 1\\
-c & 0
\end{bmatrix},
\end{equation}
with $c\in\{1,2,3\}$, 
has finitely many distinct seeds.

Now suppose that $bc\ge 4$.
Let 
$(\xx(t), \yy(t), B(t))_{t\in\ZZ}$
be a seed pattern in an ambient field~$\FFcal$
whose exchange matrices are given by~\eqref{eq:Bbc-again}.
We denote the cluster variables in this pattern
by $z_t$ ($t\in\ZZ$), so that 
\[
\dots,\,\xx(0)=(z_1,z_2),\ \ \xx(1)=(z_3,z_2),\ \ \xx(2)=(z_3,z_4),\ \ \xx(3)=(z_5,z_4),
\dots
\]
Our goal is to show that the set $\{z_t:t\in\ZZ\}\subset\FFcal$ is infinite.
(In fact, all cluster variables $z_t$ turn out to be
distinct.) 

Let $u$ be a formal variable, and consider the semifield
$U=\{u^r\,:\,r\in\RR\}$ of 
formal monomials in~$u$ with real exponents, 
with the operations defined~by
\begin{align*}
u^r\oplus u^s &= u^{\max(r,s)},\\
u^r\cdot u^s &= u^{r+s}. 
\end{align*}
We shall prove that the set $\{z_t\}$ is infinite 
by constructing a semifield homomorphism~$\psi:\FFcal\to U$
such that the image $\{\psi(z_t):t\in\ZZ\}\subset U$ is infinite. 
We consider the cases $bc>4$ and $bc=4$ separately. 

\textbf{Case~1:} $bc>4$. 
In this case, there is a real number
$\lambda>1$ satisfying~\eqref{eq:lambda-char}. 
Let $\psi$ be the map uniquely defined 
by setting the image of every frozen variable to be~1,
and setting
\begin{equation}
\label{eq:psi-initial}
\psi(z_1)=u^c,\quad \psi(z_2)=u^{\lambda+1}. 
\end{equation}
The exchange relations imply that the images $\psi(z_t)$ satisfy
\begin{equation}
\label{eq:Abc-trop}
\psi(z_{t-1})\,\psi(z_{t+1})=
\begin{cases}
\psi(z_t)^c \oplus 1 & \text{if $t$ is even;}\\[.05in]
\psi(z_t)^b \oplus 1 & \text{if $t$ is odd}
\end{cases}
\end{equation} \pagebreak[3]
(cf.\ \eqref{eq:Abc}). We claim that, for $k=0,1,2,\dots$, one has
\begin{equation}
\label{eq:z-via-lambda}
\psi(z_{2k+1})=u^{\lambda^k c},\quad \psi(z_{2k+2})=u^{\lambda^k(\lambda+1)}. 
\end{equation}
The base case $k=0$ holds in view of~\eqref{eq:psi-initial}. 
Induction step:
\begin{align*}
\psi(z_{2k+3})&=\frac{\psi(z_{2k+2})^c \oplus 1}{\psi(z_{2k+1})}
=u^{\lambda^k(\lambda+1)c-\lambda^k c}
=u^{\lambda^{k+1} c},\\
\psi(z_{2k+4})&=\frac{\psi(z_{2k+3})^b \oplus 1}{\psi(z_{2k+2})}
=u^{\lambda^{k+1} cb-\lambda^k(\lambda+1)}
=u^{\lambda^k( \lambda cb-\lambda-1)}
=u^{\lambda^{k+1}(\lambda+1)}, 
\end{align*}
where the last equality relies on~\eqref{eq:lambda-char}. 
Since $\lambda>1$, formulas \eqref{eq:z-via-lambda} imply that the
image set $\{\psi(z_t)\}\subset U$ is infinite. 

\textbf{Case~2:} $bc=4$. 
While the general logic of the argument remains the same,
it has to be adjusted since in this case, 
the only root of the equation~\eqref{eq:lambda-char} is $\lambda=1$. 
So we replace~\eqref{eq:psi-initial} by
\[
\psi(z_1)=u,\quad \psi(z_2)=u^b,
\]
and verify by induction that $\psi(z_{2k-1})=u^{2k-1}$
and $\psi(z_{2k+2})=u^{(k+1)b}$,
implying the claim.
Details are left to the reader. 
\end{proof}

Theorem~\ref{th:finite-type-rank2} suggests the following definition. 

\begin{definition}
\label{def:2-finite}
A skew-symmetrizable matrix~$B=(b_{ij})$ is called \emph{$2$-finite} 
if and only if 
for any matrix $B'$ mutation equivalent to~$B$ and any indices $i$ and~$j$, we have
$|b'_{ij}b'_{ji}| \leq 3$.
\end{definition}

\begin{corollary}
\label{cor:finite-type-2finite}
In a seed pattern of finite type, every exchange
matrix is $2$-finite. 
\end{corollary}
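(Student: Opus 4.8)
The plan is to combine two facts established earlier in the excerpt: the finite-type rank-2 classification (Theorem~\ref{th:finite-type-rank2}) and the hereditary nature of the restriction operation on seed patterns. The statement to prove is that in a finite-type seed pattern, every exchange matrix $B=(b_{ij})$ is $2$-finite, i.e.\ that $|b'_{ij}b'_{ji}|\le 3$ for all $B'$ mutation-equivalent to $B$ and all indices $i,j$. First I would observe that $2$-finiteness of $B$ is by Definition~\ref{def:2-finite} a condition on the entire mutation class $[B]$, and that finite type is preserved under mutation; so it suffices to show that for \emph{any} exchange matrix $B$ occurring in a finite-type pattern, we have $|b_{ij}b_{ji}|\le 3$ for every pair $i,j$. (Applying this to each $B'$ in the mutation class then yields the full statement.)

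The core of the argument is a reduction to rank~$2$. Fix an exchange matrix $B$ in the finite-type pattern and a pair of mutable indices $i,j$. I would pass to the restriction $B_{\{i,j\}}$ to the two-element index set $I=\{i,j\}$, which by Lemma~\ref{ex:mut-local} (mutation commutes with restriction) generates a rank-2 seed pattern that is itself of finite type---here I would invoke the Proposition stating that finite mutation type is hereditary, together with the fact that finite type implies finite mutation type, or more directly the observation that a seed subpattern of a finite-type pattern has finitely many cluster variables and hence finitely many seeds (see \cref{cor:finitely-many-seeds}). The restricted matrix has the form $\begin{bmatrix} 0 & b_{ij}\\ b_{ji} & 0\end{bmatrix}$, which up to the normalization in~\eqref{eq:Bbc-again} corresponds to parameters $b,c$ with $bc=|b_{ij}b_{ji}|$ (skew-symmetrizability forcing $b_{ij}$ and $b_{ji}$ to have opposite signs). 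By Theorem~\ref{th:finite-type-rank2}, this rank-2 pattern is of finite type if and only if $bc\le 3$, giving $|b_{ij}b_{ji}|\le 3$ exactly.

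I expect the main technical obstacle to be justifying cleanly that the restriction to $\{i,j\}$ does produce a genuine rank-2 \emph{seed pattern} of finite type, as opposed to merely a submatrix. The subtlety is that restriction of seeds (Definition~\ref{def:excision}) requires a vanishing condition on the entries $b_{\ell k}$ for $\ell\notin I$, which need not hold for an arbitrary pair $i,j$; so I cannot directly excise a rank-2 seed subpattern. The clean way around this is to argue at the level of \emph{matrices} rather than seeds: finite type of the ambient pattern forces finitely many distinct exchange matrices in $[B]$ (since finite type gives finitely many seeds), and on each of these matrices I only need the combinatorial rank-2 input. Concretely, if some $|b_{ij}b_{ji}|\ge 4$, I would mutate the ambient pattern only at the indices $i$ and $j$, producing an infinite sequence of distinct exchange matrices (by the computation underlying Case~1 and Case~2 of the proof of Theorem~\ref{th:finite-type-rank2}, the relevant matrix entries grow without bound), contradicting finiteness of the number of seeds. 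Thus the real content is to package the rank-2 divergence argument so that it applies inside the higher-rank pattern using only mutations at $i$ and $j$; restriction (Lemma~\ref{ex:mut-local}) guarantees that the $2\times 2$ block $B_{\{i,j\}}$ evolves under these mutations exactly as in the rank-2 case, so its divergence is inherited by $B$ itself.

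Assembling the pieces: given the finite-type pattern, suppose for contradiction that some mutation-equivalent $B'$ has $|b'_{ij}b'_{ji}|\ge 4$ for some $i,j$. Using Lemma~\ref{ex:mut-local}, the orbit of the $2\times 2$ submatrix $B'_{\{i,j\}}$ under $\mu_i,\mu_j$ coincides with a genuine rank-2 mutation class with $bc=|b'_{ij}b'_{ji}|\ge 4$, which by Theorem~\ref{th:finite-type-rank2} is \emph{not} of finite type and in fact (from the explicit $\psi$-image computation in the proof) yields infinitely many distinct submatrices. Since distinct $2\times 2$ submatrices force distinct ambient matrices $B'$, the pattern would have infinitely many seeds, contradicting finite type. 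Hence $|b'_{ij}b'_{ji}|\le 3$ for all $B'$ and all $i,j$, which is precisely the $2$-finiteness of every exchange matrix.
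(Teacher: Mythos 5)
Your high-level strategy (reduce to rank $2$ and invoke Theorem~\ref{th:finite-type-rank2}) is the same as the paper's, and you correctly flag that Definition~\ref{def:excision} cannot be applied to an arbitrary pair $\{i,j\}$. But the mechanism you substitute in the final assembly contains a genuine error: you claim that when $|b'_{ij}b'_{ji}|\ge 4$, alternately mutating at $i$ and $j$ makes ``the relevant matrix entries grow without bound,'' yielding ``infinitely many distinct submatrices'' and hence infinitely many distinct ambient exchange matrices. This is false. For a $2\times 2$ skew-symmetrizable matrix, \emph{every} off-diagonal entry lies in the mutated row or column, so $\mu_1(B)=\mu_2(B)=-B$: the rank-$2$ mutation class of matrices is just $\{B,-B\}$, no matter how large $bc$ is. This is exactly what~\eqref{eq:Bbc-again} records with the factor $(-1)^t$. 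By Lemma~\ref{ex:mut-local}, the same holds for the block $B'_{\{i,j\}}$ inside the ambient matrix: under alternating $\mu_i,\mu_j$ it merely alternates sign. The divergence computed in Cases~1 and~2 of the proof of Theorem~\ref{th:finite-type-rank2} is divergence of the tropical images $\psi(z_t)$ of the \emph{cluster variables}, not of any matrix entries. Indeed, your proposed argument cannot be repaired at the matrix level: already for $n=2$ with no frozen variables and $bc\ge 4$, the pattern has exactly two distinct exchange matrices yet infinitely many seeds, so no counting of exchange matrices can detect the failure of finite type.

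The correct completion --- and this is what the paper does --- is to track cluster variables rather than matrices. Starting from the seed carrying $B'$, alternately apply $\mu_i$ and $\mu_j$; all variables other than those at positions $i,j$ stay fixed and enter the exchange relations only as coefficient-like factors. (Formally, freeze all indices outside $\{i,j\}$, which commutes with mutation; or simply run the semifield-homomorphism argument of Theorem~\ref{th:finite-type-rank2} in the ambient field, sending every variable except $x_i,x_j$ to~$1$, so that the images satisfy the recurrences~\eqref{eq:Abc-trop}.) Since that proof allows arbitrary coefficients, it produces infinitely many distinct cluster variables in the ambient pattern, hence infinitely many distinct seeds, contradicting finite type. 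With this substitution your reduction becomes the paper's proof; without it, the contradiction you derive never materializes.
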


\begin{proof}
This is an immediate consequence of
Theorem~\ref{th:finite-type-rank2}.  
If an exchange matrix $B$ is mutation equivalent to $B'=(b_{ij}')$ such
that $|b'_{ij}b'_{ji}| \geq 4$ for some $i$ and~$j$, 
then 
``freezing" all the cluster variables in the corresponding seed 
except for $x_i$ and $x_j$, and 
	alternately applying mutations $\mu_i$ and~$\mu_j$ to the
corresponding seed, we obtain infinitely many 
distinct cluster variables, 
and infinitely many distinct seeds.  
\end{proof}

We will eventually show that the 
converse to \cref{cor:finite-type-2finite} holds as well, see
\cref{th:finite-type-bound-by-3}.

\pagebreak[3]

\section{Cartan matrices and Dynkin diagrams}\label{sec:Cartan}

The classification of cluster algebras of finite type turns out to be
completely parallel to the famous 
Cartan-Killing classification of semisimple Lie algebras, finite
crystallographic root systems, etc.
The latter classification can be found in many books,  e.g.,~\cite{fulton-harris,kac}.
In this section, we quickly review it, using the language of
\emph{Cartan matrices} and \emph{Dynkin diagrams}. 
We then explain the connection between 
(symmetrizable) Cartan matrices and (skew-symmetrizable) exchange
matrices. 

\begin{definition}
A square integer matrix~$A=(a_{ij})$ is called a
\emph{symmetrizable generalized Cartan matrix} if it satisfies the following conditions:
\begin{itemize}[leftmargin=.2in]
\item all diagonal entries of $A$ are equal to~$2$;
\item all off-diagonal entries of $A$ are non-positive;
\item
there exists a diagonal matrix~$D$ with positive diagonal entries such
that the matrix $DA$ is symmetric. 
\end{itemize} \pagebreak[3]
We call $A$ \emph{positive} if 
$DA$ is positive definite; 
this is equivalent to the positivity of all 
principal minors~$\Delta_{I,I}(A)$.  
In particular, 
any such matrix satisfies
\[
\Delta_{\{i,j\},\{i,j\}}(A)
=\det\begin{pmatrix}
2 & a_{ij}\\
a_{ji} & 2
\end{pmatrix}
=4-a_{ij} a_{ji}>0, 
\]
or equivalently
\begin{equation}
\label{eq:cartan-bound-3}
a_{ij} a_{ji} \leq 3 \ \text{for $i \neq j$.}
\end{equation}
Positive symmetrizable generalized Cartan matrices are often referred to simply
as \emph{Cartan matrices}, or \emph{Cartan matrices of finite type}.
\end{definition}

\begin{example}
In view of \eqref{eq:cartan-bound-3}, a $2 \times 2$ matrix
\[
A=\begin{bmatrix}
2 & -b\\
-c & 2
\end{bmatrix}
\]
is a Cartan matrix of finite type if and only if one of the following
holds:
\begin{itemize}[leftmargin=.2in]
\item
$b=c=0$; 
\item
$b=c=1$; 
\item
$b=1$, $c=2$ or $b=2$, $c=1$; 
\item
$b=1$, $c=3$ or $b=3$, $c=1$. 
\end{itemize}
Note that this matches the classifications in
Theorem~\ref{th:finite-type-rank2} and
\cref{rem:weyl-rank2}. 
The latter match has a well-known explanation: 
there is a canonical correspondence between
Cartan matrices of finite type and finite Weyl groups (or finite
crystallographic root systems). 
The relationship between these objects and cluster algebras of finite type is
much more subtle. 
\end{example}

\begin{remark}
A Cartan matrix encodes essential information 
about the geometry of a root system 
(or the corresponding Weyl group). 
The classification of finite crystallographic 
root systems (resp., associated reflection groups) can be
reduced to classifying Cartan matrices of finite type.
This standard material can be found in many books,
see, e.g.,~\cite{humphreys}.
\end{remark}

\begin{definition}
\label{def:coxeter-dynkin}
The \emph{Coxeter graph} of an $n \times n$ Cartan matrix~$A$ is
a simple graph with vertices $1, \dots, n$ in which vertices $i$ and
$j\neq i$ are
joined by an edge whenever $a_{ij} \neq 0$.
If $a_{ij} \in \{0, -1\}$ for all $i \neq j$, then $A$ is uniquely
determined by its Coxeter graph. 
(Such matrices are called \emph{simply-laced}.)
If $A$ is not simply-laced but of finite type then,
in view of \eqref{eq:cartan-bound-3}, one needs a
little additional information to specify $A$.
This is done by replacing the Coxeter graph of $A$ with its
\emph{Dynkin diagram} in which, 
instead of being connected by 
	a single edge, each pair of vertices $i$ and $j$ with
$a_{ij} a_{ji} > 1$ is shown as follows:
\begin{center}
\begin{tabular}{rcll}
$i$ & \hspace{-.1in}\raisebox{-0.7mm}{\includegraphics[width=10mm]{B2dynk.ps}}
       &\hspace{-.15in} $j$ & if
  $a_{ij}=-1$ and $a_{ji}=-2$;\\[.05in] 
$i$ & \hspace{-.1in}\raisebox{-0.7mm}{\includegraphics[width=10mm]{G2dynk.ps}}
       &\hspace{-.15in} $j$ & if $a_{ij}=-1$ and
  $a_{ji}=-3$. 
\end{tabular}
\end{center}
\end{definition}

Note that our usage of the terms \emph{Coxeter graph} and 
\emph{Dynkin diagram} is a bit non-standard:
Coxeter graphs are usually defined as edge-labeled graphs, 
and 
 Dynkin diagrams are often assumed to be connected.
(We make no such requirement.)
Also, as in~\cite{ca2}, we use the conventions
of~\cite{kac} 
(as opposed to those~in~\cite{bourbaki}) 
in going between Dynkin diagrams and Cartan matrices.

A~couple of examples are shown in Figure~\ref{fig:B3-C3}.
The notation $B_3$ and $C_3$ is explained
in Figure \ref{diagrams}.

\begin{figure}[ht]
\vspace{-.2in}
\[
\begin{array}{ccc}
B_3\quad
&
\setlength{\unitlength}{1.5pt}
\begin{picture}(40,17)(0,-2)
\thicklines
\put(9,0){\line(1,1){2.5}}
\put(9,0){\line(1,-1){2.5}}
\thinlines
\put(20,-0.15){\line(1,0){20}}
\put(0,1.25){\line(1,0){20}}
\put(0,-1.5){\line(1,0){20}}
\multiput(0,0)(20,0){3}{\circle*{3}}
\end{picture} &
\quad\begin{bmatrix}
2  & -2 & 0\\
-1  & 2  & -1\\
0  & -1  & 2\\
\end{bmatrix}
\\[.3in]
C_3\quad
&
\setlength{\unitlength}{1.5pt}
\begin{picture}(40,17)(0,-2)
\thicklines
\put(11,0){\line(-1,1){2.5}}
\put(11,0){\line(-1,-1){2.5}}
\thinlines
\put(20,-0.15){\line(1,0){20}}
\put(0,1.25){\line(1,0){20}}
\put(0,-1.5){\line(1,0){20}}
\multiput(0,0)(20,0){3}{\circle*{3}}
\end{picture}&
\quad\begin{bmatrix}
2  & -1 & 0\\
-2  & 2  & -1\\
0  & -1  & 2\\
\end{bmatrix}
\end{array}
\]
\vspace{-.1in}
\caption{Dynkin diagrams and Cartan matrices of types $B_3$ and $C_3$.}
\label{fig:B3-C3}
\end{figure}

\begin{remark}
It is important to stress that the meaning of double 
and triple arrows in a Dynkin diagram is very different from the meaning
of multiple arrows in a quiver. 
A double arrow 
\[
\,1\,\,
\raisebox{-0.7mm}{\includegraphics[width=10mm]{B2dynk.ps}}
\,\,2
\]
in a Dynkin diagram corresponds to the submatrix 
\[
\begin{bmatrix}
2  & -1\\
-2  & 2 \\
\end{bmatrix}
\]
of the associated Cartan matrix.
Meanwhile, a double arrow 
\[
1\begin{array}{c}
\longrightarrow\\[-.12in]
\longrightarrow
\end{array}
2
\] 
in a quiver
corresponds to the submatrix
\[
\begin{bmatrix}
0  & 2 \\
-2  & 0 \\
\end{bmatrix}
\]
of the associated exchange matrix. 
\end{remark}

A Cartan matrix is called \emph{indecomposable} if its 
Dynkin diagram is connected.
By a simultaneous permutation of rows and columns, any Cartan matrix~$A$ can
be transformed  into a block-diagonal matrix with indecomposable blocks.
This corresponds to decomposing the Dynkin diagram of $A$ into
connected components.
The \emph{type} of~$A$ (i.e., its
equivalence class with respect to simultaneous permutations of rows
and columns) 
is determined by specifying the multiplicity of each type of connected
Dynkin diagram in this decomposition.
Thus to classify Cartan matrices, one needs to produce
the list of all connected Dynkin diagrams.
The celebrated \emph{Cartan-Killing classification} 
asserts that this list is given as follows.

\begin{theorem}
\label{th:Cartan-Killing-list}
Figure~\ref{diagrams} gives a complete list of connected Dynkin
diagrams corresponding to indecomposable Cartan matrices of finite type.
\end{theorem}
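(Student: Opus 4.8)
The plan is to prove the two directions separately: first that every diagram appearing in Figure~\ref{diagrams} really does correspond to a (positive, symmetrizable) Cartan matrix of finite type, and conversely that no other connected diagram can arise. For the constructive direction I would, for each of $A_n, B_n, C_n, D_n, E_6, E_7, E_8, F_4, G_2$, exhibit a symmetrizer $D=\operatorname{diag}(d_i)$ with positive entries making $DA$ symmetric, and then verify that $DA$ is positive definite by checking that all leading principal minors are positive. The cleanest implementation computes the determinants of the Cartan matrices of each family via their defining recursions (for instance $\det A_n = n+1$), so that positivity of every principal minor---equivalently positive definiteness---follows by induction on the size. By \eqref{eq:cartan-bound-3} this simultaneously confirms that the off-diagonal product bound $a_{ij}a_{ji}\le 3$ is respected.

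For the converse I would translate the problem into Euclidean geometry. Fixing a symmetrizer $D$, I attach to each index a unit vector $\epsilon_i$ with $\langle\epsilon_i,\epsilon_j\rangle\le 0$ for $i\ne j$ and $\langle\epsilon_i,\epsilon_j\rangle^2=\tfrac14\,a_{ij}a_{ji}\in\{0,\tfrac14,\tfrac12,\tfrac34\}$; positive definiteness of $DA$ forces $\{\epsilon_i\}$ to be linearly independent. Call such a system \emph{admissible}. The crucial hereditary input is that restricting to any subset of indices (equivalently, passing to a full subdiagram) yields another admissible system, which is exactly the principal-minor positivity used above and is what makes an inductive case analysis possible.

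I would then run the classical chain of lemmas on admissible systems, every one of which is driven by the single inequality $\|\sum_i c_i\epsilon_i\|^2>0$ evaluated on well-chosen coefficients $c_i$: (i)~the number of connected pairs is at most $n-1$, so the Coxeter graph is a \emph{forest} with no cycles; (ii)~each vertex has total bond degree strictly less than $4$, bounding its valence; (iii)~a simple chain of vertices may be contracted to produce a smaller admissible system (the chain-shrinking lemma); and (iv)~consequently the diagram carries at most one multiple edge and at most one branch vertex, and these cannot sit too close to one another. Applying (iii) to collapse all simple chains reduces any candidate diagram to a small ``core,'' and the forbidden cores are precisely the extended (affine) diagrams $\tilde D_n,\tilde E_6,\tilde E_7,\tilde E_8,\tilde F_4,\tilde G_2$; each of these is excluded by exhibiting an explicit nonzero vector in its kernel, showing the associated form is only positive \emph{semi}definite and hence not positive definite. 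What survives is exactly the list in Figure~\ref{diagrams}.

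The main obstacle is step (iv) together with the final exclusion of the extended diagrams. The decisive quantitative computation is the branch-node analysis: for a tree with three arms of lengths $p,q,r$ (counting nodes along each arm through a common trivalent vertex) the form is positive definite if and only if $\tfrac1p+\tfrac1q+\tfrac1r>1$, and one must check this carefully to see that only $(2,2,r)$, $(2,3,3)$, $(2,3,4)$, $(2,3,5)$ qualify---yielding $D_{r+2}$, $E_6$, $E_7$, $E_8$---while the boundary solutions $(3,3,3)$, $(2,4,4)$, $(2,3,6)$ give the singular affine diagrams $\tilde E_6,\tilde E_7,\tilde E_8$. Pinning down the unique location of the branch node and of any multiple bond, and verifying these borderline positivity computations, is the delicate part; everything preceding it is routine linear algebra once the admissibility framework and the chain-shrinking lemma are established.
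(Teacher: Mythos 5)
There is no proof in the paper to compare yours against: immediately after stating Theorem~\ref{th:Cartan-Killing-list}, the text says explicitly that the proof is not included and that the theorem is not relied upon anywhere in the book, deferring instead to standard references such as \cite{humphreys}, \cite{kac}, and \cite{bourbaki}. So your proposal must be judged on its own terms, and on those terms it is a correct outline of the classical Cartan--Killing argument. The forward direction via determinant recursions (e.g.\ $\det A_n = n+1$) together with heredity of principal-minor positivity is standard, and your converse follows the classical admissible-systems route: realize the symmetrized matrix as a Gram matrix of unit vectors $\epsilon_i$ with $\langle \epsilon_i,\epsilon_j\rangle^2 = \tfrac14 a_{ij}a_{ji}$, deduce that the Coxeter graph is a forest (this excludes the cycles $A_n^{(1)}$), bound the total bond weight at each vertex by $4$, contract simple chains, and finish with the branch-node inequality $\frac{1}{p}+\frac{1}{q}+\frac{1}{r}>1$ and the exclusion of the affine cores $D_n^{(1)}$, $E_6^{(1)}$, $E_7^{(1)}$, $E_8^{(1)}$, $F_4^{(1)}$, $G_2^{(1)}$ by exhibiting null vectors (the vectors of marks), which shows their forms are only semidefinite. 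One point you gloss over in step (iv) is the chain-with-one-double-bond case, where one must separate $B_n/C_n$ (terminal double bond) from $F_4$ (central double bond in a $4$-vertex chain); but this is precisely the exclusion of $F_4^{(1)}$ combined with heredity, so your framework does cover it. It is also worth noting that the paper carries out a close combinatorial analogue of your strategy elsewhere, for a different theorem: in Section~\ref{sec:2-finite}, Proposition~\ref{pr:trees-Dynkin-vs-infty} classifies $2$-finite tree diagrams by showing that the extended Dynkin tree diagrams of Figure~\ref{fig:extended-dynkin-diagrams} are the minimal obstructions. That is the same ``minimal forbidden core'' idea you use, transplanted from positive definiteness to mutation finiteness, which is exactly why the paper can afford to omit the classical theorem: its finite type classification never invokes it.
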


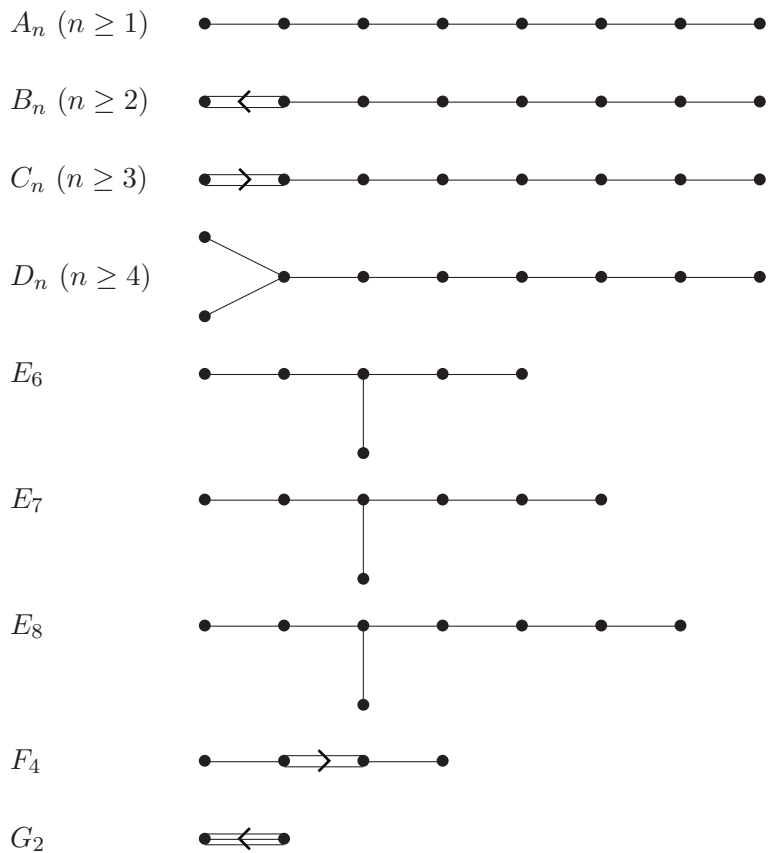
\begin{figure}[ht]
\hrule

\vspace{-.2in}
\[
\begin{array}{lcl}
A_n\  (n\geq 1) &&
\setlength{\unitlength}{1.5pt}
\begin{picture}(140,17)(0,-2)
\put(0,-0.15){\line(1,0){140}}
\multiput(0,0)(20,0){8}{\circle*{3}}
\end{picture}\\
B_n\ (n\geq 2)
&&
\setlength{\unitlength}{1.5pt}
\begin{picture}(140,17)(0,-2)
\thicklines
\put(9,0){\line(1,1){2.5}}
\put(9,0){\line(1,-1){2.5}}
\thinlines
\put(20,-0.15){\line(1,0){120}}
\put(0,1.25){\line(1,0){20}}
\put(0,-1.5){\line(1,0){20}}
\multiput(0,0)(20,0){8}{\circle*{3}}
\end{picture} \\
C_n\ (n\geq 3)
&&
\setlength{\unitlength}{1.5pt}
\begin{picture}(140,17)(0,-2)
\thicklines
\put(11,0){\line(-1,1){2.5}}
\put(11,0){\line(-1,-1){2.5}}
\thinlines
\put(20,-0.15){\line(1,0){120}}
\put(0,1.25){\line(1,0){20}}
\put(0,-1.5){\line(1,0){20}}
\multiput(0,0)(20,0){8}{\circle*{3}}
\end{picture}
\\[.1in]
D_n\ (n\geq 4)
&&
\setlength{\unitlength}{1.5pt}
\begin{picture}(140,17)(0,-2)
\put(20,-0.15){\line(1,0){120}}
\put(0,10){\line(2,-1){20}}
\put(0,-10){\line(2,1){20}}
\multiput(20,0)(20,0){7}{\circle*{3}}
\put(0,10){\circle*{3}}
\put(0,-10){\circle*{3}}
\end{picture}
\\[.1in]
E_6
&&
\setlength{\unitlength}{1.5pt}
\begin{picture}(140,17)(0,-2)
\put(0,-0.15){\line(1,0){80}}
\put(40,0){\line(0,-1){20}}
\put(40,-20){\circle*{3}}
\multiput(0,0)(20,0){5}{\circle*{3}}
\end{picture}
\\[.25in]
E_7
&&
\setlength{\unitlength}{1.5pt}
\begin{picture}(140,17)(0,-2)
\put(0,-0.15){\line(1,0){100}}
\put(40,0){\line(0,-1){20}}
\put(40,-20){\circle*{3}}
\multiput(0,0)(20,0){6}{\circle*{3}}
\end{picture}
\\[.25in]
E_8
&&
\setlength{\unitlength}{1.5pt}
\begin{picture}(140,17)(0,-2)
\put(0,-0.15){\line(1,0){120}}
\put(40,0){\line(0,-1){20}}
\put(40,-20){\circle*{3}}
\multiput(0,0)(20,0){7}{\circle*{3}}
\end{picture}
\\[.3in]
F_4
&&
\setlength{\unitlength}{1.5pt}
\begin{picture}(140,17)(0,-2)
\thicklines
\put(31,0){\line(-1,1){2.5}}
\put(31,0){\line(-1,-1){2.5}}
\thinlines
\put(20,1.25){\line(1,0){20}}
\put(20,-1.5){\line(1,0){20}}
\put(0,-0.25){\line(1,0){20}}
\put(40,-0.25){\line(1,0){20}}
\multiput(0,0)(20,0){4}{\circle*{3}}
\end{picture}
\\[-.0in]
G_2
&&
\setlength{\unitlength}{1.5pt}
\begin{picture}(140,17)(0,-2)
\thicklines
\put(9,0){\line(1,1){2.5}}
\put(9,0){\line(1,-1){2.5}}
\thinlines
\put(0,1.25){\line(1,0){20}}
\put(0,-1.5){\line(1,0){20}}
\put(0,-0.15){\line(1,0){20}}
\multiput(0,0)(20,0){2}{\circle*{3}}
\end{picture}
\end{array}
\]
\vspace{-.1in}
\caption{Dynkin diagrams of indecomposable Cartan matrices.  The 
subscript $n$ indicates the number of nodes in the diagram.}
\label{diagrams}
\end{figure}

We do not include the proof of Theorem~\ref{th:Cartan-Killing-list}
in this book, nor do we rely on this theorem anywhere in our proofs. 
%
We will, however, make extensive use of the standard nomenclature of
Dynkin diagrams 
presented in Figure~\ref{diagrams}. 
The notation $X_n \sqcup Y_{n'}$ will denote
 the disjoint union of two Dynkin diagrams
$X_n$ and $Y_{n'}$.

\pagebreak[3]

The relationship between Cartan matrices 
and skew-symmetrizable matrices 
is based on the following definition~\cite{ca2}.

\begin{definition}
\label{def:assoc-cartan}
Let $B=(b_{ij})$  be a skew-symmetrizable integer matrix.
Its \emph{Cartan counterpart} is the symmetrizable generalized Cartan
matrix 
\begin{equation}
\label{eq:CartanCounterpart}
A = A(B)=(a_{ij})
\end{equation}
of the same size, defined by
\begin{equation}
\label{eq:assoc-cartan}
a_{ij} =
\begin{cases}
2 & \text{if $i=j$;} \\ 
- |b_{ij}| & \text{if $i\neq j$.}
\end{cases}
\end{equation}
\end{definition}

The main result of this chapter is the following classification
of cluster algebras of finite type~\cite{ca2}. 

\begin{theorem}
\label{th:finite-type-classification}
A seed pattern (or the corresponding cluster algebra) is of finite
type if and only if 
it contains an exchange matrix $B$ 
	whose Cartan counterpart $A(B)$
	(see \cref{def:assoc-cartan})
is a Cartan matrix of finite type. 
\end{theorem}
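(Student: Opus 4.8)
The plan is to prove the two implications separately, using \cref{cor:finitely-many-seeds} to identify ``finite type'' with ``finitely many cluster variables'' and \cref{cor:finite-type-2finite} to bring in the $2$-finiteness condition on exchange matrices. Schematically, writing (1) for finite type, (2) for $2$-finiteness of the whole mutation class, and (3) for ``some exchange matrix $B$ has $A(B)$ of finite type,'' I would establish (1)$\Rightarrow$(2) [already done in \cref{cor:finite-type-2finite}], the classification step (2)$\Rightarrow$(3), and the construction step (3)$\Rightarrow$(1).

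For the \emph{``if''} direction (3)$\Rightarrow$(1), suppose the pattern contains $B$ with $A(B)$ of finite type. Since a block-diagonal exchange matrix produces a seed pattern whose seed count is the product of the counts for its connected components, I would first reduce to the case where the Dynkin diagram of $A(B)$ is connected, i.e.\ one of $A_n,\dots,G_2$ in \cref{diagrams}. As all these diagrams are trees and all orientations of a tree are mutation-equivalent (cf.\ \cref{ex:orientations-of-a-tree}), it then suffices to exhibit, for each type, a \emph{single} seed pattern with finitely many seeds whose initial extended exchange matrix has full $\ZZ$-rank: the types $ADE$ by explicit constructions (e.g.\ type $A_n$ from triangulations of a convex polygon, as in the $R_{2,n+3}$ example), and the non-simply-laced types $BCFG$ by folding the corresponding $ADE$ models via \cref{sec:folding} (cf.\ \cref{ex:E6F4}). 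Full $\ZZ$-rank lets me pass from this one model to arbitrary coefficients: by \cref{cor:z-span} and \cref{rem:full-z-rank}, any seed pattern sharing the same exchange matrices has no more seeds than the full-rank model, hence finitely many.

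For the \emph{``only if''} direction (1)$\Rightarrow$(3), I combine (1)$\Rightarrow$(2) from \cref{cor:finite-type-2finite}---every exchange matrix in the pattern is $2$-finite, so $|b_{ij}b_{ji}|\le 3$ persists throughout the mutation class---with the classification step (2)$\Rightarrow$(3). For the latter I would pass to the \emph{diagram} of the exchange matrix (\cref{def:diagramofB}) and study diagram mutation (\cref{pr:diagram-mutation}). Using that $2$-finiteness is hereditary under restriction (a consequence of \cref{ex:mut-local}), the goal is to show that a connected $2$-finite diagram is mutation-equivalent to an orientation of a finite-type Dynkin diagram, equivalently that the \emph{extended} (affine) Dynkin diagrams are the minimal connected diagrams that fail to be $2$-finite, and that every non-finite connected diagram contains one of them after suitable mutation.

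The main obstacle is this classification step (2)$\Rightarrow$(3). The case-by-case constructions of the ``if'' direction are, once the folding framework of \cref{sec:folding} is set up, essentially bookkeeping; by contrast, showing that mere $2$-finiteness forces mutation-equivalence to a finite-type Cartan counterpart requires a genuine structural analysis---controlling how edge weights change under diagram mutation and ruling out every affine configuration---and this is where the real difficulty of the theorem lies.
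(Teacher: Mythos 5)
Your proposal is correct and follows essentially the same route as the paper: the same three-condition cycle (finite type $\Rightarrow$ $2$-finiteness $\Rightarrow$ existence of a finite-type Cartan counterpart in the mutation class $\Rightarrow$ finite type), with the ``if'' direction established by case-by-case full-$\ZZ$-rank models transferred to arbitrary coefficients via \cref{cor:z-span} and \cref{rem:full-z-rank}, the $BCFG$ types obtained by folding, and the hard classification step handled through diagram mutation and affine obstructions. The only cosmetic deviations are that the paper treats $E_6$, $E_7$, $E_8$ by a computer search rather than an explicit construction, and its catalogue of $2$-infinite obstructions is somewhat broader than the extended Dynkin tree diagrams alone (e.g.\ it also needs improperly oriented cycles), but the structure of the argument is identical.
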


The proof of \cref{th:finite-type-classification} 
spans Sections~\ref{sec:type-A}--\ref{sec:2-finite}. 
An overview of the proof is given below in this section. 

One important feature of Theorem~\ref{th:finite-type-classification}
is that the finite type property depends solely on the exchange
matrix~$B$ but not on the coefficient tuple~$\yy$. 
In other words, the top $n\times n$ submatrix~$B$ of an extended exchange
matrix~$\tilde B$ determines whether the seed pattern at hand has
finitely many seeds.
The bottom part of~$\tilde B$ has no effect on this property. 

\begin{definition}\label{def:type}
Let $X_n$ be a Dynkin diagram on $n$ vertices.  
A seed pattern of rank~$n$ 
(or the corresponding cluster algebra) 
	is said to be \emph{of (Cartan-Killing) type~$X_n$} if one of its exchange matrices~$B$ 
	has Cartan counterpart of  type~$X_n$. 
\end{definition}

\begin{example}
The matrices 
$$
\begin{bmatrix}
0 & 0\\
0 & 0
\end{bmatrix},
\begin{bmatrix}
0 & 1\\
-1 & 0
\end{bmatrix},
\begin{bmatrix}
0 & 1\\
-2 & 0
\end{bmatrix},
\begin{bmatrix}
0 & 1\\
-3 & 0
\end{bmatrix}$$
define seed patterns (and cluster algebras) of types 
$A_1 \sqcup A_1$, $A_2$, $B_2$, and~$G_2$, respectively.
\end{example}

\begin{remark}
\label{rem:simplylaced}
Suppose $X_n$ is simply laced, 
i.e.,\ is one of the types $A_n$, $D_n$, $E_6$, $E_7$, $E_8$.  Then 
a seed pattern is of type $X_n$ if
 one of its exchange matrices~$B$ 
	corresponds to a quiver 
	that is an orientation of a
Dynkin diagram of type~$X_n$.
We note that by Exercise~\ref{ex:orientations-of-a-tree}, all
orientations of a tree are mutation equivalent to each other, so if one of them is
present in the pattern, then all of them are.
\end{remark}

A priori, 
\cref{def:type} allows for the possibility that 
a given seed pattern is simultaneously 
of two different types.  
However, 
the following companion  to 
\cref{th:finite-type-classification}
shows that 
this cannot happen.



\begin{theorem}
\label{thm:type}
Let $B'$ and $B''$ be skew-symmetrizable square matrices whose Cartan
counterparts $A(B')$ and $A(B'')$ are Cartan matrices of
finite type. Then the following are equivalent: 
\begin{enumerate}[leftmargin=.3in]
\item[{\rm (1)}]
the Cartan matrices $A(B')$ and $A(B'')$ have the same type;
\item[{\rm (2)}]
$B'$ and $B''$ are mutation equivalent.
\end{enumerate}
\end{theorem}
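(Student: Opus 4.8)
The plan is to prove the two implications separately, reading \cref{thm:type} as the assertion that the map sending a Cartan--Killing type to its mutation class is both well defined and injective. By \cref{th:finite-type-classification} we already know that $B'$ and $B''$ lie in finite-type mutation classes; what remains is to match mutation classes with types bijectively.

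For $(1)\Rightarrow(2)$, suppose $A(B')$ and $A(B'')$ both have type $X_n$. Since $A(B)$ records exactly the unoriented data $a_{ij}=-|b_{ij}|$, and every Dynkin diagram is a forest, each of $B'$ and $B''$ is the signed adjacency matrix of an \emph{orientation} of the weighted Dynkin forest of type~$X_n$: the only freedom is the sign pattern, while all magnitudes $|b_{ij}|$ are pinned down by $X_n$. I would then invoke the skew-symmetrizable analogue of \cref{ex:orientations-of-a-tree}, namely that any two orientations of a forest are mutation equivalent. Concretely, mutating at a source or sink reverses all arrows at that vertex, creates no new edges, and preserves every $|b_{ij}|$, so it stays within the orientations of the same weighted forest; and any two orientations are connected by a sequence of such source/sink reversals. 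Hence $B'\sim B''$, and moreover each type $X_n$ determines a single mutation class. (For simply-laced $X_n$ this is exactly \cref{rem:simplylaced}.)

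For $(2)\Rightarrow(1)$, I would exhibit mutation invariants that separate the types on the Cartan--Killing list (Figure~\ref{diagrams}); three of them suffice. First, the rank $n$ is obviously preserved. Second, the skew-symmetrizer: if $DB$ is skew-symmetric with $D=\operatorname{diag}(d_1,\dots,d_n)$ positive, then $D\mu_k(B)$ is skew-symmetric as well, so for connected $B$ (where $D$ is unique up to scaling) the multiset $\{d_1,\dots,d_n\}$ up to a common rescaling is a mutation invariant; it coincides with the symmetrizer of the Cartan counterpart, since $d_ib_{ij}=-d_jb_{ji}$ gives $d_i|b_{ij}|=d_j|b_{ji}|$. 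Third, the number of cluster variables, which in type $X_n$ equals $N=n+|\Phi^+|$, is a mutation invariant. A finite check against Figure~\ref{diagrams} then shows these separate the connected types: the symmetrizer multiset distinguishes the simply-laced types (all $d_i$ equal) from the rest and, crucially, separates $B_n$ from $C_n$, whose scaled multisets are $\{2,\dots,2,1\}$ (with $n-1$ twos) and $\{1,\dots,1,2\}$ (with $n-1$ ones), inequivalent precisely for $n\ge 3$ and consistent with $B_2=C_2$; meanwhile $N$ separates types sharing a rank and a symmetrizer, such as $A_n$ and $D_n$, where $N=\binom{n+1}{2}+n$ versus $N=n^2$.

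Finally, for disconnected types I would reduce to the connected case. Since a block-diagonal $B$ yields a seed pattern that is a product over its connected components, all three invariants are additive over components (the symmetrizer multiset being the union), so comparing the sorted lists of per-component invariants recovers the multiset of connected types. The main obstacle, I expect, is the $B_n$ versus $C_n$ separation: rank, cluster-variable count, and every ``unoriented'' datum coincide for these two families, so the argument genuinely depends on the skew-symmetrizer being a mutation invariant, together with checking that the case analysis over Figure~\ref{diagrams} is exhaustive once the symmetrizer and $N$ are taken into account.
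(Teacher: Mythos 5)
Your proposal is correct and follows essentially the same route as the paper: (1)$\Rightarrow$(2) via source/sink mutations relating all orientations of the (weighted) Dynkin forest, and (2)$\Rightarrow$(1) by reducing to connected components and separating types with exactly the two mutation invariants the paper uses, namely the cluster-variable count (Proposition~\ref{pr:number-of-clusters}) and the skew-symmetrizing vector (Proposition~\ref{pr:Bn-vs-Cn}, resting on Lemma~\ref{lem:vector} and Exercise~\ref{ex:mat-mut-simple}), the latter resolving the $(B_n,C_n)$ ambiguity. The only difference is organizational: you promote the symmetrizer to a primary invariant alongside the count, whereas the paper uses the count first and invokes the symmetrizer only for the single colliding pair $(B_n, C_n)$; do note that in the disconnected case the clean justification is that mutation acts on components separately (so the multiset of per-component invariants, not merely their totals, is preserved), since totals alone do not suffice (e.g.\ $A_4$ and $B_3\sqcup A_1$ share rank $4$ and $14$ cluster variables).
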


This theorem will be proved in \cref{sec:type}.

By~\cref{th:finite-type-classification}, 
a seed pattern of finite type must contain exchange matrices whose 
Cartan counterparts are Cartan matrices of finite type. \linebreak[3]
By \cref{thm:type},
all these matrices have the same type.
Consequently 
the (Cartan-Killing) {type} of
a seed pattern (resp., cluster algebra) of finite type
is unambiguously defined.

\begin{remark}
\label{rem:clustersubalgebra}
Mutation classes of finite type can be partially ordered via embeddability
(cf.\ Definition~\ref{def:quiver-embedding}), which in turn can be
interpreted in the language of cluster subalgebras (cf.\ \cref{def:clustersubalgebra}).
For example, the inclusion of Dynkin diagrams
$A_n \subset D_{n+1}$
(see Figure \ref{diagrams}) yields an embedding
$\mathbf{A_n} \leq \mathbf{D_{n+1}}$ of mutation classes of type
$A_n$ and type $D_{n+1}$ quivers; consequently
there is a cluster subalgebra of type~$A_n$ within each
cluster algebra of type~$D_{n+1}$.
Similarly, we have embeddings
$\mathbf{D_n} \leq  \mathbf{E_{n+1}}$ for $5 \leq n \leq 7$, and
$\mathbf{E_6} \leq \mathbf{E_7} \leq \mathbf{E_8}$,
cf.\ Example~\ref{example:dynkin-embeddings} and
Figure~\ref{fig:E6E8}.

Going beyond the quiver case, we have the embeddings
$\mathbf{A_n} \leq \mathbf{B_{n+1}}$, $\mathbf{A_n} \leq 
\mathbf{C_{n+1}}$,
$\mathbf{B_3} \leq \mathbf{F_4}$, $\mathbf{C_3} \leq \mathbf{F_4}$,
and the corresponding inclusions for cluster algebras of finite type.
\end{remark}

\medskip







We conclude this section by an overview of the remainder of
\cref{ch:finitetype}.

Sections~\ref{sec:type-A}--\ref{sec:decomposable-types}
are dedicated to showing  that any seed pattern 
that has an exchange matrix whose
Cartan counterpart is of one of the types $A_n, B_n,\dots, G_2$ 
has finitely many seeds. 
This is done case by case. 
The idea is to explicitly construct, for each (indecomposable) type, 
a particular seed pattern whose exchange matrices
have full $\ZZ$-rank, and show that this pattern has finitely many
seeds. \linebreak[3]
Then an argument based on 
Corollary~\ref{cor:z-span} and Remark~\ref{rem:full-z-rank}
will imply the same for \emph{all} cluster algebras of the corresponding
type. 

In Sections~\ref{sec:type-A}--\ref{sec:type-D}
we exhibit seed patterns of types~$A_n$ and~$D_n$ 
possessing the requisite properties. 
In type~$A_n$, we utilize the construction involving the homogeneous
coordinate ring of the Grassmannian~$\Gr_{2,n+3}$, 
cf.\ Section~\ref{sec:Ptolemy}. 
Type $D_n$ is treated using a similar construction, admittedly much more technical
than in the type~$A_n$ case. 
In \cref{sec:BC-finite}, 
we handle the types $B_n$ and $C_n$ using the technique of folding 
introduced in Section~\ref{sec:folding}.

The exceptional types are treated in a different way.
In \cref{sec:exceptional}, we use a computer check to 
verify that the cluster algebras of type 
$E_8$ are of finite type.
This implies the same for the types $E_6$ and~$E_7$. 
Types~$F_4$ and~$G_2$ are then handled in \cref{sec:exceptional2}, 
via folding of~$E_6$ and~$D_4$, respectively. 

In \cref{sec:decomposable-types}, we discuss decomposable types,
and complete the first part of the proof 
of \cref{th:finite-type-classification}.

We note that 
  Sections~\ref{sec:type-A}--\ref{sec:exceptional} contain many
  incidental examples of cluster 
  algebras of finite type, in addition to those used in the proof of
  the classification theorem. 

\cref{thm:type} is proved in \cref{sec:type}.  
Also in this section, we complete and summarize 
the enumeration of the seeds (equivalently, clusters) and 
the cluster variables for all finite types. 

The proof  of \cref{th:finite-type-classification} is completed in
\cref{sec:2-finite},
by demonstrating that any seed pattern of finite type comes from
a Cartan matrix of finite type.  
This is done by exploiting the fact that 
every exchange matrix appearing in such a seed pattern is $2$-finite,
see \cref{cor:finite-type-2finite}. 


\cref{th:finite-type-classification} provides a
characterization of seed patterns of finite type 
which, while conceptually satisfying, 
is not particularly useful in practice.   
In Section~\ref{sec:quasi-cartan}, we  discuss an alternative
criterion for recognizing whether a seed pattern is of finite type. 
This criterion is formulated directly in terms of the given exchange matrix 
$B$ (as opposed to its mutation class).

\section{Seed patterns of type~$A_n$}
\label{sec:type-A}


The main result of this section is 
Theorem~\ref{th:type-A-is-finite}, asserting that seed patterns of type~$A_n$
are of finite type.  The proof uses the fact that 
seed patterns of type~$A_n$ are governed by the combinatorics
of triangulated polygons.
A~case in point is 
 the cluster algebra structure in 
the Pl\"ucker ring~$R_{2,n+3}$
(the homogeneous coordinate ring of~$\Gr_{2,n+3}$)  
discussed in Section~\ref{sec:Ptolemy}. 
In~this example, we verify that a cluster variable indexed~by 
a diagonal in a triangulation~$T$ only depends on the diagonal 
and not on the choice of~$T$, or a sequence of mutation steps relating $T$ to an initial triangulation.
After verifying that the cluster structure in~$R_{2,n+3}$ is 
of finite type, we check that one of its exchange matrices
has full $\ZZ$-rank, and the general case follows. 


Let $T$ be a triangulation of a convex $(n+3)$-gon~$\mathbf{P}_{n+3}$
by $n$ noncrossing diagonals labeled $1,\dots,n$.
We define the $n\times n$ matrix $B(T)=(b_{ij}(T))$ by 
\begin{equation}
\label{eq:B(T)}
b_{ij}(T)=
\begin{cases}
1 & \!\!\text{if $i$ and $j$ label two sides of a triangle
  in~$T$,}\\
& \text{\quad with $j$ following $i$ in the clockwise order;}\\
-1 & \!\!\text{if $i$ and $j$ label two sides of a triangle
  in~$T$,}\\
& \text{\quad with $i$ following $j$ in the clockwise order;}\\
0 & \!\!\text{if $i$ and $j$ do not belong to the same triangle
  in~$T$.}
\end{cases}
\end{equation}
The skew-symmetric matrix $B(T)$ corresponds to the mutable part of the quiver
$Q(T)$ described in Definition~\ref{def:Q(T)-polygon} and Figure~\ref{fig:quiver-triangulation}.


The following easy lemma provides an alternative definition of the
notion of a seed pattern of type~$A_n$.  

\begin{lemma}\label{lem:A-triangulation}
A seed pattern has type~$A_n$ if and only if one 
(equivalent\-ly, any) of its exchange matrices 
can be identified with the exchange matrix $B(T)$
corresponding to a triangulation~$T$ of a convex
$(n+3)$-gon, 
cf.~\eqref{eq:B(T)}. 
\end{lemma}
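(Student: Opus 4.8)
The plan is to prove the statement as a direct consequence of Definition~\ref{def:type} together with the characterization of the matrices $B(T)$ via their Cartan counterparts. By Definition~\ref{def:type}, a seed pattern has type~$A_n$ precisely when one of its exchange matrices has Cartan counterpart of type~$A_n$; by Theorem~\ref{thm:type}, this is equivalent to that exchange matrix being mutation equivalent to any fixed matrix whose Cartan counterpart is of type~$A_n$. So the whole lemma reduces to the following purely combinatorial claim: for any triangulation~$T$ of a convex $(n+3)$-gon, the matrix $B(T)$ defined in~\eqref{eq:B(T)} has Cartan counterpart of type~$A_n$. Once this is established, the ``if and only if'' and the parenthetical ``(equivalently, any)'' both follow immediately, since all the $B(T)$ are mutation equivalent to one another (flips of triangulations realize mutations, so they lie in a single mutation class) and hence, by Theorem~\ref{thm:type}, all share the same Cartan-Killing type.

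First I would fix a convenient triangulation to compute with, namely the ``fan'' triangulation~$T_0$ in which all $n$ diagonals emanate from a single vertex of the polygon. For this $T_0$, the diagonals can be labeled $1,\dots,n$ consecutively so that diagonals $i$ and $i+1$ bound a common triangle while non-consecutive diagonals share no triangle. Reading off~\eqref{eq:B(T)}, the matrix $B(T_0)$ is then the tridiagonal skew-symmetric matrix with entries $b_{i,i+1}=\pm1$, $b_{i+1,i}=\mp1$, and all other entries zero. Applying the Cartan counterpart construction from Definition~\ref{def:assoc-cartan}, equation~\eqref{eq:assoc-cartan}, gives $a_{ii}=2$ and $a_{i,i+1}=a_{i+1,i}=-|b_{i,i+1}|=-1$ with all further off-diagonal entries zero. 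This is exactly the Cartan matrix whose Dynkin diagram is the path on $n$ vertices, i.e.\ type~$A_n$ in the nomenclature of Figure~\ref{diagrams}.

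Next I would argue that all the matrices $B(T)$, as $T$ ranges over triangulations of the $(n+3)$-gon, form a single mutation class. This is the content (recalled in Section~\ref{sec:triangulations} and in Example~\ref{ex:flip-is-mutation}) that a flip of a diagonal in a triangulation corresponds to a quiver mutation, and that any two triangulations of a convex polygon are connected by a sequence of flips. Hence every $B(T)$ is mutation equivalent to $B(T_0)$, whose Cartan counterpart is of type~$A_n$. By Theorem~\ref{thm:type}, mutation equivalence forces all the $B(T)$ to have Cartan counterparts of the same type, namely~$A_n$. This simultaneously verifies the ``equivalently, any'' clause: if the type-$A_n$ condition holds for one exchange matrix of the pattern, then by Theorem~\ref{thm:type} it holds for every matrix in that mutation class, in particular for every $B(T)$ appearing.

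The one step that deserves genuine care, rather than being a bare citation, is the sign bookkeeping in reading off $B(T_0)$ from~\eqref{eq:B(T)}: one must check that the clockwise-ordering convention consistently produces a tridiagonal matrix for the fan, so that its Cartan counterpart is unambiguously the $A_n$ path rather than some disconnected or mislabeled diagram. Since the Cartan counterpart only depends on the absolute values $|b_{ij}|$, the orientation of the arrows is in fact irrelevant to the final identification, which removes any delicacy: whatever the signs, consecutive diagonals contribute $|b_{i,i+1}|=1$ and nonconsecutive ones contribute $0$. I therefore expect no substantive obstacle; the lemma is ``easy'' exactly because the Cartan counterpart forgets orientations, and the only real input is the already-established dictionary between flips and mutations together with Theorem~\ref{thm:type}.
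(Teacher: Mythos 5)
Your argument contains a genuine error: the ``purely combinatorial claim'' to which you reduce the lemma is false. It is not true that every $B(T)$ has Cartan counterpart of type~$A_n$. Take $n=3$ and the triangulation of the hexagon whose three diagonals $[1,3]$, $[3,5]$, $[5,1]$ form an internal triangle; then the mutable quiver $Q(T)$ is an oriented $3$-cycle, so by Definition~\ref{def:assoc-cartan} the Cartan counterpart is
\[
A(B(T))=\begin{bmatrix}2&-1&-1\\-1&2&-1\\-1&-1&2\end{bmatrix},
\]
whose Coxeter graph is a cycle and whose determinant is~$0$: it is not a Cartan matrix of finite type at all, let alone of type~$A_3$. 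The step that produced this false claim is a misreading of Theorem~\ref{thm:type}. That theorem takes as a \emph{hypothesis} that both matrices have Cartan counterparts of finite type, and only then asserts that equality of types is equivalent to mutation equivalence; it does not say that mutation preserves the type of the Cartan counterpart, and it cannot, since (as the example shows) having a finite-type Cartan counterpart is not a mutation-invariant property. This is precisely why Definition~\ref{def:type} is worded as ``\emph{one} of its exchange matrices has Cartan counterpart of type~$X_n$.''

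The lemma is still provable from the correct ingredients you assembled, but the two directions must be routed differently. For ``$\Leftarrow$'': if some exchange matrix of the pattern is $B(T)$, then since flips correspond to mutations (Exercise~\ref{ex:flip-is-mutation}) and any two triangulations are connected by flips, the fan matrix $B(T_\circ)$ --- whose Cartan counterpart you correctly computed to be of type~$A_n$ --- is mutation equivalent to $B(T)$, hence is itself an exchange matrix of the pattern, and Definition~\ref{def:type} applies. For ``$\Rightarrow$'': if some exchange matrix $B$ of the pattern has $A(B)$ of type~$A_n$, then $B$ is an orientation of the path on $n$ vertices, and one checks directly that every such orientation equals $B(T)$ for a suitable ``snake'' triangulation (alternatively, use Exercise~\ref{ex:orientations-of-a-tree} to mutate $B$ to $B(T_\circ)$ by source/sink mutations, noting that $B(T_\circ)$ then also belongs to the pattern). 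Finally, the parenthetical ``(equivalently, any)'' is not a statement about Cartan counterparts: it requires that the set $\{B(T)\}$ be \emph{closed under mutation}, i.e.\ $\mu_k(B(T))=B(T')$ where $T'$ is the flip of $T$ at diagonal~$k$ --- which is the full content of Exercise~\ref{ex:flip-is-mutation}, and is strictly stronger than the inclusion you actually justified, namely that every $B(T)$ lies in the mutation class of $B(T_\circ)$.
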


\begin{figure}[ht] 
\begin{center} 
\setlength{\unitlength}{2pt} 
\begin{picture}(60,60)(0,0) 
\thicklines 
  \multiput(0,20)(60,0){2}{\line(0,1){20}} 
  \multiput(20,0)(0,60){2}{\line(1,0){20}} 
  \multiput(0,40)(40,-40){2}{\line(1,1){20}} 
  \multiput(20,0)(40,40){2}{\line(-1,1){20}} 
 
  \multiput(20,0)(20,0){2}{\circle*{1}} 
  \multiput(20,60)(20,0){2}{\circle*{1}} 
  \multiput(0,20)(0,20){2}{\circle*{1}} 
  \multiput(60,20)(0,20){2}{\circle*{1}} 
 
\put(20,50){\circle{3}}
\put(20,40){\circle{3}}
\put(30,30){\circle{3}}
\put(40,30){\circle{3}}
\put(50,40){\circle{3}}

\thinlines 
\put(40,60){\line(-2,-1){40}} 
\put(40,60){\line(-1,-1){40}} 
\put(40,60){\line(-1,-3){20}} 
\put(40,60){\line(0,-1){60}} 
\put(40,60){\line(1,-2){20}} 
 
 
\put(40,-3){\makebox(0,0){$4$}} 
\put(63,20){\makebox(0,0){$3$}} 
\put(63,40){\makebox(0,0){$2$}} 
\put(40,63){\makebox(0,0){$1$}} 
\put(16,63){\makebox(0,0){$n+3$}} 
\put(-7,40){\makebox(0,0){$n+2$}} 
\put(-7,20){\makebox(0,0){$n+1$}} 
 
 \linethickness{1.2pt}

\put(20,48){{\vector(0,-1){6}}}
\put(22,38){{\vector(1,-1){6.5}}}
\put(42,32){{\vector(1,1){6.5}}}
\put(32,30){{\vector(1,0){6}}}

\end{picture} 
\end{center} 
\caption{
A triangulation~$T_\circ$ of the
polygon~$\mathbf{P}_{n+3}$ (here $n=5$). 
The mutable part of the quiver $Q(T_\circ)$ (see
Definition~\ref{def:Q(T)-polygon}) 
is the \emph{equioriented Dynkin quiver} of type~$A_n$.} 
\label{fig:octagon-special} 
\end{figure}
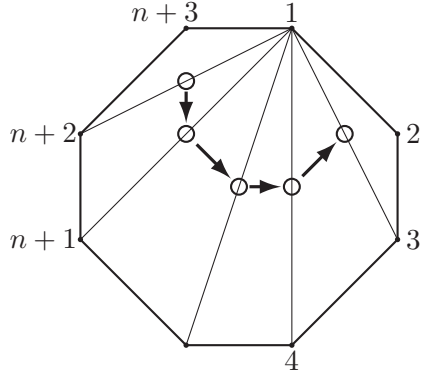 

\pagebreak[3]

\begin{theorem}
\label{th:type-A-is-finite}
Seed patterns of type~$A_n$ are of finite type. 
\end{theorem}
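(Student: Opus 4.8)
The plan is to exploit the explicit model $R_{2,n+3}$ for type $A_n$ described above: first prove directly that \emph{this particular} cluster algebra has only finitely many cluster variables, and then transfer finiteness to every type $A_n$ seed pattern via the full-$\ZZ$-rank machinery of Corollary~\ref{cor:z-span} and Remark~\ref{rem:full-z-rank}.

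First I would fix a triangulation $T$ of $\mathbf{P}_{n+3}$ and the seed $(\tilde x(T), Q(T))$ whose mutable variables are the Pl\"ucker coordinates $P_{ij}$ of the diagonals and whose frozen variables are the $P_{ij}$ of the sides; by Lemma~\ref{lem:A-triangulation} this pattern is of type $A_n$. The crucial point, recorded in Section~\ref{sec:Ptolemy}, is that mutating at a diagonal is a \emph{flip}: it removes one diagonal of a quadrilateral and inserts the other, and the corresponding exchange relation is exactly the three-term Grassmann--Pl\"ucker relation~\eqref{eq:grassmann-plucker-3term}, whose left-hand side is the product of the two crossing diagonals. Hence each mutation replaces one Pl\"ucker coordinate by another Pl\"ucker coordinate. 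Arguing by induction on the length of a mutation sequence out of $(\tilde x(T), Q(T))$, I would conclude that every cluster variable ever produced is some $P_{ij}$ with $1\le i<j\le n+3$. As there are only finitely many such coordinates, the pattern has finitely many cluster variables, and Corollary~\ref{cor:finitely-many-seeds} then gives that $R_{2,n+3}$ is of finite type.

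To pass from this single model to an arbitrary type $A_n$ pattern, I would take $T=T_\circ$ to be the snake triangulation of Figure~\ref{fig:octagon-special}, whose mutable quiver is the equioriented $A_n$ path, and verify by direct inspection that the rows of the extended exchange matrix $\tilde B(T_\circ)$ span $\ZZ^n$, i.e.\ that it has full $\ZZ$-rank. Now let $(\xx(t),\yy(t),B(t))_{t\in\TT_n}$ be any seed pattern of type $A_n$; by Lemma~\ref{lem:A-triangulation} I may relabel so that its initial exchange matrix is $B(T_\circ)$, whence its exchange matrices coincide with those of the $R_{2,n+3}$-pattern at every $t\in\TT_n$. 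Taking the $R_{2,n+3}$-pattern as the ``first'' pattern, its full $\ZZ$-rank guarantees the $\ZZ$-span hypothesis of Corollary~\ref{cor:z-span} for the given pattern as the ``second'' one (Remark~\ref{rem:full-z-rank}). Therefore every coincidence of seeds in $R_{2,n+3}$ forces the corresponding coincidence in the given pattern, and since $R_{2,n+3}$ has only finitely many seeds, so does the given pattern.

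The conceptual heart, and the step I expect to require the most care, is the reduction ``every cluster variable is a Pl\"ucker coordinate,'' which is precisely what makes the count finite; this is where working inside the concrete ring $R_{2,n+3}$ pays off, since it lets us identify each mutated variable with a genuine, triangulation-independent regular function rather than with a merely abstract Laurent expression. The remaining steps — the full-$\ZZ$-rank check for $T_\circ$ and the application of Corollary~\ref{cor:z-span} — are routine once this is in place, the only mild subtlety being to run the transfer in the correct direction, from the rank-$n$ model down to the pattern with arbitrary coefficients.
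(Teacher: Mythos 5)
Your proposal is correct and follows essentially the same route as the paper: realize the type $A_n$ pattern inside the Pl\"ucker ring $R_{2,n+3}$, use the flip/mutation correspondence and the Grassmann--Pl\"ucker relations to see that every seed reachable by mutation is $(\tilde\xx(T),\tilde B(T))$ for some triangulation $T$ (hence finitely many seeds), then transfer to arbitrary coefficients via the full $\ZZ$-rank of $\tilde B(T_\circ)$ together with Corollary~\ref{cor:z-span} and Remark~\ref{rem:full-z-rank}. The only cosmetic difference is that you deduce finiteness of seeds from finiteness of cluster variables via Corollary~\ref{cor:finitely-many-seeds}, whereas the paper concludes directly from the fact that seeds are indexed by the finitely many triangulations.
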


\begin{remark}
\label{rem:finite-type-coeffs}
Definition~\ref{def:type} imposes no
restrictions on the bottom part of the extended exchange
matrices, nor on the number of frozen variables. 
In light of Lemma 
\ref{lem:A-triangulation}, 
Theorem~\ref{th:type-A-is-finite} asserts that as long as the mutable
part of a quiver comes from a triangulated polygon,
the total number of seeds generated by the quiver is finite. 
\end{remark}

\begin{proof}
We start by  showing that a particular seed pattern of type~$A_n$, 
the one associated with the Pl\"ucker ring~$R_{2,n+3}$,
is of finite type.  

As in Section~\ref{sec:Ptolemy}, we label the  vertices of the
polygon~$\mathbf{P}_{n+3}$ clockwise by the numbers $1,\dots,n+3$. 
For a triangulation $T$ of~$\mathbf{P}_{n+3}$ as in Definition~\ref{def:Q(T)-polygon},
we use the labels $1,\dots,n$ for the diagonals of~$T$ (in arbitrary fashion), 
and use the labels $n+1,\dots,2n+3$ for the sides
of~$\mathbf{P}_{n+3}$, as follows: 
\begin{itemize}[leftmargin=.2in]
\item
the side with vertices $\ell$ and $\ell+1$ is labeled~$n+\ell$, for $\ell=1,\dots,n+2$;
\item
the side with vertices $1$ and $n+3$ is labeled $2n+3$. 
\end{itemize} 
We define the matrix $\tilde B(T)=(b_{ij}(T))$ 
by the formula~\eqref{eq:B(T)},
this time with $i\!\in\!\{1,\dots,2n+3\}$ and $j\!\in\!\{1,\dots,n\}$. 
Thus $\tilde B(T)$ is the extended exchange matrix
for the quiver $Q(T)$ from
Definition~\ref{def:Q(T)-polygon}. 
By Exercise~\ref{ex:flip-is-mutation}, flips of triangulations
translate into mutations of associated quivers. 

We now reformulate the construction described in Section~\ref{sec:Ptolemy}. 
Let $V=\CC^2$ be a 2-dimensional complex vector space,
and let $\langle\cdot,\cdot\rangle$ denote 
the standard skew-symmetric nondegenerate bilinear form on~$V$. 
Simply~put, $\langle u,v\rangle$ 
is the determinant of the $2\times 2$ matrix with
columns $u,v\in V$. 
Let~$\mathcal{K}$ be the field of rational functions on
$V^{n+3}$ written in terms of $2n+6$ variables, 
the coordinates of $n+3$ vectors $v_1,\dots,v_{n+3}$. 
The special linear group naturally acts on~$V$, hence on~$V^{n+3}$ and
on~$\mathcal{K}$. 
Let $\FFcal=\mathcal{K}^{\SL_2}$ be the sub\-field of $\SL_2$-invariant
rational functions. 
All the action will take place in~$\FFcal$. 

We associate the Pl\"ucker coordinate $P_{ij}=\langle v_i,v_j\rangle\in\FFcal$
with the line segment connecting vertices $i$ and~$j$. 
For each triangulation~$T$ of $\mathbf{P}_{n+3}$, we let $\tilde \xx(T)$ be the collection of
the $2n+3$ Pl\"ucker coordinates $P_{ij}$ associated with the 
sides and diagonals of~$T$, as in Section~\ref{sec:Ptolemy}. 
By Lemma \ref{lem:independent} below, the elements of 
$\tilde \xx(T)$ are algebraically independent.
We view the Pl\"ucker coordinates associated with the sides of
$\mathbf{P}_{n+3}$ as frozen variables. 
As observed earlier, the Grassmann-Pl\"ucker relations~\eqref{eq:grassmann-plucker-3term}
satisfied by the elements $P_{ij}$ 
can be interpreted as exchange relations encoded by the matrices~$\tilde B(T)$.

We set $\Sigma(T)=(\tilde \xx(T),\tilde B(T))$.
Therefore the seeds obtained from an initial seed
$\Sigma(T_\circ)$  
form a seed pattern of type~$A_n$, and 
the initial cluster variables consist of  the
Pl\"ucker coordinates labeling the triangulation $T_\circ$. 
Because the Pl\"ucker coordinates
satisfy the exchange relations, which correspond to flips of triangulations,
this gives a canonical identification of cluster variables and 
clusters with 
diagonals 
and triangulations
of $\mathbf{P}_{n+3}$.
(A priori, a seed and the cluster variables in it could depend not only
on the triangulation they correspond to, but the sequence of mutations
we used to arrive at that triangulation from the initial seed.)
Therefore the number of distinct seeds in this pattern is  finite. 

We complete the proof of Theorem~\ref{th:type-A-is-finite} using an
argument based on
Corollary~\ref{cor:z-span} and Remark~\ref{rem:full-z-rank}. 
All we need to do is to check that for some triangulation~$T_\circ$,
the matrix $\tilde B(T_\circ)$ has full $\ZZ$-rank. 
Taking $T_\circ$ as in Figure~\ref{fig:octagon-special}, we obtain the
matrix  
\[
\tilde B(T_\circ)
=
\left[\,\begin{matrix}
0  & -1 & 0  & \cdots & 0 & 0 \\
1  & 0  & -1 & \cdots & 0 & 0 \\
0  & 1  & 0  & \cdots & 0 & 0 \\[-.05in]
\vdots & \vdots & \vdots & \ddots & \vdots & \vdots \\
0 & 0 & 0 & \cdots & 0 & -1 \\
0 & 0 & 0 & \cdots &1 & 0 \\
\hline\\[-.17in]
-1 & 0 &0 & \cdots & 0 & 0\\
1 & 0 &0 & \cdots & 0 & 0\\[-.05in]
\vdots & \vdots & \vdots & \ddots & \vdots & \vdots 
\end{matrix}\,
\right],
\]
where the line is drawn under the $n$th row.
 The matrix is easily seen to have full~$\ZZ$-rank. 
\end{proof}

\begin{lemma}\label{lem:independent}
For any triangulation~$T$ of the polygon~$\mathbf{P}_{n+3}$,
the elements of $\tilde \xx(T)$ are algebraically independent. 
Thus $(\tilde \xx(T),\tilde B(T))$ is a seed~in~$\FFcal$.  
\end{lemma}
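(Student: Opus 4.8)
The plan is to interpret the elements of $\tilde\xx(T)$ as pullbacks of coordinate functions under an explicit rational map, and to prove algebraic independence by showing that this map is dominant. Write $E(T)$ for the set of the $2n+3$ edges (the $n+3$ sides together with the $n$ diagonals) of the triangulation $T$, and consider the rational map
\[
\Phi\colon V^{n+3} \dashrightarrow \CC^{2n+3},\qquad
(v_1,\dots,v_{n+3})\longmapsto \bigl(\langle v_i,v_j\rangle\bigr)_{\{i,j\}\in E(T)}.
\]
Each coordinate of $\Phi$ is the Pl\"ucker coordinate $P_{ij}$, an element of $\FFcal\subseteq\mathcal{K}$. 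Since $V^{n+3}$ is irreducible, the functions $(P_{ij})_{\{i,j\}\in E(T)}$ are algebraically independent over $\CC$ if and only if $\Phi$ is dominant. Thus it suffices to produce, for a generic target point, a configuration $(v_1,\dots,v_{n+3})$ mapping to it; equivalently, I will construct a rational section of $\Phi$. Because the reconstruction below is unique up to the $\SL_2$-action, $\Phi$ in fact descends to a birational map on the quotient, which shows additionally that the $P_{ij}$ generate $\FFcal$, confirming that $(\tilde\xx(T),\tilde B(T))$ is a genuine seed.

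First I would order the $n+1$ triangles of $T$ as $\Delta_0,\Delta_1,\dots,\Delta_n$ so that, for each $r\ge1$, the triangle $\Delta_r$ meets $\Delta_0\cup\cdots\cup\Delta_{r-1}$ along exactly one edge and introduces exactly one new vertex; this is the standard ear-by-ear buildup of a convex polygon triangulation, reflected in the tree structure of its dual. For the base triangle $\Delta_0$ on vertices $a,b,c$, a direct computation---for instance normalizing $v_a=(1,0)$ and using the residual shear freedom in $\SL_2$---shows that for generic values of the three edge coordinates $P_{ab},P_{bc},P_{ac}$ there is a configuration $(v_a,v_b,v_c)$ realizing them, unique up to the $\SL_2$-action. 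This settles the three independent coordinates attached to $\Delta_0$.

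For the inductive step, suppose the vectors attached to the already-placed vertices have been reconstructed and that the shared edge $\{i,j\}$ of $\Delta_r$ satisfies $P_{ij}\ne0$, so that $v_i,v_j$ form a basis of $V$. The new vertex $k$ contributes two new edges, with values $P_{ik}=\langle v_i,v_k\rangle$ and $P_{jk}=\langle v_j,v_k\rangle$. Since $v_i,v_j$ are a basis, the assignment $v_k\mapsto(\langle v_i,v_k\rangle,\langle v_j,v_k\rangle)$ is a linear isomorphism $V\to\CC^2$; hence $v_k$ is uniquely determined by the two prescribed values, and every pair of values is attained. Each step therefore adds exactly two new, freely prescribable Pl\"ucker coordinates, and after all $n$ steps every vertex is placed and all $3+2n=2n+3$ edge values are realized. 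This is the desired rational section, so $\Phi$ is dominant and the elements of $\tilde\xx(T)$ are algebraically independent.

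The one point requiring care---and the main (if modest) obstacle---is genericity: the reconstruction is valid only where the shared-edge coordinate $P_{ij}$ used at each stage is nonzero, since otherwise $v_i,v_j$ fail to be independent and the linear system for $v_k$ degenerates. But the nonvanishing of finitely many coordinates holds on a dense open subset of $\CC^{2n+3}$, which is all that dominance requires. The only structural input is the ear decomposition of the triangulation used to order the triangles; everything else is linear algebra over the field $\FFcal$.
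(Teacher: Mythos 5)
Your proof is correct, but it takes a genuinely different route from the paper's. The paper argues in two lines: the elements of $\tilde\xx(T)$ generate $\FFcal$ over $\CC$ (every Pl\"ucker coordinate $P_{ij}$ is a rational function of those in $\tilde\xx(T)$, via iterated Pl\"ucker/exchange relations), and the transcendence degree of $\FFcal$ over $\CC$ equals $2n+3$ (the dimension of the affine cone over $\Gr_{2,n+3}$); a generating set whose cardinality equals the transcendence degree is automatically algebraically independent. You instead prove dominance of the evaluation map $\Phi$ directly, by building a rational section: order the triangles by an ear decomposition, realize the base triangle's three edge values up to the $\SL_2$-action, and then solve for each new vertex $v_k$ from the two new edge values using the linear isomorphism $v_k\mapsto(\langle v_i,v_k\rangle,\langle v_j,v_k\rangle)$ attached to the shared edge; genericity ($P_{ij}\neq 0$ on attachment edges) is an open dense condition, which is all dominance needs. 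What your approach buys is self-containedness: it requires neither the invariant-theoretic identification of $\FFcal$ with the function field of the cone over the Grassmannian (to know its transcendence degree) nor the nontrivial fact that \emph{all} Pl\"ucker coordinates are rational in $\tilde\xx(T)$; moreover, your observation that the reconstruction is unique up to $\SL_2$ yields generation of $\FFcal$ as a byproduct, which is exactly what the ``Thus\dots is a seed'' clause needs. What the paper's approach buys is brevity: given the structural facts it cites (which are used elsewhere in the chapter anyway), the argument is immediate, whereas yours requires setting up the shelling order and the genericity bookkeeping.
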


\begin{proof}
One way to establish this is to observe that $\tilde\xx(T)$ generates
the field~$\FFcal$ (since each Pl\"ucker coordinate~$P_{ij}$ is a
rational function in~$\tilde\xx(T)$), 
and combine this with the fact that the transcendence
degree of~$\FFcal$ over~$\CC$ (equivalently, the dimension of the affine cone
over~$\Gr_{2,n+3}$) is $2n+3$. 
\end{proof}

\pagebreak[3]

\begin{remark} 
An alternative proof of Theorem~\ref{th:type-A-is-finite} 
can be based on the description of the fundamental group of
the graph whose vertices are the triangulations of the
polygon~$\mathbf{P}_{n+3}$, and whose edges correspond to the flips. 
(We view this graph as a $1$-dimensional simplicial complex, 
the $1$-skeleton of the $n$-dimensional associahedron.) 
The fundamental group of this graph is generated by $4$-cycles and
$5$-cycles
(the boundaries of $2$-dimensional faces of the associahedron) pinned down to a basepoint.  
For each of these cycles, the corresponding sequence of $4$ or $5$
mutations in a seed pattern of type~$A_n$ brings us back to the
original seed; this follows from the analysis of the type~$A_2$ case in
Section~\ref{sec:finite-type-rank2}. 
Consequently, the seeds in such a pattern can be labeled by the
triangulations of~$\mathbf{P}_{n+3}$, implying the claim of finite
type. 
\end{remark}

\begin{corollary}
\label{cor:An-structure}
Cluster variables in a seed pattern of type~$A_n$ can be
labeled by the diagonals of a convex $(n+3)$-gon~$\mathbf{P}_{n+3}$ so that
\begin{itemize}[leftmargin=.2in]
\item
clusters correspond to triangulations of the polygon~$\mathbf{P}_{n+3}$ by
noncrossing diagonals, 
\item
mutations correspond to flips, and 
\item
exchange matrices are given by~\eqref{eq:B(T)}. 
\end{itemize}
Cluster variables labeled by different diagonals are
distinct, so there are altogether $\frac{n(n+3)}{2}$ cluster variables
and $\frac{1}{n+2}\binom{2n+2}{n+1}$ seeds (and as many clusters). 
\end{corollary}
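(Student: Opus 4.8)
The plan is to bootstrap everything from the combinatorial correspondence already assembled in the proof of Theorem~\ref{th:type-A-is-finite}, and then to upgrade it to an honest bijection by proving that distinct diagonals carry distinct cluster variables. First I would fix a reference triangulation $T_\circ$ and identify each vertex $t$ of $\TT_n$ with the triangulation $\Phi(t)$ reached by the corresponding sequence of flips (flips matching mutations by Exercise~\ref{ex:flip-is-mutation}). Taking the Pl\"ucker pattern in $R_{2,n+3}$ as the ``first'' pattern in Corollary~\ref{cor:z-span}---legitimate because its initial extended exchange matrix $\tilde B(T_\circ)$ has full $\ZZ$-rank, so Remark~\ref{rem:full-z-rank} furnishes the $\ZZ$-span hypothesis---and an arbitrary type-$A_n$ pattern (sharing the same matrices $B(t)$, cf.\ Lemma~\ref{lem:A-triangulation}) as the ``second,'' I would conclude that $\Phi(t)=\Phi(t')$ forces the two seeds of the arbitrary pattern to coincide. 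Hence its seed depends only on the triangulation; write $\Sigma(T)$ for it, with exchange matrix $B(T)$ as in~\eqref{eq:B(T)}. A short separate observation attaches a well-defined variable $z_d$ to each diagonal $d$: mutating at a position other than $d$ leaves the variable at $d$ fixed, and the triangulations containing a fixed $d$ are connected by flips avoiding $d$ (as $d$ cuts $\mathbf{P}_{n+3}$ into two smaller polygons), so the variable occupying slot $d$ is the same in every cluster $\Sigma(T)$ with $d\in T$.

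The heart of the argument is distinctness: for $d\neq d'$ I must show $z_d\neq z_{d'}$, and I would split into two cases. If $d$ and $d'$ do not cross, they lie in a common triangulation $T$, so $z_d$ and $z_{d'}$ are two members of the single cluster $\Sigma(T)$ and are therefore distinct by algebraic independence. If $d$ and $d'$ cross, the key point is that \emph{any} two crossing diagonals are flip-partners: their four endpoints sit in convex position and span a quadrilateral whose two diagonals are exactly $d$ and $d'$, and this quadrilateral can always be realized as a cell of some triangulation $T\ni d$ (triangulate the four outer arcs arbitrarily). Flipping $d$ in $T$ then replaces it by $d'$, so the exchange relation reads $z_d\,z_{d'}=M_1+M_2$, where $M_1,M_2$ are monomials in the remaining variables of $\Sigma(T)$ and in the coefficients. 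Were $z_d=z_{d'}$, this would give $z_d^2=M_1+M_2$, exhibiting $z_d$ as algebraic over the field generated by $\Sigma(T)\setminus\{z_d\}$ together with the frozen variables---contradicting the algebraic independence of the cluster $\Sigma(T)$. Thus $z_d\neq z_{d'}$ in every case.

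The enumeration is then immediate. A convex $(n+3)$-gon has $\binom{n+3}{2}-(n+3)=\tfrac{n(n+3)}{2}$ diagonals, and since distinct diagonals carry distinct variables there are exactly $\tfrac{n(n+3)}{2}$ cluster variables. Distinct triangulations are distinct sets of diagonals, hence give distinct clusters $\{z_d:d\in T\}$ and distinct seeds; as the number of triangulations of a convex $(n+3)$-gon is the Catalan number $\tfrac{1}{n+2}\binom{2n+2}{n+1}$, this counts the seeds (and clusters). I expect the main obstacle to be the crossing case of distinctness: everything hinges on recognizing that crossing diagonals are \emph{always} flip-partners, which is precisely what lets one pit a single exchange relation against algebraic independence. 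The secondary place requiring care is the verification that $z_d$ is well defined independently of the triangulation, which underlies the whole labeling and must be in place before the two-case distinctness argument can even be stated.
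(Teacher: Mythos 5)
Your proof is correct and takes essentially the same route as the paper's: the structural correspondence is imported from the proof of Theorem~\ref{th:type-A-is-finite} together with Corollary~\ref{cor:z-span} and Remark~\ref{rem:full-z-rank}, and distinctness is settled by the identical two-case argument (noncrossing diagonals lie in a common cluster, hence are algebraically independent; crossing diagonals satisfy an exchange relation $x\,x'=M_1+M_2$, so $x=x'$ would force $x^2=M_1+M_2$, contradicting algebraic independence). The only difference is that you spell out two details the paper leaves implicit, namely the well-definedness of the variable attached to a diagonal and the fact that crossing diagonals are always flip partners.
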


\begin{proof}
It is well known that the number of triangulations a convex $(n+3)$-gon has is equal to the 
Catalan number $\frac{1}{n+2}\binom{2n+2}{n+1}$, see
e.g., \cite[Exercise~6.19a]{ec2}. 
So the only claim remaining to be proved is that all these cluster
variables are distinct in any seed pattern of type~$A_n$. 
Let $x$ and $x'$ be two cluster variables labeled by distinct
diagonals $d$ and~$d'$. 
If $d$ and~$d'$ do not cross each other, then there is a
cluster containing $x$ and~$x'$, so $x$ and~$x'$ are algebraically
independent and therefore distinct. 
If $d$ and $d'$ do cross, then there is an exchange
relation of the form $x\,x'=M_1+M_2$ where $M_1$ and~$M_2$ are
monomials in the elements of some extended cluster~$\tilde\xx$
containing~$x$.
Now the equality $x=x'$ would imply $x^2=M_1+M_2$, contradicting the
condition that the elements of~$\tilde\xx$ are algebraically
independent. 
\end{proof}

In the rest of this section, we examine several seed
patterns (or cluster algebras) of type~$A_n$ which naturally arise in
various mathematical contexts.

\begin{exercise}
A \emph{frieze pattern}~\cite{conway-coxeter, coxeter-frieze} is a table of the form
\[
\text{$n+2$ rows}\left\{
\begin{array}{ccccccccccccccccc}
\cdots &  1  &&  1  &&  1  &&  1  &&  1  &&  1  &&  1  &&  1 & \cdots \\
\cdots &    & *  &&  *  &&  *  &&  *  &&  *  &&  *  &&  * && \cdots\\
\cdots &  *  && *  &&  *  &&  *  &&  *  &&  *  &&  *  &&  * & \cdots\\
\cdots &    & *  &&  *  &&  *  &&  *  &&  *  &&  *  &&  * && \cdots\\
\cdots &  *  && *  &&  *  &&  *  &&  *  &&  *  &&  *  &&  * & \cdots\\
\cdots &    & 1  &&  1  &&  1  &&  1  &&  1  &&  1  &&  1 && \cdots
\end{array}
\right.
\]
with (say) positive integer entries such that every quadruple 
\[
\begin{array}{ccc}
 &B&\\
A&&C\\
&D
\end{array}
\]
satisfies $AC-BD=1$. 
Identify the entries in a frieze pattern with cluster variables in a
seed pattern of type~$A_n$.
How many distinct entries does a frieze pattern have? 
What kind(s) of periodicity does it possess? 
\end{exercise}

\begin{example}
\label{example:SL4}
Let us discuss, somewhat informally, the example of a seed pattern
associated with the basic affine space for~$\SL_4$. 
Choose the initial seed for this pattern as shown in 
Figure~\ref{fig:special-seed-SL4/U}. 
(This seed has already appeared in Figure~\ref{fig:two-quivers};
it corresponds to a particular choice of a wiring diagram.) 
The variables $P_2$, $P_3$, and~$P_{23}$
are mutable; the remaining six variables are frozen. 
The mutable part of the initial quiver is an oriented $3$-cycle;
as such, it is easily identified as the mutable part of a quiver
associated with a particular triangulation of a hexagon.
Thus we are dealing here with a seed of type~$A_3$. 

\begin{figure}[ht]
\begin{center}
\setlength{\unitlength}{2.5pt} 
\begin{picture}(60,20)(0,10) 

\put( 0,10){\makebox(0,0){$P_{1}$}}
\put(20,10){\makebox(0,0){$P_{2}$}}
\put(40,10){\makebox(0,0){$P_{3}$}}
\put(60,10){\makebox(0,0){$P_{4}$}}

\put(10,20){\makebox(0,0){$P_{12}$}}
\put(30,20){\makebox(0,0){$P_{23}$}}
\put(50,20){\makebox(0,0){$P_{34}$}}

\put(20,30){\makebox(0,0){$P_{123}$}}
\put(42,30){\makebox(0,0){$P_{234}$}}

\thicklines 

\put(16,10){\vector(-1,0){12}}
\put(36,10){\vector(-1,0){12}}
\put(56,10){\vector(-1,0){12}}

\put(26,20){\vector(-1,0){12}}
\put(46,20){\vector(-1,0){12}}

\put(22,12){\vector(1,1){6}}
\put(42,12){\vector(1,1){6}}
\put(32,22){\vector(1,1){6}}

\put(12,17){\vector(1,-1){5}}
\put(32,17){\vector(1,-1){5}}
\put(22,27){\vector(1,-1){5}}

\end{picture}
\vspace{-.1in}
\end{center}
\caption{A seed of flag minors in $\CC[\SL_4]^U$.
}
\label{fig:special-seed-SL4/U}
\end{figure}

\noindent
A matrix in $\SL_4$ has $2^4-2=14$ nontrivial flag minors:
the $6$ frozen variables
\[
P_1, \, P_{12}, \, P_{123}, \, P_4, \, P_{34}, \, P_{234}
\]
(recall that they correspond to the unbounded chambers in a wiring
diagram)
and $8$ additional flag minors 
\[
P_2, \, P_3, \, P_{13}, \, P_{14}, \, P_{23},\, P_{24}, \, P_{124}, \,
P_{134}, 
\]
all of which can be obtained by the mutation process from our initial
seed.
Note however that a seed pattern of type~$A_3$ should have $9$ cluster
variables---so one of them is still missing! 

Examining the initial seed shown in
Figure~\ref{fig:special-seed-SL4/U},
we see that it can be mutated in three possible ways. 
The corresponding exchange relations are:
\begin{align*}
P_2 \,P_{13}&={P_{12}}\,P_3+{P_1}\,P_{23}\,,\\
P_3\,P_{24}&=
{P_{4}}\,P_{23}+{P_{34}}\,P_{2}\,,\\
P_{23}\,\Omega &=
P_{123}\,P_{34}\,P_2+P_{12}\,P_{234}\,P_3\,. 
\end{align*}
The first two relations correspond to
the two braid moves that can be applied to the initial wiring diagram~$D_\circ$. 
The third relation is different in nature: 
it produces a seed that does not correspond to any wiring diagram,  
as the new cluster variable~$\Omega$ is not a
flag minor. 
Ostensibly, $\Omega$~is a rational expression (indeed, a Laurent polynomial)
in the flag minors. 
One can check that in~fact
\begin{equation}
\label{eq:Omega}
\Omega=\dfrac{P_{123}\,P_{34}\,P_2+P_{12}\,P_{234}\,P_3}{P_{23}}
=-P_1P_{234}+P_2P_{134}
\end{equation}
---so $\Omega$ is not merely a rational function on
the basic affine space but a \emph{regular} function.
It follows that the corresponding cluster algebra is precisely the
invariant ring $\CC[\SL_4]^U$ (recall that the latter is generated by the
flag minors). 
It turns out that this phenomenon holds for any special linear group~$\SL_k$, 
resulting in a cluster algebra structure in
$\CC[\operatorname{SL}_k]^U$. 

In the case under consideration, we get 14 
distinct extended clusters, in agreement with Corollary~\ref{cor:An-structure}. 
See Figure~\ref{fig:sl_4/U}.  
\end{example}

\pagebreak[3]

\begin{figure}[ht]
\begin{center}
\setlength{\unitlength}{0.75pt} 
\begin{picture}(200,195)(0,-15) 
\thinlines 

\put(10,10){\line(-1,-1){10}}
\put(40,80){\line(1,2){20}}
\put(40,80){\line(1,-1){30}}
\put(70,50){\line(3,2){30}}
\put(70,50){\line(1,-4){10}}
\put(80,10){\line(1,0){40}}
\put(60,120){\line(8,-3){40}}
\put(100,70){\line(0,1){35}}
\put(100,70){\line(3,-2){30}}
\put(120,10){\line(1,4){10}}
\put(130,50){\line(1,1){30}}
\put(100,105){\line(8,3){40}}
\put(100,175){\line(0,1){15}}
\put(140,120){\line(1,-2){20}}
\put(190,10){\line(1,-1){10}}

\qbezier(10,10)(10,50)(40,80)
\qbezier(10,10)(40,-5)(80,10)
\qbezier(60,120)(65,155)(100,175)

\qbezier(190,10)(190,50)(160,80)
\qbezier(190,10)(160,-5)(120,10)
\qbezier(140,120)(135,155)(100,175)

\put(10,10){\circle*{4}} 
\put(40,80){\circle*{4}} 
\put(60,120){\circle*{4}} 
\put(70,50){\circle*{4}} 
\put(80,10){\circle*{4}} 
\put(100,70){\circle*{4}} 
\put(100,105){\circle*{4}} 
\put(100,175){\circle*{4}} 
\put(120,10){\circle*{4}} 
\put(130,50){\circle*{4}} 
\put(140,120){\circle*{4}} 
\put(160,80){\circle*{4}} 
\put(190,10){\circle*{4}} 

\put(10,120){\makebox(0,0){$P_{124}$}}
\put(190,120){\makebox(0,0){$P_{134}$}}
\put(100,140){\makebox(0,0){$\Omega$}}
\put(100,40){\makebox(0,0){$P_{23}$}}
\put(100,-17){\makebox(0,0){$P_{14}$}}
\put(73,85){\makebox(0,0){$P_{2}$}}
\put(127,85){\makebox(0,0){$P_{3}$}}
\put(40,30){\makebox(0,0){$P_{24}$}}
\put(160,30){\makebox(0,0){$P_{13}$}}


\end{picture}

\end{center}
\caption{Clusters in $\CC[\operatorname{SL}_4]^U$.
The 14 clusters for this seed pattern are shown as
the vertices of a graph; the edges of the graph correspond to seed mutations.  
Note that there is one additional vertex at infinity,
so the graph should be viewed as drawn on a sphere rather than a
plane. 
The regions are labeled by cluster variables. 
Each cluster consists of three elements labeling the regions
adjacent to the corresponding vertex. 
The 6 frozen variables are not~shown. 
This graph is isomorphic to the $1$-skeleton of the three-dimensional
associahedron, shown in Figure~\ref{fig:A3assoc_poly}.}
\label{fig:sl_4/U}
\end{figure}
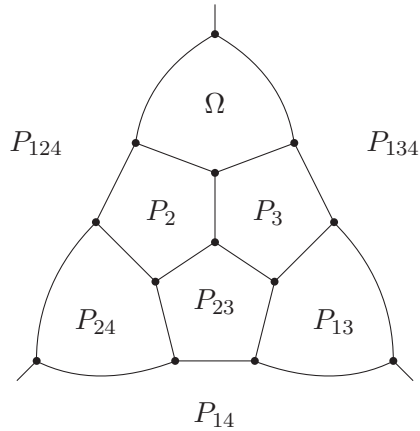

\begin{example}
\label{ex:tridiagonal}
We conclude this section by presenting a family of
seed patterns of type~$A_n$ introduced in~\cite{yangzel}.
They correspond to cluster structures in particular \emph{double Bruhat cells} for the
special linear groups~$\SL_{n+1}(\CC)$,
more specifically in the cells associated with
\emph{pairs of Coxeter elements} in the associated symmetric
group~$\mathcal{S}_{n+1}$.
This construction can be extended to arbitrary simply connected
semisimple complex
Lie groups, see~\cite{yangzel}. 

Let $L_n\subset \SL_{n+1}(\CC)$ be the subvariety of tridiagonal
matrices 
\begin{equation}
\label{eq:tridiagonal}
z=\begin{bmatrix}v_1 & q_1 & 0 & \cdots & 0 \\
                  1  &  v_2 & q_2 & \ddots & \vdots \\
                  0  & 1 & v_3 & \ddots  & 0 \\
                  \vdots & \ddots & \ddots & \ddots  & q_n\\
                  0 &  \cdots &  0 & 1 & v_{n+1}
                  \end{bmatrix} 
\end{equation}
of determinant~1. 
For $i,j\in\{1,\dots,n+3\}$ satisfying $i+2\le j$, 
consider the \emph{solid principal minor} 
(cf.\ Exercise~\ref{exercise:solid-minors})
\[
U_{ij} 
=\Delta_{[i,j-2],[i,j-2]} \in \CC[L_n], 
\]
the determinant of the
submatrix with rows and columns $i, i+1, \dots, j-2$.
For example, $U_{i,i+2}\!=\! v_i$ and $U_{1,n+3} \!=\! \det(z)\! =\! 1$.
By convention, $U_{i,i+1}\!=\!1$.

\begin{exercise}
Prove that these functions satisfy the relations
\begin{equation}
\label{eq:exchange-A-principal-u}
U_{ik}\,U_{j\ell} 
= q_{j-1} q_j \cdots q_{k-2}\, U_{ij}\,U_{k\ell} + U_{i\ell} \,U_{jk}\,, 
\end{equation}
for $1 \leq i<j<k<l\le n+3$.
Then show that these relations are the exchange relations in a
particular seed
pattern of type~$A_n$. 
More precisely, show that there is a seed pattern of type~$A_n$,
with the frozen variables $q_1, \dots, q_n$, 
in which the cluster variables~$U_{ij}$ associated to the diagonals $[i,j]$ of
the convex polygon~$\mathbf{P}_{n+3}$ 
satisfy the exchange relations~\eqref{eq:exchange-A-principal-u}. 
\end{exercise}

What is required to show that 
the relations \eqref{eq:exchange-A-principal-u} are the exchange
relations in a seed pattern of type~$A_n$?
Let us extend each $n\times n$ exchange matrix
$B(T)$ associated to a triangulation~$T$ of~$\mathbf{P}_{n+3}$
(see~\eqref{eq:B(T)}) to the $2n\times n$ matrix~$\tilde B(T)$ 
whose columns 
encode the relations~\eqref{eq:exchange-A-principal-u} corresponding
to the $n$ possible flips from~$T$. 
One then needs to verify that whenever 
triangulations $T$ and $T'$ are related by a flip, 
the associated matrices $\tilde B(T)$ and $\tilde B(T')$ are
related by the corresponding mutation. 


One of the clusters in this seed pattern consists of the $n$ 
leading principal minors
$U_{13}\,, \dots, U_{1,n+2}\,$.
The exchange relations from this cluster 
are the relations \eqref{eq:exchange-A-principal-u} with 
$(i,j,k,\ell)=(1,k-1,k,k+1)$. 
They can be rewritten as follows: 
\begin{equation}
\label{eq:exchange-A-initial}
U_{1,k+1}  = v_{k-1} \,U_{1,k}  - q_{k-2} \,U_{1,k-1}  \quad (k = 3,
\dots, n+2).
\end{equation}
These relations play important roles in the classical theory
of orthogonal polynomials in one variable~\cite{szego},
in the study of a generalized Toda lattice~\cite{kostant-toda},
and in other mathematical contexts. 
\end{example}

\section{Seed patterns of type $D_n$}
\label{sec:type-D}

In this section we show that seed patterns of type $D_n$
are of finite type.  
The proof of this theorem, while substantially more technical than the
proof of Theorem~\ref{th:type-A-is-finite}, 
follows the same general plan.
It relies on two main ingredients.
The first ingredient is a combinatorial construction 
(``tagged arcs'' in a punctured disk) 
that enables us to explicitly describe the
combinatorics of mutations in type~$D_n$ and introduce the relevant
nomenclature.
The second ingredient is an algebraic construction of a particular
seed pattern of type~$D_n$. 
In this pattern, each cluster variable
associated with a tagged arc has an intrinsic definition independent
of the mutation path; this
will imply that the number of seeds is finite.
The ``full $\ZZ$-rank'' argument will then allow us to generalize the
finiteness statement to arbitrary coefficients.

While type $D_n$ Dynkin diagrams are usually defined
for $n \geq 4$, in this section we will allow
for the possibility of $n=3$
(in which case one recovers type~$A_3$). 

\begin{exercise}
Show that a seed pattern is of type~$D_n$ if and only if 
one of its exchange matrices 
corresponds to a quiver which is an oriented $n$-cycle. 
\end{exercise}

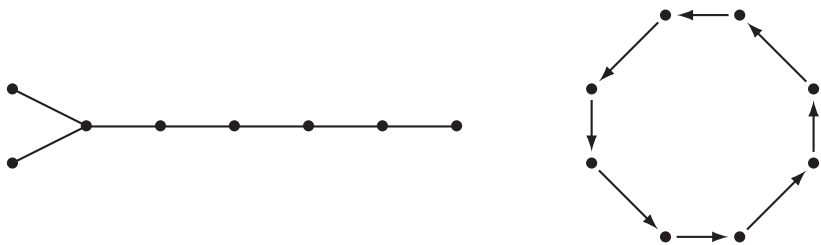
\begin{figure}[ht]
\setlength{\unitlength}{1.4pt}
\begin{picture}(120,17)(0,-30)
\thicklines 
\put(20,-0.15){\line(1,0){100}}
\put(0,10){\line(2,-1){20}}
\put(0,-10){\line(2,1){20}}
\multiput(20,0)(20,0){6}{\circle*{3}}
\put(0,10){\circle*{3}}
\put(0,-10){\circle*{3}}
\end{picture}
\qquad\qquad
\setlength{\unitlength}{1.4pt} 
\begin{picture}(60,60)(0,0) 
\thicklines 
  \put(23,0){\vector(1,0){14}} 
  \put(37,60){\vector(-1,0){14}} 
  \put(60,23){\vector(0,1){14}} 
  \put(0,37){\vector(0,-1){14}} 

  \put(42,2){\vector(1,1){16}} 
  \put(18,58){\vector(-1,-1){16}} 
  \put(2,18){\vector(1,-1){16}} 
  \put(58,42){\vector(-1,1){16}} 

  \multiput(20,0)(20,0){2}{\circle*{3}} 
  \multiput(20,60)(20,0){2}{\circle*{3}} 
  \multiput(0,20)(0,20){2}{\circle*{3}} 
  \multiput(60,20)(0,20){2}{\circle*{3}} 
\end{picture} 

\caption{Dynkin diagram of type~$D_n$ and an oriented $n$-cycle.}
\label{fig:Dn}
\end{figure}

\pagebreak[3]

\begin{theorem}
\label{th:type-D-is-finite}
Seed patterns of type~$D_n$ are of finite type. 
\end{theorem}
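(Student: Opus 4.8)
The plan is to follow the three-step blueprint of Theorem~\ref{th:type-A-is-finite}: exhibit one explicit seed pattern of type~$D_n$ whose cluster variables carry an intrinsic, path-independent meaning, so that finiteness of the seeds is manifest, and then bootstrap to arbitrary coefficients by a full $\ZZ$-rank argument. For the combinatorial scaffolding I would replace the triangulated $(n+3)$-gon by \emph{tagged arcs} in a once-punctured disk with $n$ marked points on the boundary. The tagged triangulations of this surface index the clusters of type~$D_n$; flips of tagged triangulations correspond to mutations, and a suitable tagged triangulation yields an oriented $n$-cycle, matching the description of type~$D_n$ from the exercise above. The essential combinatorial fact is that there are only finitely many tagged arcs, hence finitely many candidate clusters; the real work is to realize the cluster variables so that this finiteness descends to the seed pattern itself.

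Next I would construct the algebraic model: a family of regular functions $x_\gamma$ in a fixed ambient field~$\FFcal$, indexed by tagged arcs~$\gamma$, such that the functions attached to the arcs of any tagged triangulation~$T$ are algebraically independent and constitute an extended cluster, they satisfy the generalized Ptolemy exchange relations prescribed by flips, and, crucially, $x_\gamma$ depends only on~$\gamma$ and not on the mutation sequence producing it. As in the type~$A_n$ case, ordinary boundary-to-boundary arcs would be realized by Pl\"ucker-type coordinates $\langle v_i,v_j\rangle$ of vectors $v_i\in\CC^2$; the novelty is the puncture, where each radius splits into two tagged versions (plain and notched) that require two companion functions obeying the puncture exchange relations. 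Granting such functions, each cluster variable is determined by its tagged arc, so the supply of cluster variables is finite, and, arguing as in type~$A_n$ (with an analogue of \cref{lem:independent} supplying algebraic independence), the seed pattern has only finitely many seeds.

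To promote this distinguished pattern to \emph{every} seed pattern of type~$D_n$, I would select a tagged triangulation~$T_\circ$ whose extended exchange matrix $\tilde B(T_\circ)$ has full $\ZZ$-rank, and then apply Corollary~\ref{cor:z-span} and Remark~\ref{rem:full-z-rank} exactly as in the type~$A_n$ proof: any seed pattern of type~$D_n$ has the same exchange matrices, its initial extended exchange matrix has rows lying in the $\ZZ$-span of those of $\tilde B(T_\circ)$, and therefore its pattern of coinciding seeds matches the finite one already established.

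The hardest part is the algebraic construction at the puncture. Defining the functions attached to the plain and notched tagged arcs so that they are genuinely independent of the mutation path, verifying that all the puncture exchange relations hold as literal identities among these functions, and checking that every flip---including the self-folded and puncture flips that have no counterpart in type~$A_n$---is matched by the corresponding matrix mutation, is considerably more delicate than the polygon case and forms the technical core of the argument.
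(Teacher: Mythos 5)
Your overall strategy is the same as the paper's: tagged arcs and tagged triangulations of a once-punctured $n$-gon as the combinatorial model, an algebraic realization by Pl\"ucker-type invariants $\langle v_i,v_j\rangle$ in which each cluster variable depends only on its tagged arc (so that finiteness of seeds is manifest), and then the Corollary~\ref{cor:z-span}/Remark~\ref{rem:full-z-rank} bootstrap to arbitrary coefficients. The paper implements the puncture data essentially as you envision: it adjoins two extra vectors $a,\overline a$ and two scalars $\lambda,\overline\lambda$, forms the operator $A$ with eigenvectors $a$ and $a^{\notch}$ and eigenvalues $\lambda,\overline\lambda$, and assigns $\langle v_i,a\rangle$, $\langle v_i,a^{\notch}\rangle$ to plain and notched radii and $\langle v_j,Av_i\rangle$ to arcs crossing a cut; the exchange relations are then literal Grassmann--Pl\"ucker identities (Proposition~\ref{prop:exchange-rel-Dn}).

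There is, however, one step in your plan that fails as stated: you propose to ``select a tagged triangulation $T_\circ$ whose extended exchange matrix $\tilde B(T_\circ)$ has full $\ZZ$-rank,'' where $\tilde B(T_\circ)$ is the matrix whose frozen rows are indexed by the boundary segments. No such tagged triangulation exists. For the fan triangulation by plain radii, every row of $\tilde B(T_\circ)$ sums to zero, so its columns are linearly dependent and the rank is $<n$ (see Example~\ref{example:Dtriangulation} and Figure~\ref{fig:T-circ}); since all the matrices $\tilde B(T)$ are related by mutations and mutation preserves the rank of an extended exchange matrix, every $\tilde B(T)$ has rank $<n$. The paper flags exactly this obstacle and removes it by enlarging the coefficient system: the scalars $\lambda$ and $\overline\lambda$ from the algebraic model are declared \emph{frozen variables}, giving matrices $\tilde B^\bullet(T)$ with two extra rows, and for the fan triangulation one checks directly that $\tilde B^\bullet(T_\circ)$ has full $\ZZ$-rank. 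This is not a cosmetic adjustment: the exchange relations of your model must genuinely involve these extra coefficients (they enter as $\lambda\overline\lambda$, $\lambda$, or $\overline\lambda$ in the relations containing the operator $A$), and without them the full-rank argument---and hence the reduction from the explicit pattern to all seed patterns of type~$D_n$---cannot get off the ground.
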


As in 
Remark~\ref{rem:finite-type-coeffs}, 
the key point of Theorem~\ref{th:type-D-is-finite} is that 
a pattern of type~$D_n$ has finitely many seeds regardless of the number
of frozen variables and of the entries in the bottom parts of extended
exchange matrices.

\begin{definition} 
\label{def:arcs-Dn}
Let $\mathbf{P}_n^\bullet$ be a convex $n$-gon ($n\ge 3$) with a
distinguished point~$p$ (a~\emph{puncture}) in its interior. 
We label the vertices of $\mathbf{P}_n^\bullet$ clockwise from $1$ to~$n$. 
These vertices and the puncture~$p$ are
the \emph{marked points} of~$\mathbf{P}_n^\bullet$. 

\pagebreak[3]

An~\emph{arc} in $\mathbf{P}_n^\bullet$
is a non-selfintersecting curve~$\gamma$ in~$\mathbf{P}_n^\bullet$
such that
\begin{itemize}[leftmargin=.2in]
\item
the endpoints of $\gamma$ are two different marked points; 
\item
except for its endpoints, 
$\gamma$ is disjoint from the boundary of $\mathbf{P}_n^\bullet$ and
  from the puncture~$p$; 
\item
$\gamma$ does not cut out an unpunctured digon. 
\end{itemize}
We consider each arc up to isotopy within the class of such curves. 

The arcs incident to the puncture are called \emph{radii}. 
\end{definition} 

\begin{definition} 
\label{def:tagged-arcs-Dn}
A \emph{tagged arc} in $\mathbf{P}_n^\bullet$ is either an ordinary
non-radius arc, or a radius that has been labeled (``tagged'') 
in one of two possible ways, \emph{plain} or \emph{notched}. 
Two tagged arcs are called \emph{compatible} with one
  another if their  untagged versions do not cross each other,
with the following modification: 
the plain and notched versions of the same radius are
compatible, but the the plain and notched versions of two different
radii are not.

A \emph{tagged triangulation} is a maximal (by inclusion) collection of
pairwise compatible tagged arcs. 
See Figure~\ref{fig:tagged-arc-d3}.
\end{definition}

\vspace{-.1in}

\begin{figure}[ht]
  \centering
  \includegraphics{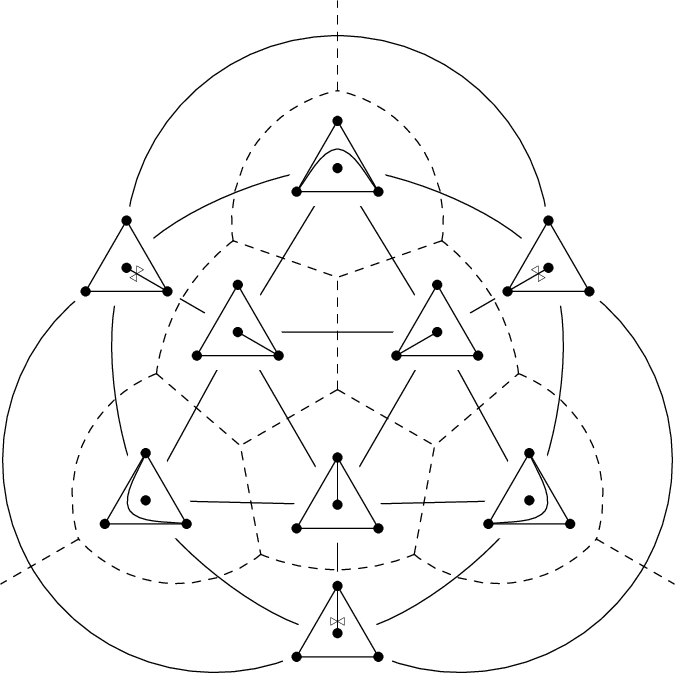}
  \caption{Tagged arcs in a once-punctured
    triangle~$\mathbf{P}_3^\bullet$. 
Solid lines indicate which arcs are compatible. The vertices of the dashed graph
    correspond to tagged triangulations; its edges correspond to flips.
	Note that there is one additional vertex at infinity,
so the graph should be viewed as drawn on a sphere rather than a
	plane.} 
 \label{fig:tagged-arc-d3}
\end{figure}

\pagebreak[3]

\begin{lemma}
\label{lem:tagged-flip}
Any tagged triangulation~$T$ of $\mathbf{P}_n^\bullet$ consists of $n$
tagged~arcs. 
Any tagged arc in a tagged triangulation~$T$ can be replaced in a
unique way by a tagged arc not belonging to~$T$, to form a new tagged
triangulation~$T'$. 
\end{lemma}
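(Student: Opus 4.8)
The plan is to prove both assertions together by first reducing every tagged triangulation of $\mathbf{P}_n^\bullet$ to one of two transparent shapes, then counting arcs with an Euler-characteristic argument, and finally analyzing the flip locally around the arc that is removed. The starting point is a dichotomy for the radii of a tagged triangulation $T$. Maximality forces $T$ to contain at least one radius, since a region of $\mathbf{P}_n^\bullet$ not met by any arc at $p$ would have to be a face containing the puncture and hence could not be a triangle. The compatibility rule of Definition~\ref{def:tagged-arcs-Dn}---plain and notched versions of the \emph{same} radius are compatible, but differently tagged versions of \emph{different} radii are not---then forces exactly one of the following: (i) all radii of $T$ carry the same tag and have distinct endpoints; or (ii) $T$ contains both the plain and the notched version of a single radius $r_i$ and no other radius. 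Indeed, two radii at distinct vertices can coexist only with equal tags, while a doubled radius at $i$ is incompatible with any radius at $j\neq i$.

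For the count I would pass from $T$ to an ordinary (untagged) ideal arc system $T^\circ$: in case~(i) each radius becomes an ordinary radius, while in case~(ii) the doubled radius is replaced by the pair consisting of the radius $r_i$ together with the loop $\ell_i$ enclosing $p$, which form a self-folded triangle. In either case $T^\circ$ cuts $\mathbf{P}_n^\bullet$ into triangular faces, and the passage $T\mapsto T^\circ$ preserves cardinality (the doubled radius and the pair $\{r_i,\ell_i\}$ both contribute $2$). Writing $a$ for the number of arcs of $T^\circ$, $F$ for the number of triangles, and recording the $n$ boundary edges and the $n+1$ marked points, each arc borders two triangle-slots (the interior radius of a self-folded triangle is traversed twice) while each boundary edge borders one, so $3F=2a+n$; combined with Euler's relation $(n+1)-(a+n)+F=1$ this yields $F=a$ and hence $a=n$. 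Thus every tagged triangulation has exactly $n$ arcs.

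For the flip, fix $\gamma\in T$; since $|T|=n$, it suffices to show that exactly two tagged arcs are compatible with all of $T\setminus\{\gamma\}$, namely $\gamma$ itself and a unique replacement $\gamma'$. I would argue this by inspecting the faces of $T^\circ$ adjacent to $\gamma$. When $\gamma$ is a non-radius arc, its two adjacent triangles merge into a quadrilateral, whose two diagonals are $\gamma$ and a single other arc $\gamma'$; this is the usual polygon flip. When $\gamma$ is a radius, removing it merges the two triangles incident to $p$ along $\gamma$: if the two neighboring radii around $p$ are based at distinct vertices $a,b$, the merged region is again a quadrilateral and $\gamma$ flips to the boundary-to-boundary arc $[a,b]$, decreasing the number of radii by one. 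The genuinely puncture-specific situation is the remaining one, in which the two neighboring radii coincide (so $T$ has only two radii, or $\gamma$ is one arc of a doubled radius) and removing $\gamma$ leaves $p$ inside a once-punctured region bounded by a single radius; here the unique completion is forced to be a \emph{re-tagged} radius, exactly as in Figure~\ref{fig:tagged-arc-d3} (and in the once-punctured digon), where flipping one radius of a two-radius triangulation produces a doubled radius, and conversely. In every case the compatibility rules single out precisely one $\gamma'\neq\gamma$.

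The main obstacle is the tag bookkeeping in the radius flips, i.e.\ the flips that cross between family~(i) and family~(ii). Establishing ``exactly two completions'' there requires checking, against the modified compatibility rule, that no third tagged arc inside the once-punctured region is compatible with $T\setminus\{\gamma\}$ and that the replacement carries the correct tag; a non-radius arc placed there would either cross the boundary radius or cut out an unpunctured digon, leaving only the plain and notched radii as candidates. This is the one place where the plain/notched distinction does real work, and it is precisely what makes the once-punctured case (type~$D_n$) behave differently from the unpunctured polygon (type~$A_n$). I would also be careful that the Euler count is applied uniformly across both families, which hinges on the self-folded triangle contributing correctly to $3F=2a+n$.
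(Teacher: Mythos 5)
Your proof is correct, and its backbone coincides with the paper's: your dichotomy (i)/(ii) is exactly the paper's classification of tagged triangulations into three flavors (all radii plain, all radii notched, or a doubled radius inside a punctured digon), and both arguments hinge on the same compatibility rule for tags. The difference is in how much gets proved from there. The paper simply asserts that in each flavor the count of tagged arcs is $n$, relying implicitly on the known count for ordinary triangulations; you instead pass to the untagged system $T^\circ$ (with a self-folded triangle in case (ii)) and run an Euler-characteristic count, $F=a$ from $V-E+F=1$ and $3F=2a+n$ from the side count, which is self-contained and treats both flavors uniformly --- provided one also justifies that maximality forces all complementary regions to be triangles, a point you assert at about the same level of rigor as the paper's ``it is easy to see.'' More substantially, the paper's proof says nothing explicit about the second assertion (existence and uniqueness of the flip); you supply the local case analysis --- quadrilateral flips for non-radius arcs, radius-to-chord flips when the two neighboring radii are distinct, and the re-tagging flips that pass between families (i) and (ii) --- together with the verification that no third compatible arc exists in the once-punctured region (any candidate non-radius arc either crosses the remaining radius or cuts out an unpunctured digon, so only the plain and notched radii survive). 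That last check is precisely what the paper leaves to the reader, and it is where the plain/notched mechanism does its real work. So your route costs more bookkeeping but buys a complete proof of both assertions rather than only the arc count.
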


\pagebreak[3]

\begin{proof}
It is easy to see that tagged triangulations come in three flavors: 
\begin{enumerate}[leftmargin=.3in]
\item a triangulation in the usual (topological) sense, with every radius plain; 
\item a triangulation in the usual sense, with every radius notched; 
\item the plain and notched versions of the same radius
  inside a punctured digon. Outside of the digon, it is an ordinary
  triangulation. 
\end{enumerate}
In each of these cases, the total number of tagged arcs is~$n$. 
\end{proof}

\pagebreak[3]

In the situation described in Lemma~\ref{lem:tagged-flip},
we say that the tagged triangulations $T$ and~$T'$ are related by a
\emph{flip}. 

\begin{exercise}
Verify that any two tagged triangulations of $\mathbf{P}_n^\bullet$ are connected
via a sequence of flips. 
\end{exercise}

We next define an extended exchange matrix $\tilde B(T)$ associated with a
  tagged triangulation~$T$ of the punctured
  polygon~$\mathbf{P}_n^\bullet$. 
The construction is similar to the one in type~$A_n$. 
In the cases (1) and~(2) above, the rule~\eqref{eq:B(T)} is used; 
in the case~(3), some adjustments are needed around the radii. 

Figure~\ref{fig:octagon-D8} illustrates the
recipe used to define the matrix $\tilde B(T)$, or equivalently the
corresponding quiver.
The vertices of the quiver corresponding to boundary segments 
(i.e., the sides of the polygon) are
frozen;
the ones corresponding to tagged arcs are mutable. 
Details are left to the reader. 

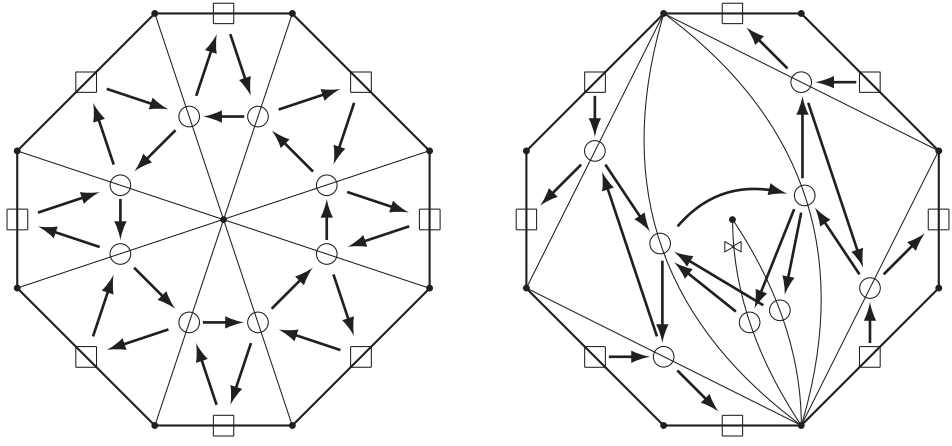
\begin{figure}[ht] 
\begin{center} 
\vspace{-.1in}
\setlength{\unitlength}{2.6pt} 
\begin{picture}(60,66)(0,-3) 
\thicklines 
  \multiput(0,20)(60,0){2}{\line(0,1){20}} 
  \multiput(20,0)(0,60){2}{\line(1,0){20}} 
  \multiput(0,40)(40,-40){2}{\line(1,1){20}} 
  \multiput(20,0)(40,40){2}{\line(-1,1){20}} 
 
  \multiput(20,0)(20,0){2}{\circle*{1}} 
  \multiput(20,60)(20,0){2}{\circle*{1}} 
  \multiput(0,20)(0,20){2}{\circle*{1}} 
  \multiput(60,20)(0,20){2}{\circle*{1}} 
 
  \put(30,30){\circle*{1}} 
 
\thinlines 
\put(40,60){\line(-1,-3){20}} 
\put(20,60){\line(1,-3){20}} 
\put(0,20){\line(3,1){60}} 
\put(0,40){\line(3,-1){60}} 
 

\multiput(28.5,-1.5)(3,0){2}{\line(0,1){3}}
\multiput(28.5,-1.5)(0,3){2}{\line(1,0){3}}

\multiput(28.5,58.5)(3,0){2}{\line(0,1){3}}
\multiput(28.5,58.5)(0,3){2}{\line(1,0){3}}

\multiput(8.5,48.5)(3,0){2}{\line(0,1){3}}
\multiput(8.5,48.5)(0,3){2}{\line(1,0){3}}

\multiput(48.5,48.5)(3,0){2}{\line(0,1){3}}
\multiput(48.5,48.5)(0,3){2}{\line(1,0){3}}

\multiput(8.5,8.5)(3,0){2}{\line(0,1){3}}
\multiput(8.5,8.5)(0,3){2}{\line(1,0){3}}

\multiput(48.5,8.5)(3,0){2}{\line(0,1){3}}
\multiput(48.5,8.5)(0,3){2}{\line(1,0){3}}

\multiput(-1.5,28.5)(3,0){2}{\line(0,1){3}}
\multiput(-1.5,28.5)(0,3){2}{\line(1,0){3}}

\multiput(58.5,28.5)(3,0){2}{\line(0,1){3}}
\multiput(58.5,28.5)(0,3){2}{\line(1,0){3}}


\put(25,45){\circle{3}}
\put(35,45){\circle{3}}
\put(25,15){\circle{3}}
\put(35,15){\circle{3}}
\put(15,25){\circle{3}}
\put(15,35){\circle{3}}
\put(45,25){\circle{3}}
\put(45,35){\circle{3}}

\linethickness{1pt}

\put(33,45){{\vector(-1,0){6}}}
\put(26,48){{\vector(1,3){3}}}
\put(31,57){{\vector(1,-3){3}}}

\put(27,15){{\vector(1,0){6}}}
\put(34,12){{\vector(-1,-3){3}}}
\put(29,3){{\vector(-1,3){3}}}

\put(38,46){{\vector(3,1){9}}}
\put(49,47){{\vector(-1,-3){3}}}
\put(43,37){{\vector(-1,1){6}}}

\put(22,14){{\vector(-3,-1){9}}}
\put(11,13){{\vector(1,3){3}}}
\put(17,23){{\vector(1,-1){6}}}

\put(37,17){{\vector(1,1){6}}}
\put(46,22){{\vector(1,-3){3}}}
\put(47,11){{\vector(-3,1){9}}}

\put(23,43){{\vector(-1,-1){6}}}
\put(14,38){{\vector(-1,3){3}}}
\put(13,49){{\vector(3,-1){9}}}

\put(45,27){{\vector(0,1){6}}}
\put(48,34){{\vector(3,-1){9}}}
\put(57,29){{\vector(-3,-1){9}}}

\put(15,33){{\vector(0,-1){6}}}
\put(12,26){{\vector(-3,1){9}}}
\put(3,31){{\vector(3,1){9}}}

\end{picture} 
\qquad{\ \ }
\begin{picture}(60,66)(0,-3) 
\thicklines 
  \multiput(0,20)(60,0){2}{\line(0,1){20}} 
  \multiput(20,0)(0,60){2}{\line(1,0){20}} 
  \multiput(0,40)(40,-40){2}{\line(1,1){20}} 
  \multiput(20,0)(40,40){2}{\line(-1,1){20}} 
 
  \multiput(20,0)(20,0){2}{\circle*{1}} 
  \multiput(20,60)(20,0){2}{\circle*{1}} 
  \multiput(0,20)(0,20){2}{\circle*{1}} 
  \multiput(60,20)(0,20){2}{\circle*{1}} 
 
  \put(30,30){\circle*{1}} 
 
\thinlines 
\put(0,20){\line(1,2){20}} 
\put(0,20){\line(2,-1){40}} 
\put(40,0){\line(1,2){20}} 
\put(20,60){\line(2,-1){40}} 
 
\qbezier(40,0)(30,15)(30,30)
\qbezier(40,0)(40,15)(30,30)
\put(30.1,26){\makebox(0,0){$\notch$}} 

\qbezier(40,0)(9,23)(20,60)
\qbezier(20,60)(51,37)(40,0)

\multiput(28.5,-1.5)(3,0){2}{\line(0,1){3}}
\multiput(28.5,-1.5)(0,3){2}{\line(1,0){3}}

\multiput(28.5,58.5)(3,0){2}{\line(0,1){3}}
\multiput(28.5,58.5)(0,3){2}{\line(1,0){3}}

\multiput(8.5,48.5)(3,0){2}{\line(0,1){3}}
\multiput(8.5,48.5)(0,3){2}{\line(1,0){3}}

\multiput(48.5,48.5)(3,0){2}{\line(0,1){3}}
\multiput(48.5,48.5)(0,3){2}{\line(1,0){3}}

\multiput(8.5,8.5)(3,0){2}{\line(0,1){3}}
\multiput(8.5,8.5)(0,3){2}{\line(1,0){3}}

\multiput(48.5,8.5)(3,0){2}{\line(0,1){3}}
\multiput(48.5,8.5)(0,3){2}{\line(1,0){3}}

\multiput(-1.5,28.5)(3,0){2}{\line(0,1){3}}
\multiput(-1.5,28.5)(0,3){2}{\line(1,0){3}}

\multiput(58.5,28.5)(3,0){2}{\line(0,1){3}}
\multiput(58.5,28.5)(0,3){2}{\line(1,0){3}}

\put(10,40){\circle{3}}
\put(50,20){\circle{3}}
\put(20,10){\circle{3}}
\put(40,50){\circle{3}}

\put(19.5,26.5){\circle{3}}
\put(40.5,33.5){\circle{3}}

\put(32.5,15){\circle{3}}
\put(36.9,16.8){\circle{3}}


\linethickness{1pt}

\qbezier(22,29)(28,36)(37,34)
\put(37,34){{\vector(4,-1){1}}}

\put(40,31){{\vector(-1,-5){2.4}}}
\put(34.5,17.5){{\vector(-5,3){13}}}
\put(39,31.5){{\vector(-2,-5){5.8}}}
\put(30.5,16.6){{\vector(-5,4){9}}}

\put(40.2,36){{\vector(0,1){12}}}
\put(41,47){{\vector(1,-3){8}}}
\put(48.5,22){{\vector(-2,3){6.5}}}

\put(19.8,24){{\vector(0,-1){12}}}
\put(19,13){{\vector(-1,3){8}}}
\put(11.5,38){{\vector(2,-3){6.5}}}

\put(50,12){{\vector(0,1){6}}}
\put(52,22){{\vector(1,1){6}}}

\put(10,48){{\vector(0,-1){6}}}
\put(8,38){{\vector(-1,-1){6}}}

\put(48,50){{\vector(-1,0){6}}}
\put(38,52){{\vector(-1,1){6}}}

\put(12,10){{\vector(1,0){6}}}
\put(22,8){{\vector(1,-1){6}}}

\end{picture} 
\end{center} 
\caption{Quivers associated with tagged triangulations of a once-punctured 
convex polygon~$\mathbf{P}_n^\bullet$.} 
\label{fig:octagon-D8} 
\end{figure} 

\begin{exercise}
Verify that if two tagged triangulations $T$ and~$T'$ are related by a flip, 
then the extended skew-symmetric 
matrices $\tilde B(T)$
and~$\tilde B(T')$ are related by the corresponding matrix mutation. 
\end{exercise}

Unfortunately, the matrices $\tilde B(T)$ have rank $<\!n$, so exhibiting
a seed pat\-tern with these matrices and finitely many seeds would not
yield a proof of Theorem~\ref{th:type-D-is-finite}, cf.\ Remark~\ref{rem:full-z-rank}.
We will overcome this obstacle by introducing additional frozen variables besides
those labeled by boundary segments. 

\medskip

We now prepare the algebraic ingredients for the proof of
Theorem~\ref{th:type-D-is-finite}. 
Similarly to the type~$A_n$ case, the idea is to  interpret
tagged arcs in a once-punctured polygon
as a family of rational functions.  These rational 
functions 
satisfy the 
type $D_n$ exchange relations, which in turn are 
associated to flips of tagged arcs.
The astute reader will notice that 
the algebraic construction we associate to the type~$D_n$
case  contains the algebraic construction associated 
to the type~$A_{n-1}$ case (Pl\"ucker coordinates of an
$(n+2)$-tuple of vectors in~$\CC^2$), 
reflective of the fact that the 
type~$A_{n-1}$ Dynkin diagram is contained inside the 
type~$D_{n}$ Dynkin diagram.

As in Section~\ref{sec:type-A}, we set $V=\CC^2$, 
and let $\langle u, v\rangle$ be 
the determinant of the $2 \times 2$ matrix with columns
$u,v \in V$.
Let $\mathcal{K}$ be the field of rational functions on
$V^n\times V\times V\times\CC^2\cong\CC^{2n+6}$, 
written in terms of $2n+6$ variables: 
the coordinates of $n+2$ vectors 
\[
v_1,\dots,v_n,a,\overline{a} \in V,  
\]
plus two additional variables $\lambda$ and~$\overline{\lambda}$.

Let $A\in\operatorname{End}(V)$ denote the linear operator defined by 
\begin{equation}
\label{eq:Adefn}
A v = \frac{\overline{\lambda} \langle v,a \rangle \,\overline{a} - \lambda \langle
  v,\overline{a} \rangle \,a}{\langle \overline{a}, a\rangle},  
\end{equation}
so that $a$ (resp.~$\overline{a}$) is an eigenvector for~$A$ with
eigenvalue~$\lambda$ (resp.~$\overline{\lambda}$). 
Let
\[
a^{\notch}=\frac{\overline{\lambda}-\lambda}{\langle
  \overline{a}, a\rangle} \,\overline{a}; 
\]
thus $a^{\notch}$ is also an eigenvector
for~$A$, with eigenvalue~$\overline{\lambda}$. 
We choose this normalization because of the following property,
which is immediate from the definitions. 

\begin{lemma}
\label{lem:v-Av}
For $v\in V$, we have
$\langle v,Av \rangle = \langle v, a\rangle \langle v, a^{\notch} \rangle
$.
\end{lemma}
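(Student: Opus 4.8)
The plan is to prove the identity by direct substitution, since the statement is essentially a one-line computation once the definition of $A$ is unwound. First I would plug the formula~\eqref{eq:Adefn} for $Av$ into $\langle v, Av\rangle$ and use the linearity of $\langle v,\cdot\rangle$ in its second slot to distribute the form over the two terms in the numerator, obtaining
\[
\langle v, Av\rangle
=\frac{1}{\langle \overline{a},a\rangle}
\Bigl(\overline{\lambda}\,\langle v,a\rangle\,\langle v,\overline{a}\rangle
-\lambda\,\langle v,\overline{a}\rangle\,\langle v,a\rangle\Bigr),
\]
where the scalars $\overline{\lambda}\langle v,a\rangle$ and $\lambda\langle v,\overline{a}\rangle$ simply pull out of the form.

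Next I would factor the common product $\langle v,a\rangle\langle v,\overline{a}\rangle$ out of both terms, collapsing the parenthesis to $(\overline{\lambda}-\lambda)\langle v,a\rangle\langle v,\overline{a}\rangle$, so that
\[
\langle v, Av\rangle
=\langle v,a\rangle\cdot\frac{\overline{\lambda}-\lambda}{\langle \overline{a},a\rangle}\,\langle v,\overline{a}\rangle.
\]
Finally I would recognize the trailing factor: by the very definition $a^{\notch}=\tfrac{\overline{\lambda}-\lambda}{\langle \overline{a},a\rangle}\,\overline{a}$, together with one more use of linearity, one has $\tfrac{\overline{\lambda}-\lambda}{\langle \overline{a},a\rangle}\langle v,\overline{a}\rangle=\langle v,a^{\notch}\rangle$, which yields the claimed equality $\langle v,Av\rangle=\langle v,a\rangle\langle v,a^{\notch}\rangle$.

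I do not expect any genuine obstacle here: the computation is entirely mechanical, and the only point requiring the slightest care is keeping track of the scalar coefficients as they are pulled through the bilinear form $\langle\cdot,\cdot\rangle$. In particular, neither the nondegeneracy of the form nor the eigenvector property of $a$ and $\overline{a}$ is needed; the identity follows purely formally from linearity and the two defining expressions, which is exactly why the lemma can be asserted to be immediate from the definitions.
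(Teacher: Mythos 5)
Your computation is correct and is precisely the verification the paper has in mind: the paper states the lemma is ``immediate from the definitions,'' and your mechanical expansion of $\langle v,Av\rangle$ via bilinearity, followed by factoring out $(\overline{\lambda}-\lambda)\langle v,\overline{a}\rangle/\langle\overline{a},a\rangle=\langle v,a^{\notch}\rangle/\langle v,\overline a\rangle\cdot\langle v,\overline a\rangle$, is exactly that omitted one-line argument. Nothing is missing, and your observation that neither nondegeneracy nor the eigenvector property is needed is also accurate.
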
 

We next describe a seed pattern inside~$\mathcal{K}$, 
and in fact inside its subfield $\mathcal{F}$ 
of $\SL_2(\CC)\!\times\! \CC^*$-invariant rational
functions. 
(The group $\SL_2$ acts in the standard way on each of the vectors
$v_1,\dots,v_n,a,\overline{a}$, and fixes $\lambda$
and~$\overline{\lambda}$.
The group $\CC^*$ acts by rescaling the vector~$\overline{a}$.
The subfield $\mathcal{F}$ can be thought of as a field of rational functions on
a $(2n\!+\!2)$-dimensional variety, and indeed, extended clusters 
in our seed pattern will have size $2n+2$.)
Informally speaking, we are going to think of the vectors
$v_1,\dots,v_n$ as located at the corresponding vertices 
of~$\mathbf{P}_n^\bullet$, 
and we shall associate both eigenvectors $a$ and~$a^{\notch}$ with the
puncture~$p$. 
We make a \emph{cut} running from the puncture~$p$ 
to the boundary segment (the side of the polygon) 
that connects the vertices $1$ and~$n$. We will think of
crossing the cut as picking up an application of~$A$. 

\begin{definition}
\label{def:P-gamma}
We associate an element $P_\gamma\in\mathcal{K}$ 
to each tagged arc or boundary segment~$\gamma$
in~$\mathbf{P}_n^\bullet$, 
as follows
(see Figure~\ref{fig:octagon-D8-arcs}): 
\[
P_\gamma\!=\!
\begin{cases}
\langle v_i,v_j \rangle 
&\text{if $\gamma$ doesn't cross the cut, and has endpoints $i$ and $j>i$;}\\
\langle v_j,Av_i \rangle 
&\text{if $\gamma$ crosses the cut, and has endpoints $i$ and $j>i$;}\\
\langle v_i,a \rangle 
&\text{if $\gamma$ is a plain radius with endpoints $p$ and~$i$;}\\
\langle v_i,a^{\notch} \rangle 
&\text{if $\gamma$ is a notched radius with endpoints $p$ and~$i$.}
\end{cases}
\]
\end{definition}

\begin{figure}[ht] 
\begin{center} 
\setlength{\unitlength}{3.8pt} 
\begin{picture}(60,66)(0,0) 
\thicklines 
  \multiput(0,20)(60,0){2}{\line(0,1){20}} 
  \multiput(20,0)(0,60){2}{\line(1,0){20}} 
  \multiput(0,40)(40,-40){2}{\line(1,1){20}} 
  \multiput(20,0)(40,40){2}{\line(-1,1){20}} 
 
  \multiput(20,0)(20,0){2}{\circle*{1}} 
  \multiput(20,60)(20,0){2}{\circle*{1}} 
  \multiput(0,20)(0,20){2}{\circle*{1}} 
  \multiput(60,20)(0,20){2}{\circle*{1}} 
 
  \put(30,30){\circle*{1}} 
 
\thinlines 
\put(40,0){\line(-2,1){40}}

\qbezier(40,0)(30,15)(30,30)
\qbezier(40,0)(40,15)(30,30)
\put(30.1,26){\makebox(0,0){$\notch$}} 

\qbezier(40,0)(55,80)(0,20)

\multiput(49,9)(2,0){2}{\line(0,1){2}}
\multiput(49,9)(0,2){2}{\line(1,0){2}}

\put(20,10){\circle{2}}


\put(32.5,15){\circle{2}}
\put(36.9,16.8){\circle{2}}

\put(37.5,45){\circle{2}}

\put(20,62){\makebox(0,0){$n$}} 
\put(40,62){\makebox(0,0){$1$}} 
\put(40,-2){\makebox(0,0){$i$}} 
\put(-2,20){\makebox(0,0){$j$}} 
\put(27.8,30){\makebox(0,0){$p$}} 

\put(57,9){\makebox(0,0){$\langle v_{i-1},v_i \rangle$}} 
\put(40,48){\makebox(0,0){$\langle v_j,Av_i \rangle$}} 
\put(19,7){\makebox(0,0){$\langle v_i,v_j \rangle$}} 
\put(41.8,16.8){\makebox(0,0){$\langle v_i,a \rangle$}} 
\put(26.5,15){\makebox(0,0){$\langle v_i,a^{\notch} \rangle$}} 

\linethickness{1.5pt}

\put(30,30){{\line(0,1){35}}}
\put(30,67){\makebox(0,0){$A$}}

\end{picture} 
\end{center} 
\caption{Elements of the field~$\mathcal{K}$ associated with tagged
  arcs in~$\mathbf{P}_n^\bullet$.
The boundary segment crossed by the cut corresponds to 
$\langle v_n,Av_1\rangle$.} 
\label{fig:octagon-D8-arcs} 
\end{figure}
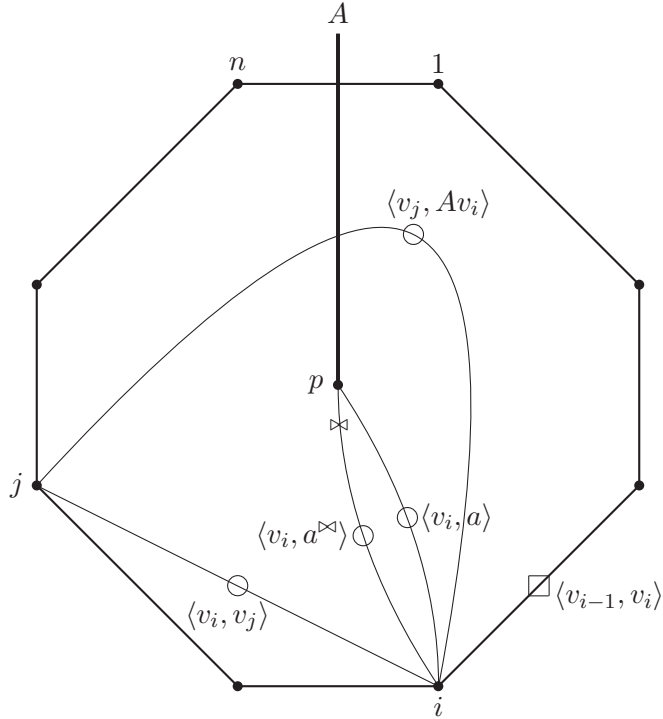 

We can now verify that the $P_{\gamma}$ satisfy a family of relations
that topologists know as ``skein relations" for tagged arcs.

For example, Figure \ref{fig:skein} illustrates the equation
\begin{equation}\label{eq:firstskein}
\langle v_i, v_l \rangle \langle v_k, Av_i \rangle = 
\langle v_k, v_l \rangle 
\langle v_i, a \rangle
\langle v_i, a^{\notch} \rangle
 +
	\langle v_i, v_k \rangle \langle v_l, Av_i \rangle,
\end{equation}
which follows from the 
Grass\-mann-Pl\"ucker relation~\eqref{eq:grassmann-plucker-3term} 
combined with Lemma~\ref{lem:v-Av}.

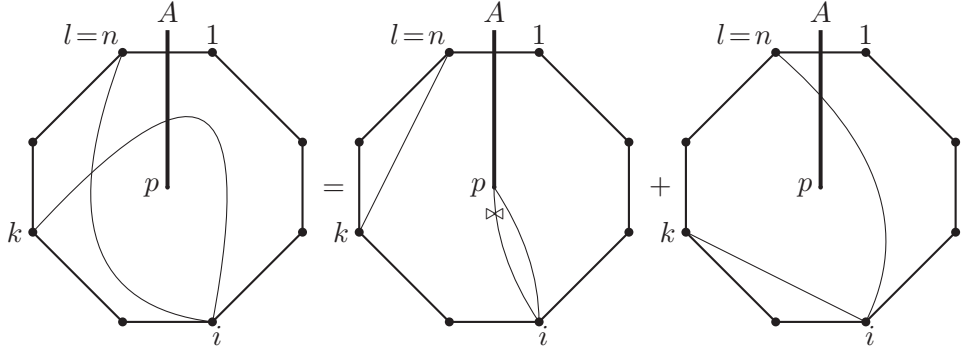
\begin{figure}[ht] 
\begin{center} 
\setlength{\unitlength}{1.7pt} 

\begin{picture}(60,66)(0,0) 
\thicklines 
  \multiput(0,20)(60,0){2}{\line(0,1){20}} 
  \multiput(20,0)(0,60){2}{\line(1,0){20}} 
  \multiput(0,40)(40,-40){2}{\line(1,1){20}} 
  \multiput(20,0)(40,40){2}{\line(-1,1){20}} 
 
  \multiput(20,0)(20,0){2}{\circle*{2}} 
  \multiput(20,60)(20,0){2}{\circle*{2}} 
  \multiput(0,20)(0,20){2}{\circle*{2}} 
  \multiput(60,20)(0,20){2}{\circle*{2}} 
 
  \put(30,30){\circle*{1}} 
 
\thinlines 

\qbezier(40,0)(55,80)(0,20)
\qbezier(40,0)(-1,5)(20,60)

\put(13,64){\makebox(0,0){$l\!=\!n$}} 
\put(40,64){\makebox(0,0){$1$}} 
\put(41,-3){\makebox(0,0){$i$}} 
\put(-4,20){\makebox(0,0){$k$}} 
\put(26.5,30){\makebox(0,0){$p$}} 

\put(67,29.5){\makebox(0,0){$=$}} 


\linethickness{1.5pt}

\put(30,30){{\line(0,1){35}}}
\put(30,69){\makebox(0,0){$A$}} 
\end{picture} 
\hspace{.5cm}
\begin{picture}(60,66)(0,0) 
\thicklines 
  \multiput(0,20)(60,0){2}{\line(0,1){20}} 
  \multiput(20,0)(0,60){2}{\line(1,0){20}} 
  \multiput(0,40)(40,-40){2}{\line(1,1){20}} 
  \multiput(20,0)(40,40){2}{\line(-1,1){20}} 
 
  \multiput(20,0)(20,0){2}{\circle*{2}} 
  \multiput(20,60)(20,0){2}{\circle*{2}} 
  \multiput(0,20)(0,20){2}{\circle*{2}} 
  \multiput(60,20)(0,20){2}{\circle*{2}} 
 
  \put(30,30){\circle*{1}} 
 
\thinlines 
\put(0,20){\line(1,2){20}}

\qbezier(40,0)(30,15)(30,30)
\qbezier(40,0)(40,15)(30,30)
\put(30.2,24){\makebox(0,0){$\notch$}} 

\put(13,64){\makebox(0,0){$l\!=\!n$}} 
\put(40,64){\makebox(0,0){$1$}} 
\put(41,-3){\makebox(0,0){$i$}} 
\put(-4,20){\makebox(0,0){$k$}} 
\put(26.5,30){\makebox(0,0){$p$}} 

\put(67,30){\makebox(0,0){$+$}} 


\linethickness{1.5pt}

\put(30,30){{\line(0,1){35}}}
\put(30,69){\makebox(0,0){$A$}}

\end{picture} 
\hspace{.5cm}
\begin{picture}(60,66)(0,0) 
\thicklines 
  \multiput(0,20)(60,0){2}{\line(0,1){20}} 
  \multiput(20,0)(0,60){2}{\line(1,0){20}} 
  \multiput(0,40)(40,-40){2}{\line(1,1){20}} 
  \multiput(20,0)(40,40){2}{\line(-1,1){20}} 
 
  \multiput(20,0)(20,0){2}{\circle*{2}} 
  \multiput(20,60)(20,0){2}{\circle*{2}} 
  \multiput(0,20)(0,20){2}{\circle*{2}} 
  \multiput(60,20)(0,20){2}{\circle*{2}} 
 
  \put(30,30){\circle*{1}} 
 
\thinlines 
\put(40,0){\line(-2,1){40}}

\qbezier(40,0)(55,30)(20,60)

\put(13,64){\makebox(0,0){$l\!=\!n$}} 
\put(40,64){\makebox(0,0){$1$}} 
\put(41,-3){\makebox(0,0){$i$}} 
\put(-4,20){\makebox(0,0){$k$}} 
\put(26.5,30){\makebox(0,0){$p$}} 


\linethickness{1.5pt}

\put(30,30){{\line(0,1){35}}}
\put(30,69){\makebox(0,0){$A$}}

\end{picture} 
\vspace{-.1in}
\end{center} 
\caption{Pictorial representation of the relation
\eqref{eq:firstskein} (or~\eqref{eq:firstskein-repeated}).}
\label{fig:skein} 
\end{figure} 


More generally, we have the following relations. 
The reader may want to draw a figure corresponding to each relation to 
get some intuition about the form of the relations.

\begin{proposition}
\label{prop:exchange-rel-Dn}
The elements $P_\gamma$ described in Definition~\ref{def:P-gamma}
satisfy the following relations:
\begin{align}
\label{eq:arcs-Dn-exchange}
\langle v_i,v_k \rangle \langle v_j,v_l \rangle 
&=\langle v_i,v_j \rangle \langle v_k,v_l\rangle +\langle v_i,v_l \rangle \langle v_j,v_k \rangle,
\\
\langle v_j,v_l \rangle \langle v_k,A v_i \rangle
&=\langle v_j,v_k \rangle \langle v_l,A v_i \rangle+\langle v_j,A v_i \rangle \langle v_k,v_l \rangle,
\\
\langle v_k,A v_i \rangle \langle v_l,A v_j \rangle
&=\lambda \overline{\lambda} \langle v_i,v_j \rangle \langle v_k,v_l \rangle+\langle v_k,A v_j \rangle \langle v_l,Av_i \rangle,
\\
\langle v_i, v_k \rangle \langle v_l,A v_j \rangle
&=\langle v_l,Av_i \rangle \langle v_j, v_k \rangle+\langle v_l,A v_k \rangle \langle v_i,v_j \rangle,
\\
\label{eq:firstskein-repeated}
\langle v_i, v_l \rangle \langle v_k, Av_i \rangle &= 
\langle v_i, v_k \rangle \langle v_l, Av_i \rangle +
\langle v_k, v_l \rangle \langle v_i, a \rangle \langle v_i, a^{\notch} \rangle, \\
\langle v_j, Av_i \rangle \langle v_l, Av_j \rangle &=
\lambda \overline{\lambda} \langle v_i, v_j \rangle\langle v_j, v_l \rangle+
\langle v_j, a^{\notch} \rangle \langle v_j,a \rangle \langle v_l, Av_i \rangle,\\
\langle v_i, v_l \rangle \langle v_l, Av_j \rangle &=
\langle v_l, Av_i \rangle \langle v_j, v_l \rangle +
\langle v_l, a^{\notch} \rangle \langle v_l, a \rangle \langle v_i, v_j \rangle\\
\label{eq:one-radius-exchange}
\langle v_i, v_k \rangle \langle v_j,a \rangle
&=\langle v_i,v_j \rangle \langle v_k, a \rangle+\langle v_i,a \rangle \langle v_j,v_k \rangle,
\\
\langle v_j, Av_i \rangle \langle v_k,a \rangle
&=\overline{\lambda} \langle v_j,v_k \rangle \langle v_i, a \rangle+\langle v_j,a \rangle \langle v_k,A v_i \rangle,
\\
\label{eq:crosscut-radius-exchange}
\langle v_k, Av_j \rangle \langle v_i,a \rangle
&=\langle v_k,Av_i \rangle \langle v_j, a \rangle+\lambda \langle v_k,a \rangle \langle v_i,v_j \rangle,
\\
\label{eq:two-radii-exchange}
\langle v_i, a^{\notch} \rangle \langle v_j,a \rangle 
&=\overline{\lambda} \langle v_i,v_j \rangle +\langle v_j,A v_i \rangle.
\end{align}
where $1\le i<j<k<\ell\le n$. 
In addition, they satisfy the relations obtained from
\eqref{eq:one-radius-exchange}--\eqref{eq:two-radii-exchange} 
by interchanging $\lambda$ with~$\overline{\lambda}$ and $a$
with~$a^{\notch}$ throughout. 
\end{proposition}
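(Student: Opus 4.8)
The plan is to derive every displayed relation from a single algebraic source, the three-term Grassmann--Pl\"ucker relation \eqref{eq:grassmann-plucker-3term} in $V=\CC^2$, which in vector form is the syzygy $\langle v,w\rangle u-\langle u,w\rangle v+\langle u,v\rangle w=0$ valid for any $u,v,w\in V$ (it expresses the linear dependence of three vectors in a plane); pairing this syzygy on the right with a fourth vector recovers \eqref{eq:grassmann-plucker-3term}. To bring the operator $A$ of \eqref{eq:Adefn} into play I would first record four structural identities, each an immediate consequence of the eigendata $Aa=\lambda a$ and $Aa^{\notch}=\overline\lambda\,a^{\notch}$ (recall that $a^{\notch}$ is a rescaling of $\overline a$): the determinant identity $\langle Au,Av\rangle=\lambda\overline\lambda\,\langle u,v\rangle$, the trace identity $\langle Au,v\rangle+\langle u,Av\rangle=(\lambda+\overline\lambda)\langle u,v\rangle$, and the two eigenvalue-crossing identities $\langle Au,a\rangle=\overline\lambda\langle u,a\rangle$ and $\langle Au,a^{\notch}\rangle=\lambda\langle u,a^{\notch}\rangle$, the latter two obtained by specializing the trace identity to an eigenvector. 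All of these are universal identities in $v_1,\dots,v_n,a,\overline a,\lambda,\overline\lambda$, so the ordering $1\le i<j<k<\ell\le n$ plays no role.

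For the relations with at most one occurrence of $A$ and no repeated radius I would choose the four vectors in \eqref{eq:grassmann-plucker-3term} appropriately. Relations \eqref{eq:arcs-Dn-exchange} and \eqref{eq:one-radius-exchange} are \eqref{eq:grassmann-plucker-3term} verbatim, with $a$ playing the role of an ordinary vector in the latter. When a single image $Av_i$ occurs in all three products (as in the second listed relation, and in $\langle v_j,Av_i\rangle\langle v_k,a\rangle=\cdots$) I would apply \eqref{eq:grassmann-plucker-3term} to the quadruple with $Av_i$ treated as a constant vector, using $\langle Av_i,a\rangle=\overline\lambda\langle v_i,a\rangle$ to simplify the cross term in the second case. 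When three different images $Av_i,Av_j,Av_k$ occur (as in $\langle v_i,v_k\rangle\langle v_l,Av_j\rangle=\cdots$ and in \eqref{eq:crosscut-radius-exchange}) I would instead apply $A$ to the syzygy among the three source vectors and then pair with the remaining vector, invoking $Aa=\lambda a$ to handle the radius term in \eqref{eq:crosscut-radius-exchange}. The relation with two factors of $A$, namely $\langle v_k,Av_i\rangle\langle v_l,Av_j\rangle=\lambda\overline\lambda\langle v_i,v_j\rangle\langle v_k,v_l\rangle+\cdots$, is \eqref{eq:grassmann-plucker-3term} applied to $v_k,v_l,Av_i,Av_j$ after rewriting the cross term $\langle Av_i,Av_j\rangle$ by the determinant identity.

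The ``skein''-type relations \eqref{eq:firstskein-repeated}, the relation $\langle v_j,Av_i\rangle\langle v_l,Av_j\rangle=\cdots$, and the relation $\langle v_i,v_l\rangle\langle v_l,Av_j\rangle=\cdots$ each contain a product $\langle v,a\rangle\langle v,a^{\notch}\rangle$ of two radius-pairings at one vertex~$v$; here I would first use Lemma~\ref{lem:v-Av} to replace that product by the single pairing $\langle v,Av\rangle$, after which the identity reduces to one of the templates above (for instance \eqref{eq:firstskein-repeated} becomes the syzygy among $v_i,v_k,v_l$ with $A$ applied and paired against $v_i$, together with $\langle v_i,Av_i\rangle=\langle v_i,a\rangle\langle v_i,a^{\notch}\rangle$). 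The only relation requiring the explicit formula \eqref{eq:Adefn} is the two-radii relation \eqref{eq:two-radii-exchange}: substituting the definitions of $A$ and $a^{\notch}$ and clearing the denominator $\langle\overline a,a\rangle$ turns it, after cancellation of the terms carrying the coefficient $\lambda$, into exactly \eqref{eq:grassmann-plucker-3term} for the quadruple $v_i,v_j,a,\overline a$.

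Finally, the relations obtained by interchanging $\lambda\leftrightarrow\overline\lambda$ and $a\leftrightarrow a^{\notch}$ require no new computation: by construction $A$ is symmetric in its two eigenpairs $(a,\lambda)$ and $(a^{\notch},\overline\lambda)$, and all four structural identities together with Lemma~\ref{lem:v-Av} are invariant under this swap, so the same derivations apply verbatim. I expect no genuine obstacle; the work is entirely a matter of selecting the correct quadruple for each Grassmann--Pl\"ucker application and tracking the eigenvalue factors $\lambda,\overline\lambda$ produced by the determinant and eigenvalue-crossing identities. The single point needing the most care is \eqref{eq:two-radii-exchange}, where one must unwind \eqref{eq:Adefn} by hand rather than appeal to the abstract identities.
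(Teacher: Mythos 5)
Your proposal is correct and follows essentially the same route as the paper's proof: each relation is reduced to a suitable instance of the Grassmann--Pl\"ucker relation \eqref{eq:grassmann-plucker-3term} by means of the identities $\langle Au,Av'\rangle=\lambda\overline{\lambda}\langle u,v'\rangle$, $\langle Au,a\rangle=\overline{\lambda}\langle u,a\rangle$, $\langle Au,a^{\notch}\rangle=\lambda\langle u,a^{\notch}\rangle$, together with Lemma~\ref{lem:v-Av}. The only (immaterial) divergence is at \eqref{eq:two-radii-exchange}, which you handle by unwinding \eqref{eq:Adefn}, whereas the paper obtains it without doing so, by applying the Pl\"ucker relation to $v_i,v_j,Av_i,a$, invoking Lemma~\ref{lem:v-Av}, and cancelling the common factor $\langle v_i,a\rangle$.
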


\proof
Each of these relations follows from a suitable instance of the
Grass\-mann-Pl\"ucker relation~\eqref{eq:grassmann-plucker-3term}, 
using Lemma~\ref{lem:v-Av}
and the identities
\begin{align*}
\langle Av,Av' \rangle &=\det(A)\,\langle v,v'\rangle
=\lambda \overline{\lambda}\langle v,v' \rangle,\\
\langle Av,a \rangle &=\overline{\lambda}\langle v,a \rangle,\\
\langle Av,a^{\notch} \rangle &=\lambda \langle v,a^{\notch} \rangle.
\end{align*}
For example, \eqref{eq:crosscut-radius-exchange} can be obtained
from the identity 
\[
\langle v_k ,Av_j \rangle \langle Av_i,a \rangle
= \langle v_k , Av_i \rangle \langle Av_j, a \rangle+
\langle v_k , a \rangle \langle Av_i, Av_j \rangle, 
\]
while 
\eqref{eq:two-radii-exchange} follows (using \cref{lem:v-Av}) from 
\[
\langle v_i, Av_i \rangle \langle v_j,a \rangle 
=\langle v_i,v_j \rangle \langle Av_i,a \rangle +\langle
v_i,a\rangle\langle v_j,A v_i \rangle. \qed
\]

For a tagged triangulation $T$ of the punctured
polygon~$\mathbf{P}_n^\bullet$, let $\tilde\xx(T)$ 
be the $(2n+2)$-tuple 
consisting of the elements~$P_\gamma$ labeled by the tagged arcs and boundary
segments in~$T$, together with $\lambda$ and~$\overline{\lambda}$.
We view the elements $P_\gamma\in\tilde\xx(T)$ labeled by tagged arcs as cluster
variables, and those labeled by the boundary segments as frozen variables; 
$\lambda$~and~$\overline{\lambda}$ are frozen variables as well. 

Let $\gamma$ be a tagged arc in a tagged triangulation~$T$,
and let $\gamma'$ be the tagged arc that replaces~$\gamma$ when the
latter is flipped. 
One can check that in every such instance, exactly one of the
relations in Proposition~\ref{prop:exchange-rel-Dn}
has the product~$P_\gamma\,P_{\gamma'}$ on the left-hand side;
the corresponding right-hand side is always a sum of two monomials in the
elements of~$\tilde\xx(T)$. 
We let $\tilde B^\bullet(T)$ denote the matrix 
encoding these relations for all
tagged arcs in~$T$.  
The matrix $\tilde B^\bullet(T)$ 
 can be seen to be an extension of the matrix
$\tilde B(T)$ by two extra rows corresponding to $\lambda$
and~$\overline{\lambda}$. 
(Put differently, setting $\lambda=\overline{\lambda}=1$ produces
relations encoded by~$\tilde B(T)$.) 

\begin{example}
\label{example:Dtriangulation}
Figure~\ref{fig:T-circ} shows a triangulation~$T_\circ$ of a
punctured pentagon, 
and the associated matrix~$\tilde B^\bullet(T_\circ)$.
Columns $1$ and $5$ of $\tilde B^\bullet(T_\circ)$ encode the exchange relations
\begin{align*}
\langle v_5, Av_2 \rangle \langle v_1,a \rangle &= 
\langle v_5, Av_1 \rangle \langle v_2,a \rangle +
\lambda \langle v_5,a \rangle \langle v_1, v_2 \rangle = 
x_{10} x_2 + \lambda x_5 x_6 \\
\langle v_4, Av_1 \rangle 
\langle v_5, a \rangle &= 
\overline{\lambda} \langle v_4, v_5 \rangle 
\langle v_1, a \rangle + 
\langle v_4, a \rangle 
\langle v_5, A v_1 \rangle = 
\overline{\lambda} x_9 x_1 + x_4 x_{10}.
\end{align*}
The matrix 
$\tilde B^\bullet(T_\circ)$ 
has full $\ZZ$-rank. 
However, the submatrix of $\tilde B(T_\circ)$  consisting of the first
ten rows does not have full rank (each row sum is~$0$).

This example can be straightforwardly generalized to $n \neq 5$.
\end{example}

\begin{figure}[ht] 
\begin{center} 
\setlength{\unitlength}{2.5pt} 
\begin{picture}(144,62)(-5,-3) 
\thicklines 
 
 
  \put(30,30){\circle*{1}} 
 
\thinlines 
\put(30,30){\line(-1,1){20}}
\put(30,30){\line(1,1){20}}
\put(30,30){\line(0,-1){30}}
\put(30,30){\line(-5,-3){25}}
\put(30,30){\line(5,-3){25}}

\put(10,50){\line(1,0){40}}
\put(10,50){\line(-1,-7){5}}
\put(50,50){\line(1,-7){5}}
\put(30,0){\line(-5,3){25}}
\put(30,0){\line(5,3){25}}

{\put(10,50){\circle*{1.5}}}
{\put(50,50){\circle*{1.5}}}
{\put(30,0){\circle*{1.5}}}
{\put(5,15){\circle*{1.5}}}
{\put(55,15){\circle*{1.5}}}


\linethickness{1.5pt}

\put(30,58){\makebox(0,0){$A$}} 

\put(44,40){\makebox(0,0){$x_1$}}
\put(44,25){\makebox(0,0){$x_2$}}
\put(32.5,15){\makebox(0,0){$x_3$}}
\put(16,25){\makebox(0,0){$x_4$}}
\put(16,40){\makebox(0,0){$x_5$}}

\put(49.5,33){\makebox(0,0){$x_6$}}
\put(42,10){\makebox(0,0){$x_7$}}
\put(18,10){\makebox(0,0){$x_8$}}
\put(10.5,33){\makebox(0,0){$x_9$}}
\put(37,52){\makebox(0,0){$x_{10}$}}

\put(28,26){\makebox(0,0){$p$}} 

\put(30,30){\line(0,1){25}}


\put(100,30){\makebox(0,0){$\displaystyle
\tilde B^\bullet(T_\circ)
=
\left[\,\begin{matrix}
0  & -1 & 0  &  0 & 1 \\
1  & 0  & -1 &  0 & 0 \\
0  & 1  & 0  & -1 & 0 \\
0  & 0  & 1  & 0 & -1 \\
-1  & 0  & 0  & 1 & 0 \\
\hline\\[-.15in]
-1 & 1 & 0 & 0 & 0\\
0 & -1 & 1 & 0 & 0\\
0 & 0 & -1 & 1 & 0\\
0 & 0 & 0 & -1 & 1\\
1 & 0 & 0 & 0 & -1\\
\hline\\[-.15in]
-1 & 0 &0 & 0 & 0\\
0 & 0 & 0 & 0 & 1
\end{matrix}\,
\right]
$}}
\end{picture} 
\vspace{-.25in}
\end{center} 
\caption{The (tagged) triangulation $T_\circ$
of a punctured pentagon ($n=5$), and the corresponding
matrix~$\tilde B^\bullet(T_\circ)$. 
The columns of $\tilde B^\bullet(T_\circ)$ correspond to $x_1$, \dots,
$x_5$. 
The rows correspond to 
$x_1$, \dots, $x_{10}$, $\lambda$, $\overline{\lambda}$, in~this order. 
} 
\label{fig:T-circ}
\end{figure}
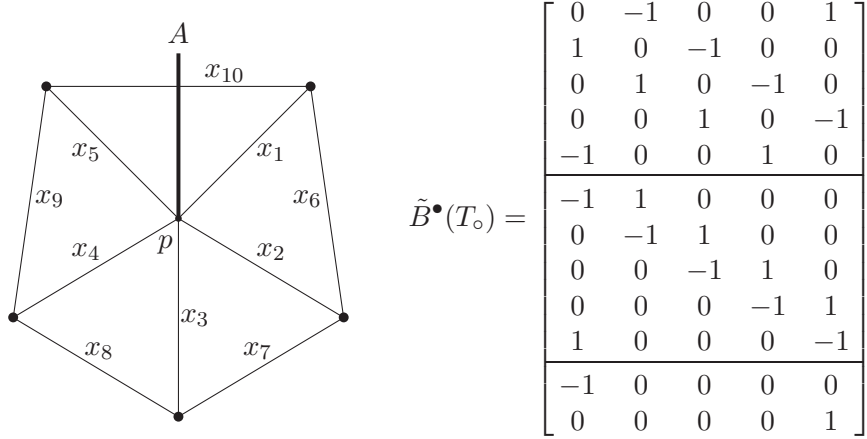

\begin{proposition}
\label{prop:seed-pattern-tagged-Dn}
For any tagged
triangulation~$T$ of~$\mathbf{P}_n^\bullet$, 
the elements of $\tilde\xx(T)$ are algebraically independent,
so  $(\tilde\xx(T),\tilde B^\bullet(T))$ is a seed in the field generated by~$\tilde\xx(T)$. 
The seeds associated to tagged triangulations related by a flip are
related to each other by the corresponding mutation. 
\end{proposition}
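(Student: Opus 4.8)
The plan is to handle the two assertions separately. Algebraic independence (so that $(\tilde\xx(T),\tilde B^\bullet(T))$ is a seed) will follow from a transcendence-degree count together with the fact that $\tilde\xx(T)$ generates $\mathcal{F}$, exactly as in the type $A_n$ argument (Lemma~\ref{lem:independent}); the flip/mutation compatibility will then be read off from the skein relations of Proposition~\ref{prop:exchange-rel-Dn}.

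First I would compute $\operatorname{trdeg}_\CC\mathcal{F}=2n+2=|\tilde\xx(T)|$. The field $\mathcal{K}$ has transcendence degree $2n+6$, and $\mathcal{F}=\mathcal{K}^{\SL_2\times\CC^*}$ is the invariant field of the four-dimensional group $\SL_2\times\CC^*$. A generic tuple $(v_1,\dots,v_n,a,\overline a)$ contains two linearly independent vectors, which forces the $\SL_2$-part of any stabilizing element to be the identity, and $\CC^*$ then acts freely on $\overline a\neq 0$; hence the action is generically free and $\operatorname{trdeg}_\CC\mathcal{F}=(2n+6)-4=2n+2$. Consequently, \emph{once} $\tilde\xx(T)$ is shown to generate $\mathcal{F}$, the coincidence of cardinality and transcendence degree forces $\tilde\xx(T)$ to be a transcendence basis, hence algebraically independent.

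To prove generation I would verify it for one base triangulation and propagate. A convenient base point is the all-radii triangulation $T_\circ$ (illustrated for $n=5$ in Figure~\ref{fig:T-circ}), whose elements are the brackets $\langle v_i,a\rangle$ (from the plain radii), the boundary brackets $\langle v_i,v_{i+1}\rangle$ and the lone cut-crossing boundary bracket $\langle v_n,Av_1\rangle$, together with $\lambda,\overline\lambda$. Using the three-term Grassmann--Pl\"ucker relation~\eqref{eq:grassmann-plucker-3term} with $a$ as a reference vector, every $\langle v_i,v_j\rangle$ is recovered from the $\langle v_i,a\rangle$ and the adjacent $\langle v_i,v_{i+1}\rangle$, so the entire ``$\overline a$-free'' part of $\mathcal{F}$ is generated. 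The one missing direction --- the $\CC^*$-class of $\overline a$, encoded by any ratio $r_i=\langle v_i,\overline a\rangle/\langle a,\overline a\rangle$ --- is pinned down by expanding $\langle v_n,Av_1\rangle$ through the definition of~$A$ and Lemma~\ref{lem:v-Av}: this yields an affine expression for $\langle v_n,Av_1\rangle$ in $r_1$ whose coefficients are already available, which I can invert to solve for $r_1$, and hence for every $\langle v_i,a^{\notch}\rangle=(\lambda-\overline\lambda)\,r_i$. Thus $\CC(\tilde\xx(T_\circ))=\mathcal{F}$. For arbitrary $T$ I would propagate along flips: each flip $T\to T'$ at an arc $\gamma$ carries (Proposition~\ref{prop:exchange-rel-Dn}) a relation $P_\gamma P_{\gamma'}=M_1+M_2$ whose right-hand side is a sum of two monomials in the shared variables and which is symmetric in $\gamma\leftrightarrow\gamma'$; hence $\CC(\tilde\xx(T))=\CC(\tilde\xx(T'))$, and generation (and with it algebraic independence, by the count above) is inherited. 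Since any two tagged triangulations are joined by flips, this settles the first assertion for all $T$.

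For the second assertion, the cluster-variable half is immediate: the flip relation $P_\gamma P_{\gamma'}=M_1+M_2$ is precisely the exchange relation encoded by the flipped column of $\tilde B^\bullet(T)$, so $P_{\gamma'}$ is the mutated variable. For the matrices, all rows of $\tilde B^\bullet(T)$ except the last two constitute $\tilde B(T)$, which transforms by $\mu_k$ under a flip by the preceding exercise on tagged flips; it remains to check that the two extra rows recording the exponents of $\lambda$ and $\overline\lambda$ obey the same mutation rule. I expect this to be the main obstacle. One must run through the flip types catalogued in Lemma~\ref{lem:tagged-flip} --- plain and notched ordinary flips and the flips internal to a once-punctured digon --- identify in each case which relation of Proposition~\ref{prop:exchange-rel-Dn} governs it, and confirm that the (asymmetric) placement of the factors $\lambda,\overline\lambda$ in the two monomials, compare~\eqref{eq:one-radius-exchange}, \eqref{eq:crosscut-radius-exchange} and~\eqref{eq:two-radii-exchange}, matches $\mu_k$ applied to the $\lambda,\overline\lambda$ rows. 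This is a finite and mechanical check once the dictionary between arcs and matrix entries is fixed, but it is the most delicate bookkeeping in the proof.
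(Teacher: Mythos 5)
Your proposal is correct, and for the algebraic-independence half it takes a genuinely different route from the paper's. The paper argues as follows: since each exchange is a birational transformation, independence need only be checked for one triangulation; at the all-radii triangulation $T_\circ$ it observes that the $2n-1$ elements of $\tilde\xx(T_\circ)$ not involving $\lambda$, $\overline\lambda$, $\overline a$ are precisely an extended cluster of the type~$A_{n-2}$ pattern built from the vectors $v_1,\dots,v_n,a$, hence algebraically independent by Lemma~\ref{lem:independent}, and then adjoining the three remaining elements $\lambda$, $\overline\lambda$, $\langle v_n, Av_1\rangle$ (the only ones involving the new variables) preserves independence. You instead compute $\operatorname{trdeg}_\CC\mathcal{F}=2n+2$ from the generically free action of the $4$-dimensional group $\SL_2\times\CC^*$, prove that $\tilde\xx(T_\circ)$ \emph{generates} $\mathcal{F}$ (via the Pl\"ucker recursion and the affine solve for $r_1$ inside $\langle v_n,Av_1\rangle$), propagate generation rather than independence along flips, and conclude independence from the equality of cardinality and transcendence degree. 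In effect you have transplanted the paper's proof of Lemma~\ref{lem:independent} (generation plus transcendence-degree count) directly into type~$D_n$, whereas the paper recycles that lemma as a black box. Your version is more self-contained and makes explicit the step the paper compresses into ``the claim follows''---namely, why $\lambda$, $\overline\lambda$, $\langle v_n,Av_1\rangle$ add three independent directions, which is exactly your observation that $\langle v_n,Av_1\rangle$ is affine in $r_1$ with invertible coefficient $(\lambda-\overline\lambda)\langle v_n,a\rangle$; the cost is that you must justify the generic-freeness dimension count, which the paper's route avoids. For the mutation-compatibility half, both you and the paper are at the same level of rigor: the paper dismisses the check that $\tilde B^\bullet(T)$ mutates correctly under flips as ``a straightforward but tedious exercise,'' while you at least isolate where the real content lies (the two rows recording the exponents of $\lambda$ and $\overline\lambda$, whose behavior must be matched case by case against the asymmetric placement of $\lambda,\overline\lambda$ in \eqref{eq:one-radius-exchange}--\eqref{eq:two-radii-exchange}), so no gap in your argument exceeds what the paper itself leaves to the reader.
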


\begin{proof} 
It is a straightforward but tedious exercise to verify that the
matrix $\tilde B^\bullet(T)$ undergoes a mutation when a tagged arc
in~$T$ is flipped to produce a new tagged triangulation~$T'$. 
We already know that the elements of $\tilde\xx(T)$ and
$\tilde\xx(T')$ satisfy the corresponding exchange relation. 
It remains to show algebraic independence.
Since each exchange is a birational transformation, 
it suffices to prove algebraic independence for one particular choice
of~$T$. 

Consider the triangulation~$T_\circ$ made up of $n$ plain radii, 
cf.\ Figure~\ref{fig:T-circ}
and Example~\ref{example:Dtriangulation}.
Then $\lambda$,  $\overline{\lambda}$, and 
$\langle v_n, A v_1 \rangle$ are the only elements of
$\tilde\xx(T_\circ)$ which involve $\lambda$,  $\overline{\lambda}$,
or~$\overline{a}$. 
The remaining $2n-1$ elements  are 
$2\times 2$ minors of the matrix with columns  
$v_1, v_2, \dots,v_n,a$; 
they form an extended cluster in the corresponding
seed pattern of type~$A_{n-2}$. 
Hence they are algebraically independent, and the claim follows. 
\end{proof}

\noindent
\textbf{Proof of 
Theorem~\ref{th:type-D-is-finite}}.
By Proposition~\ref {prop:seed-pattern-tagged-Dn}, the seeds 
$(\tilde\xx(T),\tilde B^\bullet(T))$ form a seed pattern.
This pattern has finitely many seeds because a once-punctured polygon
has finitely many tagged triangulations.
As verified in Example \ref{example:Dtriangulation},
the extended exchange matrix $\tilde
B^\bullet(T_\circ)$ has full $\ZZ$-rank. 
The theorem follows by Corollary~\ref{cor:z-span} and 
\cref{rem:full-z-rank}.
\endproof

\begin{corollary}
\label{cor:Dn-structure}
Cluster variables in a seed pattern of type~$D_n$ can be
labeled by the tagged arcs in a once-punctured convex $n$-gon~$\mathbf{P}_{n}^\bullet$ so that
clusters correspond to tagged triangulations, 
and mutations correspond to flips. 
Cluster variables labeled by different tagged arcs are distinct.
There are altogether $n^2$ cluster variables
and $\frac{3n-2}{n}\binom{2n-2}{n-1}$ seeds (or clusters). 
\end{corollary}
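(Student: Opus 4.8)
The plan is to mimic, step for step, the proof of the type~$A_n$ result \cref{cor:An-structure}, now using the tagged-arc model of \cref{def:P-gamma}--\cref{prop:seed-pattern-tagged-Dn}. First I would settle the labeling and its path-independence. In the algebraic seed pattern of \cref{prop:seed-pattern-tagged-Dn}, the cluster variable attached to a tagged arc~$\gamma$ is the element $P_\gamma$ of \cref{def:P-gamma}, and $P_\gamma$ is \emph{intrinsic} to~$\gamma$: it is a fixed element of $\mathcal{K}$ that depends neither on the tagged triangulation containing~$\gamma$ nor on the mutation path used to reach it. Hence two mutation sequences terminating at the same tagged triangulation yield the same (unlabeled) seed in this model pattern. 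Since $\tilde B^\bullet(T_\circ)$ has full $\ZZ$-rank (\cref{example:Dtriangulation}), \cref{rem:full-z-rank} together with \cref{cor:z-span} shows that each seed coincidence in the model pattern forces the corresponding coincidence in an arbitrary seed pattern of type~$D_n$. Therefore in \emph{any} type~$D_n$ pattern the seed reached depends only on the tagged triangulation, so cluster variables may be labeled by tagged arcs and clusters by tagged triangulations; that mutations correspond to flips, with exchange matrices given by the recipe illustrated in \cref{fig:octagon-D8}, is then built into \cref{prop:seed-pattern-tagged-Dn} and \cref{lem:tagged-flip}.

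Next I would prove distinctness exactly as in type~$A$. If $\gamma$ and $\gamma'$ are compatible, they lie in a common tagged triangulation, so $x_\gamma$ and $x_{\gamma'}$ belong to a common extended cluster, are algebraically independent, and hence are distinct. If $\gamma\neq\gamma'$ are incompatible, I claim there is a tagged triangulation $T\ni\gamma$ whose flip at~$\gamma$ is~$\gamma'$; then the exchange relation has the form $x_\gamma\,x_{\gamma'}=M_1+M_2$, where $M_1,M_2$ are monomials in the elements of an extended cluster containing~$x_\gamma$, and the equality $x_\gamma=x_{\gamma'}$ would give $x_\gamma^2=M_1+M_2$, contradicting the algebraic independence of that extended cluster. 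The claim that incompatible tagged arcs always form such a flip (exchange) pair is the delicate combinatorial input; I would verify it directly from the geometry of tagged arcs in~$\mathbf{P}_n^\bullet$, organized by the three flavors of tagged triangulations described in the proof of \cref{lem:tagged-flip}.

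The count of cluster variables is then an elementary enumeration of tagged arcs. The radii contribute $2n$ (for each of the $n$ vertices, a plain and a notched version). The non-radius arcs contribute one for each of the $n$ pairs of adjacent vertices (the arc that encloses the puncture~$p$ in a once-punctured digon) and two for each of the $\tfrac{n(n-3)}{2}$ pairs of non-adjacent vertices (with $p$ on either side), giving $n+n(n-3)=n^2-2n$. Altogether there are $2n+(n^2-2n)=n^2$ tagged arcs, hence $n^2$ cluster variables.

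The main obstacle will be extracting the exact seed count in closed form. I would again classify tagged triangulations into the three flavors of \cref{lem:tagged-flip}: the all-plain ideal triangulations, the all-notched ones, and the once-punctured-digon type. Cutting a once-punctured $n$-gon along its radii reduces each flavor to triangulations of ordinary polygons, which are counted by Catalan numbers; writing $C_{n-1}=\tfrac{1}{n}\binom{2n-2}{n-1}$, this bookkeeping gives $n\,C_{n-1}$ all-plain and $n\,C_{n-1}$ all-notched triangulations, and $(n-2)\,C_{n-1}$ of digon type. Summing yields $(3n-2)\,C_{n-1}=\tfrac{3n-2}{n}\binom{2n-2}{n-1}$ tagged triangulations, equivalently seeds (and clusters). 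Carrying out the three sub-counts precisely, and confirming the flavors are disjoint and exhaustive, is the real computational heart; together with the exchangeability input of the distinctness step, it is the part I expect to require the most care.
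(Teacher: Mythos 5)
Your overall skeleton --- intrinsic functions $P_\gamma$, the full $\ZZ$-rank argument via \cref{cor:z-span}, then distinctness, then enumeration --- matches the paper's proof, but both places where you depart from the paper contain genuine errors. First, distinctness: your argument hinges on the claim that any two incompatible tagged arcs are exchangeable, i.e.\ lie in a pair of tagged triangulations related by the flip swapping them. This is false in type~$D_n$, which is exactly why the type~$A_n$ argument of \cref{cor:An-structure} does not carry over verbatim. For a concrete counterexample, take $\mathbf{P}_4^\bullet$ and let $\gamma$ be the arc with endpoints $1,2$ enclosing the puncture and $\gamma'$ the arc with endpoints $3,4$ enclosing the puncture. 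They are incompatible, but they cross \emph{twice}, and a flip always replaces a tagged arc by one crossing it exactly once: here one can simply list the eight tagged triangulations of $\mathbf{P}_4^\bullet$ containing $\gamma$ and check that flipping $\gamma$ produces a radius or an arc crossing $\gamma$ once, never $\gamma'$. So for such pairs the relation $x_\gamma x_{\gamma'}=M_1+M_2$ you want to invoke does not exist, and your contradiction argument cannot start. The paper avoids this entirely: it verifies distinctness by direct calculation in the single case $n=4$ and deduces the general case by restriction.

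Second, the seed count. Your three-flavor partition (all-plain, all-notched, punctured-digon type, cf.\ \cref{lem:tagged-flip}) is the right one, but your sub-counts are wrong. Writing $C_{n-1}=\tfrac{1}{n}\binom{2n-2}{n-1}$, the number of digon-type tagged triangulations is $n\,C_{n-1}$ (choose the vertex $i$ carrying the plain/notched pair of radii, then triangulate the $(n+1)$-gon obtained by cutting along the loop at $i$ around the puncture --- this is the content of \cref{lem:count-with-radius}), not $(n-2)C_{n-1}$; and the number of all-plain (equivalently all-notched) triangulations is $(n-1)C_{n-1}$, not $n\,C_{n-1}$. For $n=3$ the true counts are $4+4+6=14$, whereas your claimed counts give $6+6+2$; for $n=4$ they are $15+15+20=50$ versus your $20+20+10$. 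Your total comes out right only because of the accidental identity $2n+(n-2)=2(n-1)+n$, so the bookkeeping you describe, if carried out honestly, would contradict your stated intermediate numbers even though the sum $(3n-2)C_{n-1}$ is correct. The paper does not compute the flavors separately at all: it double-counts triples $(T,\gamma,i)$ using \cref{lem:count-with-radius} to obtain the recurrence of \cref{lem:recurrence-Dn}, and then verifies the closed formula by induction. Your direct-count route can be repaired with the corrected sub-counts above, but as written it does not establish the formula.
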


Most of the work in the proof of the corollary concerns 
the enumeration of seeds.  
Let $a_n$ (resp.,~$d_n$) denote the number of seeds in a pattern of
type~$A_n$ (resp.,~$D_n$), including $a_0=1$, $d_2=4$, and $d_3=14$ 
by convention. 
We know from Corollary~\ref{cor:An-structure} that $a_n=\frac{1}{n+2}\binom{2n+2}{n+1}$. 

\begin{lemma}
\label{lem:count-with-radius}
The number of tagged triangulations~$T$ containing a given
radius~$\gamma$ 
is equal
to~$a_{n-1}$. 
\end{lemma}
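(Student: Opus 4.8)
The plan is to reduce the count to the number of triangulations of a convex $(n+2)$-gon. By the symmetry interchanging $\lambda\leftrightarrow\overline\lambda$ and $a\leftrightarrow a^{\notch}$ (equivalently, the plain and notched tags), we may assume that $\gamma$ is the \emph{plain} radius joining the puncture~$p$ to a vertex~$i$; the vertex~$i$ can be fixed arbitrarily since the argument will not depend on it.

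First I would cut $\mathbf{P}_n^\bullet$ open along $\gamma$. Slitting the disk along an arc from the interior puncture to the boundary vertex~$i$ turns~$p$ into a single boundary vertex and splits~$i$ into two vertices $i_L,i_R$; the two sides of the slit become boundary edges $p\,i_L$ and $p\,i_R$. The outcome is an unpunctured convex polygon $\mathbf{Q}$ with $(n-1)+2+1=n+2$ vertices. I would then record the evident correspondence on arcs: a radius of~$T$ to a vertex $j\neq i$ becomes a diagonal $p\,j$ of~$\mathbf{Q}$; a non-radius arc of~$T$ (which cannot cross~$\gamma$) becomes a diagonal of~$\mathbf{Q}$ between the corresponding, possibly doubled, vertices; and $\gamma$ itself becomes the pair of boundary edges $p\,i_L,\,p\,i_R$. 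The inverse map re-glues these two edges, identifying $i_L=i_R=i$, and turns diagonals through~$p$ back into radii.

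The delicate point, which I expect to be the main obstacle, is to match the tags correctly, and in particular to account for the flavor~(3) triangulations of \cref{lem:tagged-flip}. Since $\gamma$ is plain, compatibility forbids $T$ from containing any notched radius to a vertex $j\neq i$; hence $T$ is of exactly one of two kinds: (a)~an ordinary triangulation with all radii plain and at least two of them, one being the radius to~$i$; or (b)~a triangulation with a punctured digon at~$i$, carrying both the plain and the notched radius to~$i$. Under the cut, kind~(a) matches the triangulations of~$\mathbf{Q}$ in which $p$ has degree $\ge 3$, while kind~(b) matches those in which $p$ has degree~$2$, i.e.\ which use the diagonal $i_L i_R$ cutting off the triangle $p\,i_L\,i_R$. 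The key verification is that re-gluing that diagonal produces a loop at~$i$ enclosing the puncture, which is precisely the standard encoding of the notched radius; so a degree-$2$ triangulation of~$\mathbf{Q}$ re-glues to the plain radius~$\gamma$ together with the notched radius to~$i$, exactly flavor~(3). Since every triangulation of~$\mathbf{Q}$ has $\deg p\ge 2$, the two kinds exhaust all of them, and the map is a bijection.

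Having established the bijection, I would conclude by invoking \cref{cor:An-structure}: the number of triangulations of a convex $(n+2)$-gon equals the number of seeds (clusters) in a pattern of type~$A_{n-1}$, namely $a_{n-1}=\frac{1}{n+1}\binom{2n}{n}$. This proves the lemma.
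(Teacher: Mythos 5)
Your proof is correct and takes essentially the same route as the paper: cut the punctured polygon open along $\gamma$ to get an $(n+2)$-gon, and use the correspondence between the notched radius at~$i$ and the loop (equivalently, the diagonal $i_Li_R$ cutting off the triangle $p\,i_Li_R$) to handle the punctured-digon case. The only difference is organizational---the paper counts the two kinds separately, getting $a_{n-2}$ for triangulations containing the notched counterpart and $a_{n-1}-a_{n-2}$ for ordinary ones avoiding the loop, and sums them, whereas you package the same dichotomy as a single bijection with all triangulations of the $(n+2)$-gon.
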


\begin{proof}
Assume without loss of generality that $\gamma$ is a plain radius
connecting the puncture~$p$ to a boundary point~$i$. 
If $T$ contains the notched counterpart of~$\gamma$, then the
remaining $n-1$ arcs of~$T$ form a triangulation of the $(n+1)$-gon
obtained from $\mathbf{P}_{n}^\bullet$ by cutting it open
along~$\gamma$. 
The number of such triangulations is~$a_{n-2}$. 
If $T$ does not contain the notched counterpart of~$\gamma$, then 
it is an ordinary triangulation of $\mathbf{P}_{n}^\bullet$ cut open
along~$\gamma$, in which we are not allowed to use the arc connecting 
the two boundary points obtained from~$i$. 
The number of such triangulations is~$a_{n-1}-a_{n-2}$. 
\end{proof}

\begin{lemma}
\label{lem:recurrence-Dn}
The numbers $d_n$ satisfy the recurrence
\[
d_{n}=\sum_{k=0}^{n-3} a_k \,d_{n-1-k} + 2a_{n-1}\,.
\]
\end{lemma}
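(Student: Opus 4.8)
The plan is to prove the recurrence by a direct combinatorial decomposition of the set of tagged triangulations of the once-punctured polygon~$\mathbf{P}_n^\bullet$, whose cardinality is~$d_n$. Throughout I would use the count $a_k=\frac{1}{k+2}\binom{2k+2}{k+1}$ of triangulations of a $(k+3)$-gon from \cref{cor:An-structure}, the fact (\cref{lem:count-with-radius}) that exactly $a_{n-1}$ tagged triangulations contain a prescribed radius, and the trichotomy of tagged triangulations recorded in the proof of \cref{lem:tagged-flip}: an honest triangulation with every radius plain, the same with every radius notched, or a punctured-digon configuration carrying the two tagged versions of a single radius.

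First I would fix the boundary edge $b=[n,1]$ together with the vertex~$1$, so as to break the symmetry of the polygon and make the decomposition one-sided. Given a tagged triangulation~$T$, I would single out an intrinsic tagged arc $\alpha=[1,j]$ of~$T$ that separates the puncture~$p$ from the edge~$b$, chosen canonically among the arcs of~$T$ through vertex~$1$. Such an $\alpha$ cuts $\mathbf{P}_n^\bullet$ into an unpunctured polygon on the vertices $\{1,j,j+1,\dots,n\}$, which contains~$b$, and a once-punctured polygon on the vertices $\{1,2,\dots,j\}$, which contains~$p$ and has~$\alpha$ as a boundary edge. The unpunctured piece is an $(n-j+2)$-gon, triangulated in $a_{n-j-1}$ ways, and the punctured piece is a once-punctured $j$-gon, triangulated in $d_{j}$ ways; summing over the admissible $j$ (from $2$ to $n-1$) and substituting $k=n-j-1$ produces precisely the convolution $\sum_{k=0}^{n-3} a_k\, d_{n-1-k}$.

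It then remains to count the tagged triangulations for which no such separating arc through vertex~$1$ exists; these constitute the base case and should contribute the term $2a_{n-1}$. Here the puncture is ``visible'' from~$b$, and I would count these configurations by cutting the puncture open along the slit running to~$b$ (the monodromy picture of Figure~\ref{fig:octagon-D8-arcs}): this turns $\mathbf{P}_n^\bullet$ into an unpunctured $(n+2)$-gon, whose $a_{n-1}$ triangulations each lift in exactly two ways according to the overall tag, plain or notched, of the radii. The factor $a_{n-1}$ also matches \cref{lem:count-with-radius}, and the factor~$2$ records the plain/notched branch of the trichotomy. I would calibrate all conventions against the smallest case $n=3$, where the decomposition gives $a_0 d_2 + 2a_2 = 4 + 2\cdot 5 = 14 = d_3$.

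The hard part, and the step requiring genuine care, is the tagged-arc bookkeeping. The delicate point is to pin down the intrinsic separating arc~$\alpha$ so that each $T$ is counted exactly once \emph{and} so that the two resulting pieces are simultaneously and independently freely triangulable (the naive ``closest/farthest'' choices either double count, because the punctured sub-polygon can occupy several positions, or over-constrain one of the pieces). This must be dovetailed with the correct treatment of notched radii, self-folded triangles, and punctured digons in both the convolution step and the base case; in particular I expect the main obstacle to be verifying that the ``no separating arc'' triangulations are in an exact two-to-one correspondence with triangulations of the $(n+2)$-gon. Once the conventions are fixed this is a finite, case-by-case check, but it is exactly where the self-folded and punctured-digon configurations have to be handled explicitly.
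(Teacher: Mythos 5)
There is a genuine gap at the heart of your plan: no choice of a ``canonical'' separating arc can make each tagged triangulation correspond to exactly one pair of \emph{freely} triangulated pieces, because the convolution $\sum_{k} a_k\,d_{n-1-k}$ does not count triangulations at all---it counts \emph{incidences} $(T,\alpha)$, where $\alpha$ is an arc of $T$ through vertex~$1$ separating $p$ from $b$. Gluing a free triangulation of the unpunctured $(n-j+2)$-gon to a free tagged triangulation of the punctured $j$-gon along $\alpha=[1,j]$ produces each $T$ once \emph{for every} separating arc it contains, and a single $T$ can contain several such arcs (they are nested, hence mutually compatible). Concretely, take $n=4$, vertices $1,2,3,4$, $b=[4,1]$, and let $\alpha_2,\alpha_3$ be the arcs $[1,2]$ and $[1,3]$ with the puncture on the $\{1,2\}$- and $\{1,2,3\}$-sides respectively. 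Then $\#\{T\ni\alpha_3\}=a_0d_3=14$ and $\#\{T\ni\alpha_2\}=a_1d_2=8$, but since $\alpha_2$ and $\alpha_3$ are compatible, $\#\{T\supseteq\{\alpha_2,\alpha_3\}\}=a_0\cdot(1\cdot d_2)=4$. So only $14+8-4=18$ of the $d_4=50$ tagged triangulations contain a separating arc, while the convolution equals $22$; correspondingly, $50-18=32$ triangulations contain no separating arc, not $2a_3=28$ as your base case asserts. The identity $50=22+28$ holds only because the overcount $22-18=4$ in the convolution exactly cancels the deficit $32-28=4$ in the base case; your calibration at $n=3$ passes only because there a triangulation can contain at most one separating arc, so incidences and triangulations coincide. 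Thus both halves of your decomposition fail as stated, and the failure has nothing to do with self-folded triangles or punctured digons---it already occurs for plain triangulations.

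The way to realize your intuition correctly is to count incidences from the start, which is exactly what the paper does: it counts triples $(T,\gamma,i)$ with $\gamma\in T$ and $i$ an endpoint of $\gamma$, in two ways. Counting $T$ first gives $2n\,d_n$, since every $T$ has $n$ arcs with two endpoints each. Counting $i$ first: for a boundary vertex $i$, the non-radius arcs at $i$ contribute $2\sum_k a_k d_{n-1-k}$ incidences (your convolution, localized at $i$, with the factor $2$ recording whether the puncture lies to the left or right of the arc), and the two tagged radii at $i$ contribute $2a_{n-1}$ by \cref{lem:count-with-radius}; the choice $i=p$ contributes $2na_{n-1}$. Summing over \emph{all} endpoint choices is what makes the total computable; fixing the single vertex $1$, as you do, leaves you with $\sum_T(\text{number of arcs of }T\text{ at vertex }1)$, which has no closed form on its own. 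If you insist on a genuine partition by, say, the innermost separating arc, then one of the two pieces becomes \emph{constrained} (no separating arc at vertex~$1$ allowed), yielding a recurrence in a new quantity that still has to be related to $d_j$---so the repair leads you back to an incidence or inclusion--exclusion argument in any case.
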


\begin{proof}
Consider two ways of
counting the triples $(T,\gamma,i)$ in which $T$ is a tagged
triangulation of~$\mathbf{P}_{n}^\bullet$,
$\gamma$~is a tagged arc in~$T$, and $i$~is an endpoint~of~$\gamma$.
Selecting~$T$ first, then~$\gamma$ and then~$i$, we see that the
number of such triples is equal to~$d_n \cdot n \cdot 2$. 
Selecting~$i$ first, then~$\gamma$ and~$T$ together, 
treating the cases $i\neq p$ and $i=p$ separately, and using
Lemma~\ref{lem:count-with-radius}, 
we obtain: 
\[
2nd_n=n\Bigl(2 \sum_{k=0}^{n-3} a_k \,d_{n-1-k}+2a_{n-1}\Bigr) + 2na_{n-1}\,,
\] 
as desired. (The factor of~$2$ before the sum accounts for the two
ways of cutting up the polygon: 
leaving the puncture to the left or to the right of~$\gamma$, as we
move away from~$i$.)
\end{proof}

\begin{proof}[Proof of Corollary~\ref{cor:Dn-structure}]
The statements in the first sentence of the corollary have already
been established. 
The claim of distinctness can be verified in the special case $n=4$ by
direct calculation; the general case then follows by restriction. 

The cluster variables are labeled by
$\frac{n^2-3n+2}{2}$ ordinary arcs not crossing the cut,
$\frac{n^2-n-2}{2}$ ordinary arcs crossing the cut,
and $n$ radii of each of the two flavors, bringing the total
to~$n^2$. 

The formula $d_n=\frac{3n-2}{n}\binom{2n-2}{n-1}$ can now be proved by
induction using the recurrence in Lemma~\ref{lem:recurrence-Dn}. 
We leave this step to the reader. 
\end{proof}

We conclude this section by examining a couple of examples of cluster
algebras of type~$D_n$ that occur in the settings discussed in
Chapter~\ref{ch:tp-examples}. 

\pagebreak[3]

\begin{example} 
\label{example:mat33}
The ring of polynomials in 9 variables $z_{ij}$ ($i,j\in\{1,2,3\}$),
viewed as matrix entries of a $3\times 3$ matrix
\[
z=\begin{bmatrix}
z_{11} & z_{12} & z_{13} \\[.05in]
z_{21} & z_{22} & z_{23} \\[.05in]
z_{31} & z_{32} & z_{33}
\end{bmatrix}
\in\operatorname{Mat}_{3,3}(\CC)\cong\CC^9,
\]
carries a natural cluster algebra structure of  type~$D_4$,
see, e.g.,~\cite{cdm}.
The seed pattern giving rise to this cluster structure 
contains, among others, the seeds associated with double wiring
diagrams with $3$~wires of each kind. 

Let us use the initial seed associated with 
the diagram~$D$ in 
Figure~\ref{fig:chamber-quiver2}. 
Thus the initial cluster is 
\begin{equation}
\label{eq:G36-initial-cluster}
\xx=(x_1,x_2,x_3,x_4)=(\Delta_{1,2}, \Delta_{3,2}, \Delta_{13,12},
\Delta_{13,23}),  
\end{equation}
the 5 coefficient variables are
\begin{equation}
\label{eq:3x3-coeffs}
(x_5,\dots,x_{9})=
(\Delta_{123,123}, \Delta_{12,23}, \Delta_{1,3}, \Delta_{3,1},
\Delta_{23,12}),    
\end{equation}
and the exchange relations (encoded by 
the quiver shown in Figure~\ref{fig:chamber-quiver2}) are: 
\begin{align*}
\Delta_{1,2}\,\Delta_{3,3} &=
  \Delta_{1,3}\,\Delta_{3,2} + \Delta_{13,23}\,,\\
\Delta_{3,2}\,\Delta_{1,1} &=
  \Delta_{13,12} + \Delta_{3,1}\,\Delta_{1,2}\,,\\
\Delta_{13,12}\,\Delta_{23,23} &=
  \Delta_{23,12}\,\Delta_{13,23} + \Delta_{123,123}\,\Delta_{3,2}\,,\\
\Delta_{13,23}\,\Delta_{12,12} &=
  \Delta_{123,123}\,\Delta_{1,2} + \Delta_{12,23}\,\Delta_{13,12}\,.
\end{align*}
The mutable part of $Q(D)$ is an oriented $4$-cycle, so we are indeed dealing with
a pattern of type~$D_4$. 
By Corollary~\ref{cor:Dn-structure}, it has 50~seeds,
which include 34~seeds associated with double wiring diagrams, see
Figure~\ref{fig:schemes}. 
There are 16 cluster variables,
labeled by the 16 tagged arcs in a once-punctured
quadrilateral. 
They include the 14 minors of~$z$ not listed in~\eqref{eq:3x3-coeffs}. 
As suggested by Exercise~\ref{exercise:KL},
the remaining two cluster variables are the polynomials $K(z)$ and
$L(z)$ given by~\eqref{eq:K(z)}--\eqref{eq:L(z)}. 

\begin{figure}[htbp]
\begin{center}
\begin{tabular}{ccc}
\setlength{\unitlength}{3.8pt}
\begin{picture}(33,20)(2,10)
\thinlines

\multiput(10,10)(20,0){2}{\circle*{1}}
\multiput(10,30)(20,0){2}{\circle*{1}}
\put(20,20){\circle*{1}}
\multiput(10,10)(0,20){2}{\line(1,0){20}}
\multiput(10,10)(20,0){2}{\line(0,1){20}}

\put(26,16){\makebox(0,0){$1$}}
\put(14,16){\makebox(0,0){$2$}}
\put(14,24){\makebox(0,0){$3$}}
\put(26,24){\makebox(0,0){$4$}}
\thicklines
\put(10,10){\line(1,1){20}}
\put(10,30){\line(1,-1){20}}

\end{picture}
&
\setlength{\unitlength}{3.8pt}
\begin{picture}(33,20)(2,10)
\thinlines
\multiput(10,10)(20,0){2}{\circle*{1}}
\multiput(10,30)(20,0){2}{\circle*{1}}
\put(20,20){\circle*{1}}

\put(28,16){\makebox(0,0){$\Delta_{1,2}$}}
\put(12,16){\makebox(0,0){$\Delta_{3,2}$}}
\put(10,25){\makebox(0,0){$\Delta_{13,12}$}}
\put(30,25){\makebox(0,0){$\Delta_{13,23}$}}
\thicklines
\put(10,10){\line(1,1){20}}
\put(10,30){\line(1,-1){20}}

\end{picture}
\end{tabular}
\vspace{-.1in}
\end{center}
\caption{The triangulation of a once-punctured square 
representing the initial
cluster~\eqref{eq:G36-initial-cluster}. 
Note that the frozen variables are not shown; 
the frozen variables of the initial cluster at the right 
do not correspond to the edges of the square.
}
\label{fig:arcs-G36}
\end{figure}
\end{example}

\begin{example}
The coordinate ring $\CC[\SL_5]^U$ of the 
basic affine space for~$\SL_5$ has a natural cluster algebra structure of
type~$D_6$, to be discussed in detail in Example~\ref{ex:SL5}.  
This cluster algebra has 36 cluster variables and 8 frozen variables.
This set of 44 generators includes $2^5\!-\!2\!=\!30$ flag minors plus 14
non-minor elements. 
The total number of clusters is 672.  

\end{example}

\begin{remark}
For any irreducible representation of a semisimple algebraic 
group~$G$, Lusztig~\cite{Lusztig-book}
introduced the concept of a canonical (resp., dual canonical) basis; 
these correspond to the lower and upper global bases of Kashiwara~\cite{Kashiwara}.
While we will not define the dual canonical bases here,
we note that they are strongly connected to 
the theory of cluster algebras. \linebreak[3]
In particular, it was conjectured in~\cite[p.~498]{ca1}
that every cluster monomial in~$\CC[\SL_k]^U$
(i.e., a monomial in the elements of some extended cluster) 
belongs to the dual canonical basis; 
this conjecture was proved in~\cite{kkko}.
For $k\le 5$, cluster monomials
make up the entire dual canonical basis in~$\CC[\SL_k]^U$.  
\end{remark}

\begin{example}
The homogeneous coordinate ring of a Schubert divisor in the
Grassmannian~$\operatorname{Gr}_{2,n+2}$ has a structure of a cluster algebra of type~$D_n$; 
see Example~\ref{ex:realization-Dn} for details. 
\end{example}


\section{Seed patterns of types $B_n$ and $C_n$}
\label{sec:BC-finite}

By \cref{def:type},  a seed pattern of rank~$n\ge 2$ (or the associated cluster
algebra) is of type~$B_n$ 
if one of its exchange matrices 
 is 
\begin{equation}
\label{typeB}
B=
\left[\,\begin{matrix}
0  & -2 & 0 & 0  & \cdots & 0 & 0 \\
1  & 0  & -1 & 0 & \cdots & 0 & 0 \\
0  & 1  & 0  & -1 & \cdots & 0 & 0 \\
0  & 0  & 1  & 0 & \cdots & 0 & 0 \\[-.05in]
\vdots & \vdots & \vdots & \vdots & \ddots & \vdots & \vdots \\
0 & 0 & 0 & 0 & \cdots & 0 & -1 \\
0 & 0 & 0 & 0 & \cdots &1 & 0 
\end{matrix}\,
\right]
\end{equation}
(up to simultaneous permutation of rows and columns). 

Similarly, a seed pattern (or cluster algebra) of rank~$n\ge 3$ is of
  type~$C_n$ 
if  one of its exchange matrices has the form 
\begin{equation}
\label{typeC}
B=
\left[\,\begin{matrix}
0  & -1 & 0 & 0  & \cdots & 0 & 0 \\
2  & 0  & -1 & 0 & \cdots & 0 & 0 \\
0  & 1  & 0  & -1 & \cdots & 0 & 0 \\
0  & 0  & 1  & 0 & \cdots & 0 & 0 \\[-.05in]
\vdots & \vdots & \vdots & \vdots & \ddots & \vdots & \vdots \\
0 & 0 & 0 & 0 & \cdots & 0 & -1 \\
0 & 0 & 0 & 0 & \cdots &1 & 0 
\end{matrix}\,
\right].
\end{equation}
(We continue to use the conventions of~\cite{kac},
cf.\ Definition~\ref{def:coxeter-dynkin}.) 

\begin{theorem}
\label{th:type-B-is-finite}
Seed patterns of type~$B_n$ are of finite type. 
\end{theorem}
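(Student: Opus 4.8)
The plan is to obtain a type $B_n$ seed pattern by \emph{folding} a type $D_{n+1}$ pattern, and then to leverage the finiteness of the latter (Theorem~\ref{th:type-D-is-finite}) together with the full $\ZZ$-rank machinery of Corollary~\ref{cor:z-span} and Remark~\ref{rem:full-z-rank}. This follows the route announced for Section~\ref{sec:BC-finite}: the rank-$2$ instance is precisely the motivating example opening Section~\ref{sec:folding}, where folding $1\leftarrow 2\rightarrow 3$ (that is, $D_3=A_3$) produces the rank-$2$ pattern of type $B_2$. In general I would fold an orientation of the $D_{n+1}$ Dynkin tree by the symmetry that exchanges its two fork nodes.

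Concretely, let $Q$ be an orientation of the $D_{n+1}$ Dynkin tree whose two fork vertices $1,1'$ are both oriented toward the branch vertex $2$, and let $G=\ZZ/2\ZZ$ act by the automorphism swapping $1\leftrightarrow 1'$ and fixing $2,\dots,n$. The conditions of Definition~\ref{def:folding} are then immediate: \eqref{item:admissible-G1} holds as all vertices are mutable; \eqref{item:admissible-G2} holds because the chosen orientation is $G$-equivariant; \eqref{item:admissible-G3} holds since $1,1'$ are non-adjacent, so $b_{1,1'}=0$; and \eqref{item:admissible-G4} holds because $b_{1,2}$ and $b_{1',2}$ share a sign. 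Folding via~\eqref{eq:B^G}, the orbit $\{1,1'\}$ contributes $b^G_{\{1,1'\},\{2\}}=b_{1,2}+b_{1',2}=\pm 2$ against $b^G_{\{2\},\{1,1'\}}=b_{2,1}=\mp 1$, while the fixed chain $2,\dots,n$ folds to itself; thus $\tilde B(Q)^G$ is the matrix~\eqref{typeB} (up to the signs fixed by the orientation of the chain), so the folded seeds have Cartan counterpart of type $B_n$ in the sense of Definition~\ref{def:type}. Since $Q$ is an orientation of a Dynkin diagram of type $D_{n+1}$, the seed pattern it generates is of type $D_{n+1}$ (Remark~\ref{rem:simplylaced}) and hence of finite type by Theorem~\ref{th:type-D-is-finite}.

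The hard part will be verifying that $Q$ is \emph{globally} foldable, since the text stresses that $G$-admissibility need not survive even a single mutation. I would establish this by tracking the $G$-action through the tagged-arc model of type $D_{n+1}$ from Section~\ref{sec:type-D}: the fork swap corresponds to the plain~$\leftrightarrow$~notched tag-swapping symmetry of the once-punctured polygon~$\mathbf{P}_{n+1}^\bullet$, and each mutation $\mu_J$ at a $G$-orbit corresponds to a $G$-symmetric flip. The aim is to show that the class of $G$-symmetric tagged triangulations is closed under such flips and that each of them yields a $G$-admissible quiver, so that every quiver $(\mu_{J_k}\circ\cdots\circ\mu_{J_1})(Q)$ is $G$-admissible; this is the one step demanding genuine combinatorial bookkeeping rather than a formal argument.

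Granting global foldability, Corollary~\ref{cor:always-admissible} (through Lemma~\ref{lem:mut-folded-seeds}) assembles the folded seeds into a bona fide seed pattern of type $B_n$, each of whose seeds is the image under folding of a seed of the $D_{n+1}$ pattern; as the latter has finitely many seeds, so does the folded pattern. It remains only to guarantee that the initial extended exchange matrix of this folded pattern has full $\ZZ$-rank, which one arranges by retaining enough frozen rows from the folded $D_{n+1}$ pattern---concretely, it suffices to adjoin at most one frozen row to~\eqref{typeB}, exactly the maneuver used in the rank-$2$ case of Exercise~\ref{exercise:A2B2G2}. Remark~\ref{rem:full-z-rank} and Corollary~\ref{cor:z-span} then upgrade finiteness from this one pattern to \emph{every} seed pattern of type $B_n$, regardless of its coefficient data, which is the assertion of the theorem.
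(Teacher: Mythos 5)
Your construction coincides with the paper's own proof of \cref{th:type-B-is-finite}: the paper likewise folds an orientation $Q_0$ of the $D_{n+1}$ Dynkin tree (realized as the quiver of the tagged triangulation $T_0$ of \cref{fig:D-triangulation}) by the $\ZZ/2\ZZ$ symmetry swapping the two fork vertices, and it establishes global foldability exactly along the lines you sketch---by identifying the fork swap with the tag-toggling symmetry of $\mathbf{P}_{n+1}^\bullet$, so that orbit mutations become flips at non-radius arcs or at pairs of parallel radii, preserving $G$-invariance of the tagged triangulation and hence $G$-admissibility of its quiver.

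The one genuine divergence is your endgame. The paper does not use the full $\ZZ$-rank argument for types $B_n$ and $C_n$: it invokes \cref{cor:folding-finite-type}, whose proof (via \cref{lem:admissible-if-add-frozen}) unfolds an \emph{arbitrary} coefficient extension of \eqref{typeB} into $G$-fixed frozen vertices attached equivariantly to $Q_0$; since \cref{th:type-D-is-finite} already allows arbitrary coefficients, every type-$B_n$ pattern is then the folding of a finite pattern, and no rank computation enters. Your route---fold one pattern whose extended exchange matrix has full $\ZZ$-rank, then quote \cref{cor:z-span} and \cref{rem:full-z-rank}---is also valid, but two details you elide need care. First, the frozen rows you ``retain'' must themselves be realizable by folding: the natural frozen data of Section~\ref{sec:type-D} is not $G$-fixed (the tag swap interchanges $\lambda$ and $\overline{\lambda}$), so the clean move is to adjoin a fresh frozen vertex fixed by~$G$ and attached equivariantly, whose folding produces any prescribed integer row. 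Second, which single row yields full $\ZZ$-rank depends on the parity of~$n$: for $n$ even, the rows of \eqref{typeB} contain all odd-indexed unit vectors but span only an index-two sublattice of the even-indexed coordinates, so adjoining $(1,0,\dots,0)$ fails while $(0,1,0,\dots,0)$ succeeds, and the situation reverses for $n$ odd. Neither point is an obstruction; what \cref{cor:folding-finite-type} buys the paper is skipping this bookkeeping entirely, while your route has the virtue of uniformity with the arguments used for types $A_n$, $D_n$, and~$E_8$.
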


\begin{theorem}
\label{th:type-C-is-finite}
Seed patterns of type~$C_n$ are of finite type. 
\end{theorem}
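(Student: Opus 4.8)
The plan is to obtain a seed pattern of type~$C_n$ by folding a seed pattern of type~$A_{2n-1}$, which is already of finite type by \cref{th:type-A-is-finite}. By \cref{lem:A-triangulation}, such a pattern is governed by the triangulations of a convex $(2n+2)$-gon~$\mathbf{P}_{2n+2}$. Let $\rho$ be the $180^\circ$ rotation of $\mathbf{P}_{2n+2}$, carrying each vertex to its antipode; it generates a $\ZZ/2\ZZ$-action~$G$ on vertices, on diagonals, and on the quiver of any $\rho$-invariant triangulation. A diagonal is $\rho$-fixed exactly when it is a \emph{long} diagonal joining antipodal vertices, and since any two long diagonals cross, every centrally symmetric triangulation contains exactly one long diagonal together with $n-1$ antipodal pairs of ordinary diagonals. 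As the initial object I would take the centrally symmetric fan triangulation $T_\circ$: draw a long diagonal~$L$ and fan out each of the two halves from an endpoint of~$L$. Then the mutable part of $Q(T_\circ)$ is a path on $2n-1$ vertices having $L$ at its center, with its two halves interchanged by~$G$; one checks that $Q(T_\circ)$ is $G$-admissible and that the mutable part of its folded matrix $\tilde B(Q(T_\circ))^G$ is the type~$C_n$ exchange matrix~\eqref{typeC} (up to relabelling), the fixed diagonal~$L$ becoming the special short node and the double bond arising because its neighbouring orbit has size two.

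The main obstacle is to show that $Q(T_\circ)$ is globally foldable, i.e.\ that $G$-admissibility is preserved under every sequence of orbit-mutations. I would argue this combinatorially. Inside the (finite) mutation class of~$A_{2n-1}$ the $\rho$-invariant triangulation quivers are precisely those of centrally symmetric triangulations; a mutable $G$-orbit is either an antipodal pair of diagonals or the unique long diagonal; and the orbit-mutation $\mu_K$ realises the corresponding centrally symmetric flip. Flipping an antipodal pair is manifestly symmetric, and the quadrilateral surrounding a long diagonal is itself centrally symmetric, so flipping $L$ again produces a long diagonal; hence centrally symmetric triangulations are closed under orbit-flips. It therefore suffices to verify that every centrally symmetric triangulation quiver satisfies the conditions of \cref{def:folding}. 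Conditions~\eqref{item:admissible-G1}--\eqref{item:admissible-G2} follow from $\rho$-invariance, and~\eqref{item:admissible-G3} holds because a diagonal and its antipode can never be two sides of one triangle. The delicate point is~\eqref{item:admissible-G4}: a violation is an oriented $2$-path $d\to j\to d'$ through a mutable diagonal~$j$ joining an antipodal pair $d,d'=\rho(d)$. Applying~$\rho$, which preserves the clockwise orientation defining the signs in~\eqref{eq:B(T)}, yields a second path $d'\to\rho(j)\to d$. An elementary inspection of the quadrilateral surrounding~$j$ shows that $j$ can be incident to both $d$ and~$d'$ only when $j=L$; then $\rho(j)=j$, and the two paths require the single arrow between $j$ and~$d$ to point in both directions, which is absurd. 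Thus~\eqref{item:admissible-G4} holds and $Q(T_\circ)$ is globally foldable.

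With global foldability in hand, \cref{cor:always-admissible} shows that the folded seeds form a seed pattern whose initial exchange matrix has mutable part of the form~\eqref{typeC}; this is a seed pattern of type~$C_n$, and it has finitely many seeds because each of its seeds is the fold of one of the finitely many seeds of the $A_{2n-1}$ pattern. To pass to arbitrary coefficients I would proceed exactly as in the $A_n$ and $D_n$ cases: retaining the $n+1$ frozen orbits coming from antipodal pairs of boundary segments, one writes out $\tilde B(Q(T_\circ))^G$ explicitly and checks that it has full $\ZZ$-rank. This step is essential---and not merely cosmetic---because the bare exchange matrix~\eqref{typeC} is itself $\ZZ$-rank deficient, so the frozen rows are doing real work. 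Then \cref{cor:z-span} together with \cref{rem:full-z-rank} upgrades finiteness from this single pattern to every seed pattern of type~$C_n$, completing the proof. (Type~$B_n$, \cref{th:type-B-is-finite}, is handled identically, folding a type $D_{n+1}$ pattern by the involution that swaps the two tips of the fork.)
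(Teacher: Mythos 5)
Your construction and your global-foldability argument coincide with the paper's proof: the same centrally symmetric fan triangulation of $\mathbf{P}_{2n+2}$, the same $\ZZ/2\ZZ$-action by the $180^\circ$ rotation~$\rho$, and the same key observation that orbit-mutations preserve central symmetry, so that $G$-admissibility need only be checked for quivers of centrally symmetric triangulations (your endpoint analysis of condition~\eqref{item:admissible-G4} is in fact more detailed than the paper's ``it is easy to see''). The divergence---and the gap---is in your final step, the passage to arbitrary coefficients.

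Your claim that $\tilde B(Q(T_\circ))^G$, with the $n+1$ frozen orbits of antipodal boundary segments, has full $\ZZ$-rank is false, and the check you propose would fail. By condition~\eqref{item:admissible-G2} of \cref{def:folding}, for any $2$-element orbit $I=\{i,\rho(i)\}$ we have $b_{\rho(i),L}=b_{i,L}$, where $L$ is the long diagonal; hence the folded entry $b^G_{I,\{L\}}=2\,b_{i,L}$ is even, while $b^G_{\{L\},\{L\}}=0$. Since every row orbit is either $\{L\}$ or has size two (no boundary segment of an even polygon is fixed by~$\rho$), \emph{every} entry of the column indexed by $\{L\}$ is even, so the $\ZZ$-span of the rows lies in the proper sublattice of vectors whose $\{L\}$-coordinate is even. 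Concretely, for $n=2$ the folded $5\times 2$ matrix has rows $(0,-1)$, $(2,0)$, $(-2,1)$, $(0,-1)$, $(0,1)$, whose $\ZZ$-span misses $(1,0)$; the same parity obstruction occurs for every~$n$, no matter which antipodal pairs of boundary segments you retain, so the frozen rows cannot do the work you assign to them. The repair is available but must be made explicitly: the folded matrix does have full $\QQ$-rank (for odd $n$ one genuinely needs a frozen row, e.g.\ the orbit of the boundary pair attached only to the last fan diagonal, since the type-$C_n$ block \eqref{typeC} is singular for odd~$n$), so one can invoke \cref{rem:q-span} in place of \cref{rem:full-z-rank}. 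Alternatively---and this is the paper's route---no rank computation is needed at all: given an arbitrary extended exchange matrix extending the type-$C_n$ matrix, one unfolds its coefficient rows by attaching $G$-\emph{fixed} frozen vertices to~$Q_0$ (\cref{lem:admissible-if-add-frozen}), applies \cref{th:type-A-is-finite} upstairs (it already covers arbitrary coefficients, cf.\ \cref{rem:finite-type-coeffs}), and folds the finitely many seeds back down via \cref{cor:always-admissible}; this is exactly what \cref{cor:folding-finite-type} packages.
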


Note that type~$B_2$ is already covered by
Theorem~\ref{th:finite-type-rank2}, with $bc=2$. 

We will prove
Theorems~\ref{th:type-B-is-finite}--\ref{th:type-C-is-finite} using
the technique of folding 
introduced in Section~\ref{sec:folding}. 
Specifically, we will obtain seed patterns of type~$C_n$ by folding
seed patterns of type~$A_{2n-1}$, and we will get type~$B_n$ from type~$D_{n+1}$. 

The following statement follows easily from the definitions. 

\begin{lemma}
\label{lem:admissible-if-add-frozen}
Let $Q$ be a quiver globally foldable with respect to an action of a group~$G$.
Let $\overline{Q}$ be a quiver constructed from~$Q$ by adding some new
frozen vertices together with some arrows connecting them to the mutable vertices in~$Q$.
We extend the action of $G$ from $Q$ to 
$\overline{Q}$ by making $G$ fix every newly added vertex.
Assume that the new arrows are compatible with the action of~$G$ on~$\overline Q$, i.e.
this action satisfies condition~{\rm(2)} of \cref{def:folding}.
Then the quiver $\overline{Q}$ is globally foldable with respect to~$G$.
\end{lemma}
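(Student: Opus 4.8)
The plan is to exploit the fact that passing from $Q$ to $\overline Q$ leaves the mutable part of the quiver untouched, so that mutations at mutable $G$-orbits act on the old vertices exactly as in $Q$ while the newly added frozen rows evolve passively. First I would record that, since $G$ fixes each new vertex, every new vertex is a singleton $G$-orbit. Consequently conditions \eqref{item:admissible-G1}, \eqref{item:admissible-G3}, \eqref{item:admissible-G4} of Definition~\ref{def:folding} involving a new vertex are either vacuous (a singleton orbit satisfies \eqref{item:admissible-G1} and \eqref{item:admissible-G4} trivially) or concern only the unchanged mutable part (as in \eqref{item:admissible-G3}), and hence hold for $\overline Q$ by the $G$-admissibility of $Q$. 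Condition \eqref{item:admissible-G2} for the new rows is precisely the requirement that the added arrows be $G$-invariant, i.e.\ that $G$ act on $\overline Q$ by quiver automorphisms; this holds by construction, so $\overline Q$ is $G$-admissible.

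The substance of the lemma is global foldability, so I would prove by induction on $r$ that for every sequence of mutable $G$-orbits $J_1,\dots,J_r$ the quiver $\overline Q^{(r)}=(\mu_{J_r}\circ\cdots\circ\mu_{J_1})(\overline Q)$ is $G$-admissible, carrying along the stronger inductive invariant that the full subquiver of $\overline Q^{(r)}$ on the old vertices coincides with $Q^{(r)}=(\mu_{J_r}\circ\cdots\circ\mu_{J_1})(Q)$, and that each new frozen row of $\tilde B(\overline Q^{(r)})$ is constant on mutable $G$-orbits. (Each $\mu_{J}$ is well defined, since \eqref{item:admissible-G3} makes the orbit totally disconnected.)

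For the inductive step, write $\tilde B(\overline Q^{(r)})=(b_{ab})$ and set $K=J_{r+1}$. Mutation at a mutable (old) vertex only changes an entry $b_{ab}$ through the entries $b_{ak}$ and $b_{kb}$ with $k\in K$, all of which are old-on-old entries when $a,b$ are old; hence the old-on-old block of $\overline Q^{(r+1)}$ equals $\mu_K(Q^{(r)})=Q^{(r+1)}$, which is $G$-admissible because $Q$ is globally foldable. This immediately yields \eqref{item:admissible-G1}, \eqref{item:admissible-G3}, \eqref{item:admissible-G4} for $\overline Q^{(r+1)}$ (the new vertices remaining frozen singletons), together with the old-on-old instances of \eqref{item:admissible-G2}.

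The one genuine computation, which I expect to be the main (though routine) obstacle, is verifying \eqref{item:admissible-G2} for the new rows, i.e.\ that they remain constant on mutable orbits. Fix a new frozen vertex $i$, a mutable column $j$, and $g\in G$; I must show $\mu_K(b)_{ij}=\mu_K(b)_{i,g(j)}$. Since $\overline Q^{(r)}$ is $G$-admissible, I would apply the folded mutation formula~\eqref{eq:mutation-folding}. If $j\in K$, both sides equal $-b_{ij}=-b_{i,g(j)}$ by the inductive invariant. If $j\notin K$, the correction term is $\pm\sum_{k\in K}b_{ik}b_{kj}$; reindexing the sum by the permutation $k\mapsto g(k)$ of the orbit $K$ and using the $G$-invariance of row $i$ (so $b_{i,g(k)}=b_{ik}$) together with the $G$-admissibility of the old block (so $b_{g(k),g(j)}=b_{kj}$) gives $\sum_{k\in K}b_{ik}b_{k,g(j)}=\sum_{k\in K}b_{ik}b_{kj}$. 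Finally, Lemma~\ref{lem:folding-signs}, applied to the orbit pairs $(\{i\},K)$ and $(K,[j])$, guarantees that all products $b_{ik}b_{kj}$ share one sign and that this sign is unchanged when $j$ is replaced by $g(j)$, so the same case of~\eqref{eq:mutation-folding} applies to $j$ and to $g(j)$. Hence $\mu_K(b)_{ij}=\mu_K(b)_{i,g(j)}$, which completes the induction and the proof.
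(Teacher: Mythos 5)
Your proof is correct, and in fact it supplies an argument that the paper itself omits: the authors simply declare that this lemma ``follows easily from the definitions'' and give no proof, so yours is the content they are waving at. Your induction along the mutation sequence, with the two invariants that (a) the full subquiver of $(\mu_{J_r}\circ\cdots\circ\mu_{J_1})(\overline{Q})$ on the old vertices coincides with $(\mu_{J_r}\circ\cdots\circ\mu_{J_1})(Q)$ (this is exactly \cref{ex:mut-local}, mutation commuting with restriction, since the old vertices contain all mutable ones) and (b) the new frozen rows stay constant on mutable $G$-orbits (checked via \eqref{eq:mutation-folding} and \cref{lem:folding-signs}), is precisely the right bookkeeping; note that the case alignment in (b) can also be seen directly by reindexing with condition~\eqref{item:admissible-G2} on the old block, since $b_{k,g(j)}=b_{g^{-1}(k),j}$ for $k\in K$. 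One point you half-acknowledge deserves to be made explicit: as literally stated, the lemma allows ``some arrows'' from the new frozen vertices, but condition~\eqref{item:admissible-G2} of \cref{def:folding} applied to a fixed new vertex $i$ forces $b_{ij}=b_{i,g(j)}$, i.e.\ each new row must be constant on $G$-orbits of columns; for arbitrary arrows $\overline{Q}$ is not even $G$-admissible, so the lemma as written would fail. Your assertion that this invariance ``holds by construction'' is therefore really a (necessary) strengthening of the stated hypothesis rather than a consequence of it---but it is clearly the intended reading, and it is the only situation in which the lemma is used: in \cref{cor:folding-finite-type} the new frozen rows arise by unfolding rows of a folded extended exchange matrix, hence are automatically constant on orbits. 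With that reading fixed, your induction closes without gaps.
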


\pagebreak[3]

\begin{corollary}
\label{cor:folding-finite-type}
Let $Q$ be a quiver without frozen vertices. 
Suppose that $Q$ is globally foldable with respect to an action of a group~$G$. 
If every seed pattern with the initial exchange matrix~$B(Q)$
is of finite type (regardless of the choice of an initial extended exchange matrix~$\tilde B$
containing~$B(Q)$),
then every seed pattern with the initial exchange matrix~$B(Q)^G$ 
is of finite type. 
\end{corollary}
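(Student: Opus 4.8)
The plan is to produce one particular seed pattern with initial exchange matrix $B(Q)^G$ that is simultaneously of finite type and has an initial extended exchange matrix of full $\ZZ$-rank; the general statement then follows from Corollary~\ref{cor:z-span} together with Remark~\ref{rem:full-z-rank}. Indeed, once such a pattern is in hand, any other seed pattern with the same initial exchange matrix $B(Q)^G$ has the same exchange matrices $B(t)$ at every vertex $t\in\TT_r$ (where $r$ is the number of $G$-orbits, all mutable since $Q$ has no frozen vertices), and its rows automatically lie in the $\ZZ$-span of the rows of a full-$\ZZ$-rank matrix; hence Corollary~\ref{cor:z-span} bounds its number of seeds by that of our chosen pattern.

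To build the desired pattern, I would first enlarge $Q$ to a quiver $\overline Q$ by adjoining, for each $G$-orbit $J$ of vertices of $Q$, one new frozen vertex connected by a single arrow to each vertex of $J$ and to no other vertex. Declaring $G$ to fix each new vertex, this enlargement is $G$-equivariant, so $\overline Q$ is again $G$-admissible, and by Lemma~\ref{lem:admissible-if-add-frozen} it is globally foldable with respect to~$G$. Since the new arrows touch only frozen vertices, the mutable part of $\tilde B(\overline Q)$ is still $B(Q)$, while the folded matrix $\tilde B(\overline Q)^G$ has the $r\times r$ block $B(Q)^G$ on top and, by the uniform choice of arrows (using condition~\eqref{item:admissible-G2} and formula~\eqref{eq:B^G}), an $r\times r$ identity block contributed by the new frozen orbits. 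In particular $\tilde B(\overline Q)^G$ has full $\ZZ$-rank.

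Next I would fold the whole seed pattern of $\overline Q$. By hypothesis, the seed pattern with initial extended exchange matrix $\tilde B(\overline Q)$ (whose mutable part is $B(Q)$) is of finite type. By Corollary~\ref{cor:always-admissible}, the folded seeds form a seed pattern with initial extended exchange matrix $\tilde B(\overline Q)^G$, and hence with initial exchange matrix $B(Q)^G$. The crucial observation is that folding is a well-defined map on seeds: by Lemma~\ref{lem:mut-folded-seeds}, each folded seed $((\mu_{J_k}\circ\cdots\circ\mu_{J_1})(\Sigma))^G$ is determined by the unfolded seed $(\mu_{J_k}\circ\cdots\circ\mu_{J_1})(\Sigma)$, so two coinciding unfolded seeds fold to coinciding seeds. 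Therefore the number of distinct seeds in the folded pattern is at most the number of distinct seeds in the (finite-type) pattern of $\overline Q$, which is finite.

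Combining the last two paragraphs yields a finite-type seed pattern whose initial exchange matrix is $B(Q)^G$ and whose initial extended exchange matrix $\tilde B(\overline Q)^G$ has full $\ZZ$-rank. Applying Corollary~\ref{cor:z-span} and Remark~\ref{rem:full-z-rank} as in the first paragraph then shows that every seed pattern with initial exchange matrix $B(Q)^G$ has finitely many seeds, completing the proof. The step requiring the most care is the construction of $\overline Q$: one must add the new frozen vertices $G$-equivariantly so that $\overline Q$ stays globally foldable, while simultaneously arranging that the extra folded rows span $\ZZ^r$; the identity-block construction above is precisely what makes these two requirements compatible.
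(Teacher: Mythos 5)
Your proof is correct, and it rests on the same two ingredients as the paper's one-line argument---Lemma~\ref{lem:admissible-if-add-frozen} and Corollary~\ref{cor:always-admissible}---but it handles arbitrary coefficients by a genuinely different route. The paper's intended combination is direct: given an arbitrary initial extended exchange matrix $\overline{\tilde B}$ whose principal part is $B(Q)^G$, one unfolds every frozen row at once, adjoining to $Q$ one $G$-fixed frozen vertex per row, joined to each vertex of every mutable orbit $J$ by arrows of multiplicity equal to the corresponding entry of that row; then $(\tilde B(\overline Q))^G=\overline{\tilde B}$, so the folded pattern supplied by Corollary~\ref{cor:always-admissible}---which has at most as many distinct seeds as the unfolded pattern, finite by hypothesis---\emph{is} (up to isomorphism) the given coefficient pattern, and no rank considerations are needed. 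You instead unfold only one specially chosen extension, the one whose folded bottom block is ($\pm$)~the identity, and then transfer finiteness to all other coefficient patterns via Corollary~\ref{cor:z-span} and Remark~\ref{rem:full-z-rank}; this requires the additional (true, and tacitly used) observation that the exchange matrices $B(t)$ mutate autonomously, so that any two patterns sharing the initial exchange matrix $B(Q)^G$ share all their exchange matrices, as Corollary~\ref{cor:z-span} demands. What your detour buys is uniformity: it is exactly the template used throughout the chapter for types $A_n$, $D_n$, and $E_8$ (exhibit one finite-type pattern whose initial extended exchange matrix has full $\ZZ$-rank, then transfer), with your identity-block unfolding playing the role of principal-like coefficients; what the paper's route buys is brevity and the stronger conclusion that \emph{every} coefficient pattern over $B(Q)^G$ literally arises as a folding. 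One point you handle correctly that deserves emphasis in either approach: the new frozen vertices must be attached $G$-equivariantly, i.e.\ with constant multiplicity along each orbit, since otherwise condition~\eqref{item:admissible-G2} of Definition~\ref{def:folding} already fails for $\overline Q$ and Lemma~\ref{lem:admissible-if-add-frozen} cannot be invoked.
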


\begin{proof}
Any extended exchange matrix~$\tilde B$ that extends $B(Q)^G$ 
can be obtained from an extended exchange matrix that extends $B(Q)$ 
via the folding procedure described in Lemma~\ref{lem:admissible-if-add-frozen}. 
The claim then follows from  Corollary~\ref{cor:always-admissible}. 
\end{proof}

\begin{proof}[Proof of \cref{th:type-C-is-finite}]
Our proof strategy is as follows. 
We will construct a type $A_{2n-1}$ quiver~$Q_0$ with a group $G$ acting on its vertices,
so that $Q_0$ is globally foldable with respect to~$G$, 
and $B(Q_0)^G$ is the $n \times n$ exchange matrix  of type~$C_n$
from \cref{typeC}.
\cref{th:type-C-is-finite} will then follow from
\cref{th:type-A-is-finite}
and \cref{cor:folding-finite-type}.

The combinatorial model for a seed pattern of type~$A_{2n-1}$
presented in \cref{sec:type-A}
uses triangulations
of a convex $(2n+2)$-gon~$\mathbf{P}_{2n+2}$, with vertices numbered 
$1,2,\dots, 2n+2$ in clockwise order.
Consider the centrally symmetric triangulation $T_0$
(see \cref{fig:centrallysymmetric})
consisting of the following diagonals:
\begin{itemize}[leftmargin=.2in]
\item a ``diameter'' $d_1$ connecting vertices $1$ and $n+2$;
\item diagonals $d_2, d_3, \dots, d_n$ connecting 
$n+2$ with $2, 3, \dots, n$;
\item diagonals $d_{2'}, d_{3'}, \dots, d_{n'}$ connecting~$1$ with $n+3$, $n+4, \dots, 2n+1$.
\end{itemize}
Let $Q_0$ denote the 
mutable part of the quiver associated to the triangulation $T_0$
(see Section~\ref{sec:triangulations}); $Q_0$ 
is an orientation 
of the type $A_{2n-1}$ Dynkin diagram with  vertices
labeled $n', \dots, 3', 2', 1, 2, 3, \dots, n$ in order, 
and arrows directed towards the central vertex~$1$,
see \cref{fig:centrallysymmetric}.
The group $G=\ZZ/2\ZZ$ acts on the vertices of $Q_0$ by 
exchanging $i'$ and~$i$ for $2 \leq i \leq n$, and fixing the vertex~$1$.
It is easy to see that $Q_0$ is $G$-admissible, and moreover  
$B(Q_0)^G$ is the exchange matrix of type~$C_n$ from \cref{typeC}.

\begin{figure}[ht]
\begin{center}
\setlength{\unitlength}{2.4pt}
\begin{picture}(60,60)(0,0)
\thinlines
  \multiput(0,20)(60,0){2}{\line(0,1){20}}
  \multiput(20,0)(0,60){2}{\line(1,0){20}}
  \multiput(0,40)(40,-40){2}{\line(1,1){20}}
  \multiput(20,0)(40,40){2}{\line(-1,1){20}}

  \multiput(20,0)(20,0){2}{\circle*{1}}
  \multiput(20,60)(20,0){2}{\circle*{1}}
  \multiput(0,20)(0,20){2}{\circle*{1}}
  \multiput(60,20)(0,20){2}{\circle*{1}}

\put(40,60){\line(-1,-3){20}}
\put(40,60){\line(-1,-1){40}}
\put(40,60){\line(-2,-1){40}}

\put(20,0){\line(2,1){40}}
\put(20,0){\line(1,1){40}}


\thicklines

{\put(41,61){$1$}}
{\put(62,39){$2$}}
{\put(62,19){$3$}}
{\put(41,-3){$4$}}
{\put(17,-3){$5$}}
{\put(-4,19){$6$}}
{\put(-4,39){$7$}}
{\put(17,61){$8$}}

{\put(30,30){\circle{3}}}
{\put(32.5,30){$d_1$}}
{\put(40,20){\circle{3}}}
{\put(42.5,19){$d_2$}}
{\put(42,11){\circle{3}}}
{\put(39,5.5){$d_3$}}
{\put(20,40.5){\circle{3}}}
{\put(12.5,40){$d_{2'}$}}
{\put(20,50){\circle{3}}}
{\put(19,53){$d_{3'}$}}

\linethickness{1.2pt}

\put(41.3,13){{\vector(-1,4){1.3}}}
\put(38.5,21.5){{\vector(-1,1){7}}}
\put(21.5,38.5){{\vector(1,-1){7}}}
\put(20,47.5){{\vector(0,-1){5}}}
\end{picture}
\end{center}

\caption{A centrally symmetric triangulation $T_0$ and its quiver
  $Q_0=Q(T_0)$. 
The folded matrix $B(Q(T_0))^G$ has 
type~$C_n$; here $n=3$.}
\label{fig:centrallysymmetric}
\end{figure}
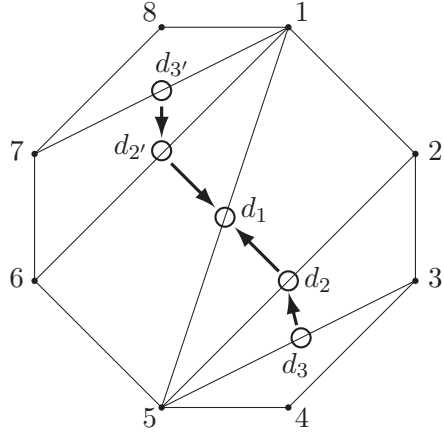

It remains to show that $Q_0$ is globally foldable. 
Besides acting on the set
$\{n', \dots, 3', 2', 1, 2, 3, \dots, n\}$,
the group $G=\ZZ/2\ZZ$ naturally acts---by central symmetry---on 
the set of diagonals of the polygon~$\mathbf{P}_{2n+2}$. 
This action has two kinds of orbits:
(1) the ``diameters'' of~$\mathbf{P}_{2n+2}$ fixed by the $G$-action,
and (2) pairs of centrally symmetric diagonals. 

A transformation  $\mu_{J_k} \circ \dots \circ \mu_{J_1}$ associated with 
a sequence $J_1, \dots, J_k$ of $G$-orbits in $\{n', \dots, 3', 2', 1, 2, 3, \dots, n\}$
corresponds to a sequence of flips 
associated with diameters or pairs of centrally symmetric diagonals. 
Such a sequence transforms $T_0$ into another centrally symmetric triangulation~$T$ in which 
the diameter retains the label~$1$ and each pair of centrally symmetric diagonals 
have labels $i$ and~$i'$, for some~$i$.
It is easy to see that the quiver associated to~$T$ is $G$-admissible.
Thus $Q_0$ is globally foldable.
\end{proof}

\begin{proof}[Proof of Theorem~\ref{th:type-B-is-finite}]
We will follow the strategy used 
in the proof of Theorem~\ref{th:type-C-is-finite}.  Namely, we will
construct a type $D_{n+1}$ quiver $Q_0$ 
with a group $G$ acting on its vertices, 
so that $Q_0$ is globally foldable with respect to~$G$, 
and $B(Q_0)^G$ is the exchange matrix  of type~$B_n$
from~\eqref{typeB}. 
\cref{th:type-B-is-finite} will then follow from
\cref{th:type-D-is-finite}
and \cref{cor:folding-finite-type}.

The combinatorial model for a seed pattern of type~$D_{n+1}$ ($n\ge 3$)
uses tagged triangulations of a convex $(n+1)$-gon
$\mathbf{P}_{n+1}^\bullet$ with a
puncture~$p$  in its interior.   The vertices of 
$\mathbf{P}_{n+1}^\bullet$ are numbered $1, 2, \dots, n+1$ in clockwise order.
Consider the tagged triangulation $T_0$ formed by:
\begin{itemize}[leftmargin=.2in]
\item two radii $d_1$ and $d_{2}$ (tagged plain and notched)
connecting the vertex~$1$ with the puncture~$p$;
\item plain arcs $d_3, d_4, \dots, d_{n+1}$ connecting 
the vertex~$1$ with vertices $2,3, \dots, n$, respectively, as shown in \cref{fig:D-triangulation}.
\end{itemize}

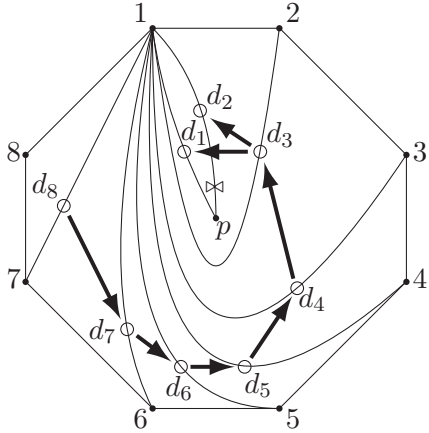
\begin{figure}[ht]
\begin{center}
\setlength{\unitlength}{2.8pt}
\begin{picture}(60,60)(0,0)
\thinlines
  \multiput(0,20)(60,0){2}{\line(0,1){20}}
  \multiput(20,0)(0,60){2}{\line(1,0){20}}
  \multiput(0,40)(40,-40){2}{\line(1,1){20}}
  \multiput(20,0)(40,40){2}{\line(-1,1){20}}

  \multiput(20,0)(20,0){2}{\circle*{1}}
  \multiput(20,60)(20,0){2}{\circle*{1}}
  \multiput(0,20)(0,20){2}{\circle*{1}}
  \multiput(60,20)(0,20){2}{\circle*{1}}

\put(30,30){\circle*{1}}
\qbezier(20,60)(30,50)(30,30)
\qbezier(20,60)(22,45)(30,30)
\put(29.8, 35){\makebox(0,0){$\notch$}}

\qbezier(20,60)(30,-15)(40,60)
\qbezier(60,40)(20,-20)(20,60)
\qbezier(60,20)(15,-20)(20,60)
\qbezier(40,0)(10,0)(20,60)
\qbezier(20,0)(10,20)(20,60)
\qbezier(0,20)(10,40 )(20,60)

{\put(25,40.5){\circle{2}}}
{\put(24.5,42.3){$d_1$}}
{\put(30,27.5){$p$}}

{\put(27.5,47){\circle{2}}}
{\put(28.5,48){$d_2$}}

{\put(37,40.5){\circle{2}}}
{\put(38,42){$d_3$}}

{\put(42.8,19){\circle{2}}}
{\put(43,16){$d_4$}}

{\put(34.5,6.6){\circle{2}}}
{\put(34.5,2.5){$d_5$}}

{\put(24.5,6.6){\circle{2}}}
{\put(22,2){$d_6$}}

{\put(16,12.5){\circle{2}}}
{\put(10,11){$d_7$}}

{\put(6,32){\circle{2}}}
{\put(1,34){$d_8$}}

\thicklines

{\put(41,61){$2$}}
{\put(61,39){$3$}}
{\put(61,19){$4$}}
{\put(41,-3){$5$}}
{\put(17,-3){$6$}}
{\put(-3,19){$7$}}
{\put(-3,39){$8$}}
{\put(17,61){$1$}}

\linethickness{1.5pt}

\put(35,40.5){{\vector(-1,0){8.5}}}
\put(35.5,41.5){{\vector(-3,2){6.5}}}
\put(42.2, 20.5){{\vector(-1,4){4.6}}}
\put(35.5,8){{\vector(2,3){6.5}}}
\put(26,6.6){{\vector(1,0){7}}}
\put(17.5,11.5){{\vector(4,-3){5.5}}}
\put(6.8,30.4){{\vector(1,-2){8.2}}}

\end{picture}
\end{center}
\caption{A tagged triangulation $T_0$ with quiver $Q_0$ such that 
$B(Q_0)^G$ is the type $B_n$ exchange matrix;  
here $n=7$.}
\label{fig:D-triangulation} 
\end{figure}

The corresponding quiver $Q_0=Q(T_0)$ is an orientation of the type 
$D_{n+1}$ Dynkin diagram.
The group $G=\ZZ/2\ZZ$ acts on the vertices of $Q_0$ by exchanging 
the vertices $1$ and~$2$, and fixing all the other vertices.
It is easy to see that $Q_0$ is $G$-admissible, and moreover 
$B(Q_0)^G$ is the exchange matrix of type~$B_n$ 
from \cref{typeB}.

It remains to show that $Q_0$ is globally foldable. 
In addition to the action on the set $\{1, 2, \dots, n+1\}$
described above,
the group $G=\ZZ/2\ZZ$ acts on the set of tagged arcs in 
the punctured polygon~$\mathbf{P}_{n+1}^\bullet$
by fixing every non-radius arc, and toggling the tags of the radii. 
A transformation  \hbox{$\mu_{J_k} \circ \dots \circ \mu_{J_1}$} associated with 
a sequence $J_1, \dots, J_k$ of $G$-orbits in $\{1, 2, \dots, n+1\}$
corresponds to a sequence of flips associated with non-radius arcs
or pairs of ``parallel'' radii with different tagging. 
(The latter step replaces a pair of parallel radii inside a punctured digon 
by another such pair of radii -- the pair incident to the other vertex of the 
digon.)
Such a sequence transforms $T_0$ into another $G$-invariant tagged triangulation~$T$
of the punctured polygon~$\mathbf{P}_{n+1}^\bullet$
in which the labels $1$ and~$2$ are assigned to a pair of parallel radii. 
It is easy to see that the quiver associated to~$T$ is $G$-admissible
(cf., e.g., the right picture in \cref{fig:octagon-D8}),
and so $Q_0$ is globally foldable.
\end{proof}

\begin{lemma}
The cluster variables labeled by different $G$-orbits in a cluster
algebra of type $B_n$ (respectively, $C_n$) are not equal to each 
other.
\end{lemma}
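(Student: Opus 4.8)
The plan is to argue exactly as in the proofs of \cref{cor:An-structure} and \cref{cor:Dn-structure}, but carried out inside the folded seed pattern. By \cref{cor:always-admissible}, the folded seeds of the patterns constructed in the proofs of \cref{th:type-C-is-finite} and \cref{th:type-B-is-finite} form a genuine seed pattern in a freely generated field $\FFcal^G$; its extended clusters correspond to the $G$-invariant triangulations (centrally symmetric triangulations of $\mathbf{P}_{2n+2}$ in type $C_n$, see \cref{fig:centrallysymmetric}, resp.\ $G$-invariant tagged triangulations of $\mathbf{P}_{n+1}^\bullet$ in type $B_n$, see \cref{fig:D-triangulation}), and its cluster variables $x_I$ are indexed by the $G$-orbits $I$ of diagonals (resp.\ of tagged arcs). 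Fix two distinct orbits $I\neq J$. First I would split into two cases according to whether or not $I$ and $J$ are \emph{compatible}, meaning that some $G$-invariant triangulation contains representatives of both.

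In the compatible case there is essentially nothing to do: the variables $x_I$ and $x_J$ then occur together in the extended cluster attached to such a triangulation, and any extended cluster of the folded pattern is algebraically independent (being a transcendence basis of $\FFcal^G$); hence $x_I\neq x_J$. The substance of the lemma is the incompatible case, which I would reduce to a bounded number of base cases by restriction, mirroring the way the general case of \cref{cor:Dn-structure} is deduced from $n=4$. The point is that $I$ and $J$ together involve only finitely many vertices of the (punctured) polygon; together with their $G$-translates these vertices span a $G$-invariant subpolygon $\mathbf{P}'$ on at most eight vertices in type $C$ (hence of type at most $C_3$), and a punctured subpolygon with boundedly many vertices in type $B$.

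The heart of the argument is then to promote this combinatorial restriction to an algebraic one: the type $A$ (resp.\ type $D$) cluster subalgebra supported on $\mathbf{P}'$ is itself $G$-invariant and globally foldable, and folding it should produce a cluster subalgebra of the ambient folded pattern, of type $C_k$, $B_k$ or $A_k$ with $k$ bounded, whose cluster variables include $x_I$ and $x_J$. Since a cluster subalgebra is a subring (\cref{def:clustersubalgebra}), the equality $x_I=x_J$ would already hold inside it, so it would suffice to verify distinctness in the finitely many base cases of small rank; these can be checked by direct computation, the smallest being the rank-two cases governed by \cref{th:finite-type-rank2}, where the pertinent exchange relation $x_I\,x_J=M_1+M_2$ forces $x_I\neq x_J$. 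The hard part will be precisely the compatibility of restriction with folding: one must verify that freezing and excising the arcs outside $\mathbf{P}'$ commutes with the $G$-action and with the passage to the folded seed, so that the folded subalgebra genuinely embeds in the ambient type $B_n$ (resp.\ $C_n$) pattern and carries the same two cluster variables $x_I$ and $x_J$.
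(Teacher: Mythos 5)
Your proposal follows essentially the same route as the paper: both reduce the claim to a bounded-rank case by restricting to a $G$-invariant subconfiguration (an octagon in type $C$, a punctured quadrilateral in type $B$), obtaining a seed subpattern of type $C_3$ (resp.\ $B_3$) in which the finitely many orbits are checked directly to yield distinct cluster variables---the paper does this check by comparing denominators of Laurent expansions with respect to a fixed initial cluster. The only real divergence is that the ``hard part'' you flag, namely making restriction commute with folding, never arises in the paper's argument: the freezing and restriction are performed directly inside the already-folded type $C_n$ (resp.\ $B_n$) seed pattern, where \cref{def:clustersubalgebra} applies verbatim, so no compatibility between folding and passage to subpatterns needs to be verified.
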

\begin{proof}
If two cluster variables appear in the same cluster, then they are necessarily distinct.
We now consider two cluster variables which do not appear in the same cluster.

In type $C$, consider two pairs of centrally symmetric diagonals, 
or a diameter and a pair of centrally symmetric diagonals, or 
two diameters. By the observation in the first paragraph, we may assume that 
	at least of the diagonals/diameters cross over each other.
	In all three cases, there is an
octagon that contains the two $G$-orbits.  
Freezing all cluster variables except those corresponding to the diagonals of this octagon
yields a seed subpattern of type~$C_3$.  
In type~$C_3$, one can check
directly that each of 12 distinct $G$-orbits 
corresponds to a different cluster variable,
say by verifying that the Laurent expansions expressing these cluster variables in terms
of a particular initial cluster have different denominators. 

In type $B$, consider two pairs of radii in $\mathbf{P}_{n+1}^\bullet$,
or a pair of radii and an arc which is not a radius, or 
two arcs which are not radii.  In all three cases, the two $G$-orbits
lie inside a certain punctured quadrilateral, 
reducing the problem to the treatment of a seed subpattern of type~$B_3$. 
In type~$B_3$, there are $12$ different $G$-orbits;
using the same method as in type~$C_3$, we can verify that  
they correspond to $12$ distinct cluster variables.
\end{proof}

\begin{exercise}\label{ex:BC}
By enumerating $G$-orbits, verify that 
a cluster algebra of type $B_n$ or $C_n$ has $n^2+n$ cluster variables.
By enumerating $G$-invariant tagged triangulations and 
centrally symmetric triangulations, verify that 
a cluster algebra of type $B_n$ or $C_n$ has 
$\binom{2n}{n}$ clusters (or seeds). 
\end{exercise}



\begin{proposition}
\label{pr:Bn-vs-Cn}
The same seed pattern cannot be simultaneously of type~$B_n$ and of type~$C_n$
for $n \geq 3$.
\end{proposition}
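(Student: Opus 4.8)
The plan is to distinguish the two types by means of a \emph{skew-symmetrizer}, which is invariant under mutation. Recall that a skew-symmetrizable matrix $B=(b_{ij})$ carries a diagonal matrix $D=\operatorname{diag}(d_1,\dots,d_n)$ with $d_i>0$ and $d_i b_{ij}=-d_j b_{ji}$ for all $i,j$; such a $D$ is a skew-symmetrizer of $B$. A direct verification from the mutation rule~\eqref{eq:matrix-mutation} shows that if $D$ skew-symmetrizes $B$, then the same $D$ skew-symmetrizes $\mu_k(B)$ for every mutable $k$. Consequently, all exchange matrices $B(t)$ occurring in one seed pattern admit a single common skew-symmetrizer $D$, relative to the fixed labeling $1,\dots,n$. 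Moreover, when $B$ is indecomposable (its diagram is connected), the weights $d_i$ are pinned down up to a common positive scalar, since along any path in the diagram the ratios $d_i/d_j$ are forced by the relations $d_i b_{ij}=-d_j b_{ji}$.

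Next I would compute the skew-symmetrizer of each model matrix. For the type $B_n$ matrix~\eqref{typeB}, the relations $d_1 b_{12}=-d_2 b_{21}$ and $d_i b_{i,i+1}=-d_{i+1}b_{i+1,i}$ (for $i\ge 2$) read $d_2=2d_1$ and $d_i=d_{i+1}$, so $D$ has diagonal $(1,2,2,\dots,2)$ up to scaling; its multiset of entries is $\{1,2,\dots,2\}$, with a single $1$ and $n-1$ twos. For the type $C_n$ matrix~\eqref{typeC}, the same relations give $d_1=2d_2$ and $d_i=d_{i+1}$, so $D$ has diagonal $(2,1,1,\dots,1)$ up to scaling, with multiset $\{2,1,\dots,1\}$ consisting of a single $2$ and $n-1$ ones. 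Passing to the standard form up to simultaneous permutation of rows and columns only permutes the diagonal of $D$, so these multisets (normalized, say, so that the smallest entry equals $1$) are well defined for any exchange matrix of the given type.

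Finally, suppose a seed pattern were simultaneously of type $B_n$ and of type $C_n$. Then it contains an exchange matrix $B'$ whose Cartan counterpart has type $B_n$ and an exchange matrix $B''$ whose Cartan counterpart has type $C_n$; both are principal parts $B(t)$, hence share the common pattern skew-symmetrizer $D$. Since the Dynkin diagrams $B_n$ and $C_n$ are connected, the Cartan counterparts of $B'$ and $B''$ are indecomposable, and the essential-uniqueness statement forces the normalized multiset of $D$ to agree with that of type $B_n$ and, at the same time, with that of type $C_n$. But for $n\ge 3$ these normalized multisets differ: the first has exactly one entry equal to the minimum, whereas the second has $n-1\ge 2$ of them. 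This contradiction proves the proposition.

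I expect the only delicate point to be the bookkeeping around the skew-symmetrizer — establishing that a single $D$ serves the whole pattern and that it is pinned down (up to scalar) by each indecomposable matrix occurring in it; the numerical comparison of the two multisets is then immediate. It is worth noting that the argument degenerates precisely at $n=2$, where the multisets $\{1,2\}$ coincide, consistent with the fact that $B_2$ and $C_2$ determine the same seed patterns.
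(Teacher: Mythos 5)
Your proof is correct and follows essentially the same route as the paper: mutation-invariance of the skew-symmetrizer, its uniqueness up to scaling for matrices with connected diagram, and the comparison of the vectors $(1,2,\dots,2)$ and $(2,1,\dots,1)$. The paper cites these facts as Lemma~\ref{lem:vector} and Exercise~\ref{ex:mat-mut-simple} rather than reproving them, but the argument is the same, including the observation that it degenerates at $n=2$.
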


\begin{proof}
By Lemma \ref{lem:vector}, if the diagram of an 
exchange
matrix is connected, then its skew-symmetrizing vector
is unique up to rescaling.  
By Exercise \ref{ex:mat-mut-simple}, the skew-symmetrizing
vector is preserved under mutation. 
It remains to note that the 
skew-symmetrizing vectors for the exchange matrices of types~$B_n$ and~$C_n$ 
are $(1,2,2, \dots, 2)$ and $(2,1,1, \dots, 1)$,
respectively (up to rescaling and permuting the entries).
\end{proof}

Examples of coordinate rings having  natural cluster algebra structures 
of types $B_n$ and~$C_n$
will be given in 
Section~\ref{sec:models-classical}. 


\section{Seed patterns of types $E_6$, $E_7$, $E_8$}
\label{sec:exceptional}

In this section, we describe a computer-assisted proof of the statement that the cluster
algebras (or seed patterns) of exceptional types $E_6$, $E_7$, $E_8$
are of finite type.
The proof utilizes one of several software 
packages for computing with cluster algebras, freely available online.  
Our personal
favorites are 
 the \texttt{Java} applet
~\cite{Keller-aplet}
and the \texttt{Sage} package
~\cite{Musiker-Stump},
cf.\ the links at~\cite{Portal}. 
Among other things, 
both the applet and the \texttt{Sage} package allow one to 
compute seeds and Laurent expansions of 
cluster variables 
obtained by applying a sequence of mutations to a given initial seed.



\begin{theorem}\label{thm:E}
Seed patterns 
of types $E_6$, $E_7$, and $E_8$ are of finite type.
\end{theorem}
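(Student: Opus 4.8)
The plan is to prove Theorem~\ref{thm:E} by a finite computation, following the computer-assisted strategy announced at the beginning of this section. The key structural observation is a cascading implication among the three exceptional types: since the Dynkin diagram $E_6$ is a full subdiagram of $E_7$, which is in turn a full subdiagram of $E_8$, an orientation of $E_6$ (resp.\ $E_7$) can be realized as a full subquiver of a suitable orientation of $E_8$. By the hereditary nature of the finite mutation type property (the Proposition following \cref{ex:mut-local}), and more to the point by \cref{cor:finitely-many-seeds} which equates having finitely many seeds with having finitely many cluster variables, it suffices to establish the result for type~$E_8$ alone; finiteness for $E_6$ and $E_7$ then follows because any seed subpattern of a finite-type pattern is again of finite type (see the Lemma following \cref{def:clustersubalgebra}).

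First I would fix a specific orientation of the $E_8$ Dynkin diagram---say the quiver~$Q$ with all edges oriented consistently along a chosen spanning path and its branch---to serve as the initial exchange matrix~$B_\circ=B(Q)$. By Remark~\ref{rem:simplylaced}, all orientations of a tree are mutation equivalent, so the choice is immaterial to the finite type question. I would then run one of the software packages cited above (the \texttt{Java} applet~\cite{Keller-aplet} or the \texttt{Sage} package~\cite{Musiker-Stump}) to exhaustively enumerate the mutation class of~$B_\circ$: starting from the initial seed, repeatedly apply all eight mutations $\mu_1,\dots,\mu_8$, recording each newly produced quiver up to isomorphism, and terminating when no mutation yields a previously unseen quiver. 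The output is a finite list of seeds, confirming that the seed pattern with initial exchange matrix~$B_\circ$ has only finitely many distinct seeds (the known count being $25080$ seeds and $120$ cluster variables for~$E_8$).

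To promote this statement about one particular seed pattern to the assertion that \emph{every} seed pattern of type~$E_8$---with arbitrary coefficients---is of finite type, I would invoke the now-familiar argument based on \cref{cor:z-span} together with Remark~\ref{rem:full-z-rank}. Concretely, I would choose the initial extended exchange matrix to have full $\ZZ$-rank (for instance by taking principal coefficients, so that the bottom block of $\tilde B_\circ$ is the $8\times 8$ identity matrix, whence the rows span $\ZZ^8$). The computer enumeration shows that this full-rank seed pattern has finitely many seeds; then by \cref{cor:z-span} and Remark~\ref{rem:full-z-rank}, any other seed pattern sharing the same sequence of exchange matrices $B(t)$ also has finitely many seeds, and coincidences of seeds in the reference pattern force coincidences in the target pattern. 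This covers all coefficient choices for~$E_8$, and the subpattern argument of the first paragraph then yields the conclusion for $E_6$ and~$E_7$.

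The main obstacle is not conceptual but one of verification and trust: the proof's correctness rests entirely on the finiteness and completeness of the machine enumeration of the $E_8$ mutation class. The hard part will be ensuring that the enumeration genuinely terminates and genuinely visits \emph{all} quivers in the mutation class---that is, that the breadth-first (or depth-first) search closes up correctly under all eight mutations and that quiver isomorphism is tested reliably, so no seed is either missed or spuriously identified with another. Since we cannot reduce this to a short human-checkable argument, the integrity of the result depends on the reproducibility of the computation across independent software implementations, which is precisely why both the applet and the \texttt{Sage} package are cited.
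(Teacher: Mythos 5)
Your proposal is correct and follows essentially the same route as the paper: reduce $E_6$ and $E_7$ to $E_8$ via seed subpatterns, certify finiteness of one type-$E_8$ pattern by exhaustive machine enumeration, and propagate to arbitrary coefficients via Corollary~\ref{cor:z-span} and Remark~\ref{rem:full-z-rank}.

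The one consequential divergence is your choice of initial quiver. The paper deliberately does \emph{not} start from an orientation of the $E_8$ Dynkin diagram: it starts from the triangulated grid quiver of \cref{fig:2x4-grid} (mutation equivalent to any $E_8$ orientation by Exercise~\ref{exercise:D4-E8}), precisely because the enumeration compares seeds through Laurent expansions in the initial cluster, and these expansions become astronomically large when the initial quiver is a Dynkin orientation---the paper reports that the run starting from the alternating orientation did not terminate within several days, whereas the grid-quiver run finished in under an hour. Since the entire weight of a computer-assisted proof rests on the computation actually closing up, this choice is not cosmetic: your plan as stated (a Dynkin orientation, made heavier still by carrying principal coefficients) is the very strategy the paper's own remark flags as computationally infeasible. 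Relatedly, the paper enumerates the \emph{trivial-coefficient} pattern and then checks that the $8\times 8$ exchange matrix of the grid quiver has full $\ZZ$-rank, which makes principal coefficients unnecessary; your principal-coefficient variant is logically sound but costlier. Finally, a small factual slip: a type-$E_8$ pattern has $128$ cluster variables ($120$ is the number of non-initial ones), though this does not affect the argument.
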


\begin{proof}
It suffices to verify that a seed pattern of 
type $E_8$ is of finite type. 
A~pattern of type~$E_6$ or~$E_7$ can be viewed as a 
subpattern of a pattern of type~$E_8$,
so if the latter has finitely many seeds, then so does the former. 

\pagebreak[3]

The main part of the proof is a verification that a cluster 
algebra of type $E_8$ \emph{with trivial coefficients} has finitely many seeds. 
(The case of general coefficients will follow easily, see below.) 
With the \texttt{Sage} package~\cite{Musiker-Stump}, this is done as follows.
The \texttt{Sage} command 

\begin{tabular}{l}
\texttt{{S24}} = \texttt{{ClusterSeed([[0,1],[1,2],[2,3],[4,5],[5,6],[6,7],}}\\
\qquad\quad \texttt{{[0,4],[1,5],[2,6],[3,7],[5,0],[6,1],[7,2]]);}}
\end{tabular}

\noindent
defines a seed \texttt{S24} with the quiver
shown in Figure~\ref{fig:2x4-grid}. 
Recall that by Exercise~\ref{exercise:D4-E8}, this quiver is 
mutation equivalent to any orientation of a Dynkin
  diagram of type~$E_8$.
%
%
Next, the \texttt{Sage} command 

\begin{tabular}{l}
\texttt{VC = S24.variable\underbar{\ }class(ignore\underbar{\ }bipartite\underbar{\ }belt=True);}
\end{tabular}

\noindent
performs an exhaustive depth-first search to find all 
seeds that can be obtained from \texttt{S24} using $\le N$ mutations,
for $N=0, 1, 2, 3, \dots$; the cluster variables 
appearing in these seeds are recorded in the list~\texttt{VC}. 
As soon as the calculation stops, the finiteness of the seed pattern is thereby established.
Then the command 

\begin{tabular}{l}
\texttt{len(VC);} 
\end{tabular}

\noindent
produces the output 

\begin{tabular}{l}
\texttt{128}
\end{tabular}

\noindent
which is the total number of cluster variables in the pattern. 
These 128 cluster variables, or more precisely their Laurent expansions in terms of the chosen initial seed,
can be displayed by executing the commands 

\begin{tabular}{l}
\texttt{for k in range(128): print(VC[k]); print("...");}
\end{tabular}

\noindent
To get a better idea of what happens in the course of the depth-first search,
one can run the command 

\begin{tabular}{l}
\texttt{SC=S24.mutation\underbar{\ }class(show\underbar{\ }depth=True,$\,$return\underbar{\ }paths=True);}
\end{tabular}

\noindent
its output will show how many seeds have been obtained after each stage. 
These data are recorded in Figure~\ref{fig:2x4-grid}. 
We see that no new seeds are found for $N=14$. 
The total number of seeds is $25080$.   

\begin{figure}[ht]
\begin{center}
\setlength{\unitlength}{2.8pt}
\begin{picture}(40,16)(0,-3)
\put(0,0){\circle*{1.5}}
\put(10,0){\circle*{1.5}}
\put(20,0){\circle*{1.5}}
\put(30,0){\circle*{1.5}}
\put(0,10){\circle*{1.5}}
\put(10,10){\circle*{1.5}}
\put(20,10){\circle*{1.5}}
\put(30,10){\circle*{1.5}}
\put(0,-3){\makebox(0,0){$0$}} 
\put(0,13){\makebox(0,0){$4$}} 
\put(10,-3){\makebox(0,0){$1$}} 
\put(10,13){\makebox(0,0){$5$}} 
\put(20,-3){\makebox(0,0){$2$}} 
\put(20,13){\makebox(0,0){$6$}} 
\put(30,-3){\makebox(0,0){$3$}} 
\put(30,13){\makebox(0,0){$7$}} 

\thicklines
\put(2,0){\vector(1,0){6}}
\put(2,10){\vector(1,0){6}}
\put(12,0){\vector(1,0){6}}
\put(12,10){\vector(1,0){6}}
\put(22,10){\vector(1,0){6}}
\put(22,0){\vector(1,0){6}}
\put(0,2){\vector(0,1){6}}
\put(10,2){\vector(0,1){6}}
\put(20,2){\vector(0,1){6}}
\put(30,2){\vector(0,1){6}}
\put(8,8){\vector(-1,-1){7}}
\put(18,8){\vector(-1,-1){7}}
\put(28,8){\vector(-1,-1){7}}
\end{picture}
\ \\[.25in] 
\small
\begin{tabular}{|c|c|c|c|c|c|c|c|c|c|c|c|c|c|c|c|}
\hline
$\!0\!$ & $\!1\!$ & $2$ & $3$ & $4$ & $5$ & $6$  & $7$  & $8$  & $9$  & $10$  & $11$  & $12$  & $\ge 13$ \\
\hline
$\!1\!$ & $\!9\!$ & $\!50\!$ & $\!196\!$ & $\!614\!$ & $\!1582\!$ & $\!3525\!$ & $\!6863\!$  
& $\!11626\!$ & $\!17098\!$ & $\!21706\!$ & $\!24220\!$ & $\!24974\!$ & $\!25080\!$  \\
\hline
\end{tabular}
\end{center}
\caption{Top: the triangulated grid quiver of type~$E_8$.
Bottom: the table showing, for each $N\ge 0$, 
the number of distinct seeds that can be obtained 
from the seed with this quiver using $\le N$ mutations.} 
\label{fig:2x4-grid}
\end{figure}

The proof of Theorem~\ref{thm:E} for the case of general coefficients
can now be completed using a standard
argument based on
Corollary~\ref{cor:z-span} and Remark~\ref{rem:full-z-rank}.   
One only needs to check that the $8 \times 8$ 
exchange matrix 
\[
B=
\left[\,\begin{matrix}
0  & 1 & 0 & 0  & 1 & -1 & 0 & 0 \\
-1  & 0 & 1 & 0  & 0 & 1 & -1 & 0 \\
0  & -1 & 0 & 1  & 0 & 0 & 1 & -1 \\
0  & 0 & -1 & 0  & 0 & 0 & 0 & 1 \\
-1  & 0 & 0 & 0  & 0 & 1 & 0 & 0 \\
1  & -1 & 0 & 0  & -1 & 0 & 1 & 0 \\
0  & 1 & -1 & 0  & 0 & -1 & 0 & 1 \\
0  & 0 & 1 & -1  & 0 & 0 & -1 & 0 
\end{matrix}\,
\right].
\]
associated to the quiver in Figure~\ref{fig:2x4-grid} 
has full $\ZZ$-rank. 
\end{proof}

\pagebreak[3]

\begin{remark}
The reader may be wondering: why we chose as the initial quiver 
the triangulated grid quiver in Figure~\ref{fig:2x4-grid},
rather than an orientation of a Dynkin diagram of type~$E_8$? 
The answer is that a straightforward implementation of the latter strategy appears to be 
computationally infeasible. \linebreak[3] 
A~seed pattern of type~$E_8$ has $128$ cluster variables.  
The formulas expressing them as Laurent polynomials in the $8$~initial cluster variables 
are recursively computed in the process of the depth-first search,
and then used to compare the seeds to each other. 
When the triangulated grid quiver is chosen as the initial one, these Laurent polynomials
turn out to be quite manageable. \linebreak[3]
As a result, the entire calculation took less than one hour on a MacBook Pro laptop 
computer (manufactured in~2013), with a 2.6 GHz processor and 8~GB RAM.  
By comparison, a similar calculation using, as 
the initial quiver, the Dynkin diagram of type~$E_8$ with an alternating orientation
(i.e., each vertex is either a source or a sink) did not terminate within a few days. 
The explanation likely lies in the fact that the Laurent polynomials expressing 
some of the 128 cluster variables in terms of the initial ones are extremely cumbersome
in this case. 
To get an idea of the size of these Laurent polynomials 
(which are known to have positive coefficients), 
one can specialize the $8$~initial variables to~1, and compute the remaining 120 cluster variables 
recursively. 
When the initial quiver is the alternating Dynkin quiver of type~$E_8$, 
the largest of these specializations turns out to be equal to 2820839; 
for the triangulated grid quiver, the corresponding value is~107. 
\end{remark}

\begin{remark}
One naturally arising cluster algebra of type~$E_8$ 
is the homogeneous coordinate
ring of the Grassmannian $\Gr_{3,8}$ of $3$ planes in $8$-space. 
Another closely related example is the coordinate ring of 
the affine space of $3\times 5$ matrices.
(See Chapter~\ref{ch:rings} and Chapter~\ref{ch:Grassmannians} for more
details.) 
Each of these constructions can in principle be used, with or without
a computer,  
to verify that seed patterns of type~$E_8$ are of finite type.
\end{remark}

\pagebreak[3]

\section{Seed patterns of types $F_4$ and $G_2$}
\label{sec:exceptional2}

We will now use folding to take
care of types $F_4$ and~$G_2$.


By \cref{ex:E6F4}, one can realize a type $F_4$ exchange
matrix as~$B(Q)^G$, where $Q$ is an orientation of the type $E_6$ 
Dynkin diagram, $G = \ZZ/2\ZZ$, and $Q$ is globally foldable. 
Now \cref{cor:folding-finite-type}, together with the fact that 
type~$E_6$ cluster algebras are of finite type,
implies that the same is true in type~$F_4$.

An alternative approach
is to use a computer to check directly that 
a cluster algebra of type $F_4$ (with no frozen variables) has finitely many seeds
(there are $105$ of them).  
We can then use our standard argument 
based on Remark~\ref{rem:full-z-rank}, 
together with the fact that the matrix $\tilde B^G$ from 
Figure~\ref{fig:E6-F4} has full $\ZZ$-rank, to complete
the proof. 

We now turn to cluster algebras of type~$G_2$. 
While it follows from the results of \cref{sec:finite-type-rank2} that
cluster algebras of type~$G_2$ are of finite type, 
this result can also be obtained via folding of the type~$D_4$ quiver   
shown in Figure~\ref{fig:D4-G2}. 
It is easy to see that this quiver is $G$-foldable, 
with respect to the natural action of $G=\ZZ/3\ZZ$. 
Since every seed pattern of type $D_4$ is of finite type, 
Corollary~\ref{cor:folding-finite-type} implies the same for the type~$G_2$.

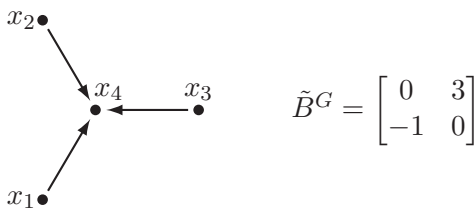
\begin{figure}[ht]
\setlength{\unitlength}{1pt} 
\begin{picture}(210,68)(-30,0) 
\thicklines 
  \put(2,3.4){\vector(10,17){16}} 
  \put(2,64.6){\vector(10,-17){16}} 

\put(55,34){\vector(-1,0){31}}
  \put(20,34){\circle*{4}} 
  \put(59,34){\circle*{4}} 
  \put(0,0){\circle*{4}} 
  \put(0,68){\circle*{4}} 

\put(-8,0){\makebox(0,0){$x_1$}}
\put(-8,68){\makebox(0,0){$x_2$}}
\put(25,41){\makebox(0,0){$x_4$}}
\put(59,41){\makebox(0,0){$x_3$}}

\put(130,34){\makebox(0,0){$\tilde B^G=\begin{bmatrix}
0 & 3 \\
-1 & 0
\end{bmatrix}$}}

\end{picture} 

\caption{The generator of the group $G=\ZZ/3\ZZ$ acts on the vertices 
of the quiver shown on the left by sending $1\mapsto 2\mapsto 3\mapsto
1$ and $4\mapsto 4$. 
All vertices are mutable.  
The rows and columns of the matrix $\tilde B^G$ are indexed by the $G$-orbits
$\{1,2,3\}$ and $\{4\}$.}
\label{fig:D4-G2}
\end{figure}

\section{Decomposable types} 
\label{sec:decomposable-types}

We refer to the disjoint union of two Dynkin diagrams of types 
$X_n$ and~$Y_{n'}$ as a Dynkin diagram of type 
$X_n \sqcup Y_{n'}$; and similarly for disjoint unions of three or 
more Dynkin diagrams.

We have now defined cluster algebras of types 
$A_n$, $B_n$, $C_n$, $D_n$, $E_6$, $E_7$, $E_8$, 
$F_4$, and $G_2$, corresponding to the indecomposable Cartan matrices
of finite type, or equivalently, the connected Dynkin diagrams.  
A seed pattern (or cluster algebra) of rank $n+n'$ is said to be of type 
$X_n \sqcup Y_{n'}$ if one of its exchange matrices
is (up to a simultaneous permutation of rows
and columns) a block-diagonal matrix with blocks whose Cartan counterparts
are of types $X_n$ and~$Y_{n'}$.  We have the following simple lemma.

\begin{lemma} \label{lem:decomposable} 
If $\mathcal{A}$ is 
a cluster algebra of 
decomposable type $X_n\sqcup Y_{n'}$,
then the number of cluster variables is the sum of those 
of the cluster algebras of types $X_n$ and~$Y_{n'}$,
and the number of clusters is the product. 
\end{lemma}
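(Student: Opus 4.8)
The plan is to exploit the block-diagonal structure of a defining exchange matrix: I would show that the whole seed pattern decouples into two independent subpatterns, of types $X_n$ and $Y_{n'}$, and then read off the sum and the product from this splitting. First I would fix an exchange matrix $B$ of $\AA$ that is block-diagonal (one exists by the definition of type $X_n\sqcup Y_{n'}$), with mutable indices partitioned into blocks $I_1=\{1,\dots,n\}$ and $I_2=\{n+1,\dots,n+n'\}$ whose Cartan counterparts are of types $X_n$ and $Y_{n'}$, and take $B$ as the initial exchange matrix. The first key point is that block-diagonality survives mutation: in the mutation rule \eqref{eq:matrix-mutation} an off-diagonal entry $b_{ij}$ changes only when both $b_{ik}$ and $b_{kj}$ are nonzero, which for a block-diagonal matrix forces $i,j,k$ into a single block. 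Hence every $B(t)$ in the pattern stays block-diagonal, with the $I_1$--$I_2$ block identically zero.

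Because of this, the dynamics decouple. For a mutable $k\in I_1$ the exchange relation involves only the cluster variables indexed by $I_1$ (together with frozen variables), and never those indexed by $I_2$, and symmetrically for $I_2$. Concretely, freezing the mutable variables of block $I_2$ and then restricting to $I_1$ is legitimate---the hypothesis of Definition~\ref{def:excision} reads $b_{ik}=0$ for $i\notin I_1$, $k\in I_1$, which holds precisely because $B$ is block-diagonal---and yields a seed subpattern of type $X_n$; likewise one obtains a type $Y_{n'}$ subpattern. Thus each cluster of $\AA$ splits canonically into its block-$I_1$ part, a cluster of the $X_n$-subpattern, and its block-$I_2$ part, a cluster of the $Y_{n'}$-subpattern.

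For the cluster-variable count, every cluster variable of $\AA$ occupies a block-$I_1$ or a block-$I_2$ position, so the set of all cluster variables is the union of those of the two subpatterns, and I would check this union is disjoint. By the Laurent phenomenon relative to the initial cluster, a block-$I_1$ cluster variable is a Laurent polynomial in the initial $I_1$-variables and the frozen variables that genuinely involves an $I_1$-variable (each mutated variable $x_k'$ carries $x_k$ in its denominator), whereas a block-$I_2$ cluster variable involves only $I_2$- and frozen variables; algebraic independence of the initial extended cluster then forbids any coincidence. Summing gives the first assertion.

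For the cluster count I would exhibit a bijection between clusters of $\AA$ and pairs consisting of a cluster of the $X_n$-subpattern and a cluster of the $Y_{n'}$-subpattern. Projection to the two blocks is injective, since a block-diagonal matrix and the two half-clusters reassemble the seed; it is surjective because, given target clusters $\xx_1$ and $\xx_2$, one can reach $\xx_1$ by mutations in $I_1$-directions (which leave block $I_2$ fixed) and then $\xx_2$ by mutations in $I_2$-directions (which leave block $I_1$ fixed), using that mutations at indices in different blocks commute, as they alter disjoint entries and variables. This yields the product formula. The steps requiring the most care are exactly these two independence claims---the cross-block distinctness of cluster variables and the surjectivity of the projection---both of which rest on algebraic independence of the initial extended cluster together with the commutation of mutations across blocks; everything else is direct inspection of \eqref{eq:matrix-mutation} and the definitions.
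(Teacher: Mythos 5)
Your proof is correct and takes essentially the same approach as the paper's: both rest on the observation that block-diagonality of the exchange matrix is preserved under mutation, so that the (labeled) seeds of the decomposable pattern are exactly the pairs consisting of a seed of the type-$X_n$ pattern and a seed of the type-$Y_{n'}$ pattern, from which the sum and product formulas follow. Your write-up merely makes explicit two points the paper leaves implicit, namely the cross-block disjointness of cluster variables and the surjectivity of the projection onto pairs of clusters.
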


\begin{proof}
Suppose that $\mathcal{A}$ is of type $X_n \sqcup Y_{n'}$.  
Then (after a simultaneous permutation of rows and columns)
one of its exchange matrices $B$ is block-diagonal with blocks whose
Cartan counterparts are of types $X_n$ and $Y_{n'}$.
Then the labeled seeds of 
$\mathcal{A}$ are all 
pairs of the form 
$(\xx^1 \sqcup  \xx^{2}, B^1 \sqcup  B^{2})$,
where each $(\xx^i, B^i)$ is a labeled seed of the cluster algebra
associated to the $i$th block of $B$,
$\xx^1 \sqcup \xx^{2}$ denotes the concatenation
of the clusters $\xx^1$ and $\xx^{2}$, 
and $B^1 \sqcup  B^{2}$ denotes the block-diagonal 
matrix with blocks $B^1$ and $B^{2}$.
The statement of the lemma follows.
\end{proof}

We are now ready to complete 
the proof of the ``if'' direction of 
Theorem~\ref{th:finite-type-classification}, restated below
for the convenience of the reader.

\begin{corollary}
\label{if:main}
If the Cartan counterpart of an exchange matrix of a seed pattern
(or a cluster algebra)
is a Cartan matrix of finite type, then the
seed pattern is of finite
type.
\end{corollary}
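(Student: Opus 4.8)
The plan is to assemble the per-type finiteness results of Sections~\ref{sec:type-A}--\ref{sec:exceptional2} into the general statement, with the Cartan--Killing list providing the enumeration of cases and Lemma~\ref{lem:decomposable} handling the passage from indecomposable blocks to their disjoint unions.

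First I would reduce to the indecomposable case. Let $B=(b_{ij})$ be an exchange matrix whose Cartan counterpart $A=A(B)$ is a Cartan matrix of finite type. Since $a_{ij}=-|b_{ij}|$ for $i\neq j$ by~\eqref{eq:assoc-cartan}, the matrices $A$ and $B$ have the same off-diagonal zero pattern; consequently, a simultaneous permutation of rows and columns that puts $A$ into block-diagonal form with indecomposable blocks (equivalently, that decomposes the Dynkin diagram of $A$ into its connected components) simultaneously puts $B$ into block-diagonal form $B=B^{1}\sqcup\cdots\sqcup B^{r}$, where each $A(B^{s})$ is an indecomposable Cartan matrix of finite type.

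Next I would identify each block with a named type and invoke the corresponding finiteness theorem. By the Cartan--Killing classification (Theorem~\ref{th:Cartan-Killing-list}), each connected component of the Dynkin diagram of $A$ is one of $A_n$, $B_n$, $C_n$, $D_n$, $E_6$, $E_7$, $E_8$, $F_4$, $G_2$; hence each block $B^{s}$ is an exchange matrix of one of these types in the sense of Definition~\ref{def:type}. By Theorems~\ref{th:type-A-is-finite}, \ref{th:type-D-is-finite}, \ref{th:type-B-is-finite}, \ref{th:type-C-is-finite}, and~\ref{thm:E}, together with the treatment of types $F_4$ and $G_2$ in Section~\ref{sec:exceptional2}, any seed pattern whose exchange matrix is of one of these types has finitely many seeds, and this holds regardless of the coefficients and of the bottom part of the extended exchange matrix (cf.\ Remark~\ref{rem:finite-type-coeffs}). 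Thus each block $B^{s}$ gives rise to a seed pattern of finite type.

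Finally I would combine the blocks. Applying Lemma~\ref{lem:decomposable} inductively to $B=B^{1}\sqcup\cdots\sqcup B^{r}$, the number of clusters (equivalently, seeds) of the seed pattern determined by $B$ is the product of the numbers of clusters of the patterns determined by the individual blocks $B^{s}$; since each factor is finite, so is the product, and the pattern is of finite type. The proof is thus largely a matter of assembly: the only non-formal ingredient is the reduction to the named types, which is where the completeness of the Dynkin list (Theorem~\ref{th:Cartan-Killing-list}) enters, while the remaining steps are the block-diagonal bookkeeping of Lemma~\ref{lem:decomposable} and the already-established, coefficient-independent finiteness for each indecomposable type.
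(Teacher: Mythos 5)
Your proof is correct and follows essentially the same route as the paper: per-type finiteness (Theorems~\ref{th:type-A-is-finite}, \ref{th:type-D-is-finite}, \ref{th:type-B-is-finite}, \ref{th:type-C-is-finite}, and the results of Sections~\ref{sec:exceptional}--\ref{sec:exceptional2}) for the indecomposable blocks, combined with Lemma~\ref{lem:decomposable} for disjoint unions. The only difference is presentational: you spell out the block-diagonal reduction and the appeal to the Cartan--Killing list (Theorem~\ref{th:Cartan-Killing-list}), steps the paper's two-sentence proof leaves implicit.
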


\begin{proof}
Putting together Theorems~\ref{th:type-A-is-finite},
\ref{th:type-D-is-finite},
\ref{th:type-B-is-finite},
\ref{th:type-C-is-finite}, and 
the results of Sections~\ref{sec:exceptional}--\ref{sec:exceptional2},
we conclude that if the Cartan counterpart $A=A(B)$ of 
one of the exchange matrices $B$ associated to 
a seed pattern is an {indecomposable} Cartan matrix of 
finite type, then the seed pattern has finite type.
In the general (decomposable) case,
 the same conclusion follows from 
Lemma \ref{lem:decomposable}.
\end{proof}

\section{Enumeration of clusters and cluster variables}
\label{sec:type}

In this section we present formulas for 
the number
of cluster variables and clusters in each cluster algebra
of finite type. These enumerative invariants provide a way 
to distinguish cluster algebras of different types, leading to a 
proof of 
\cref{thm:type}.


One important property of finite type cluster algebras is that 
the underlying combinatorics does not depend on the choice of coefficient tuple.
(Conjecturally this holds for arbitrary cluster algebras,
see Section~\ref{sec:exchangegraph}.)
We have seen that in cluster algebras of type~$A_n$, seeds
are in bijection with triangulations of an $(n+3)$-gon, regardless 
of the choice of coefficient tuple.  Similarly, in cluster algebras
of type~$D_n$, seeds are in bijection with tagged triangulations
of a punctured $n$-gon, for any choice of  coefficients.
Seeds of cluster algebras of types~$C_n$ and $B_n$ are in bijection 
with folded triangulations and tagged triangulations, respectively.
For cluster algebras of exceptional types other than~$E_7$,
the exchange matrices all have full $\ZZ$-rank. 
It then follows from 
Corollary~\ref{cor:z-span} and Remark~\ref{rem:full-z-rank}
that the combinatorics of seeds is independent of the choice
of coefficients: if an exchange matrix $B(t)$ has full 
$\ZZ$-rank, and $\tilde B(t)$ is obtained from $B(t)$ by 
appending some additional rows, then the rows of $\tilde B(t)$ 
lie in the $\ZZ$-span of the rows of $B(t)$, and also 
the rows of $B(t)$ lie in the $\ZZ$-span of the rows of 
$\tilde B(t)$.  Therefore Corollary~\ref{cor:z-span}
implies that the seeds of the seed patterns associated with $B(t)$
and with $\tilde B(t)$ are in bijection with each other.

In type~$E_7$, one needs to add one additional row to the exchange matrix $B$ in 
order to obtain 
an exchange matrix $\tilde B$ of full $\ZZ$-rank.  One can then check 
(for example by computer) that the seeds  of the seed patterns associated
with $B$ and~$\tilde B$, respectively, are in bijection.
The same argument as before implies that the underlying combinatorics of 
any cluster algebra of type~$E_7$ does not depend on the choice of coefficient tuple.


\begin{proposition}
\label{pr:number-of-clusters}
Let $X_n$ be a connected Dynkin diagram.
The numbers of seeds 
and cluster variables 
in a seed pattern of type~$X_n$
are given by
the values in the corresponding column of the table in \cref{tab:cluster-numbers}.
Alternatively, let $\Phi$ be an irreducible finite crystallographic 
root system of type~$X_n$. 
Then 
\begin{align}
\label{eq:Seeds}
\Seeds(X_n) & = \prod_{i=1}^n \frac{e_i+h+1}{e_i+1}, \\
\label{eq:Vars}
\Vars(X_n) & = \frac{n(h+2)}{2},  
\end{align}
where $e_1,\dots,e_n$ are the \emph{exponents}
of~$\Phi$, 
and $h$ is the corresponding \emph{Coxeter number}.  
\end{proposition}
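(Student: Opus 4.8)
The plan is to prove the proposition by a case-by-case verification across the nine connected Dynkin types, splitting the work into two independent tasks: first to pin down the integers $\Seeds(X_n)$ and $\Vars(X_n)$ for each type, and then to check that these integers agree with the closed formulas \eqref{eq:Seeds} and \eqref{eq:Vars}. Before either task, I would invoke the discussion immediately preceding the proposition to guarantee that these counts are well defined, i.e., independent of the coefficient tuple; this rests on Corollary~\ref{cor:z-span} and Remark~\ref{rem:full-z-rank}, with type~$E_7$ requiring the separate argument (adjoining one extra row to reach full $\ZZ$-rank) already recorded above.

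For the first task, most of the counts are in hand. For type~$A_n$, Corollary~\ref{cor:An-structure} gives $\frac{n(n+3)}{2}$ cluster variables and the Catalan number $\frac{1}{n+2}\binom{2n+2}{n+1}$ of seeds; for type~$D_n$, Corollary~\ref{cor:Dn-structure} gives $n^2$ cluster variables and $\frac{3n-2}{n}\binom{2n-2}{n-1}$ seeds; for types~$B_n$ and~$C_n$, Exercise~\ref{ex:BC} gives $n^2+n$ cluster variables and $\binom{2n}{n}$ seeds. The rank-two analysis of Section~\ref{sec:finite-type-rank2} supplies the type~$G_2$ data ($8$ cluster variables and $8$ seeds). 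The exceptional types are handled by the computer-assisted enumeration of Section~\ref{sec:exceptional}: the run described there for type~$E_8$ yields $128$ cluster variables and $25080$ seeds, and analogous searches produce the counts for $E_6$, $E_7$, and $F_4$. These numbers populate Table~\ref{tab:cluster-numbers}.

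For the second task, I would substitute the standard exponent and Coxeter-number data for each type into \eqref{eq:Seeds} and \eqref{eq:Vars} and confirm the match. The identity $\Vars(X_n)=\frac{n(h+2)}{2}$ is immediate in every case once $h$ is known (for instance $h=n+1$ for $A_n$, $h=2n-2$ for $D_n$, $h=2n$ for $B_n,C_n$, and $h=30$ for $E_8$, whence $\Vars(E_8)=128$). For $\Seeds$, the classical types reduce to telescoping products: with exponents $1,\dots,n$ one recovers the Catalan number for $A_n$; with exponents $1,3,\dots,2n-1$ and $h=2n$ the product $\prod_{i=1}^n\frac{2i+2n}{2i}$ collapses to $\binom{2n}{n}$ for $B_n$ and $C_n$; and for $D_n$ the exponent $n-1$ contributes the factor $\frac{3n-2}{n}$ while the remaining odd exponents contribute $\binom{2n-2}{n-1}$. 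For the exceptional types the verification is a finite arithmetic check, e.g.\ $\frac{32}{2}\cdot\frac{38}{8}\cdots\frac{60}{30}=25080$ for $E_8$.

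The main obstacle is conceptual rather than technical: within the framework of this book there is no uniform derivation of the product formula \eqref{eq:Seeds}, since the route through the $f$- and $h$-vectors of the generalized associahedra is precisely the one we have chosen to avoid. Consequently the argument cannot be reduced to a single structural statement and must instead lean on the explicit combinatorial models for the classical types together with the machine computation for the exceptional ones. The only genuine bookkeeping lies in the $D_n$ telescoping and in confirming that each exceptional count coincides exactly with the corresponding product of the shifted exponents $\frac{e_i+h+1}{e_i+1}$.
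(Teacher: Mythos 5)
Your proposal is correct and follows essentially the same route as the paper: a case-by-case verification of the table (types $A_n$, $B_n$/$C_n$, $D_n$ via Corollary~\ref{cor:An-structure}, Exercise~\ref{ex:BC}, and Corollary~\ref{cor:Dn-structure}, exceptional types via the computations of Section~\ref{sec:exceptional}), followed by a direct arithmetic check that \eqref{eq:Seeds}--\eqref{eq:Vars} reproduce those values from the standard exponent and Coxeter-number data. Your added details (the coefficient-independence preamble and the telescoping-product verifications) merely spell out steps the paper treats as straightforward.
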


\begin{figure}[ht]
\begin{center}
\begin{tabular}{|c|c|c|c|c|c|c|c|c|c|}
\hline
$X_n$ & $A_n$ & $B_n,C_n$ & $D_n$ & $E_6$ & $E_7$ & $E_8$ & $F_4$  & $G_2$ \\
\hline
&&&&&&&&\\[-.1in]
$\!\Seeds (X_n)\!$ & 
$\!\frac{1}{n+2} \binom{2n+2}{n+1}\!$
& $\binom{2n}{n}$ 
&   $\!\frac{3n-2}{n}\binom{2n-2}{n-1}\!$ 
&
\!833\!
&
\!4160\!
&
\!25080\!
&
\!105\!
& $\!8\!$ \\[.05in]
\hline
$\!\Vars (X_n)\!$ & $\frac{n(n+3)}{2}$ & $\!n(n+1)\!$ & $n^2$ & $42$ & $70$ & $128$ & $28$ & $\!8\!$  \\
\hline
\end{tabular}
\end{center}
\medskip
\caption{Enumeration of seeds and cluster variables}
\label{tab:cluster-numbers}
\end{figure}

\vspace{-.1in}

\begin{proof}
The values in \cref{tab:cluster-numbers} can be verified case by case. 
The types  $A_n$, $B_n$/$C_n$, and $D_n$ were 
worked out in \cref{cor:An-structure}, \cref{ex:BC}, and 
\cref{cor:Dn-structure}, respectively.
Exceptional types can be handled using  
the software packages discussed in 
\cref{sec:exceptional}. 
It is then straightforward to check that the formulas \eqref{eq:Seeds}--\eqref{eq:Vars}
match the values shown in Figure~\ref{tab:cluster-numbers}.
(See, e.g., \cite{bourbaki, pcmi} for the values of 
the exponents and Coxeter numbers for all types.) 
\end{proof}

\begin{remark}
Since the number 
$\frac{1}{n+2} \binom{2n+2}{n+1}$
of seeds in type $A_n$ is a Catalan number,
the numbers in \eqref{eq:Seeds}  can be regarded as generalizations of the
Catalan numbers to arbitrary Dynkin diagrams.
\end{remark}

\begin{remark}
The number of cluster variables is alternatively given by
\[
\Vars(X_n) = \textstyle\frac{\#\roots(X_n)}{2}+n,
\]
where $\#\roots(X_n)$ denotes the number of roots in the root system~$\Phi$ of type~$X_n$.
Thus cluster variables are equinumerous to the roots which are either positive
or negative simple (i.e., the negatives of simple roots).
A~natural labeling of the cluster variables by these ``almost positive'' roots 
was described and studied in~\cite{yga}. 
\end{remark}

The reader is referred to~\cite{pcmi} for a~detailed discussion of cluster combinatorics of finite type, 
and further references. 

Recall from Lemma \ref{lem:decomposable} that if $\mathcal{A}$ is 
a cluster algebra of 
decomposable type $X_n\sqcup Y_{n'}$,
then the number of cluster variables is the sum of those 
for the cluster algebras of types $X_n$ and~$Y_{n'}$,
and the number of seeds is the product. 
Therefore Proposition~\ref{pr:number-of-clusters} 
allows us to compute the number of cluster variables and seeds
for any cluster algebra of finite type.

\begin{proof}[Proof of \cref{thm:type}]
The implication (1)$\Rightarrow$(2) is easy to establish. 
Suppose that the Cartan counterparts $A(B')$ and $A(B'')$ 
are Cartan matrices of the same finite type.  
In the case of simply laced types $ADE$, this means that the corresponding quivers
are (possibly different) orientations of isomorphic Dynkin diagrams. 
By  Exercise~\ref{ex:orientations-of-a-tree}, these two quivers are related to each other
by a sequence of mutations at sources and sinks, 
and consequently $B'$ and $B''$ are mutation equivalent.
The remaining cases $BCFG$ are treated in a similar fashion,
using an appropriate analogue of Exercise~\ref{ex:orientations-of-a-tree}. 


Let us prove the implication (2)$\Rightarrow$(1). 
We first observe that it suffices to establish this result in the indecomposable case,
since mutations transform the connected components of a quiver 
(or their analogues for skew-symmetrizable matrices) 
independently of each other. 

Let $B'$ and $B''$ be mutation equivalent exchange matrices 
of types $X'$ and~$X''$, respectively, 
where $X'$ and $X''$ are connected Dynkin diagrams.
We need to show that $X'$ and $X''$ are of the same type. 
This is done as follows. 
By Proposition~\ref{pr:number-of-clusters}, 
the number of cluster variables in a seed pattern associated with such an exchange matrix 
is uniquely determined by the type of the corresponding Dynkin diagram
(i.e., it does not depend on the choice of coefficient tuple). 
Moreover no two connected
Dynkin types of the same rank
produce the same number of cluster variables---with the exception of the pairs $(B_n,C_n)$.
For $B_n$ versus $C_n$, the claim follows from
Proposition~\ref{pr:Bn-vs-Cn}. 
\end{proof}

We conclude this section by an elementary observation 
that shows that the problem of enumerating cluster variables 
does not make sense outside of finite type. 

\begin{proposition}
\label{cor:finitely-many-seeds}
A seed pattern is of finite type if and only if
it has finitely many cluster variables. 
\end{proposition}

\begin{proof}
One direction is obvious:  
if a seed pattern has finitely many distinct seeds, then it has 
finitely many cluster variables.  Conversely, if a seed pattern
has finitely many cluster variables, then the only way it could
possibly have infinitely many distinct seeds is if there
were infinitely many distinct extended exchange matrices.  
By the Pigeonhole principle, one cluster would have to appear with 
infinitely many different extended exchange matrices $\tilde B$, which implies
that one of its cluster variables $x_j$ would appear in infinitely
many exchange relations, leading to infinitely many different $x'_j$'s.
\end{proof}

\pagebreak[3]

\section{
$2$-finite exchange matrices}
\label{sec:2-finite}

In this section, we complete the proof of
Theorem~\ref{th:finite-type-classification}, closely following~\cite{ca2}. 
The notion of a $2$-finite (skew-symmetrizable) matrix
introduced in Definition~\ref{def:2-finite} plays a key role. 

We establish Theorem~\ref{th:finite-type-classification} by including it 
in the following statement.  

\begin{theorem}
\label{th:finite-type-bound-by-3}
For a seed $\Sigma=(\xx, \yy, B)$,
the following are equivalent:
\begin{enumerate}[leftmargin=.3in]
\item[{\rm (1)}]
there exists a matrix $B'$ mutation equivalent to~$B$ such that
its Cartan counterpart $A(B')$ is a Cartan matrix of finite type;
\item[{\rm (2)}]
the seed pattern (or the cluster algebra) defined by~$\Sigma$ is of
finite~type; 
\item[{\rm (3)}]
the exchange matrix $B$ is $2$-finite.
\end{enumerate}
%
\end{theorem}


The implication $\boxed{(1) \Rightarrow(2)}$ in Theorem~\ref{th:finite-type-bound-by-3} 
is nothing but \cref{if:main}.
To the best of our knowledge, the only known proof of the reverse
implication $(2) \Rightarrow (1)$ 
goes through property~(3). 

The implication $\boxed{(2) \Rightarrow (3)}$ 
is precisely Corollary~\ref{cor:finite-type-2finite}. 

All that remains in order to complete the proof of
Theorem~\ref{th:finite-type-bound-by-3} 
(hence Theorem~\ref{th:finite-type-classification})
is to prove the implication
$\boxed{(3)\Rightarrow(1)}$. 
We reformulate the latter below as a standalone statement. 



\begin{proposition}
\label{pr:2-fin-type-class}
Let $B=(b_{ij})$ be a $2$-finite
skew-symmetrizable integer matrix. 
Then there exists a matrix $B'$ mutation equivalent to~$B$ such that
its Cartan counterpart $A(B')$ is a Cartan matrix of finite type.
\end{proposition}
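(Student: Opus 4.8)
The plan is to prove Proposition~\ref{pr:2-fin-type-class} by structural induction on the size of the matrix~$B$, reducing the global statement about $2$-finiteness to a finite, local analysis. The key idea is that $2$-finiteness is a hereditary property in the sense of \cref{def:hereditary}: if $B$ is $2$-finite, then so is every principal submatrix obtained by restriction (this follows immediately from \cref{ex:mut-local}, since restriction commutes with mutation, so any mutation-equivalent matrix of a submatrix arises as a submatrix of a mutation-equivalent matrix of~$B$). This lets me assume, in the inductive step, that every proper submatrix of~$B$ is already mutation-equivalent to a Cartan matrix of finite type, and I only need to pin down the global type.

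First I would reformulate the problem in terms of diagrams. Following the approach of~\cite{ca2}, I associate to each skew-symmetrizable matrix its diagram (cf.\ \cref{def:diagramofB}), an edge-weighted graph recording the products $b_{ij}b_{ji}$, and I track how mutations act on diagrams via \cref{pr:diagram-mutation}. The $2$-finiteness hypothesis means that for every diagram in the mutation class, each edge weight lies in $\{1,2,3\}$. The goal becomes: show that some diagram in the mutation class is an orientation of a Dynkin diagram of finite type. Using the inductive hypothesis, I may assume that every proper ``subdiagram'' (obtained by deleting a vertex) is mutation-equivalent to a disjoint union of finite-type Dynkin diagrams.

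The heart of the argument is a case analysis classifying which diagrams can be $2$-finite, carried out in increasing rank. In rank~$2$ this is exactly Theorem~\ref{th:finite-type-rank2} and its consequence Corollary~\ref{cor:finite-type-2finite}, giving the bound $bc\le 3$. For the inductive step I would establish a list of forbidden configurations---small diagrams (of rank~$3$, and a few of rank~$4$) that are \emph{not} $2$-finite---and show that any $2$-finite diagram avoiding all of them, all of whose proper subdiagrams are of finite type, must itself be mutation-equivalent to a finite-type Dynkin diagram. Concretely, one shows that a minimal non-finite-type $2$-finite diagram cannot exist: by minimality every proper subdiagram is finite type, which severely constrains the shape (degrees of vertices, possible cycles, edge weights), and then a direct mutation search exhausts the finitely many surviving candidates and derives a contradiction with $2$-finiteness.

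The main obstacle, and the most delicate part, is the combinatorial bookkeeping in this minimality argument: showing that a hypothetical minimal counterexample has bounded valence and contains no problematic cycles, so that only finitely many diagrams need to be checked by hand (or by explicitly tracking a short mutation sequence that produces an edge weight exceeding~$3$). This requires care because mutation can create edges between previously unconnected vertices and can alter weights, so controlling the diagram under a single mutation---while ensuring admissibility of the subdiagram hypotheses---is where the technical weight of the proof lies. Once the forbidden rank~$3$ and rank~$4$ subdiagrams are identified and excluded, the remaining positivity/acyclicity constraints force the diagram into the Dynkin list of Figure~\ref{diagrams}, completing the induction.
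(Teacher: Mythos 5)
Your overall skeleton—passing to diagrams, using the hereditary nature of $2$-finiteness, and inducting on the number of vertices—is exactly the paper's strategy, but your inductive step contains a genuine gap. You claim that the forbidden configurations can be taken to be a finite list of small diagrams (rank~$3$ and a few of rank~$4$), and that after excluding them "a direct mutation search exhausts the finitely many surviving candidates." Both claims fail: the minimal $2$-infinite diagrams include \emph{infinite families}. A non-cyclically-oriented cycle of any length $n$ is $2$-infinite, yet every proper subdiagram of it is a union of type-$A$ paths; likewise the extended Dynkin tree diagrams $B_n^{(1)}$, $C_n^{(1)}$, $D_n^{(1)}$ of Figure~\ref{fig:extended-dynkin-diagrams} exist in every rank and have the property that all proper subdiagrams are finite-type Dynkin. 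Such candidates are invisible to any fixed catalog of rank-$3$/rank-$4$ obstructions, and they survive your screening at every rank, so the "surviving candidates" form infinite families rather than a finite set. (Compare the remark in Section~\ref{sec:quasi-cartan}: the complete list of minimal $2$-infinite diagrams includes $10$ infinite series.) A per-rank exhaustive mutation search is possible for each fixed $n$, but an induction over all $n$ cannot be closed by infinitely many finite checks; what is needed is an argument uniform in the rank.

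This uniformity is precisely where the paper invests its technical effort, and it is what your proposal is missing. The paper first proves Proposition~\ref{pr:trees-Dynkin-vs-infty} (every $2$-finite tree diagram is an orientation of a Dynkin diagram) by showing each extended Dynkin tree diagram is $2$-infinite; for the three infinite series this is an induction in which one mutates at the second vertex and deletes it, reducing $X_n^{(1)}$ to $X_{n-1}^{(1)}$, and the $E$-types are handled via the equivalence $S_{p,q,r}^s \sim T_{p+r-1,q,s}$ of Exercise~\ref{exercise:crown}. Second, it classifies $2$-finite cycle diagrams (Exercise~\ref{exercise:cycles-Dynkin-vs-infty}): they must be cyclically oriented with weights all~$1$, or $(2,2,1)$, or $(2,1,2,1)$. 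Finally, in the induction on vertices, rather than a brute search it picks $v$ with $\Gamma-\{v\}$ connected, replaces $\Gamma-\{v\}$ by a \emph{convenient representative} of its mutation class (a linearly ordered path for types $A,B,C,F,G$; an oriented cycle for type $D$; the diagram $S_{1,2,1}^{n-4}$ for type $E$), and analyzes the finitely many ways $v$ can attach—an analysis whose case structure does not grow with $n$. If you want to repair your proof, you would need to supply these three ingredients (or equivalents): without uniform treatment of the affine tree families and of arbitrarily long cycles, the minimal-counterexample argument cannot terminate.
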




In the rest of this section, we outline the proof of
Proposition~\ref{pr:2-fin-type-class} given in \cite[Sections~7--8]{ca2}. 
The proof is purely combinatorial and rather technical. 
The missing details (all of them relatively minor)
can be found in \emph{loc.\ cit.}

\pagebreak[3]

The proof of
Proposition~\ref{pr:2-fin-type-class} makes heavy use of the notion of 
diagram from Definition \ref{def:diagramofB}.  Recall from 
Proposition~\ref{pr:diagram-mutation} that mutation is well-defined for 
diagrams, and we write $\Gamma \sim \Gamma'$ to denote that 
diagrams $\Gamma$ and $\Gamma'$ are mutation equivalent.

A diagram $\Gamma$ is called
\emph{$2$-finite} if every diagram $\Gamma'\sim\Gamma$
has all edge weights equal to $1,2$ or~$3$;
otherwise we refer to $\Gamma$ as \emph{$2$-infinite.}
Thus a matrix $B$ is $2$-finite if and only if its diagram $\Gamma(B)$ is $2$-finite.
Note that a diagram is $2$-finite if and only if so are all its
connected components.

We now restate Proposition~\ref{pr:2-fin-type-class}
in the language of diagrams. 

\begin{proposition}
\label{pr:diagrams-fin-CK}
Any  $2$-finite diagram is mutation equivalent to
an orientation of a Dynkin diagram (where the weight of each edge
is understood as its multiplicity in the corresponding
Dynkin diagram).
\end{proposition}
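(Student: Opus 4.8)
The plan is to recast the proposition as a forbidden-subdiagram classification and then read off the Dynkin shapes. First I would reduce to the case where $\Gamma$ is connected: 2-finiteness of a diagram is equivalent to 2-finiteness of each connected component, the components can be mutated independently of one another, and (in the conventions of \cref{diagrams}, where a Dynkin diagram need not be connected) a disjoint union of orientations of Dynkin diagrams is again such an orientation. Next I would record that 2-finiteness is a \emph{hereditary} property of diagrams: since restriction commutes with mutation (\cref{ex:mut-local}, transported to diagrams through \cref{pr:diagram-mutation}), any mutation sequence applied to an induced subdiagram $\Delta\subset\Gamma$ lifts to the same sequence on $\Gamma$, and the edge weights occurring in $\Delta$ at each stage form a subset of those occurring in $\Gamma$; hence if $\Gamma$ is 2-finite, so is every $\Delta$. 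Consequently $\Gamma$ is 2-infinite if and only if some diagram mutation equivalent to $\Gamma$ contains, as an induced subdiagram, a \emph{minimal 2-infinite} diagram --- one that is 2-infinite while all its proper induced subdiagrams are 2-finite. So the whole proposition hinges on understanding the list $\mathcal{L}$ of minimal 2-infinite diagrams.

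The core of the argument is the classification of $\mathcal{L}$, which I would carry out by induction on the number of vertices, using the base cases already available. On two vertices, \cref{th:finite-type-rank2} shows the 2-infinite diagrams are exactly the single edges of weight $\geq 4$, each of which is minimal. On three vertices one analyzes triangles and paths directly, tracking how a single mutation alters the weights and orientation of a chordless cycle; this yields the small minimal 2-infinite diagrams. For larger $n$, minimality forces every proper induced subdiagram to be 2-finite, hence (by the inductive hypothesis) already a Dynkin orientation; a structural analysis of how such pieces can be glued, again controlled by following mutations of chordless cycles, shows that the only new minimal 2-infinite diagrams are certain orientations of the affine (extended) Dynkin diagrams $\tilde A,\tilde B,\dots,\tilde G$ together with a finite list of exceptional configurations. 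The outcome is an explicit description of $\mathcal{L}$.

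With $\mathcal{L}$ in hand I would conclude as follows. Let $\Gamma$ be a connected 2-finite diagram; by heredity, no diagram mutation equivalent to $\Gamma$ contains a member of $\mathcal{L}$ as an induced subdiagram. Arguing by induction on the number of vertices, delete a vertex $v$: the remaining diagram is 2-finite, so its components are Dynkin orientations by the inductive hypothesis. Reintroducing $v$, the requirement that no affine member of $\mathcal{L}$ appear as an induced subdiagram (after normalizing mutations at sources and sinks, as in the tree case) restricts the way $v$ can attach to precisely the extensions that keep the diagram of finite Dynkin shape, while the exceptional members of $\mathcal{L}$ exclude the remaining pathological gluings. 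The edge weights are then in $\{1,2,3\}$ and the shape is a Dynkin diagram, so reading a weight $w$ as edge multiplicity $w$ (cf.\ \cref{diagrams}) identifies $\Gamma$ with a Dynkin orientation. I emphasize that this is a self-contained combinatorial classification that re-derives the finite-type shapes and does not invoke the Cartan--Killing theorem.

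The main obstacle is the classification of minimal 2-infinite diagrams in the second step. The difficulty is twofold. First, 2-finiteness is a \emph{global} condition on the entire mutation class, so verifying that a candidate diagram is 2-infinite requires exhibiting a mutation sequence that drives some edge weight to $\geq 4$, whereas verifying that each of its proper subdiagrams is 2-finite requires controlling \emph{all} mutation sequences on them; the bookkeeping of orientations together with weights across mutations is delicate. Second, the case analysis branches substantially once cycles with mixed orientations and weighted edges are allowed, and one must certify that no sporadic minimal 2-infinite diagram has been overlooked. This is exactly the technical heart of \cite[Sections~7--8]{ca2}, and I would follow that treatment, deferring the lengthy but elementary verifications there.
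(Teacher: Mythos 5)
Your skeleton (heredity of $2$-finiteness, forbidden subdiagrams, induction on the number of vertices) resembles the paper's, but two of your steps have genuine gaps. The first is your core step: the classification of \emph{all} minimal $2$-infinite diagrams, which you defer to \cite[Sections~7--8]{ca2}. That classification is not in \emph{loc.\ cit.}; it is the content of \cite{seven}, and it is far heavier than you suggest: as noted in Section~\ref{sec:quasi-cartan}, the list comprises $10$ infinite series plus a large number of exceptional diagrams on at most $9$ vertices, not merely ``orientations of affine diagrams together with a finite list of exceptional configurations.'' The paper's proof deliberately avoids ever producing such a list. Its strategy is to catalogue only \emph{enough} $2$-infinite diagrams --- the extended Dynkin tree diagrams (Proposition~\ref{pr:trees-Dynkin-vs-infty}, Figure~\ref{fig:extended-dynkin-diagrams}) and the non-$2$-finite cycles (Exercise~\ref{exercise:cycles-Dynkin-vs-infty}) --- and these suffice because of how the induction is organized.

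The second gap is in your closing induction step, which is incorrect as stated. The inductive hypothesis gives that $\Gamma-\{v\}$ is \emph{mutation equivalent} to a Dynkin orientation, not that it is one, and normalizing ``at sources and sinks'' bridges that gap only for trees. Worse, the conclusion you aim for --- that $\Gamma$ itself has Dynkin shape --- is false: a cyclically oriented $n$-cycle with unit edge weights is $2$-finite (it is mutation equivalent to $D_n$), so by heredity neither it nor any diagram in its mutation class contains a $2$-infinite subdiagram, yet it is not an orientation of any Dynkin diagram; the same goes for the diagrams $S^s_{p,q,r}$. The paper resolves exactly this difficulty by using heredity to replace $\Gamma-\{v\}$ with a convenient, generally \emph{non-tree} representative of its mutation class (an oriented cycle for type $D_n$, the diagram $S^{n-4}_{1,2,1}$ for $E_n$), transporting those mutations to $\Gamma$; the attachment analysis then concludes only that $\Gamma$ is mutation equivalent to a Dynkin orientation, sometimes after further mutations (e.g.\ $\mu_v(\Gamma)\sim B_{n+1}$ in Case~1). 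Without this choice of representatives, your analysis of how $v$ attaches to a Dynkin tree would in effect require the full list of minimal $2$-infinite diagrams, circling back to the first gap.
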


We note that all orientations of a given Dynkin diagram 
are mutation equivalent to each other, 
as they are related by
source-or-sink mutations as in the proof of
Theorem~\ref{thm:type}.
This is true more generally for any diagram whose underlying graph is
a tree. 

A~\emph{subdiagram} of a diagram $\Gamma$ is a diagram $\Gamma'\subset\Gamma$
obtained by taking an induced directed subgraph of $\Gamma$ and 
keeping all its edge weights intact.

The proof of Proposition~\ref{pr:diagrams-fin-CK} 
repeatedly makes use of the following obvious property:
any subdiagram of a $2$-finite diagram is $2$-finite.
Equivalently, any diagram that has a $2$-infinite subdiagram is $2$-infinite.
Thus, in order to show that a given diagram is
$2$-infinite, it suffices to exhibit a sequence of mutations that
creates an edge of weight~$4$ or larger, or a subdiagram which
is already known to be $2$-infinite.
The strategy is to catalogue enough $2$-infinite subdiagrams 
to be able to show that any diagram
avoiding them has to be mutation equivalent to
an orientation of a Dynkin diagram.

We first examine two special classes of diagrams, those whose underlying graphs are
trees or cycles, respectively.
We refer to them as \emph{tree diagrams} and
\emph{cycle diagrams}, respectively.


\begin{proposition}
\label{pr:trees-Dynkin-vs-infty}
Any $2$-finite tree diagram is an orientation of a connected Dynkin diagram.
\end{proposition}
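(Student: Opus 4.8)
The plan is to reduce the statement to a finite catalogue of ``forbidden'' tree diagrams that can be checked to be $2$-infinite directly, and then argue combinatorially that any tree avoiding them lies on the Dynkin list; this keeps the argument self-contained and independent of the Cartan--Killing classification, which the text does not assume.

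First I would normalize the input. If $\Gamma$ carries an edge of weight $\ge 4$, its two endpoints form a subdiagram that is $2$-infinite by definition, and since a subdiagram of a $2$-finite diagram is $2$-finite this is impossible; hence every edge weight lies in $\{1,2,3\}$. Next, because the underlying graph of $\Gamma$ is a tree, source-and-sink mutations reorient the edges at a chosen vertex without altering the underlying weighted tree, and all orientations of a fixed tree are mutation equivalent (as recorded just above the proposition). Consequently $2$-finiteness depends only on the unoriented weighted tree $T$, so it suffices to show that if $T$ is not (the diagram of) one of the Dynkin diagrams of \cref{diagrams}, then $T$ is $2$-infinite; fixing any orientation at the end then exhibits $\Gamma$ as an orientation of a connected Dynkin diagram.

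The core of the plan is a list of minimal $2$-infinite tree diagrams together with a structural dichotomy. I would verify, via explicit mutation sequences that manufacture an edge of weight $\ge 4$, that each extended (affine) Dynkin tree $\tilde D_n$, $\tilde E_6$, $\tilde E_7$, $\tilde E_8$, $\tilde B_n$, $\tilde C_n$, $\tilde F_4$, $\tilde G_2$ is $2$-infinite; the cyclic diagram $\tilde A_n$ is not a tree and belongs to the companion cycle analysis. I would then show, by a direct examination of weighted trees with labels in $\{1,2,3\}$, that a connected such tree containing none of these as a full subdiagram must be one of the Dynkin diagrams: a vertex of degree $\ge 4$ is barred by $\tilde D_4$, two trivalent vertices by $\tilde D_n$, overly long branches at a single trivalent vertex by $\tilde E_{6},\tilde E_7,\tilde E_8$, and the remaining constraints on the placement and length of weight-$2$ and weight-$3$ edges by $\tilde B_n$, $\tilde C_n$, $\tilde F_4$, $\tilde G_2$. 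Combining these, a non-Dynkin $T$ must contain one of the forbidden affine trees as a subdiagram, hence is $2$-infinite, contradicting the hypothesis.

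The main obstacle, I expect, is the explicit $2$-infiniteness of the infinite families $\tilde B_n$, $\tilde C_n$, $\tilde D_n$: a single computation does not suffice, so I would induct on $n$, choosing a mutation near the branching end that either directly produces a weight-$\ge 4$ edge or returns a shorter member of the same family (or a diagram already shown $2$-infinite), bottoming out at small base cases such as $\tilde D_4$ that can be checked by hand or with the software of \cref{sec:exceptional}. The finitely many exceptional affine trees are bounded computations, and the concluding ``avoids all forbidden subtrees $\Rightarrow$ Dynkin'' step is routine but bookkeeping-heavy; the technical weight of the proof sits in these mutation verifications.
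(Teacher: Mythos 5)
Your proposal is correct and follows essentially the same route as the paper: both reduce the statement to showing that the extended (affine) Dynkin tree diagrams are the minimal forbidden subdiagrams, prove their $2$-infiniteness by induction along the infinite families $B_n^{(1)}, C_n^{(1)}, D_n^{(1)}$ (mutate near the branch end, pass to a subdiagram of the same family one size smaller), and conclude by the combinatorial fact that any weighted tree avoiding these is on the Dynkin list. The only cosmetic difference is that you treat $E_6^{(1)}, E_7^{(1)}, E_8^{(1)}$ by bounded direct computation, whereas the paper uses the mutation equivalence $S^s_{p,q,r}\sim T_{p+r-1,q,s}$ to reduce each to a previously handled case---an alternative the paper itself notes is viable ``with or without a computer.''
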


For the proof we consider a class of diagrams defined as follows.
A diagram $\Gamma$ is called an \emph{extended Dynkin tree diagram} if
\begin{itemize}[leftmargin=.2in]
\item
$\Gamma$ is a tree diagram with edge weights $\leq 3$;
\item
$\Gamma$ is not on the Dynkin diagram list;
\item
any proper subdiagram of $\Gamma$ is a Dynkin diagram (possibly disconnected).
\end{itemize}
To prove the proposition, it is enough to show that any orientation
of any extended
Dynkin tree diagram is $2$-infinite.
Direct inspection shows that Figure~\ref{fig:extended-dynkin-diagrams}
provides a complete list of such diagrams (as discussed above, the
choice of an orientation for a tree diagram is immaterial).
We note that all these diagrams are associated with untwisted affine Lie algebras
and can be found in the tables in \cite{bourbaki} or in
\cite[Chapter 4, Table Aff~1]{kac}.
The only diagram from those tables that is missing in
Figure~\ref{fig:extended-dynkin-diagrams} is $A_n^{(1)}$, which is
an $(n+1)$-cycle; it will appear later in our discussion of
cycle diagrams.

\begin{figure}[ht]
\vspace{-.1in}
\[
\begin{array}{ccl}
B_n^{(1)}
&&
\setlength{\unitlength}{1.5pt}
\begin{picture}(170,15)(0,-2)
\put(20,0){\line(1,0){120}}
\put(0,10){\line(2,-1){20}}
\put(0,-10){\line(2,1){20}}
\multiput(20,0)(20,0){7}{\circle*{2}}
\put(0,10){\circle*{2}}
\put(0,-10){\circle*{2}}
\put(130,4){\makebox(0,0){$2$}}
\put(165,0){\makebox(0,0){($n\ge 3$)}}
\end{picture}
\\[.2in]
C_n^{(1)}
&&
\setlength{\unitlength}{1.5pt}
\begin{picture}(140,17)(0,-2)
\put(0,0){\line(1,0){140}}
\multiput(0,0)(20,0){8}{\circle*{2}}
\put(10,4){\makebox(0,0){$2$}}
\put(130,4){\makebox(0,0){$2$}}
\put(165,0){\makebox(0,0){($n\ge 2$)}}
\end{picture}
\\[.1in]
D_n^{(1)}
&&
\setlength{\unitlength}{1.5pt}
\begin{picture}(140,17)(0,-2)
\put(20,0){\line(1,0){100}}
\put(0,10){\line(2,-1){20}}
\put(0,-10){\line(2,1){20}}
\put(120,0){\line(2,-1){20}}
\put(120,0){\line(2,1){20}}
\multiput(20,0)(20,0){6}{\circle*{2}}
\put(0,10){\circle*{2}}
\put(0,-10){\circle*{2}}
\put(140,10){\circle*{2}}
\put(140,-10){\circle*{2}}
\put(165,0){\makebox(0,0){($n\ge 4$)}}
\end{picture}
\\[.1in]
E_6^{(1)}
&&
\setlength{\unitlength}{1.5pt}
\begin{picture}(140,17)(0,-2)
\put(0,0){\line(1,0){80}}
\put(40,0){\line(0,-1){40}}
\put(40,-20){\circle*{2}}
\put(40,-40){\circle*{2}}
\multiput(0,0)(20,0){5}{\circle*{2}}
\end{picture}
\\[.7in]
E_7^{(1)}
&&
\setlength{\unitlength}{1.5pt}
\begin{picture}(140,17)(0,-2)
\put(0,0){\line(1,0){120}}
\put(60,0){\line(0,-1){20}}
\put(60,-20){\circle*{2}}
\multiput(0,0)(20,0){7}{\circle*{2}}
\end{picture}
\\[.25in]
E_8^{(1)}
&&
\setlength{\unitlength}{1.5pt}
\begin{picture}(140,17)(0,-2)
\put(0,0){\line(1,0){140}}
\put(40,0){\line(0,-1){20}}
\put(40,-20){\circle*{2}}
\multiput(0,0)(20,0){8}{\circle*{2}}
\end{picture}
\\[.3in]
F_4^{(1)}
&&
\setlength{\unitlength}{1.5pt}
\begin{picture}(140,17)(0,-2)
\put(0,0){\line(1,0){80}}
\multiput(0,0)(20,0){5}{\circle*{2}}
\put(30,4){\makebox(0,0){$2$}}
\end{picture}
\\[.1in]
G_2^{(1)}
&&
\setlength{\unitlength}{1.5pt}
\begin{picture}(140,17)(0,-2)
\put(0,0){\line(1,0){40}}
\multiput(0,0)(20,0){3}{\circle*{2}}
\put(10,4){\makebox(0,0){$3$}}
\put(30,4){\makebox(0,0){$a$}}
\put(75,0){\makebox(0,0){($a\in\{1,2,3\}$)}}
\end{picture}
\end{array}
\]
\vspace{-.1in}
\caption{Extended Dynkin tree diagrams. 
Each tree $X_n^{(1)}$ has $n+1$ vertices.
All unspecified edge weights are equal to~$1$. 
}
\label{fig:extended-dynkin-diagrams}
\end{figure}
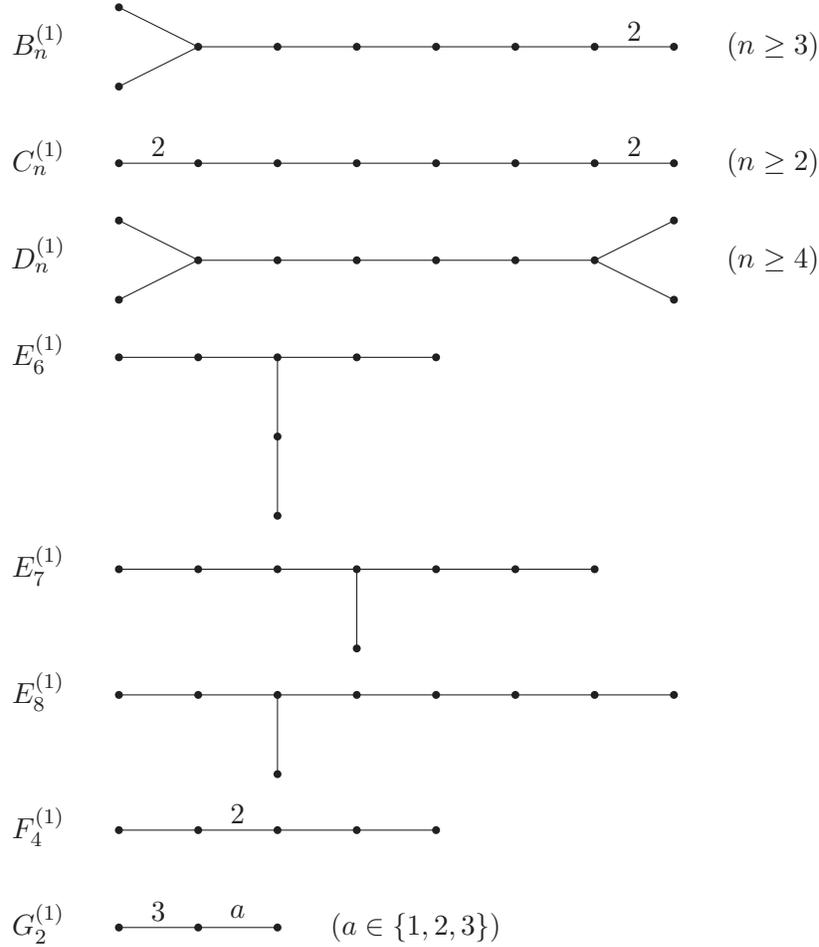

\pagebreak[3]

In showing that an extended Dynkin tree diagram is
$2$-infinite, we can arbitrarily choose its orientation. 
We start with the three infinite series $B_n^{(1)}, C_n^{(1)}$,
and $D_n^{(1)}$, each time orienting all the edges
left to right.
Let us denote the diagram in question by $X_n^{(1)}$,
and let $n_\circ$ be the minimal value~of~$n$. 
So if $X = D$ (resp., $B$,~$C$), then $n_\circ$ equals~$4$ 
(resp., $3$,~$2$).
If $n>n_\circ$, then
mutating at the second vertex from the left, and
subsequently removing this vertex (together with all incident~edges)
leaves us with a subdiagram of type~$X_{n-1}^{(1)}$.
Using induction on~$n$, it suffices to check the base cases
$D_4^{(1)}, B_3^{(1)}$ and~$C_2^{(1)}$.
It is not hard to check that each of these three diagrams is $2$-infinite. 
The same applies to extended Dynkin trees of types $F_4^{(1)}$
and~$G_2^{(1)}$. 

The remaining three cases $E_6^{(1)}, E_7^{(1)}$ and $E_8^{(1)}$
can be treated in a similar manner (with or without a computer)
but we prefer another approach.
To describe it, we will need to introduce some notation.

\pagebreak[3]

\begin{definition}
\label{def:Tabc}
For $p,q,r\in\ZZ_{\geq 0}$, we denote by
$T_{p,q,r}$ the tree diagram (with 
all edge weights equal to $1$) on $p+q+r+1$ vertices obtained by
connecting an endpoint of each of the three chains $A_p$, $A_q$ and~$A_r$
to a single extra vertex (see Figure~\ref{fig:Tabc}).
\end{definition}

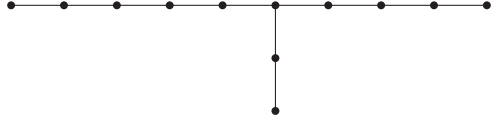
\begin{figure}[ht]
\setlength{\unitlength}{1pt}
\begin{picture}(180,43)(0,0)

\put(0,40){\line(1,0){180}}
\put(100,40){\line(0,-1){40}}

\multiput(0,40)(20,0){10}{\circle*{3}}
\multiput(100,20)(0,-20){2}{\circle*{3}}

\end{picture}
\caption{The tree diagram $T_{5,4,2}\,$.}
\label{fig:Tabc}
\end{figure}

\begin{definition}
\label{def:Spqrs}
For $p,q,r \in \ZZ_{> 0}$ and $s \in \ZZ_{\geq 0}$,
let $S_{p,q,r}^s$ denote the diagram (with all edge weights equal
to $1$) on $p+q+r+s$ vertices obtained by attaching three branches
$A_{p-1}$, $A_{q-1}$, and $A_{r-1}$ to three consecutive vertices
on a \emph{cyclically oriented} $(s+3)$-cycle (see Figure~\ref{fig:Spqrs}).
\end{definition}

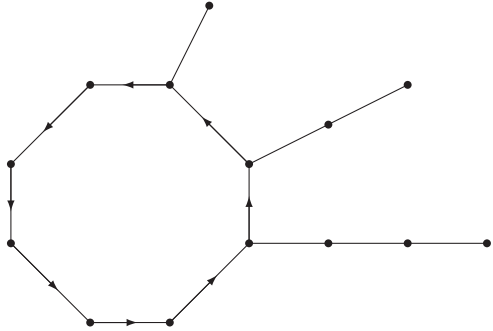
\begin{figure}[ht]
\setlength{\unitlength}{1.5pt}
\begin{picture}(60,83)(0,-2)
\put(20,0){\line(1,0){20}}
\put(20,60){\line(1,0){20}}
\put(0,20){\line(0,1){20}}
\put(60,20){\line(0,1){20}}
\put(0,20){\line(1,-1){20}}
\put(40,60){\line(1,-1){20}}
\put(0,40){\line(1,1){20}}
\put(40,0){\line(1,1){20}}

\put(20,0){\vector(1,0){12}}
\put(40,60){\vector(-1,0){12}}
\put(0,40){\vector(0,-1){12}}
\put(60,20){\vector(0,1){12}}
\put(0,20){\vector(1,-1){12}}
\put(60,40){\vector(-1,1){12}}
\put(20,60){\vector(-1,-1){12}}
\put(40,0){\vector(1,1){12}}

\multiput(20,0)(20,0){2}{\circle*{2}}
\multiput(20,60)(20,0){2}{\circle*{2}}
\multiput(0,20)(60,0){2}{\circle*{2}}
\multiput(0,40)(60,0){2}{\circle*{2}}
\put(50,80){\circle*{2}}

\multiput(80,20)(20,0){3}{\circle*{2}}
\multiput(80,50)(20,10){2}{\circle*{2}}
\put(40,60){\line(1,2){10}}
\put(60,20){\line(1,0){60}}
\put(60,40){\line(2,1){40}}


\end{picture}
\caption{The diagram $S_{4,3,2}^5\,$. 
}
\label{fig:Spqrs}
\end{figure}
\vspace{-.1in}

Note that in both definitions, the choice of orientations for the edges
where they are not shown is immaterial: different choices lead to
mutation-equivalent diagrams.
For~$T_{p,q,r}\,$, this follows from
Exercise~\ref{ex:orientations-of-a-tree};
for~$S_{p,q,r}^s$, 
one needs a slight generalization of this result, see
\cite[Proposition~9.2]{ca2}.


\begin{exercise}
\label{exercise:crown}
Show that the diagram $S_{p,q,r}^s$ is mutation equivalent to $T_{p+r-1,q,s}$.
\end{exercise}

With the help of Exercise~\ref{exercise:crown}, 
the proof of Proposition~\ref{pr:trees-Dynkin-vs-infty} can now be
completed, using the observations that 
\[
\begin{array}{l}
E_6^{(1)} = T_{2,2,2} \sim S_{2,2,1}^2 \supset D_5^{(1)} \,;\\
E_7^{(1)} = T_{3,1,3} \sim S_{3,1,1}^3 \supset E_6^{(1)} \,;\\
E_8^{(1)} = T_{2,1,5} \sim S_{2,1,1}^5 \supset E_7^{(1)} \,. 
\end{array}
\]

Turning to the cycle diagrams, we have the following
classification. 

\begin{exercise}
\label{exercise:cycles-Dynkin-vs-infty}
Let $\Gamma$ be a $2$-finite diagram whose underlying graph is an
$n$-cycle 
(with some orientation of edges).
Show that $\Gamma$ is cyclically oriented, and moreover
it must be one of the following
(see Figure~\ref{fig:3-4-cycles}):
\begin{itemize}[leftmargin=.2in]
\item[{\rm(a)}] an $n$-cycle with all weights equal
 to~$1$ (in this case, $\Gamma\sim D_n$); 
\item[{\rm(b)}] a $3$-cycle with
edge weights $2,2,1$ (in this case, $\Gamma\sim B_3$);
\item[{\rm(c)}] a $4$-cycle with
edge weights $2,1,2,1$ (in this case, $\Gamma\sim F_4$).
\end{itemize}
\end{exercise}

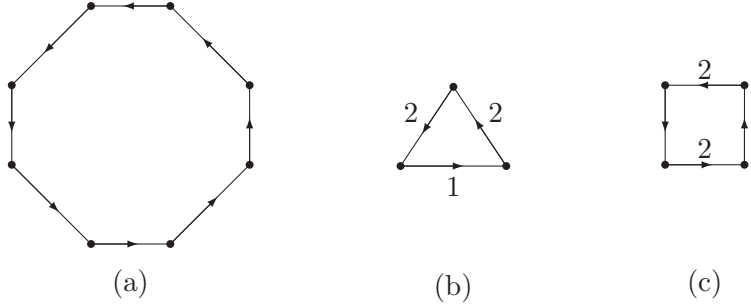
\begin{figure}[ht]
\setlength{\unitlength}{1.5pt}
\begin{picture}(60,65)(0,-7)
\put(20,0){\line(1,0){20}}
\put(20,60){\line(1,0){20}}
\put(0,20){\line(0,1){20}}
\put(60,20){\line(0,1){20}}
\put(0,20){\line(1,-1){20}}
\put(40,60){\line(1,-1){20}}
\put(0,40){\line(1,1){20}}
\put(40,0){\line(1,1){20}}

\put(20,0){\vector(1,0){12}}
\put(40,60){\vector(-1,0){12}}
\put(0,40){\vector(0,-1){12}}
\put(60,20){\vector(0,1){12}}
\put(0,20){\vector(1,-1){12}}
\put(60,40){\vector(-1,1){12}}
\put(20,60){\vector(-1,-1){12}}
\put(40,0){\vector(1,1){12}}

\multiput(20,0)(20,0){2}{\circle*{2}}
\multiput(20,60)(20,0){2}{\circle*{2}}
\multiput(0,20)(60,0){2}{\circle*{2}}
\multiput(0,40)(60,0){2}{\circle*{2}}

\put(30,-10){\makebox(0,0){(a)}}

\end{picture}
\hspace{.5in}
\setlength{\unitlength}{2pt}
\begin{picture}(30,17)(-5,-17)
\put(0,3){\line(1,0){20}}
\put(0,3){\vector(1,0){12}}
\put(0,3){\line(2,3){10}}
\put(10,18){\vector(-2,-3){6}}
\put(10,18){\line(2,-3){10}}
\put(20,3){\vector(-2,3){6}}
\put(0,3){\circle*{1.5}}
\put(20,3){\circle*{1.5}}
\put(10,18){\circle*{1.5}}
\put(2,13){\makebox(0,0){$2$}}
\put(18,13){\makebox(0,0){$2$}}
\put(10,-1){\makebox(0,0){$1$}}
\put(10,-20){\makebox(0,0){(b)}}
\end{picture}
\hspace{.5in}
\setlength{\unitlength}{1.5pt}
\begin{picture}(25,27)(-2,-27)
\multiput(0,0)(0,20){2}{\line(1,0){20}}
\put(20,20){\vector(-1,0){12}}
\put(0,0){\vector(1,0){12}}
\multiput(0,0)(20,0){2}{\line(0,1){20}}
\put(0,20){\vector(0,-1){12}}
\put(20,0){\vector(0,1){12}}
\multiput(0,0)(20,0){2}{\circle*{2}}
\multiput(0,20)(20,0){2}{\circle*{2}}
\multiput(10,4)(0,20){2}{\makebox(0,0){$2$}}
\put(10,-30){\makebox(0,0){(c)}}
\end{picture}
\caption{$2$-finite cycles. 
}
\label{fig:3-4-cycles}
\end{figure}
\vspace{-.1in}

\begin{proof}[Proof of Proposition~\ref{pr:diagrams-fin-CK}]
We proceed by induction on $n$, the number of vertices in 
$\Gamma$. 
If $n \leq 3$, then $\Gamma$ is either a tree or a cycle, and the 
theorem follows by 
Proposition~\ref{pr:trees-Dynkin-vs-infty} and 
Exercise~\ref{exercise:cycles-Dynkin-vs-infty}. 
So let us assume that the statement is already known for some 
$n \geq 3$; we need to show that it holds for a diagram $\Gamma$ on 
$n+1$ vertices. 
Pick a vertex $v\in\Gamma$ such that the subdiagram 
$\Gamma' = \Gamma - \{v\}$ is connected. 
Since $\Gamma'$ is $2$-finite, it is 
mutation equivalent to some Dynkin diagram~$X_n\,$. 
Furthermore, we may assume that $\Gamma'$ is (isomorphic to) 
our favorite representative of the mutation equivalence class 
of~$X_n\,$. For each $X_n\,$, we will choose a representative that is 
most convenient for the purposes of this proof,
and use the classifications of $2$-finite tree and cycle diagrams
obtained above to achieve the desired goal. 
 
\noindent
\textbf{Case 1:} 
\emph{$\Gamma'$ is an orientation of a  Dynkin diagram 
with no branching point, i.e., is 
of one of the types $A_n$, $B_n$, $C_n$, $F_4$, or $G_2$.} 
Let us orient the edges of $\Gamma'$ so that they all point in the 
same direction. 
If $v$ is adjacent to exactly one vertex of $\Gamma'$, then 
$\Gamma$ is a tree, and we are done by 
Proposition~\ref{pr:trees-Dynkin-vs-infty}. 
If $v$ is adjacent to more than two vertices of $\Gamma'$, then 
$\Gamma$ has a cycle subdiagram whose edges are not cyclically 
oriented, contradicting Exercise~\ref{exercise:cycles-Dynkin-vs-infty}. 
Thus we may assume that $v$ is adjacent to precisely two vertices 
$v_1$ and $v_2$ of $\Gamma'$, see 
\cref{fig:B-triangle-with-ears}. 
Then $\Gamma$ has precisely one cycle~$\mathcal{C}$, which 
must be of one of the types (a)--(c) 
shown in Figure~\ref{fig:3-4-cycles}. 
 
Suppose that $\mathcal{C}$ is an oriented cycle with unit edge
weights.  
If $\Gamma$ has an edge of weight $\geq 2$, then it contains a 
subdiagram of type $B_m^{(1)}$ or $G_2^{(1)}$, unless $\mathcal{C}$ is 
a $3$-cycle, in which case $\mu_v(\Gamma)$ is a tree, and we are done by 
Proposition~\ref{pr:trees-Dynkin-vs-infty}. 
If all edges in $\Gamma$ are of weight~$1$, then it is one of 
the diagrams $S_{p,q,r}^s$ in Exercise~\ref{exercise:crown} (with $q=0$). 
Hence $\Gamma$ is mutation equivalent to a tree, and we are done.
 
Suppose that $\mathcal{C}$ is as in Figure~\ref{fig:3-4-cycles}(b). 
If one of the edges $(v,v_1)$ and $(v,v_2)$ has weight~$1$, 
then $\mu_v$ removes the edge $(v_1, v_2)$, resulting in a tree, and 
we are done again. 
So assume that both $(v,v_1)$ and $(v,v_2)$ have weight~$2$. 
If at least one edge outside $\mathcal{C}$ 
has weight $\geq 2$, then $\Gamma\supset C_m^{(1)}$ or 
$\Gamma\supset G_2^{(1)}\,$. 
It remains to consider the case shown in Figure~\ref{fig:B-triangle-with-ears}. 
A direct check shows that 
$\mu_l \circ \cdots \circ \mu_2 \circ \mu_1 \circ \mu_{v_2} 
\circ \mu_v(\Gamma)=B_{n+1}\,$, and we are done. 
 
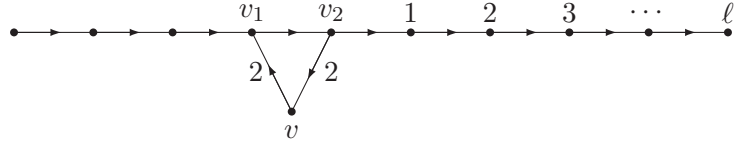
\begin{figure}[ht]
\setlength{\unitlength}{1.5pt} 
\begin{picture}(180,30)(0,-20) 
\put(0,0){\line(1,0){180}} 
\multiput(0,0)(20,0){10}{\circle*{2}} 
\put(70,-20){\circle*{2}} 
\put(70,-20){\line(-1,2){10}} 
\put(70,-20){\line(1,2){10}} 
 
\put(80,0){\vector(-1,-2){6}} 
\put(70,-20){\vector(-1,2){6}} 
 
\multiput(0,0)(20,0){9}{\vector(1,0){12}} 
 
\put(70,-25){\makebox(0,0){$v$}} 
\put(60,5){\makebox(0,0){$v_1$}} 
\put(61,-10){\makebox(0,0){$2$}} 
\put(80,-10){\makebox(0,0){$2$}} 
\put(80,5){\makebox(0,0){$v_2$}} 
\put(100,5){\makebox(0,0){$1$}} 
\put(120,5){\makebox(0,0){$2$}} 
\put(140,5){\makebox(0,0){$3$}} 
\put(160,5){\makebox(0,0){$\cdots$}} 
\put(180,5){\makebox(0,0){$\ell$}} 
\end{picture} 
\caption{Second subcase in Case~1.
} 
\label{fig:B-triangle-with-ears} 
\end{figure} 
 
Suppose that $\mathcal{C}$ is as in 
Figure~\ref{fig:3-4-cycles}(c). 
It suffices to show that any diagram $\mathcal{C}'$ 
ob\-tained from $\mathcal{C}$ 
by adjoining a single vertex adjacent to one of its vertices 
is $2$-infinite. 
Indeed, if this extra edge has weight $1$ (resp., $2$, $3$), then 
$\mathcal{C}'$ has a $2$-infinite subdiagram of type 
$B_3^{(1)}$ (resp., $C_2^{(1)}$, $G_2^{(1)}$). 
 
\noindent
\textbf{Case 2:} 
\emph{$\Gamma'\sim D_n$ ($n\geq 4$).} 
We may assume that 
$\Gamma'$ is an oriented $n$-cycle with unit edge weights. 

If $v$ is adjacent to two 
non-adjacent vertices of~$\Gamma'$ (and possibly others), 
then $\Gamma$ contains an improperly oriented cycle, contradicting 
Exercise~\ref{exercise:cycles-Dynkin-vs-infty}. 
 
Suppose $v$ is adjacent to a single 
vertex $v_1\in\Gamma'$. 
If the edge $(v,v_1)$ has weight $\geq 2$, then 
$\Gamma$ has a subdiagram $B_3^{(1)}$ or 
$G_2^{(1)}$. 
If $(v,v_1)$ has weight~$1$, then 
by Exercise~\ref{exercise:crown}, $\Gamma$ is mutation equivalent to a 
tree, and we are done. 
 
Suppose that $v$ is adjacent to exactly two 
vertices $v_1$ and $v_2$  which are adjacent to each
other. 
Then the triangle $(v,v_1,v_2)$ is either an oriented $3$-cycle with 
unit edge weights or the diagram in Figure~\ref{fig:3-4-cycles}(b). 
In the former case, $\mu_v(\Gamma)$ is an oriented $(n+1)$-cycle, so 
$\Gamma\sim D_{n+1}$. 
In the latter case, $\mu_v(\Gamma)$ contains an improperly oriented (hence 
$2$-infinite) cycle. 
 
\noindent
\textbf{Case 3:} 
\emph{$\Gamma'\sim E_n=T_{1,2,n-4}\,$, for $n\!\in\!\{6,7,8\}$.} 
By Exercise~\ref{exercise:crown}, we may assume that
$\Gamma'=S_{1,2,1}^{n-4}\,$, 
i.e., $\Gamma'$~consists of an oriented $(n-1)$-cycle 
$\mathcal{C}$ with unit edge weights, and an extra edge of weight $1$ 
connecting a vertex in 
$\mathcal{C}$ to a vertex 
$v_1\notin\mathcal{C}$. 

There are several subcases to examine, depending on how $v$
connects~to~$\mathcal{C}$. 
It is routine (if tedious) to check that in
each of these subcases, $\Gamma$~must be equivalent to an orientation
of a Dynkin diagram (e.g.\ because it is equivalent to a tree, or to
one of the diagrams treated in Cases 1 and~2 above), 
or else $\Gamma$ is not $2$-finite. 
Details can be found in~\cite{ca2}. 

This concludes the proof of Proposition~\ref{pr:diagrams-fin-CK}. 
As a consequence, we obtain Proposition~\ref{pr:2-fin-type-class}, 
Theorem~\ref{th:finite-type-bound-by-3}, and 
Theorem~\ref{th:finite-type-classification}. 
\end{proof}

\begin{remark}
\label{finite-type-hereditary}
The property of being $2$-finite is clearly 
hereditary.  Therefore  the other two equivalent properties
of exchange matrices appearing in  
\cref{th:finite-type-bound-by-3} are hereditary as well.
\end{remark}

\section{Quasi-Cartan companions}
\label{sec:quasi-cartan}

An unpleasant feature of the finite type classification
(see Theorem~\ref{th:finite-type-bound-by-3}) 
is that it does not provide an effective way to verify whether a
given exchange matrix~$B$ defines a seed pattern of finite type:
both condition~(3)
\hbox{($2$-finiteness)} and condition~(1) 
(being mutation equivalent to
a skew-sym\-metrizable version of a Cartan matrix)
impose a restriction on \emph{all} matrices in the mutation class
of~$B$.
An alternative criterion formulated directly in terms of the
matrix~$B$ (rather than its mutation class) 
was given in~\cite{bargezel}.
We reproduce this result below while omitting the technical part
of the proof. 

\begin{remark}
A different finite type recognition criterion was given in~\cite{seven},
by explicitly listing all minimal obstructions to finite type. 
More precisely, \cite{seven} provides a list of all (up to isomorphism)
\emph{minimal $2$-infinite diagrams}, i.e., all diagrams which are not
$2$-finite but whose proper subdiagrams are all $2$-finite.
Then $B$ is of finite type if and only if $\Gamma(B)$ does not contain
a subdiagram on this list.
Unfortunately, the list is rather long: it includes 10 infinite
series and a large number 
of exceptional diagrams of size~$\le 9$. 
\end{remark}

\begin{definition}
A \emph{quasi-Cartan matrix} is a symmetrizable (square)
matrix~$A=(a_{ij})$ 
with integer entries such that $a_{ii}=2$ for all~$i$. 
Note that in such a matrix, the entries $a_{ij}$ and $a_{ji}$ always 
have the same sign.
Unlike for generalized Cartan matrices, 
they are allowed to be positive. 

A quasi-Cartan matrix~$A$ is \emph{positive} if the 
symmetrized matrix is positive definite, or equivalently if the
principal minors of~$A$ are all positive. 

A quasi-Cartan matrix~$A$ is called a \emph{quasi-Cartan companion} of 
a skew-symmetrizable integer matrix~$B$
if $|a_{ij}| = |b_{ij}|$ for all~$i \neq j$.
Thus $B$ can have several quasi-Cartan companions
one of which is the Cartan counterpart of~$B$ given by
Definition~\ref{def:assoc-cartan}.
(To be precise, the number of quasi-Cartan companions of~$B$ is $2^e$ where
$e$ is the number of edges in the diagram of~$B$.) 
\end{definition}

A \emph{chordless cycle} in the diagram $\Gamma(B)$
is an induced subgraph isomorphic to a cycle (with arbitrary orientation). 



\begin{theorem}
\label{thm:bargezel-main}
For a skew-symmetrizable integer matrix~$B$, 
each of the conditions 
(1)--(3) 
in Theorem~\ref{th:finite-type-bound-by-3}
is equivalent to 
\begin{itemize}[leftmargin=.2in]
\item[\rm (4)]
every chordless cycle in $\Gamma(B)$ is cyclically oriented, and~$B$
has a positive quasi-Cartan companion.
\end{itemize}
\end{theorem}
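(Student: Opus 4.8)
The plan is to enlarge the chain of equivalences already established in Theorem~\ref{th:finite-type-bound-by-3}. Since conditions \eqref{item:finite-type}, \eqref{item:2-finite}, \eqref{item:dynkin} there are mutually equivalent, it suffices to prove the two implications $\eqref{item:dynkin}\Rightarrow(4)$ and $(4)\Rightarrow\eqref{item:2-finite}$. Both will be deduced from a single \emph{mutation-invariance lemma}: condition~(4) is preserved under matrix mutation (which is well defined on diagrams by Proposition~\ref{pr:diagram-mutation}). Granting this lemma, the two implications are short, and the entire difficulty is concentrated in the lemma.

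For $\eqref{item:dynkin}\Rightarrow(4)$, suppose $B$ is mutation equivalent to a matrix $B'$ whose Cartan counterpart $A(B')$ is a Cartan matrix of finite type. The underlying undirected graph of the diagram $\Gamma(B')$ (Definition~\ref{def:diagramofB}) coincides with the Coxeter graph of $A(B')$, since both have an edge $\{i,j\}$ exactly when $b'_{ij}\neq 0$. As the Coxeter graph of a finite-type Cartan matrix is a disjoint union of Dynkin trees, $\Gamma(B')$ is a forest and contains no chordless cycle whatsoever, so the cycle condition in~(4) holds vacuously. Moreover $A(B')$ is itself a quasi-Cartan companion of $B'$ (by Definition~\ref{def:assoc-cartan} one has $a_{ij}=-|b'_{ij}|$, hence $|a_{ij}|=|b'_{ij}|$), and it is positive because it is a Cartan matrix of finite type. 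Thus $B'$ satisfies~(4), and the mutation-invariance lemma propagates~(4) to $B$.

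For $(4)\Rightarrow\eqref{item:2-finite}$, note first that positivity of a quasi-Cartan companion $A$ of $B$ already bounds the entries of $B$ itself: for distinct $i,j$ the principal submatrix on $\{i,j\}$ is $\bigl[\begin{smallmatrix}2 & a_{ij}\\ a_{ji} & 2\end{smallmatrix}\bigr]$, whose determinant $4-a_{ij}a_{ji}$ must be positive. Since $a_{ij}$ and $a_{ji}$ share a sign, $a_{ij}a_{ji}=|b_{ij}b_{ji}|$, so $|b_{ij}b_{ji}|\le 3$. Applying the mutation-invariance lemma, every matrix $B''$ in the mutation class of $B$ again satisfies~(4) and hence obeys the same bound $|b''_{ij}b''_{ji}|\le 3$ for all $i,j$; this is exactly the statement that $B$ is $2$-finite, matching Corollary~\ref{cor:finite-type-2finite}.

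It remains to address the mutation-invariance lemma, which is the technical heart of the result and is carried out in full in~\cite{bargezel}. The plan is to realize the effect of a mutation $\mu_k$ on companions by a \emph{companion mutation} $A\mapsto A'$, obtained from $A$ by negating the entries in row and column~$k$ and correcting the entries between the neighbours of~$k$; the crucial point is that $A'=E^{\top}AE$ for some $E\in\GL_n(\ZZ)$, so the signature, and in particular positive definiteness, is preserved, while $A'$ is a companion of $\mu_k(B)$. The genuinely delicate part is the combinatorial bookkeeping showing that the cyclic-orientation condition on chordless cycles survives $\mu_k$: mutation reverses the arrows at~$k$ and alters those among its neighbours, so one must track how chordless cycles are created, destroyed, or re-oriented while simultaneously checking that the sign pattern of $A'$ stays compatible. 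The efficient device for this, introduced in~\cite{bargezel}, is to strengthen~(4) to the existence of an \emph{admissible} positive companion---one whose signs along each chordless cycle are prescribed by that cycle's orientation---and to show that admissibility together with cyclic orientation is jointly invariant under companion mutation. I expect this sign-and-cycle analysis to be the main obstacle; once it is in place, the reductions above complete the proof, and combining it with Proposition~\ref{pr:diagrams-fin-CK} identifies condition~(4) with membership in the Cartan--Killing list.
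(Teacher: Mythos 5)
Your proposal is correct and follows essentially the same route as the paper: both arguments take the mutation-invariance of condition~(4) (Lemma~\ref{lem:mutations-preserve-4}, proved in~\cite{bargezel}) as the key black-box ingredient, verify $\eqref{item:dynkin}\Rightarrow(4)$ by noting that an orientation of a Dynkin diagram has no cycles and a positive Cartan counterpart, and verify $(4)\Rightarrow\eqref{item:2-finite}$ via positivity of the $2\times 2$ principal minors, closing the loop through the equivalences of Theorem~\ref{th:finite-type-bound-by-3}. Your added sketch of why the lemma holds (companion mutation as a $\GL_n(\ZZ)$-congruence plus the admissibility bookkeeping) goes slightly beyond what the paper records, but does not change the structure of the argument.
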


The key ingredient in the proof of Theorem~\ref{thm:bargezel-main}
given in~\cite{bargezel}
is the following lemma, whose proof we omit.

\begin{lemma}
[{\cite[Lemma~4.1]{bargezel}}]
\label{lem:mutations-preserve-4}
Property {\rm (4)} in Theorem~\ref{thm:bargezel-main} is preserved
under mutations of skew-symmetrizable integer matrices.
\end{lemma}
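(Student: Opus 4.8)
The plan is to mutate not just $B$ but also a chosen positive quasi-Cartan companion of it, in a way compatible with matrix mutation, and then to show that \emph{both} clauses of property~(4) are inherited by $\mu_k(B)$. Fix a direction $k$, assume $B$ satisfies property~(4), and let $A=(a_{ij})$ be a positive quasi-Cartan companion of $B$, which exists by hypothesis. First I would introduce a \emph{companion mutation} $A'=\mu_k(A)=(a'_{ij})$ given by a rule of the shape
\begin{equation*}
a'_{ij}=
\begin{cases}
-a_{ij} & \text{if } k\in\{i,j\},\\
a_{ij}-|a_{ik}|\,a_{kj} & \text{if } i,j\neq k \text{ and } b_{ik}b_{kj}>0,\\
a_{ij} & \text{otherwise,}
\end{cases}
\end{equation*}
which is arranged so that $a'_{ij}$ differs from the $B$-mutation $b'_{ij}$ only in sign. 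The content of this first step is to verify that the signs are consistent, i.e.\ that $|a'_{ij}|=|b'_{ij}|$ for all $i\neq j$ and that $A'$ is symmetrizable, so that $A'$ is genuinely a quasi-Cartan companion of $\mu_k(B)$. This sign consistency is exactly where the hypothesis that every chordless cycle of $\Gamma(B)$ is cyclically oriented enters, since the obstruction to consistency is localized on the triangles and longer chordless cycles passing through~$k$.

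Second, I would show that $A'$ is again positive. The cleanest route is the geometric model: let $D=\operatorname{diag}(d_i)$ symmetrize $A$, equip $\RR^n$ with the positive-definite symmetric form whose Gram matrix in the standard basis $(e_i)$ is $DA$, and realize the companion mutation as an invertible \emph{partial reflection} $T$, replacing $(e_i)$ by vectors $e'_k=-e_k$ and $e'_i=e_i$ or $e'_i=s_k(e_i)$ according to the sign of $b_{ik}$. Since $T$ is invertible, the new Gram matrix $T^\top(DA)T$ is congruent to $DA$ and hence still positive definite; unwinding shows that it equals $D'A'$ for the symmetrizer $D'$ of $A'$, so $A'$ is a positive quasi-Cartan companion of $\mu_k(B)$. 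This establishes clause~(b) of property~(4) for $\mu_k(B)$.

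Third, I would prove that every chordless cycle of $\Gamma(\mu_k(B))$ is cyclically oriented. Here the positivity of $A'$ is the engine: a chordless cycle that fails to be cyclically oriented forces one of the principal submatrices of $A'$ (along the cycle) to be a quasi-Cartan matrix that is not positive definite, contradicting positivity of $A'$. Concretely I would split into the cycles that avoid $k$, whose induced subgraphs are inherited from $\Gamma(B)$ after a local analysis of the edges created or destroyed at the neighbours of $k$, and the cycles through $k$, which must be matched against chordless cycles of $\Gamma(B)$ through $k$ and analyzed using the companion property established in the first step.

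The hard part will be the first and third steps taken together, namely the combinatorial bookkeeping near $k$: controlling exactly which edges of $\Gamma(B)$ are created, destroyed, or reweighted at the vertices adjacent to $k$, and checking that no chordless cycle of $\Gamma(\mu_k(B))$ can acquire a non-cyclic orientation. The delicate case is that of chordless cycles passing through $k$, where mutation can merge or split cycles and change chordality; it is precisely this case analysis that must be carried out in detail, and which we are content to omit.
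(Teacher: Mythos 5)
You should first be aware that the paper itself contains no proof of this lemma: it is quoted from \cite[Lemma~4.1]{bargezel} and the text explicitly says ``whose proof we omit.'' So the benchmark is the argument in \cite{bargezel}, and your overall architecture does mirror it: one mutates a suitably chosen quasi-Cartan companion $A$ alongside $B$, realizes that companion mutation as a congruence $DA \mapsto E^{\top}(DA)E$ with $\det E=\pm1$ (your ``partial reflection''), and concludes that positivity survives. Two caveats on your first step, though: the mutation rule cannot be applied to an \emph{arbitrary} positive companion---one must first normalize $A$ by simultaneous sign changes in rows and columns so that its signs are aligned with those of $B$ near $k$ (this is where the sign condition \eqref{eq:sign-condition-A} and the uniqueness statement of Proposition~\ref{pr:cyclic-B-to-A} do the work); this is a fixable omission, but as written ``sign consistency'' simply fails for a generic positive companion.

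The genuine gap is in your third step, and it is not merely an omission of bookkeeping: the engine you propose there is wrong. A quasi-Cartan companion records only the absolute values $|b'_{ij}|$, so it is blind to orientation; positivity of $A'$ can never, by itself, rule out a non-cyclically-oriented chordless cycle in $\Gamma(\mu_k(B))$. Concretely, take the $3$-cycle with unit weights and the non-cyclic orientation $1\to 2$, $2\to 3$, $1\to 3$. Its companion with $a_{12}=a_{23}=-1$, $a_{13}=+1$ is positive (principal minors $2$, $3$, $4$), yet this quiver violates property~(4) of Theorem~\ref{thm:bargezel-main} and is in fact $2$-infinite (mutating at $2$ creates an edge with $|b'_{13}b'_{31}|=4$). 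So ``a chordless cycle that fails to be cyclically oriented forces a non-positive principal submatrix of $A'$'' is false, and the orientation clause of property~(4) for $\mu_k(B)$ must instead be proved by a direct combinatorial analysis of how mutation at $k$ creates, destroys, and reroutes chordless cycles, using the cyclic orientation of the chordless cycles of $\Gamma(B)$ together with the specific signs of the mutated companion. That analysis is exactly the content of the lemma in \cite{bargezel}, and it is exactly the part you declare yourself ``content to omit''; with it missing, and with step three resting on a false implication, the proposal does not prove the statement.
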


\begin{proof}[Proof of Theorem~\ref{thm:bargezel-main} modulo
    Lemma~\ref{lem:mutations-preserve-4}] 
We deduce the implications $(1) \Rightarrow (4)\Rightarrow (3)$ 
from Lemma~\ref{lem:mutations-preserve-4}.
To prove that $(1) \Rightarrow (4)$, it is enough to observe that if 
$\Gamma(B)$ is a Dynkin diagram, then $B$ satisfies~(4). (Indeed,
the Cartan counterpart of $B$ is positive, and $\Gamma(B)$ 
has no cycles.)
To prove $(4) \Rightarrow (3)$, note that
any positive quasi-Cartan matrix~$A=(a_{ij})$ satisfies $|a_{ij}
a_{ji}| \leq 3$ for all $i \neq j$ because of the positivity of
the principal $2 \times 2$ minor of~$A$ occupying the rows 
and columns $i$ and~$j$.
\end{proof}

\begin{remark}
As explained above, one can use Lemma~\ref{lem:mutations-preserve-4}  to 
establish the implications $(1) \Rightarrow (4)\Rightarrow(3)$. 
In combination with the arguments given in Section~\ref{sec:2-finite},
this yields a self-contained combinatorial proof of the equivalence 
$(3)\Leftrightarrow(1)$. 
\end{remark}

A skew-symmetrizable integer matrix~$B$ can have many quasi-Cartan companions~$A$,
corresponding to different choices for the signs of its off-diagonal matrix
entries. 
Note that the positivity property of~$A$
is preserved by simultaneous sign changes in rows and columns.
It turns out that for the purposes of checking (4) for a given matrix~$B$,
there is a \emph{unique}, up to these transformations, sign pattern for~$A$ that needs to be
checked for positivity.
More precisely, we have the following 
result, cf.\ \cite[Propositions~1.4--1.5]{bargezel}.

\begin{proposition}
\label{pr:cyclic-B-to-A}
Let $B$ be a skew-symmetrizable integer matrix such that 
each chordless cycle in $\Gamma(B)$ is cyclically
oriented. 
Then~$B$ has a quasi-Cartan companion~$A=(a_{ij})$ 
such that the sign condition 
\begin{equation}
\label{eq:sign-condition-A}
\prod_{\{i,j\} \in Z} (-a_{ij})<0
\end{equation}
(product over all edges $\{i,j\}$ in~$Z$) 
is satisfied for every chordless cycle~$Z$. 
In fact, $A$ is unique up to simultaneous sign changes in rows and
columns. 
Moreover $B$ satisfies conditions 
(1)--(3) in \cref{th:finite-type-bound-by-3} 
if and only if $A$ is positive. 
\end{proposition}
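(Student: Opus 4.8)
The plan is to reformulate the sign condition \eqref{eq:sign-condition-A} as a statement about edge-signings of the underlying graph of $\Gamma(B)$, and then to treat existence, uniqueness, and the positivity criterion separately. Writing $G$ for the underlying undirected graph of $\Gamma(B)$, a quasi-Cartan companion $A=(a_{ij})$ of $B$ is the same data as a choice of sign $\eta_{ij}=\operatorname{sgn}(-a_{ij})\in\{\pm1\}$ on each edge $\{i,j\}$ of $G$ (recall that $a_{ij},a_{ji}$ share a sign and $|a_{ij}|=|b_{ij}|$ is fixed). For a cycle $C$ in $G$ set $\tau(C)=\prod_{\{i,j\}\in C}\eta_{ij}$. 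Since $\eta_{ij}^2=1$, the map $\tau$ is a homomorphism from the cycle space $Z_1(G;\mathbb{F}_2)$ (under symmetric difference) to $\{\pm1\}$, and condition \eqref{eq:sign-condition-A} is exactly the requirement that $\tau(Z)=-1$ for every chordless (i.e.\ induced) cycle $Z$. The operation of simultaneously changing signs in row and column $i$ of $A$ multiplies $\eta_{ij}$ by $-1$ for every edge at $i$; this is the switching operation of signed-graph theory, it preserves $\tau$ on every cycle, and it is a congruence $A\mapsto DAD$ with $D$ diagonal $\pm1$, hence preserves both the companion property and positive definiteness.

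Uniqueness is then immediate from standard facts. The chordless cycles span $Z_1(G;\mathbb{F}_2)$: any cycle carrying a chord is the symmetric difference of two strictly shorter cycles, so induction on length expresses every cycle through chordless ones. Consequently the values of the homomorphism $\tau$ on chordless cycles determine it on all of $Z_1(G;\mathbb{F}_2)$. Two edge-signings with the same $\tau$ differ by a function vanishing on the cycle space, i.e.\ by an element of the cut space, which is generated by single-vertex cuts; these are precisely the switchings. Hence any two companions satisfying \eqref{eq:sign-condition-A} are related by simultaneous sign changes in rows and columns.

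For existence I would reduce to an $\mathbb{F}_2$-consistency statement and locate the real difficulty there. The prescribed values $\tau\equiv-1$ on chordless cycles extend to an honest homomorphism on $Z_1(G;\mathbb{F}_2)$ — and hence, by the realizability of every cycle-space homomorphism by some edge-signing, to a companion satisfying \eqref{eq:sign-condition-A} — if and only if they respect every linear relation among chordless cycles: whenever $Z_1\oplus\cdots\oplus Z_m=\emptyset$ with each $Z_i$ chordless, $m$ must be even. This parity statement is the hard part, and it is where the cyclic-orientation hypothesis enters. The prototypical failure is a theta configuration, three chordless cycles $C_{12},C_{13},C_{23}$ built from three internally disjoint paths between two vertices, for which $C_{12}\oplus C_{13}\oplus C_{23}=\emptyset$ with $m=3$; but these cannot all be cyclically oriented, since that would force the three connecting paths to be pairwise oppositely oriented, impossible for three objects each carrying one of two orientations. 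I would prove the general parity statement by induction on the number of vertices: deleting a vertex $v$ keeping $\Gamma(B)$ connected yields $\Gamma'$ whose chordless cycles remain chordless and cyclically oriented in $\Gamma(B)$, so by induction $\Gamma'$ has a valid signing $\eta'$; the chordless cycles through $v$ constrain only the products $\eta_{va}\eta_{vb}$, and the compatibility of these constraints around an auxiliary triangle $\{a,b,c\}$ of neighbors of $v$ reduces to $\tau'(D)=-1$ for the cycle $D$ in $\Gamma'$ assembled from three chordless paths, which the cyclic orientation of the cycles through $v$ forces. This bookkeeping is genuinely technical, and I would carry it out following \cite[Prop.~1.5]{bargezel}; it is the step I expect to be the main obstacle.

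Finally, for the positivity criterion I would combine the uniqueness above with Theorem~\ref{thm:bargezel-main}. Under the standing hypothesis that every chordless cycle of $\Gamma(B)$ is cyclically oriented, condition~(4) of Theorem~\ref{thm:bargezel-main} holds if and only if $B$ admits \emph{some} positive quasi-Cartan companion, which by that theorem is equivalent to $B$ satisfying \eqref{item:finite-type}--\eqref{item:dynkin}. It then remains to see that $B$ has a positive companion exactly when our distinguished companion $A$ is positive. One direction is trivial. For the other, if $A^{+}$ is any positive companion then, a principal submatrix of a positive definite matrix being positive definite, its restriction to each chordless cycle is positive definite; a direct check that a quasi-Cartan matrix supported on a single cycle is positive definite only when its sign product is negative (the distinction between the finite-type $D$-shape and the positive-semidefinite $\widetilde{A}$-shape) shows that $A^{+}$ satisfies \eqref{eq:sign-condition-A}, cf.\ \cite[Prop.~1.4]{bargezel}. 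By uniqueness $A^{+}$ is switching-equivalent to $A$, and switching preserves positive definiteness, so $A$ is positive. Hence $B$ satisfies \eqref{item:finite-type}--\eqref{item:dynkin} if and only if $A$ is positive, completing the plan.
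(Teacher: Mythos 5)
Your proposal cannot be compared line-by-line with the paper's own argument, because the paper gives none: \cref{pr:cyclic-B-to-A} is stated with a pointer to \cite[Propositions~1.4--1.5]{bargezel}, and the section announces up front that the technical part of the proof is omitted. Measured against that, your treatment supplies genuine content, and the framework you choose is the right one. Identifying companions of $B$ with edge-signings of $\Gamma(B)$, the sign product with a homomorphism on the cycle space $Z_1(\Gamma(B);\mathbb{F}_2)$, and simultaneous sign changes in rows and columns with switchings is exactly the correct dictionary (note that skew-symmetrizability of $B$ guarantees that \emph{every} signing yields a symmetrizable companion, which your setup quietly uses and which is true). Your uniqueness argument is complete and correct: chordless cycles span the cycle space by the chord-splitting induction, so two signings satisfying \eqref{eq:sign-condition-A} agree on all of $Z_1$, hence differ by an element of the cut space, which is spanned by vertex stars, i.e.\ by switchings. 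The ``moreover'' part is also sound: under the standing hypothesis, condition (4) of \cref{thm:bargezel-main} is the existence of \emph{some} positive companion, and your reduction to positivity of the distinguished companion $A$ --- restrict a positive companion $A^{+}$ to a chordless cycle, invoke the cycle case of \cite[Proposition~1.4]{bargezel} (if the sign product were positive one could switch to an all-negative signing, i.e.\ a generalized Cartan matrix whose diagram contains a cycle, never of finite type), conclude $A^{+}$ satisfies \eqref{eq:sign-condition-A}, hence is switching-equivalent to $A$, and switching is a congruence preserving positive definiteness --- is clean and gives the equivalence with \eqref{item:finite-type}--\eqref{item:dynkin}.

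The one place you stop short is existence, and you say so yourself. The reduction to the parity statement (any $\mathbb{F}_2$-relation among chordless cycles must have even length) is correct linear algebra, and the theta-configuration analysis correctly isolates the prototypical obstruction and shows why cyclic orientation excludes it. But the inductive ``bookkeeping'' you outline is a sketch, not a proof --- the constraints imposed by the chordless cycles through the deleted vertex are not obviously captured by the auxiliary-triangle compatibility you describe, and making this work is precisely the technical core of \cite[Proposition~1.5]{bargezel}. Since the paper itself declines to reproduce that argument and cites the same source, your proposal is on par with (indeed more explicit than) the text; just be aware that as a standalone proof it is complete except for that one deferred step.
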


\begin{remark}
Several characterizations of positive quasi-Cartan matrices have been
given in \cite[Proposition~2.9]{bargezel}.
In particular, these matrices are, up to certain equivalence, 
also classified by Cartan-Killing types.
More precisely, any positive quasi-Cartan matrix corresponding to a
root system $\Phi$ has the entries $a_{ij} = \langle \beta_i^\vee, \beta_j \rangle$,
where $\{\beta_1, \dots, \beta_n\} \subset \Phi$ is a $\ZZ$-basis
of the root lattice generated by $\Phi$, and
$\beta^\vee$ is the coroot dual to a root $\beta \in \Phi$.
\end{remark}

We conclude this section by an example illustrating the use of
Proposition~\ref{pr:cyclic-B-to-A} 
for checking whether a particular skew-symmetric matrix 
is an exchange matrix of a seed pattern of finite type.

\pagebreak[3]

\begin{example}
\vbox{Let $Q(n)$ be the following quiver with vertices $1,2,\dots,n$:\nopagebreak
\begin{center}
\setlength{\unitlength}{1.5pt}
\begin{picture}(170,30)(0,-5)
\put(0,20){\line(1,0){130}}
\put(20,0){\line(1,0){130}}
\multiput(20,0)(40,0){4}{\circle*{2}}
\multiput(0,20)(40,0){4}{\circle*{2}}
\multiput(20,0)(40,0){4}{\line(-1,1){20}}
\multiput(40,20)(40,0){3}{\line(-1,-1){20}}

\multiput(20,0)(40,0){4}{\vector(-1,1){10}}
\multiput(40,20)(40,0){3}{\vector(-1,-1){10}}
\multiput(0,20)(40,0){3}{\vector(1,0){20}}
\multiput(20,0)(40,0){3}{\vector(1,0){20}}

\put(0,25){\makebox(0,0){$1$}}
\put(40,25){\makebox(0,0){$3$}}
\put(80,25){\makebox(0,0){$5$}}
\put(120,25){\makebox(0,0){$7$}}
\put(20,-5){\makebox(0,0){$2$}}
\put(60,-5){\makebox(0,0){$4$}}
\put(100,-5){\makebox(0,0){$6$}}
\put(140,-5){\makebox(0,0){$8$}}

\put(160,0){\makebox(0,0){$\cdots$}}
\put(140,20){\makebox(0,0){$\cdots$}}

\end{picture}
\end{center}
}
\noindent
The quiver $Q(n)$ is the diagram of its $n \times n$ exchange matrix 
\[
B(n)=B(Q(n))=\begin{bmatrix}
0 & -1 & 1 & 0 & \cdots & 0 & 0\\
1 &  0 &-1 & 1 & \cdots & 0 & 0\\
-1&  1 & 0 &-1 & \cdots & 0 & 0\\
0 & -1 & 1 & 0 & \cdots & 0 & 0\\[-.05in]
\vdots & \vdots & \vdots & \vdots & \ddots & \vdots & \vdots\\
0 &  0 & 0 & 0 & \cdots & 0 & -1\\
0 &  0 & 0 & 0 & \cdots & 1 & 0
\end{bmatrix}
\]
(that is, $\Gamma(B(n))=Q(n)$).
This quiver has $n-2$ chordless cycles, the $3$-cycles with vertices
$\{i,i+1,i+2\}$, for $i=1,\dots,n-2$. 
All of them are cyclically oriented.
Now let~$A(n)$ be the quasi-Cartan companion of~$B(n)$ such that $a_{ij} =
b_{ij}$ for $i<j$.
One immediately checks that $A(n)$ satisfies the sign
condition~\eqref{eq:sign-condition-A}. 
Let $\delta_n = \det(A(n))$. 
By Sylvester's criterion, $A(n)$ is positive if and only if 
all the numbers $\delta_1,\dots,\delta_n$ are positive. 

It is not hard to compute the generating function of the
sequence~$(\delta_n)$, with the convention~$\delta_0 = 1$:
\begin{equation}
\label{eq:detAn}
\sum_{n \geq 0} \delta_n\, x^n = \frac{(1+x)(1+x+x^2)(1+x^2)(1+x^3)}
{1-x^{12}} \,.
\end{equation}
We see that $\delta_{n+12} \!=\! \delta_n$ for $n \geq 0$.
Since the numerator in \eqref{eq:detAn} is a polynomial of degree~$8$,
we conclude that $\delta_9 \!=\! \delta_{10} \!=\! \delta_{11} \!=\! 0$.
The fact that $\delta_9\!=\!0$ implies that $A(n)$ is not positive 
(hence $B(n)$ is not $2$-finite) for $n\ge 9$.

The values of~$\delta_n$ for $1 \leq n \leq 8$ are given in
Figure~\ref{fig:det-An}; cf.\ Exercise~\ref {exercise:D4-E8}. 
As all of them are positive, we conclude that 
$A(n)$ is positive (and so $B(n)$ is $2$-finite) 
if and only if $n \leq 8$. 
The corresponding Cartan-Killing types are 
shown in Figure~\ref{fig:det-An};
we leave the verification to the reader.
\end{example}

\begin{figure}[ht]
\begin{center}
\begin{tabular}{|c|c|c|c|c|c|c|c|c|}
\hline
&&&&&&&&\\[-.1in]
$n$ & 1 & 2 & 3 & 4 & 5 & 6 & 7  & 8\\
\hline
&&&&&&&&\\[-.1in]
$\delta_n=\det(A(n))$ &  2 & 3 & 4 & 4 & 4 & 3 & 2  & 1\\
\hline
&&&&&&&&\\[-.1in]
Cartan-Killing type &  $A_1$ & $A_2$ & $A_3$ & $D_4$ & $D_5$ &
$E_6$ & $E_7$  & $E_8$ \\[.05in]
\hline
\end{tabular}
\end{center}
\caption{Determinants and Cartan-Killing types of the matrices~$A(n)$.}
\label{fig:det-An}
\end{figure}

 

\backmatter


\bibliographystyle{acm}
\bibliography{bibliography}
\label{sec:biblio}


\end{document}